\newtheorem{theorem}{Theorem}[section]
\newtheorem{lemma}[theorem]{Lemma}
\newtheorem{proposition}[theorem]{Proposition}
\theoremstyle{definition}
\newtheorem{definition}[theorem]{Definition}
\newtheorem{example}[theorem]{Example}
\newtheorem{corollary}[theorem]{Corollary}
\theoremstyle{remark}
\newtheorem{remark}[theorem]{Remark}
\newtheorem{Convention}{Convention}
\newcommand\myshade{85}
\colorlet{mylinkcolor}{violet}
\colorlet{mycitecolor}{red}
\colorlet{myurlcolor}{cyan}
\numberwithin{equation}{section}
\def\co{{\mathcal O}}
\def\cohx{{\rm coh}\mbox{-}\mathbb{X}(p,q)}
\definecolor{dark-green}{RGB}{14,150,2}
\definecolor{red}{RGB}{250,0,0}
\begin{document}

\title[Combinatorial Approaches to Exceptional Sequences for Weighted Projective Lines]{Combinatorial Approaches to Exceptional Sequences for Weighted Projective Lines of Type $(p,q)$}
\author{Jianmin Chen}
\address{School of Mathematical Sciences, Xiamen University, 361005, Fujian, PR China.}
\email{chenjianmin@xmu.edu.cn}

\author{Yiting Zheng}
\address{School of Mathematical Sciences, Xiamen University, 361005, Fujian, PR China.}
\email{ytzhengxmu@163.com}

\subjclass[2020]{05E10, 16G20, 05C30, 18E10}

%\date{January 1, 2001 and, in revised form, June 22, 2001.}

\dedicatory{}

\keywords{exceptional sequence,  geometric realization, weighted projective line, tilting sheaf, lattice path}

\begin{abstract}
 %give the unique decomposition of morphisms in   $\cohx$ into compositions of epimorphisms and monomorphisms through their corresponding oriented curves  the kernels (resp. cokernels) of monomorphisms (resp. epimorphisms) between indecomposable sheaves via oriented curves.
%the existence of complete exceptional sequences
 We provide a combinatorial description of morphisms    in the coherent sheaf category  $\cohx$  over weighted projective line   of type $(p,q)$ via a marked annulus. This leads to
a  geometric realization
 of exceptional sequences 
in  $\cohx$.
As  applications, we 
  present a classification of complete exceptional sequences,   an effective method  for enlarging exceptional sequences, 
  and a new proof of  the transitivity of the braid group action on complete exceptional sequences.
 Besides,  we  
offer a combinatorial description of tilting bundles  via lattice paths and count the number of tilting sheaves in
  $\cohx$, up  to the Auslander-Reiten translation.  
\end{abstract}

\maketitle
\tableofcontents
\section{Introduction}
Exceptional objects and exceptional sequences are popular objects in various mathematical  fields.  
They were initially   introduced by Gorodentsev and Rudakov in their study of vector bundles over the projective plane \cite{MR885779,MR1074777}.   Later, Crawley-Boevey  introduced exceptional sequences in the module category of a finite  dimensional hereditary algebra over an algebraically closed field \cite{MR1206935}. Within this framework,  an exceptional sequence is called  a complete exceptional sequence if its length coincides with the rank of the Grothendieck group of the category. Crawley-Boevey  further established
the transitive action of the braid group on complete exceptional sequences, a result that was later  extended to hereditary Artin algebras by Ringel \cite{MR1293154}. 
Additionally, exceptional sequences have been explored for weighted projective lines by Meltzer \cite{MR1318999} and for Cohen-Macaulay modules over one-dimensional graded Gorenstein rings with simple singularities by Araya \cite{MR1816620}.

Exceptional sequences have established connections to several other areas of mathematics, including:
\begin{itemize}
    \item chains in the lattice of noncrossing partitions \cite{MR2032983,MR2575093,MR3551191};
\item the   theory   of cluster algebra via $c$-matrices \cite{MR3183889};
\item  factorizations of Coxeter elements \cite{MR2596373};
\item  $t$-structures and derived categories \cite{MR1039961,MR1074777,MR1973365}. 
\end{itemize}

Among the various researches on exceptional sequences, their combinatorial properties have  attracted much attention. 
For example, the enumeration of complete exceptional sequences for Dynkin types and affine type  $A$  has been studied in \cite{MR3919623,MR1842420,MR3018188,MR3145512,MR4546139,Maresca2022}.
And the enlargement property  of  exceptional sequences  and  the  transitivity 
 of the braid group action on  complete exceptional sequences in
 the derived category of a  gentle algebra have been investigated in  \cite{MR4578471}.
In this paper, we focus on 
exceptional sequences in the coherent sheaf category  $\cohx$  over weighted projective line  of type $(p,q)$.  We employ
  the geometric   model established in \cite{CRZ23} to  investigate  the geometric characterization of exceptional sequences in $\cohx$  and study their properties. 
As applications, we give a classification of complete exceptional sequences
in $\cohx$, reveal
the enlargement property  of  exceptional sequences  and  the  transitivity 
 of the braid group action on  complete exceptional sequences in $\cohx$ from combinatorial perspectives.
Notice that 
 the category  $\cohx$  is derived
equivalent to the finitely generated module category ${\rm mod}\tilde{A}_{p,q}$ of the canonical algebra $\tilde{A}_{p,q}$ (affine  type $A$) \cite{MR1318999}.  Thus, our work also     offers a new approach for studying  exceptional sequences in the category ${\rm mod}  \tilde{A}_{p,q}$.

 Let $A_{p,q}$  be an annulus with $p$ marked points on the inner boundary and $q$ marked points on the outer boundary. As shown in \cite{CRZ23}, there is a bijection $\phi$ between indecomposable sheaves
over $\cohx$ and certain homotopy classes of oriented curves in  $A_{p,q}$. 
Moreover, the dimension of extension space between two indecomposable coherent sheaves equals to the positive intersection number of the associated curves.  

Based on these  findings,  we give 
the unique decomposition of morphisms in   $\cohx$ into compositions of epimorphisms and monomorphisms through their corresponding oriented curves    in  Theorem~\ref{udecom}. Additionally, we explore
the kernels (resp. cokernels) of monomorphisms (resp. epimorphisms) between indecomposable
sheaves via oriented curves  in Theorem~\ref{thm:morphism}.

Furthermore, we introduce a family of ordered collections consisting of arcs in  $A_{p,q}$, called \emph{ordered exceptional collections} (see Definition~\ref{def:collection} and Definition~\ref{definition:ordered exceptional collection}). The  
  following theorem provides a geometric realization of exceptional sequences in $\cohx$.

\begin{theorem}[Theorem~\ref{collection and sequence}]
Let $\gamma_{1},\gamma_2,\ldots, \gamma_{s}$ be   arcs in  $A_{p,q}$. The following  are equivalent.
\begin{enumerate}
\item The ordered set  $(\gamma_1,\gamma_2,\ldots,\gamma_s)$
 is an ordered exceptional collection in $A_{p,q}$;
\item The sequence $(\phi( \gamma_{1}),\phi( \gamma_{2}),\ldots,\phi(\gamma_{s}))$ forms an exceptional sequence in $\cohx$.
\end{enumerate}
\end{theorem}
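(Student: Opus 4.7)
The plan is to translate the algebraic definition of an exceptional sequence in $\cohx$ into purely geometric conditions on arcs in $A_{p,q}$ via the bijection $\phi$, and then to verify that these conditions are exactly those required by the definition of an ordered exceptional collection. The first observation I would record is that $\phi$ restricts to a bijection between exceptional indecomposables and arcs (as opposed to closed curves, which parametrize tubular sheaves and are therefore never exceptional); using the intersection-extension dictionary of \cite{CRZ23}, an arc $\gamma$ has no positive self-intersections, so $\text{Ext}^1(\phi(\gamma),\phi(\gamma))=0$, and the endomorphism ring of $\phi(\gamma)$ is automatically the base field. Thus requiring each $\gamma_i$ to be an arc in the sense of Definition~\ref{def:collection} already forces each $\phi(\gamma_i)$ to be exceptional.

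For the pairwise conditions, I would split the $\text{Ext}$- and $\text{Hom}$-vanishing separately. The $\text{Ext}$-part is immediate from the previously established identity that $\dim\text{Ext}^1(\phi(\alpha),\phi(\beta))$ equals the positive intersection number of $\alpha$ and $\beta$: the vanishing $\text{Ext}^1(\phi(\gamma_j),\phi(\gamma_i))=0$ for $i<j$ amounts exactly to the non-crossing condition on the two arcs in the prescribed direction. The $\text{Hom}$-part requires more work, and I would invoke Theorem~\ref{udecom} together with Theorem~\ref{thm:morphism}, which describe every nonzero morphism between indecomposables as a uniquely determined epi-mono factorization realized geometrically by sub-arcs (whose kernels and cokernels are again represented by sub-arcs). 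This yields a combinatorial basis of $\text{Hom}(\phi(\gamma_j),\phi(\gamma_i))$, so the vanishing of the entire $\text{Hom}$-space translates into the non-existence of any such geometric configuration from $\gamma_j$ to $\gamma_i$.

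Assembling these translations, the algebraic conditions defining an exceptional sequence for $(\phi(\gamma_1),\ldots,\phi(\gamma_s))$ become geometric conditions on $(\gamma_1,\ldots,\gamma_s)$ which, comparing with Definition~\ref{def:collection} and Definition~\ref{definition:ordered exceptional collection}, are precisely those of an ordered exceptional collection; this yields the biconditional in both directions. I expect the $\text{Hom}$-vanishing step to be the main obstacle: unlike $\text{Ext}^1$, which is controlled by a single numerical invariant, $\text{Hom}$-vanishing requires ruling out every possible epi-mono configuration, and one must verify carefully that the combinatorial conditions in the definition of ordered exceptional collection genuinely exhaust the basis supplied by Theorem~\ref{thm:morphism}. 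Once this bookkeeping is done, the rest of the proof is a straightforward matching of data on the two sides of $\phi$.
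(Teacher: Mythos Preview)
Your pairwise translation is on the right track, and for the direction $(1)\Rightarrow(2)$ it essentially works: once one knows the geometric characterization of an exceptional pair (Theorem~\ref{position} in the paper), the compatibility of the linear order with $\preceq$ forces each $(\phi(\gamma_i),\phi(\gamma_j))$ with $i<j$ to be an exceptional pair. Note, however, that your route through Theorems~\ref{udecom} and~\ref{thm:morphism} is more laborious than needed; the paper first packages the pairwise translation into Theorem~\ref{position} (exceptional pair $\Leftrightarrow$ one of three explicit arc configurations), and then invokes that result directly.

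The genuine gap is in the direction $(2)\Rightarrow(1)$. Definition~\ref{def:collection} is not purely a conjunction of pairwise conditions: items (E2) and (E3) are \emph{global} constraints on the collection (existence of external points on each boundary; absence of certain curved polygons). These are precisely what guarantee that the relation $\preceq$ has no cycles and hence extends to a partial order. Your proposal asserts that the pairwise $\mathrm{Hom}$/$\mathrm{Ext}$ vanishing, once translated, ``are precisely those of an ordered exceptional collection,'' but this matching fails at (E2) and (E3): nothing in the pairwise dictionary produces them directly. The paper supplies the missing step by arguing the contrapositive: if (E2) fails one obtains a cyclic chain of peripheral arcs with $\mathrm{Ext}^1\neq 0$ along the cycle, and if (E3) fails one obtains a curved polygon forcing a cycle of nonzero $\mathrm{Hom}$ or $\mathrm{Ext}^1$; either way the existence of a linear order making all backward $\mathrm{Hom}$ and $\mathrm{Ext}^1$ vanish is contradicted. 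This cycle-detection argument is short but essential, and your ``bookkeeping'' remark does not cover it.
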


 As applications, we show  that any exceptional sequence in $\cohx$ can be enlarged into a complete
exceptional sequence (see Corollary~\ref{complete}). We also introduce a geometric method for constructing complete exceptional sequences,  thereby offering a classification of these sequences in   $\cohx$.

  As other applications, we define a braid group action  on ordered exceptional collections in $A_{p,q}$ (see Definition~\ref{mutation of arcs}) and demonstrate its compatibility with the braid group action on exceptional sequences in   $\cohx$ as follows.

\begin{theorem}[Theorem~\ref{mutation correspond}]
 Let   $(E,F)$ be an exceptional pair in $\cohx$ with  $\phi^{-1}(E)=\alpha$  and $\phi^{-1}(F)=\beta$, where $\alpha$ and  $\beta$ are  arcs in $A_{p,q}$. 
Then the following equations hold
\[\phi( L_{\alpha}\beta)=L_{E}F, \ \phi( R_{\beta}\alpha )=R_FE,\]
where $L_{\alpha}\beta$ (resp. $R_{\beta}\alpha$)  denotes left  mutation of $\beta$ at $\alpha$ (resp. right mutation of $\alpha$ at $\beta$), and  $L_{E}F$ (resp. $R_{F}E$) is left  mutation of $F$ at $E$ (resp. right mutation of $E$ at $F$).
\end{theorem}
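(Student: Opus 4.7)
The plan is to prove the identity $\phi(L_\alpha\beta)=L_EF$ by matching the algebraic and geometric definitions of mutation case by case according to the homological type of the exceptional pair $(E,F)$; the identity $\phi(R_\beta\alpha)=R_FE$ then follows by a parallel argument, or alternatively by invoking Serre duality.

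First I would recall that $L_EF$ is the object in the derived category of $\cohx$ fitting into the triangle
\[L_EF\longrightarrow \mathrm{RHom}(E,F)\otimes E\longrightarrow F\longrightarrow L_EF[1].\]
Since $\cohx$ is hereditary, this triangle reduces to one of the following standard descriptions of $L_EF$: the short exact sequence $0\to L_EF\to E^{d}\to F\to 0$ when $\mathrm{Hom}(E,F)\neq 0$ and $\mathrm{Ext}^{1}(E,F)=0$; the short exact sequence $0\to F\to L_EF\to E^{d}\to 0$ when $\mathrm{Hom}(E,F)=0$ and $\mathrm{Ext}^{1}(E,F)\neq 0$; the identification $L_EF=F$ when both vanish; and a two-term complex cone when both are nonzero. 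Via the bijection $\phi$ and Theorem~\ref{collection and sequence}, these homological cases translate into an explicit combinatorial dichotomy on the pair $(\alpha,\beta)$, since $\dim\mathrm{Hom}$ and $\dim\mathrm{Ext}^{1}$ are computed by positive intersection numbers of the oriented curves. I would then check that this dichotomy matches the case split appearing in Definition~\ref{mutation of arcs}.

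In each nontrivial case I would compute the arc associated to the mutation object. For the Hom-case, the short exact sequence exhibits $L_EF$ as the kernel of the canonical epimorphism $E^{d}\twoheadrightarrow F$; applying the unique decomposition of morphisms from Theorem~\ref{udecom} together with the explicit geometric formulas for kernels and cokernels from Theorem~\ref{thm:morphism}, one identifies $\phi^{-1}(L_EF)$ as a specific resolution of $\alpha$ and $\beta$ at their Hom-intersection points. The Ext-case is handled symmetrically, with $L_EF$ realized as an extension whose associated arc is a resolution at the Ext-intersection points. In each case one verifies that the resulting arc is precisely $L_\alpha\beta$ as prescribed by Definition~\ref{mutation of arcs}.

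The principal technical obstacle will be the orientation and endpoint bookkeeping under resolution at boundary intersection points, together with the subtlety of the case in which both $\mathrm{Hom}(E,F)$ and $\mathrm{Ext}^{1}(E,F)$ are nonzero. In that case $L_EF$ is the cone of a genuine two-term complex, and one must verify that the combined geometric smoothing still yields a single homotopy class of arc; equivalently, that $L_EF$ remains indecomposable and lies in the image of $\phi$. Once the left-mutation identity has been established, the corresponding identity for right mutation follows by running the dual argument on the triangle $E\to \mathrm{RHom}(E,F)^{*}\otimes F\to R_FE\to E[1]$, or, more economically, by applying Serre duality to reduce right mutation at $(E,F)$ to left mutation at a suitable auxiliary pair.
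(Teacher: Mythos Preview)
Your overall strategy matches the paper's: a case analysis on the homological type of $(E,F)$, followed by identifying $L_EF$ geometrically via Theorem~\ref{thm:morphism}. However, there are two concrete gaps.

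First, the case you flag as the ``principal technical obstacle''---both $\mathrm{Hom}(E,F)$ and $\mathrm{Ext}^1(E,F)$ nonzero---simply does not occur for exceptional pairs in $\cohx$. This is the content of Corollary~\ref{cor:class} (and the remark immediately after Corollary~\ref{iif}): for any exceptional pair one of the two spaces vanishes, $\dim\mathrm{Ext}^1(E,F)\le 1$, and $\dim\mathrm{Hom}(E,F)\le 2$ with the value $2$ occurring precisely when $F\cong E(\vec c)$ for line bundles. So there is no two-term cone to worry about, and the paper's case list is exactly: $\mathrm{Hom}\cong\Bbbk$, $\mathrm{Hom}\cong\Bbbk^2$, $\mathrm{Ext}^1\cong\Bbbk$, and both zero.

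Second, in the $\mathrm{Hom}$-case you write only the sequence $0\to L_EF\to E^d\to F\to 0$, i.e.\ you assume the canonical map is surjective. But when $\mathrm{Hom}(E,F)\cong\Bbbk$ the nonzero morphism $E\to F$ is either a monomorphism or an epimorphism (Remark~\ref{rek:mon or epi}), and both occur; in the mono subcase $L_EF$ is the \emph{cokernel}, not a kernel. The paper handles the two subcases separately, using the cokernel formula of Theorem~\ref{thm:morphism}(1) in the mono case and the kernel formula of Theorem~\ref{thm:morphism}(2) in the epi case (see Figures~\ref{fig:mutation mon} and~\ref{fig:mutation epi}), checking in each that the resulting arc matches Definition~\ref{mutation of arcs}. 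The $\mathrm{Hom}\cong\Bbbk^2$ case is treated by the explicit sequences $0\to\mathcal O(\vec x-\vec c)\to\mathcal O(\vec x)^2\to\mathcal O(\vec x+\vec c)\to 0$ and its dual, matching the third bullet of Definition~\ref{mutation of arcs}. Once you incorporate Corollary~\ref{cor:class} to fix the case list and split the $\mathrm{Hom}\cong\Bbbk$ case into mono and epi, your plan coincides with the paper's proof.
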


By considering the  action of braid group on ordered maximal exceptional collections in $A_{p,q}$, we establish the transitivity of the braid group  $B_{p+q}$ on the set
of complete exceptional sequences in $\cohx$ (see Theorem~\ref{transitivity}).

As show in \cite[Theorem 4.3]{CRZ23}, tilting sheaves in  $\cohx$ are in bijective correspondence with triangulations of  $A_{p,q}$, which are a special class of maximal exceptional collections in $A_{p,q}$. For this  special case, we aim to  explore their combinatorial   characteristics. 
 
Two  tilting sheaves $T$ and $T'$ in $\cohx$ are said to be \emph{$\tau$-equivalent}, if there exists some  $k\in\mathbb{Z}$ such that
$T'=\tau^{-k}T$.  Our main result is as follows:

\begin{theorem}[Theorem~\ref{number  tilting bundles}]\label{l to l}
There is a one-to-one correspondence between  the   $\tau$-equivalence classes of tilting bundles in  $\cohx$ 
and the  lattice paths from $(0,0)$ to $(p,q)$, as defined in Definition~\ref{lattice and dyck}.  Consequently, 
the number of $\tau$-equivalence classes of tilting bundles in  $\cohx$ is given by
  $$\binom{p+q}{p}.$$ 
 
\end{theorem}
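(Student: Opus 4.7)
The plan is to leverage the geometric model of \cite{CRZ23}: under $\phi$, a tilting bundle in $\coh\mathbb{X}(p,q)$ corresponds to a triangulation of the annulus $A_{p,q}$ all of whose arcs are of \emph{vector bundle type} (that is, none of them represents a torsion sheaf). I will carry out the proof in three steps. First, I will translate the Auslander--Reiten translate $\tau$ into a geometric operation on $A_{p,q}$; this should be the simultaneous rotation of both boundary components by one marked point (in the appropriate direction), since under $\phi$ the AR-translate on indecomposables in $\cohx$ is realized as such a rotation of the associated oriented curves. Consequently, $\tau$-equivalence of tilting bundles becomes equivalence of the corresponding ``vector bundle'' triangulations up to simultaneous rotation.

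Second, I will choose a distinguished representative in each $\tau$-orbit. The natural choice is to \emph{pin} a distinguished arc to a fixed position: among the arcs in the triangulation, one can always find one whose inner endpoint lies at a prescribed marked point (after applying a suitable power of $\tau$), and by a standard lemma one may further make this pinned arc the ``reference'' arc of the triangulation. This will give a bijection between $\tau$-equivalence classes and triangulations of $A_{p,q}$ by vector bundle arcs satisfying a fixed boundary condition. Verifying that this canonical form is well defined (existence and uniqueness of the representative in each orbit) is where I expect the main technical work, because I must ensure that no non-trivial power of $\tau$ fixes a triangulation; this follows from the fact that the rotation action on the set of arcs is free once an orientation-preserving arc is pinned.

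Third, I will encode each such pinned triangulation as a lattice path from $(0,0)$ to $(p,q)$. The idea is to read off the arcs of the triangulation in their cyclic order around the pinned starting point: each arc separates one inner marked point from an outer marked point, and recording at each step whether the next arc advances along the inner boundary or along the outer boundary produces a sequence of $p$ ``right'' steps and $q$ ``up'' steps. Compatibility with Definition~\ref{lattice and dyck} has to be checked, and the inverse map is constructed by reading a lattice path and drawing in the corresponding arcs; the triangulation axioms (non-crossing, maximal) match exactly the monotonicity of a lattice path.

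Combining these three steps yields the bijection of the theorem. The enumeration is then immediate: the number of lattice paths from $(0,0)$ to $(p,q)$ using unit right and up steps is $\binom{p+q}{p}$. The principal obstacle, as noted, is the second step — establishing a canonical form for $\tau$-orbits of vector bundle triangulations — since one must rule out periodic triangulations and also verify that the pinning procedure is independent of ancillary choices.
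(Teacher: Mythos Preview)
Your three-step outline is essentially the paper's own proof: it first shows (Lemma~\ref{lemma:tau}) that every $\tau$-orbit contains a representative with $\mathcal{O}$ as a direct summand---equivalently, with the arc $[D^{0}_{0}]$ pinned---and then reads off the lattice path exactly as in your step~3, by recording whether each successive bridging arc advances one step on the inner or the outer boundary. One remark: the freeness issue you flag as the principal obstacle is in fact trivial, since $\vec{\omega}$ has infinite order in $\mathbb{L}$ and hence no non-zero power of $\tau$ can map a finite set of line bundles to itself; the genuine work, handled in the paper's Lemma~\ref{lemma:tau} by a short induction on the winding of the arcs, is the \emph{existence} of the pinned representative.
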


This theorem provides a combinatorial description of tilting bundles in $\cohx$ via lattice paths.  Lattice paths, as important  combinatorial structures, have broad applications across mathematics, chemistry, physics, and computer science. They can model various constructs such as trees, Young tableaux, continued fractions, and integer partitions \cite{MR4543742}. Moreover, their deep connections to symmetric function theory and group representation theory further highlight their significance.
 $(m, n)$-Dyck paths are a specific type of lattice paths. When  $m=n$, these paths are enumerated by the famous Catalan numbers, which have over 200 combinatorial interpretations \cite{MR3467982}. In algebra, $(m, n)$-Dyck paths are used to describe solutions to the consistency equations of twisted generalized Weyl algebras \cite{MR3514777} and to study the conjugation of core partitions in symmetric group representation theory \cite{MR3682397}. 
By Theorem~\ref{l to l}, we identify a class of tilting bundles in $\cohx$ that correspond to $(p, q)$-Dyck paths  (see Corollary~\ref{Dyck 1}). These connections enrich our understanding of tilting bundles in $\cohx$  and offer a novel perspective  on the importance of lattice paths in mathematics.

We also
derive the quantitative characteristic of  tilting sheaves in $\cohx$ as follows.

\begin{theorem}[Theorem~\ref{numbertilting sheaves}]
The number of tilting sheaves  in $\cohx$, up to $\tau$-equivalence, is 
$$
  \sum_{k=1}^{q} k   C_{p+q-k}  C_{k-1} +\sum_{l=1}^{p} l   C_{p+q-l} C_{l-1},
$$
where    $C_n$ denotes  the $n$-th Catalan number. 
\end{theorem}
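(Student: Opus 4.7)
The plan is to use the bijection, recalled in the introduction, between tilting sheaves in $\cohx$ and triangulations of the annulus $A_{p,q}$, and then carry out the enumeration combinatorially. The Auslander--Reiten translation $\tau$ descends to a canonical combinatorial symmetry of triangulations (a simultaneous rotation of the marked points on the two boundaries of $A_{p,q}$), so the task reduces to counting triangulations of $A_{p,q}$ modulo this symmetry.

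The first step is to fix a canonical representative within each $\tau$-orbit. Since every triangulation of the annulus must contain at least one bridging arc, one can declare canonical, in each orbit, the triangulation containing a distinguished bridging arc $\alpha_{0}$ joining a chosen inner marked point to a chosen outer marked point (with minimal winding in the universal cover). Cutting along $\alpha_{0}$ unfolds $A_{p,q}$ into a disk polygon $P$ with $p+q+2$ vertices (both endpoints of $\alpha_{0}$ are duplicated along the cut), and the remaining $p+q-1$ arcs of the triangulation become a triangulation of $P$.

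The next step is to split the count according to the triangle $\Delta$ of the triangulation that is incident to $\alpha_{0}$. The third vertex of $\Delta$ lies either on the outer boundary (producing the first sum) or on the inner boundary (producing the second). In the outer case, the side of $\Delta$ opposite to $\alpha_{0}$ is an internal diagonal of $P$ spanning $k$ consecutive outer edges for some $k\in\{1,\ldots,q\}$; this diagonal cuts $P$ into a $(k+1)$-gon and a complementary polygon of matching size, whose triangulations are enumerated by $C_{k-1}$ and $C_{p+q-k}$ respectively. The linear weight $k$ then records the residual multiplicity coming from the $\tau$-orbit stabiliser: once the outer triangle type and its span $k$ are fixed, there are exactly $k$ equivalent choices of $\alpha_{0}$ that produce the same canonical orbit representative. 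Summing over $k$ yields the first sum, and the inner-boundary case is symmetric and yields the second.

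The main obstacle is the bookkeeping for the multiplicity factors $k$ and $l$: one must verify that the combinatorial $\tau$-action indeed distributes the canonical representatives uniformly, so that each triangulation is counted exactly once with the correct weight. The cleanest way to do this is a fundamental-domain argument on the universal cover of $A_{p,q}$, where $\tau$ lifts to an integer translation and its orbits become transparent; alternatively, one may argue inductively on the number of bridging arcs, using the geometric model developed in the preceding sections (Theorem~\ref{udecom}, Theorem~\ref{thm:morphism} and Theorem~\ref{collection and sequence}) to control the interaction of arcs under $\tau$-translation. Once this orbit-counting is justified, the Catalan enumeration of polygon triangulations assembles the two sums and gives the claimed closed form.
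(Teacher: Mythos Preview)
Your overall framework is sound: the bijection with triangulations of $A_{p,q}$ and the identification of $\tau$ with the simultaneous rotation $\gamma\mapsto{}_s\gamma_e$ are exactly the starting point the paper uses. However, the heart of your argument --- the origin of the linear weights $k$ and $l$ --- does not work as written.

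The specific problem is your stabiliser interpretation. Every tilting sheaf in $\cohx$ contains at least one line bundle as a direct summand (rank is additive and torsion sheaves have rank zero), and $\tau$ acts freely on line bundles; hence $\tau$ acts freely on tilting sheaves and all $\tau$-orbits are infinite with trivial stabiliser. So the factor $k$ cannot be ``the residual multiplicity coming from the $\tau$-orbit stabiliser''. Relatedly, your rule ``declare canonical the triangulation containing a fixed bridging arc $\alpha_0$'' does not single out a unique representative: several triangulations in the same $\tau$-orbit will contain $\alpha_0$, while cutting along $\alpha_0$ simply produces the $C_{p+q}$ triangulations of a $(p{+}q{+}2)$-gon --- a finite number with no evident relation to the claimed formula until the over-counting is resolved. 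Your suggestions for fixing this (a fundamental-domain argument, or induction aided by Theorems~\ref{udecom}, \ref{thm:morphism}, \ref{collection and sequence}) are too vague; those theorems concern morphisms and exceptional sequences and do not bear on the orbit-counting needed here.

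The paper proceeds differently. Rather than a single distinguished arc, it fixes a distinguished \emph{triangle}: it defines explicit families $\Gamma(a,b)$ (two bridging arcs $[D^0_{a/q}],[D^0_{b/q}]$ and the outer peripheral arc between them) and $\Gamma(a,b)'$ (the analogous inner version), and lets $\mathcal T_1\cup\mathcal T_2$ be the triangulations in these families with $a$ ranging over a prescribed window of length depending on the span $k=b-a$ (respectively $l=b$). Two short lemmas then show that the elements of $\mathcal T_1\cup\mathcal T_2$ are pairwise non-$se$-equivalent and that every triangulation is $se$-equivalent to one of them. The factor $k$ arises not from a stabiliser but from the size of the window: for fixed span $k$ there are exactly $k$ admissible positions $a\in\{1-k,\ldots,0\}$, each contributing $C_{k-1}C_{p+q-k}$ triangulations (triangulating the outer sub-polygon on $k{+}1$ vertices and the complementary $(p{+}q{-}k{+}2)$-gon). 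Re-indexing the double sum over $(a,b)$ by $k=b-a$ then yields $\sum_{k=1}^{q} k\,C_{k-1}C_{p+q-k}$, and the inner case gives the second sum analogously.
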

The paper is organized as follows. Section~\ref{sec.2} provides   background material on   the category $\cohx$ of coherent sheaves over weighted projective line of type $(p,q)$ and exceptional sequences.
 In Section~\ref{sec.3}, we use the geometric model $A_{p,q}$ for $\cohx$ to compute the  epic-monic factorisation of a morphism and the kernel (resp. cokernel) of a monomorphism (resp. an epimorphism) in  $\cohx$. Section~\ref{sec.4} offers a geometric characterization of exceptional sequences in $\cohx$. Based on it, we classify complete exceptional sequences 
 and reveal the enlargement property of exceptional sequences   in  $\cohx$ from
combinatorial perspectives. 
In Section~\ref{sec.5}, we define a braid group action on ordered exceptional collections in $A_{p,q}$ and demonstrate its compatibility with the braid group action on exceptional sequences in $\cohx$. As an application, we establish the transitivity of the braid group action on the set of complete exceptional sequences in $\cohx$. 
In Section~\ref{sec.6},  we give a combinatorial description of tilting bundles and count the number of tilting sheaves in
  $\cohx$, up to the Auslander-Reiten translation.

\section{Preliminaries}\label{sec.2}

\subsection{Weighted projective lines of type $(p,q)$} 
We recall  from \cite{AMM,CRZ23,Geigle:Lenzing:1987} some definitions and fundamental properties   about weighted projective lines of type $(p, q)$, where $p$ and $q$ are positive integers.
%In this subsection, we recall some of  basic facts about the weighted projective lines of type $(p, q)$ for $p,q \in \mathbb{Z}_{\geq 1}$ 

Denote by $\mathbb{L}:=\mathbb{L}(p,q)$ the abelian group with generators $\vec{x}_1,\vec{x}_2$ and relations
$p\vec{x}_1= q \vec{x}_2 :=\vec{c},$ where the element $\vec{c}$ is called the \emph{canonical element}.  Consequently, each element $\vec{x} \in  \mathbb{L}$ can be uniquely written in \emph{normal form}  
$$\vec{x}= l_1 \vec{x}_1+l_2 \vec{x}_2+l\vec{c}\quad {\rm with}\ 0 \leq l_1 < p,\ 0 \leq l_2 < q\ {\rm and}\ l \in \mathbb{Z}.$$  The \emph{dualizing element} of  $\mathbb{L}$  is defined as   $\vec{\omega}=-(\vec{x}_1+\vec{x}_2)$.
Besides, the   group  $\mathbb{L}$ is ordered by defining the positive cone $\{\vec{x} \in  \mathbb{L} \mid \vec{x} \geq \vec{0}\}$ to consist of the elements of the form $l_1 \vec{x}_1+l_2 \vec{x}_2$, where $l_1, l_2 \geq 0$.
 A homomorphism $\delta:\mathbb{L}\to \mathbb{Z}$  defined on generators by $\delta(\vec{x}_1)=lcm(p,q)/p$ and  $\delta(\vec{x}_2)=lcm(p,q)/q$ is called the  \emph{degree map}.

Let $\Bbbk$ be an algebraically closed field and $\boldsymbol{\lambda}=(\lambda_1, \lambda_2)$ be a sequence of pairwise distinct closed points on the projective lines $\mathbb{P}_{\Bbbk}^1$. A \emph{weighted projective line}  $\mathbb{X}(p,q)$ of weight type $(p,q)$ and parameter sequence $\boldsymbol{\lambda}$ is obtained from the projective line  $\mathbb{P}_{\Bbbk}^1$ by attaching the weight $p,q$ to $\lambda_1, \lambda_2$, respectively. The parameter sequence can be normalized into $\lambda_1=\infty,\lambda_2=0$.

  The \emph{homogeneous coordinate algebra}   $S:=S(p,q)$ of the weighted projective line  $\mathbb{X}(p,q)$ is given by $\Bbbk[x_1, x_2]$,  
  which is $\mathbb{L}$-graded by setting $\mathrm{deg\ }x_i =\vec{x}_i$ for $i=1,2$. Hence, 
$S=\bigoplus_{\vec{x} \in \mathbb{L}} S_{\vec{x}},$
where $S_{\vec{x}}$ is the homogeneous component of degree $\vec{x}$. In addition, if $\vec{x}= l_1\vec{x}_1+l_2\vec{x}_2+l \vec{c} $ is
in normal form, 
{then ${\rm dim\ } S_{\vec{x}}=l+1$ with $l \geq -1$.}  
 We now revisit  the definition of the category $\cohx$ of coherent sheaves over $\mathbb{X}(p,q)$,  as   introduced in  \cite[Section 1]{Geigle:Lenzing:1987}.  
Let ${\rm mod}^{\mathbb{L}}\ S$ be the abelian category of finitely generated $\mathbb{L}$-graded $S$-modules, and 
${\rm mod}_0^{\mathbb{L}}\ S$ be its Serre subcategory formed by finite dimensional modules. Denote by $${\rm qmod}^{\mathbb{L}}\ S:={\rm mod}^{\mathbb{L}}\ S/{\rm mod}_0^{\mathbb{L}}\ S$$ the quotient abelian category. By \cite[Theorem 1.8]{Geigle:Lenzing:1987}, the sheafification functor yields an equivalence
\[
{\rm qmod}^{\mathbb{L}}\ S\xrightarrow{\sim} \cohx.
\] 
From now on, we will identify these two categories.  
The category $\cohx$ can be expressed as $$\cohx={\rm vect}\mbox{-}\mathbb{X}(p,q)\bigvee{\rm coh}_{0}\mbox{-}\mathbb{X}(p,q),$$ where ${\rm vect}\mbox{-}\mathbb{X}(p,q)$ (resp.  ${\rm coh}_{0}\mbox{-}\mathbb{X}(p,q)$) denotes the full subcategory of $\cohx$ consisting of coherent sheaves without any simple subobjects (resp. coherent sheaves of finite length), the symbol $\bigvee$ indicates that each indecomposable object of $\cohx$ is either in ${\rm vect}\mbox{-}\mathbb{X}(p,q)$ or in ${\rm coh}_{0}\mbox{-}\mathbb{X}(p,q)$, and there are no non-zero morphisms from ${\rm coh}_{0}\mbox{-}\mathbb{X}(p,q)$ to ${\rm vect}\mbox{-}\mathbb{X}(p,q).$
The objects in ${\rm vect}\mbox{-}\mathbb{X}(p,q)$ are called \emph{vector bundles}. There is a specific type of vector bundles  called \emph{line bundles}. Up to isomorphism, each line bundle has the form $\co(\vec{x})$ for a uniquely determined $\vec{x}\in\mathbb{L}$.   
Additionally, the homomorphism space between any two line bundles $\mathcal{O}(\vec{x})$ and $\mathcal{O}(\vec{y})$  in  $\cohx$ is given by:
 $${\rm Hom_{\cohx}}(\co(\vec{x}), \co(\vec{y}))\cong S_{\vec{y}-\vec{x}},\ \forall \vec{x}, \vec{y}\in\mathbb{L}.$$

For simplification of the notations, we will abbreviate ${\rm Hom_{\cohx}}$ and ${\rm Ext_{\cohx}}$ as ${\rm Hom}$ and ${\rm Ext}$. Now we give a list of fundamental properties of the category $\cohx$.
\begin{proposition}[\cite{{Geigle:Lenzing:1987}, Len2011,MR2232010}]\label{prop of coh}
The category $\cohx$ is connected, Hom-finite, hereditary, abelian  and  $\Bbbk$-linear with the following properties:
\begin{itemize}
\item[(1)] The category $\cohx$ admits Serre duality in the form 
${\rm Ext^{1}}(X,Y)  \cong {\rm DHom}(Y, \tau X),$
where $ { \mathrm D}={\rm Hom}_\Bbbk(-,\Bbbk)$ and  the $\Bbbk$-equivalence $\tau:\cohx\to\cohx$ is the shift $X \mapsto X(\vec\omega)$.
 \item[(2)]   Each indecomposable bundle over $\mathbb{X}(p,q)$ is a line bundle.
\item[(3)] 
 The rank of Grothendieck group $K_{0}(\cohx)$ is equal to $p+q$.
\end{itemize}
\end{proposition}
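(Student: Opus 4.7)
The plan is to reduce each assertion to the general theory of weighted projective lines developed in \cite{Geigle:Lenzing:1987}, specialized to weight type $(p,q)$. First I would verify the basic categorical properties---connectedness, Hom-finiteness, abelianness, and $\Bbbk$-linearity---by transporting them through the Serre quotient equivalence ${\rm qmod}^{\mathbb{L}}\,S\xrightarrow{\sim}\cohx$ recalled just before the proposition statement. The delicate point here is heredity: since $S=\Bbbk[x_1,x_2]$ is a polynomial ring in two variables, ${\rm mod}^{\mathbb{L}}\,S$ has global dimension two, and the Serre quotient by ${\rm mod}_0^{\mathbb{L}}\,S$---whose objects are exactly those supported at the irrelevant ideal $\mathfrak{m}=(x_1,x_2)$---lowers the global dimension by one. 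Connectedness is then immediate from the formula ${\rm Hom}(\co(\vec{x}),\co(\vec{y}))\cong S_{\vec{y}-\vec{x}}$, which links any two line bundles by nonzero morphisms once their degree difference lies in the positive cone.

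For (1), the approach is to invoke $\mathbb{L}$-graded local duality at $\mathfrak{m}$. The key computation is the top local cohomology $H^2_{\mathfrak{m}}(S)$: as an $\mathbb{L}$-graded $S$-module its socle is concentrated in degree $\vec{\omega}=-(\vec{x}_1+\vec{x}_2)$, so its graded $\Bbbk$-dual realizes the graded canonical module as $S(\vec{\omega})$. Sheafifying this local duality and combining it with the identification of ${\rm Ext}^1$ in $\cohx$ with a limit of graded $\mathrm{Ext}$ groups yields the natural isomorphism ${\rm Ext}^1(X,Y)\cong {\rm DHom}(Y,X(\vec{\omega}))$, which is precisely the stated Serre duality with $\tau(-)=(-)(\vec{\omega})$.

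For (2) and (3), I would work with the canonical tilting bundle
\[
T=\co\oplus\co(\vec{x}_1)\oplus\cdots\oplus\co((p-1)\vec{x}_1)\oplus\co(\vec{x}_2)\oplus\cdots\oplus\co((q-1)\vec{x}_2)\oplus\co(\vec{c}),
\]
whose $p+q$ pairwise non-isomorphic indecomposable summands form a $\mathbb{Z}$-basis of $K_0(\cohx)$, giving (3). The endomorphism algebra of $T$ is the canonical algebra $\tilde{A}_{p,q}$ of affine type $A$, and under the induced derived equivalence the indecomposable vector bundles correspond to the preprojective and preinjective indecomposable $\tilde{A}_{p,q}$-modules; a rank computation via the Euler form then shows each such summand has rank one, establishing (2). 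The main obstacle is the careful verification that the shift by $\vec{\omega}$ descends through the Serre quotient to realize the Serre functor on $\cohx$ itself, rather than only on the ambient module category; one must match graded local duality on $S$ with sheaf duality on $\cohx$, compatibly with the sheafification functor---this is precisely where the Geigle--Lenzing structure theorem is indispensable.
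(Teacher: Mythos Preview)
The paper does not supply its own proof of this proposition: it is stated as background material with citations to \cite{Geigle:Lenzing:1987,Len2011,MR2232010}, and no argument is given in the body of the paper. Your proposal is therefore not being compared against any proof in the paper, but rather against the content of those references.

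That said, your sketch is a faithful outline of how these facts are established in the Geigle--Lenzing framework. The reduction of the categorical properties through the Serre quotient, the derivation of Serre duality from graded local duality with canonical shift $\vec{\omega}$, and the use of the canonical tilting bundle to read off the rank of $K_0$ are all standard and correct. One small caution on (2): your argument via the derived equivalence and a rank computation on preprojective/preinjective modules is workable, but the more direct route in the domestic case---and the one closer to \cite{MR2232010}---is to observe that for weight type $(p,q)$ the Euler characteristic is positive, so the slope function is bounded on indecomposable bundles and every indecomposable bundle is stable, hence of rank one by Riemann--Roch. Either way the conclusion holds; your approach just takes a slightly longer path through the tilting equivalence.
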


 By \cite[Proposition 1.1]{MR2232010}, the torsion subcategory ${\rm coh}_0\mbox{-}\mathbb{X}(p,q)$ of $\cohx$ decomposes into a coproduct
$\coprod_{\lambda\in \mathbb{P}_{\Bbbk}^1}\mathcal{U}_{\lambda}$, where $\mathcal{U}_{\lambda}$ is a connected uniserial length category, whose associated Auslander-Reiten quiver is a stable tube $\mathbb{ZA}_{\infty}/(\tau^r)$ for some $r\in \mathbb{Z}_{\geq 1}$ (c.f.\cite{SS07}). Here, the integral $r$ is called the rank of the stable tube $\mathbb{ZA}_{\infty}/(\tau^r)$ and depends on $\lambda$. Precisely, $r=p,\; q$ for $\lambda=\infty,\;0$, respectively and $r=1$ for $\lambda \in \Bbbk^{*}=\Bbbk\setminus \{ 0\}$. A stable tube of rank $1$ is called a \emph{homogeneous stable tube}. Objects that lie at the bottom of the stable tubes are all simple objects of $\cohx$. Each $\lambda \in  \Bbbk^{*}$ is associated with a unique simple sheaf $S_{\lambda}$, called \emph{ordinary simple}; while $\lambda=\infty$ (resp, $\lambda=0$) is associated with $p$ (resp, $q$) simple objects $S_{\infty,i}\, (i\in \mathbb{Z}/p\mathbb{Z}) $ (resp, $S_{0,i}\, (i\in \mathbb{Z}/q\mathbb{Z})$) called \emph{exceptional simples}.  For $\lambda \in \{\infty,0\}$ and each $j\in\mathbb{Z}_{>0}$, let  $S_{\lambda,i}^{(j)}$ denote the indecomposable object in $\mathcal{U}_{\lambda}$ of length $j$ with top $S_{\lambda,i}$. More precisely, the  composition series of $S_{\lambda,i}^{(j)}$ has the following form 
$$S_{\lambda,i-j+1}\hookrightarrow S_{\lambda,i-j+2}^{(2)}\hookrightarrow \ldots \hookrightarrow S_{\lambda,i-2}^{(j-2)} \hookrightarrow S_{\lambda,i-1}^{(j-1)} \hookrightarrow S_{\lambda,i}^{(j)}$$
with $S_{\lambda,i-k}^{(j-k)}/S_{\lambda,i-k-1}^{(j-k-1)}\cong S_{\lambda,i-k}$ for $0\leq k \leq j-2$. 

As is well known, for each $\vec{x}=l_1\vec{x}_1+l_2\vec{x}_2\in \mathbb{L}$, the twists act on the simple sheaves by  
\begin{equation}\label{shift}
S_{\infty,i}(\vec{x})=S_{\infty, i+l_{1}},\;\; S_{0,i}(\vec{x})=S_{0, i+l_{2}},\ S_{\lambda}(\vec{x})=S_{\lambda}\;\;{\rm for}\;\, \lambda \in \Bbbk^{*}.    
\end{equation}
Besides, for each ordinary simple sheaf $S_{\lambda}$, there is an
 exact sequence
$$0\longrightarrow \mathcal{O}\stackrel{u_{\lambda}}{\longrightarrow}\mathcal{O}(\vec{c})\longrightarrow S_{\lambda}\longrightarrow 0,$$
where the homomorphism $u_{\lambda}: \mathcal{O}\longrightarrow \mathcal{O}(\vec{c})$ is given by
multiplication with $x_2^{q}-\lambda x_1^{p}$.
In contrast, if $\lambda\in \{\infty, 0\}$, there are exact sequences
\begin{equation}\label{esp}
    0\longrightarrow \mathcal{O}((i-1)\vec{x}_1)\stackrel{u_{\infty}}{\longrightarrow} \mathcal{O}(i\vec{x}_1)\longrightarrow S_{\infty,i}\longrightarrow 0,\;\;i\in\mathbb{Z}/p\mathbb{Z};
\end{equation}
\begin{equation}\label{esq}
   0\longrightarrow \mathcal{O}((i-1)\vec{x}_2)\stackrel{u_{0}}{\longrightarrow} \mathcal{O}(i\vec{x}_2)\longrightarrow S_{0,i}\longrightarrow 0,\;\;i\in\mathbb{Z}/q\mathbb{Z}, 
\end{equation}
where $u_{\infty}$ (resp, $u_{0}$) is given by multiplication with $x_1$ (resp, $x_2$).

Lastly, let us recall that in a hereditary $\Bbbk$-category $\mathcal{H}$, an object $E$   is called \emph{exceptional} if  it satisfies the conditions ${\rm End}_{\mathcal{H}}(E)=\Bbbk$ and ${\rm Ext}_{\mathcal{H}}^1(E,E)=0$.
Furthermore, a sequence of exceptional objects $\epsilon =(E_1, \ldots, E_r)$ is said to be an \emph{exceptional sequence}  if both ${\rm Hom}_{\mathcal{H}}(E_t, E_s)=0$ and ${\rm Ext}_{\mathcal{H}}^1(E_t, E_s)=0$  hold for all $t>s$. Particularly,  if $r=2$ then $ \epsilon$ is referred to as an \emph{exceptional pair}, and if $r$ equals the rank of the Grothendieck group $K_0(\mathcal{H})$ then $ \epsilon$ is called a \emph{complete exceptional sequence}. 

Note that  the 
category $\cohx$ is hereditary. Herein, we revisit 
the braid group action on the set of (isomorphism classes)  exceptional sequences in $\cohx$, as outlined in \cite{AMM,M}.
 For  an exceptional pair  $(E,F)$ over $\mathbb{X}(p,q)$,  the trace map ${\rm can}:{\rm Hom}(E,F)\otimes_{\Bbbk} E \to F $ is defined conventionally by $\mathrm{can}(f\otimes a)= f(a)$. The literature  often refers to the image of ${\mathrm
{can}}$ as the trace of $E$ in $F$.  Additionally, if  
${\rm Hom}(E,F)\ne0$,  then the canonical map ${\mathrm {can}}$  is either surjective or injective, but not bijective.
The \emph{left mutation} of $(E,F)$  is the exceptional pair $(L_E F, E)$, where $ L_E F$  is determined as follows:
\begin{itemize}
\item
If ${\rm Hom}(E,F)= 0={\rm Ext}^{1}(E,F)$, then $L_E F=F$;
 \item
If $ {\rm Hom}(E,F)\neq 0$, then  $L_E F$ is given by one of
the following  exact sequences:
$$   0 \to L_E F  \to  {\rm Hom}(E,F) \otimes_{\Bbbk} E \stackrel{{\mathrm {can}}}{\to} F \to 0,$$
$$   0 \to  {\rm Hom}(E,F) \otimes_{\Bbbk} E \stackrel{{\mathrm {can}}}{\to} F\to L_E F  \to 0;$$
 \item If $ {\rm Ext}^{1}(E,F)\neq 0$, then $L_E F$ is given by
the following  exact sequence
$$   0 \to  F  \to L_E F   \to  {\rm Ext}^{1}(E,F) \otimes_{\Bbbk}  E \to 0, $$  which is the universal extension.
\end{itemize} 

Dually, the \emph{right mutation} of $(E,F)$  is the exceptional pair $(F, R_F E)$, where $ R_F E=E$ when  ${\rm Hom}(E,F)= 0={\rm Ext}^{1}(E,F)$;  otherwise  $R_F E$
is determined by one of the following  exact sequences
$$   0 \to E   \stackrel{{\mathrm {cocan}}}{\to} { \mathrm {DHom}}(E,F) \otimes_{\Bbbk} F \to  R_F E\to
0,$$
$$   0 \to R_F E  \to  E   \stackrel{{\mathrm {cocan}}}{\to} { \mathrm {DHom}}(E,F) \otimes_{\Bbbk} F \to
0,$$
$$   0 \to   { \mathrm{ DExt}}^1(E,F) \otimes_{\Bbbk}  F    \to  R_F E  \to   E \to 0.$$
Here ${\mathrm {cocan }}$ denotes the co-canonical map and  the third sequence is
the universal extension.
 
Let $B_r$ be the \emph{braid group} on $r$ strings, defined by
$$B_r= \langle {\rm \sigma}_1, \ldots , {\rm \sigma}_{r-1} | {\rm \sigma}_s {\rm \sigma}_t ={\rm \sigma}_t{\rm \sigma}_s \;
 {\mathrm {for} } \; s-t\geq 2 \, \; {\mathrm {and} }\; {\rm \sigma}_s {\rm \sigma}_{s+1}{\rm \sigma}_s={\rm \sigma}_{s+1}{\rm \sigma}_s{\rm \sigma}_{s+1} \rangle.$$
The group $B_r$ acts on the set  of exceptional sequences of length $r$ in $\cohx$ by
$${\rm \sigma }_s \cdot (E_1, \ldots, E_{s-1}, E_{s},  E_{s+1},\ldots,  E_{r})=
(E_1, \ldots, E_{s-1}, L_{E_s} E_{s+1},E_s,  E_{s+2} ,\ldots,  E_{r}),$$
 $${\rm \sigma }_s^{-1} \cdot (E_1, \ldots, E_{s-1},  E_{s},  E_{s+1},\ldots,  E_{r})=
(E_1, \ldots, E_{s-1},E_{s+1},  R_{E_{s+1}} E_{s},  E_{s+2} ,\ldots,  E_{r}).$$
%%%%%%%

%%%%%%%%

\subsection{The geometric model of $\cohx$}\label{geometric modle}
  We recall a geometric model of $\cohx$  
 from \cite{CRZ23}.  
Let $A_{p,q}$ be an annulus with $p$ marked points on the inner boundary labeled as  $0_{\partial},\,(\frac{1}{p})_{\partial},\,\ldots,\, (\frac{p-1}{p})_{\partial}$  in counterclockwise order, and $q$ marked points on the outer boundary labeled as $0_{\partial^{\prime}},\, (\frac{q-1}{q})_{\partial^{\prime}},\,\ldots,\, (\frac{1}{q})_{\partial^{\prime}}$ in clockwise order
(see Figure~\ref{marked surface}). Without
 loss of generality, assume that $1\leq p \leq q$ and the marked points are distributed in equidistance on the two boundaries of $A_{p,q}$. 
\begin{figure}[H]
\begin{tikzpicture}[scale=0.7]
\draw[](0,0)arc(0:-540:2);
\draw[](-1,0)arc(0:360:1);
\draw[dashed,red,thick,->](-0.8,0)arc(0:50:1.2);
\draw[dashed,red,thick,->](0.2,0)arc(0:-30:2.2);
\node()at(-2,-1){$\bullet$};
\node()at(-2,-1.2){\tiny{$0_{\partial}$}};
\node()at(-1.3,-0.7){$\bullet$};
\node()at(-0.8,-0.7){\tiny{$(\frac{1}{p})_{\partial}$}};
\node()at(-2.7,-0.7){$\bullet$};
\node()at(-3.3,-0.9){\tiny{$(\frac{p-1}{p})_{\partial}$}};
\node()at(-2,-2){$\bullet$};
\node()at(-2,-2.3){\tiny{$0_{\partial^{\prime}}$}};
\node()at(-2.58,-1.92){$\bullet$};
\node()at(-3.2,-2.1){\tiny{$(\frac{q-1}{q})_{\partial^{\prime}}$}};
\node()at(-1.42,-1.92){$\bullet$};
\node()at(-0.8,-2){\tiny{$(\frac{1}{q})_{\partial^{\prime}}$}};
\end{tikzpicture}
\caption{Marked surface $A_{p,q}$}\label{marked surface}
\end{figure}
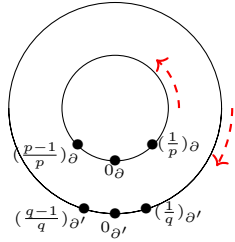
Denote by ${\rm Cyl_{p,q}}$  a rectangle of height 1 and width $1$ in $\mathbb{R}^{2}$ with $p$ marked points on the upper boundary $\partial$ and $q$ marked points on the lower boundary $\partial^{\prime}$, identifying its two vertical sides. 
Then  $A_{p,q}$ can be identified with ${\rm Cyl_{p,q}}$ in the sense of viewing the upper (resp. lower) boundary of ${\rm Cyl_{p,q}}$ as the inner (resp. outer) boundary of $A_{p,q}$, where the marked points of ${\rm Cyl_{p,q}}$ are oriented  from left to right on the upper boundary $\partial$ and from right to left on the lower boundary $\partial^{\prime}$. 

Let $(\mathbb{U}, \pi)$ be the universal cover  of ${\rm Cyl_{p,q}}$, where $\mathbb{U}=\{(x,y)\in\mathbb{R}^{2}|0\leq y\leq 1\}$ inherits the orientation of ${\rm Cyl_{p,q}}$  and the covering map 
$$\pi:=\pi_{p,q}: \mathbb{U}\to {\rm Cyl_{p,q}},\;\;(x,y)\mapsto(x-\lfloor x\rfloor, y).$$
Here, $\lfloor x\rfloor$ denotes the integer part of $x$. Naturally,  $\pi$ is also a covering map of $A_{p,q}$. Denote the marked point $(i, 1)$ on the upper boundary $\partial$ of $\mathbb{U}$ by $i_{\partial}$ and the marked point $(j, 0)$ on the lower boundary $\partial^{\prime}$ of $\mathbb{U}$ by $j_{\partial^{\prime}}$, for $ i\in \frac{\mathbb{Z}}{p}$ and $j\in \frac{\mathbb{Z}}{q}$.

Now we recall some definitions of curves and arcs in  $\mathbb{U}$ and $A_{p,q}$.  
In this paper, we consider curves  up to homotopy and all curves are always assumed to be in minimal position.

\begin{definition}\cite{BZ2011,CRZ23,V2018}
A {\emph {curve}} in $\mathbb{U}$ is a continuous function $c:[0,1]\longrightarrow \mathbb{U}$ such that $c(t)\in \mathbb{U}^0:=\mathbb{U}\setminus\{\partial, \partial^{\prime}\}$ for any $t\in (0,1)$. An {\emph {arc}} in $\mathbb{U}$ is a curve whose endpoints are marked points of $\mathbb{U}$, satisfying that it does not intersect itself in the interior of $\mathbb{U}$, the interior of the arc is disjoint from the boundary of $\mathbb{U}$, and it does not cut out a monogon or digon.
\end{definition}

Denoted by $[x_{b_1}, y_{b_2}]$ the 
  arc in $\mathbb{U}$  starts at a marked point $x_{b_1}$ and ends at a marked point $y_{b_2}$ with  $x, \,y\in\{ \frac{\mathbb{Z}}{p},\frac{\mathbb{Z}}{q}\}$, $b_{1},b_{2}\in \{\partial,\,\partial^{\prime}\}$.
 There are two main types oriented arcs in $\mathbb{U}$.

\begin{definition}[\cite{BT2020,CRZ23}] \label{peripheral arc and bridging arc}
Let $\gamma=[x_{b_1}, y_{b_2}]$ be an arc in $\mathbb{U}$. If $b_1\neq b_2,$ $\gamma$ is called a \emph{bridging arc}.
 If $b_1=b_2=\partial$ and $y-x\geq \frac{2}{p}$ or $b_1=b_2=\partial^{\prime}$ and $y-x\geq\frac{2}{q}$, then $\gamma$ is called a \emph{peripheral arc}. In particular,   $[x_{\partial^{\prime}}, y_{\partial}]$ is called a \emph{positive bridging arc}.
\end{definition}

Similarly,  in $A_{p,q}$, a \emph{bridging (\emph{resp.} peripheral) curve} is defined as the image   $\pi(\gamma)$ for a  bridging (resp.  peripheral) arc  $\gamma$ in $\mathbb{U}$. If $\pi(\gamma)$  itself is an arc, it is specifically termed a \emph{bridging} (resp. \emph{peripheral}) \emph{arc}  in $A_{p,q}$.   
For the sake of simplicity, from now on,  denote by
$$D^{\frac{i}{p}}_{\frac{j}{q}}:=[(\frac{j}{q})_{\partial^{\prime}}, (\frac{i}{p})_{\partial}],\;\;D^{\frac{i}{p}, \frac{j}{p}}:=[(\frac{i}{p})_{\partial}, (\frac{j}{p})_{\partial}]\;\;{\rm and}\;\;D_{\frac{i}{q}, \frac{j}{q}}:=[(\frac{i}{q})_{\partial^{\prime}}, (\frac{j}{q})_{\partial^{\prime}}]$$
for positive bridging arcs and peripheral arcs in $\mathbb{U}$,
and denote their image under $\pi: \mathbb{U}\to A_{p,q}$ as $[D^{\frac{i}{p}}_{\frac{j}{q}}],\ [D^{\frac{i}{p}, \frac{j}{p}}]$ and $[D_{\frac{i}{q}, \frac{j}{q}}]$ respectively.  
\begin{definition}[\cite{CRZ23}]\label{geometric}
For $n\in \mathbb{Z}_{\geq1}$, an $n$-cycle in $A_{p,q}$ is a curve $\pi(c)$, where $c$ is a curve in $\mathbb{U}^{0}$ with $c(1)-c(0)=(n,0).$ In particular, the 1-cycle will be called a \emph{loop}. For the notion of $\Bbbk^{*}$-parameterized $n$-cycles, we refer to the set $\{(\lambda, L^{n})\,|\,\lambda\in \Bbbk^{*}\}$, where $L$ is a loop in $A_{p,q}$ with the orientation in an anti-clockwise direction.
\end{definition}

Let $\mathcal{C}$ be the set consisting of the following elements with $i, j\in\mathbb{Z}$ and $n\geq 1$:
\begin{itemize}
\item[-]  
$[D^{\frac{i}{p}}_{\frac{j}{q}}]$;
    \item[-]  
$[D^{\frac{i}{p}, \frac{j}{p}}]$ and $[D_{\frac{i}{q}, \frac{j}{q}}]$, with $j-i\geq 2;$
 \item[-]   
$\{(\lambda, L^{n})\,|\,\lambda\in \Bbbk^{*}\}$.
\end{itemize}
Then it is shown in \cite{CRZ23} that the following assignments:
\begin{align}\label{map}\nonumber
&[D^{\frac{i}{p}}_{\frac{j}{q}}]\mapsto\co(i\vec{x}_1-j\vec{x}_2),\ [D^{\frac{i-j-1}{p}, \frac{i}{p}}]\mapsto S_{\infty,i}^{(j)},  \  [D_{-\frac{i}{q}, \frac{j-i+1}{q}}]\mapsto S_{0,i}^{(j)},\
 (\lambda, L^{j}) \mapsto S_{\lambda}^{(j)},\nonumber
\end{align}
define a bijective map $\phi:\mathcal{C}\longrightarrow{\rm ind}(\cohx)$, where ${\rm ind}(\cohx)$ denotes the full subcategory of $\cohx$ consisting of all indecomposable objects.

Based on this bijection, we recall  the geometric interpretation for the Auslander-Reiten sequences and the dimension of extension groups in $\cohx$. 
\begin{definition}[\cite{BT2020,BZ2011,CRZ23}]
For any oriented curve $\gamma$ in $A_{p,q}$, denote by $_{s}\gamma$ the \emph{elementary move of $\gamma$ on its starting point}, meaning that the curve $_{s}\gamma$ is obtained from $\gamma$ moving its starting point
to the next marked point on the same boundary, such that $_{s}\gamma$ is rotated in clockwise direction around the ending point of $\gamma.$
Similarly, denote by $\gamma_{e}$ the \emph{elementary move of $\gamma$ on its ending point}. Iterated elementary moves are denoted by $_{s}\gamma_{e}=_{s}(\gamma_{e})=(_{s}\gamma)_{e}$,  $_{s^2}\gamma=_{s}(_{s}\gamma)$ and $ \gamma_{e^2}=(\gamma_{e})_{e}$, respectively.
\end{definition}

\begin{proposition}[\cite{CRZ23}]\label{prop:tau}
Let  $X\in \cohx$ be a line bundle or an indecomposable torsion sheaf supported at an exceptional point.   Assume $\phi^{-1}(X)=\gamma$. Then $\tau^{-1}X=\phi(_{s}\gamma_{e})$ and  
the Auslander-Reiten sequence  starting at $X$ has the form
$$0\longrightarrow \phi(\gamma)\longrightarrow \phi(_{s}\gamma)\oplus\phi(\gamma_{e})\longrightarrow\phi(_{s}\gamma_{e})\longrightarrow 0.$$
\end{proposition}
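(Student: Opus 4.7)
The plan is to verify both assertions of the proposition by case analysis based on the type of curve $\gamma=\phi^{-1}(X)$ under the bijection recalled in Section~\ref{geometric modle}. Since $X$ is either a line bundle or an indecomposable torsion sheaf at an exceptional point, $\gamma$ must be either a positive bridging arc $D^{i/p}_{j/q}$ (when $X=\mathcal{O}(i\vec{x}_1-j\vec{x}_2)$), a peripheral arc $D^{(i-j-1)/p,\,i/p}$ on the inner boundary $\partial$ (when $X=S_{\infty,i}^{(j)}$), or a peripheral arc $D_{-i/q,\,(j-i+1)/q}$ on the outer boundary $\partial'$ (when $X=S_{0,i}^{(j)}$); the $\Bbbk^*$-parameterized $n$-cycles are excluded by hypothesis.

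For the identity $\tau^{-1}X=\phi(_s\gamma_e)$, I would use Serre duality in the form $\tau^{-1}X=X(-\vec{\omega})=X(\vec{x}_1+\vec{x}_2)$. In the line bundle case this gives the arc $D^{(i+1)/p}_{(j-1)/q}$, obtained from $\gamma$ by increasing the inner index by $1/p$ and decreasing the outer index by $1/q$. For $X=S_{\infty,i}^{(j)}$, the shift formula~\eqref{shift} yields $\tau^{-1}X=S_{\infty,i+1}^{(j)}$, whose arc $D^{(i-j)/p,\,(i+1)/p}$ is obtained from $\gamma$ by increasing both endpoint indices by $1/p$; symmetrically $\tau^{-1}S_{0,i}^{(j)}=S_{0,i+1}^{(j)}$ corresponds to decreasing both endpoint indices on $\partial'$ by $1/q$. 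It then remains to verify in each case that these simultaneous endpoint shifts coincide with the effect of $_s$ followed by $_e$; this is a direct check that the ``clockwise rotation about the opposite endpoint'' prescribed in the definition of an elementary move produces precisely these shifts, with the direction on $\partial$ being opposite to that on $\partial'$ because of the opposite boundary orientations.

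For the Auslander--Reiten sequence assertion, I would invoke the explicit forms of AR sequences in $\cohx$. In the line bundle case the AR sequence is the classical one
$$0\longrightarrow \mathcal{O}(\vec{x})\longrightarrow \mathcal{O}(\vec{x}+\vec{x}_1)\oplus\mathcal{O}(\vec{x}+\vec{x}_2)\longrightarrow \mathcal{O}(\vec{x}+\vec{x}_1+\vec{x}_2)\longrightarrow 0,$$
assembled from the canonical exact sequences~\eqref{esp} and~\eqref{esq}. In the exceptional torsion case, the standard shape of an AR sequence in a stable tube of rank $r\in\{p,q\}$ gives
$$0\longrightarrow S_{\lambda,i}^{(j)}\longrightarrow S_{\lambda,i+1}^{(j+1)}\oplus S_{\lambda,i}^{(j-1)}\longrightarrow S_{\lambda,i+1}^{(j)}\longrightarrow 0$$
for $j\geq 2$, together with the degenerate version $0\to S_{\lambda,i}\to S_{\lambda,i+1}^{(2)}\to S_{\lambda,i+1}\to 0$ for $j=1$. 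Identifying each indecomposable summand with its arc via $\phi$, and inserting the description of the elementary moves from the preceding paragraph, converts each of these into the claimed form $0\to \phi(\gamma)\to \phi(_s\gamma)\oplus\phi(\gamma_e)\to \phi(_s\gamma_e)\to 0$.

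The principal obstacle, which will require the most care, is the degenerate case $j=1$ of an exceptional simple sheaf. There $\gamma$ has length $2/p$ (or $2/q$), and one of $_s\gamma$, $\gamma_e$ joins two adjacent marked points on the same boundary and hence fails to be a valid arc by the no-digon condition. I must therefore adopt the natural convention that $\phi$ sends such a degenerate curve to the zero object, which is consistent with $S_{\lambda,i}^{(0)}=0$ appearing implicitly in the degenerate AR sequence above. A secondary but unavoidable issue is keeping track of the two boundary orientations (counterclockwise on $\partial$ versus clockwise on $\partial'$), which forces the sign in the elementary-move formulas to flip between the $S_{\infty,i}^{(j)}$ and $S_{0,i}^{(j)}$ cases, and between these and the line bundle case.
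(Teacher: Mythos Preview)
The paper does not supply its own proof of this proposition: it is quoted verbatim from \cite{CRZ23} and stated without argument. There is therefore nothing in the present paper to compare your proposal against.

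That said, your proposal is a correct and natural direct verification. The case split into bridging arcs and the two families of peripheral arcs is exactly what the bijection $\phi$ dictates; the computation of $\tau^{-1}X=X(\vec{x}_1+\vec{x}_2)$ via Serre duality and the shift formula~\eqref{shift} is right; and the explicit AR sequences you write down for line bundles and for objects in a stable tube are the standard ones. Your handling of the degenerate case $j=1$ is also the right move: the paper itself later introduces exactly this convention (the extended map $\Phi$ on $\mathcal{D}$ just before Theorem~\ref{thm:morphism}), sending arcs between adjacent marked points to the zero object, so your instinct there is consistent with how the authors proceed. The only place to be careful, as you note, is in matching the sign conventions for the elementary moves $_s$ and $_e$ on each boundary with the orientation conventions fixed in Section~\ref{geometric modle}; this is bookkeeping rather than a genuine obstacle.
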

\begin{remark}
For any oriented curve $\gamma$ in $A_{p,q}$, we denote by  $_{s^{-1}}\gamma_{e^{-1}}$ the curve obtained by $\gamma$ moving both the starting point and ending point simultaneously to the  previous  marked points on the same boundary. Then we have
$\phi( _{s^{-1}}\gamma_{e^{-1}})=\tau \phi( \gamma)$.
\end{remark}
\begin{definition}[\cite{BM2012,W2008}]\label{positive intersection}
A point of intersection of two oriented curves $\gamma_{1}$ and $\gamma_{2}$ in $A_{p,q}$ is called \emph{positive intersection} of $\gamma_{1}$ and $\gamma_{2}$, if $\gamma_{2}$ intersects $\gamma_{1}$ from the right, that is, the picture looks like this:
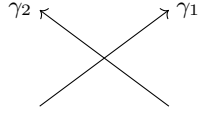
\begin{figure}[H]
\begin{tikzpicture}[scale=0.85]
\draw[->](0,0)--(2,1.5);
\draw[->](2,0)--(0,1.5);
\node()at(-0.3,1.5){\small$\gamma_{2}$};
\node()at(2.3,1.5){\small$\gamma_{1}$};
\end{tikzpicture}
\caption{Positive intersection}
\label{fig:Positive intersection.}
\end{figure}
\end{definition}
\noindent Denoted by
$$I^{+}(\gamma_{1},\gamma_{2})=\mathop{{\rm min}}\limits_{\gamma_{1}^{\prime}\sim\gamma_{1},\ \gamma_{2}^{\prime}\sim\gamma_{2}}|\gamma_{1}^{\prime}\cap^{+}\gamma_{2}^{\prime}| $$
where $\gamma_{1}^{\prime}\cap^{+}\gamma_{2}^{\prime}$  is the set of the positive intersections of $\gamma_{1}^{\prime}$ and $\gamma_{2}^{\prime}$, excluding their endpoints, and the sign $a\sim b$ means that the  curves $a$ and $b$ are homotopy. 

\begin{theorem}[\cite{CRZ23}]\label{dimension and positive intersection}
Let $X, Y$ be two indecomposable objects in $\cohx$. Then \[ {\rm dim}_{\Bbbk}{\rm Ext}^{1}(X,Y) =I^{+}(\phi^{-1}(X), \phi^{-1}(Y)).\]
 
\end{theorem}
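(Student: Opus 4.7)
The plan is to verify the equality $\dim_\Bbbk {\rm Ext}^1(X,Y)=I^+(\phi^{-1}(X),\phi^{-1}(Y))$ by a case analysis according to the types of the indecomposables $X$ and $Y$. The natural subdivision is (i) both $X$ and $Y$ are line bundles, (ii) one is a line bundle and the other is a torsion sheaf, (iii) both are torsion sheaves in the same tube, and (iv) both are torsion sheaves in distinct tubes. In every case both sides behave additively in short exact sequences (for ${\rm Ext}^1$ via the long exact sequence together with hereditariness, and for $I^+$ via the elementary moves of Proposition \ref{prop:tau}), so the argument reduces to a few base cases that can be checked by hand.

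For case (i), I would use Serre duality to rewrite ${\rm Ext}^1(\co(\vec{x}),\co(\vec{y}))\cong {\rm D}\,{\rm Hom}(\co(\vec{y}),\co(\vec{x}+\vec\omega))\cong {\rm D}\,S_{\vec{x}+\vec\omega-\vec{y}}$, whose dimension is read off directly from the normal form via the count $l+1$. On the geometric side, the arcs $\phi^{-1}(\co(\vec{x}))=[D^{i/p}_{j/q}]$ and $\phi^{-1}(\co(\vec{y}))=[D^{i'/p}_{j'/q}]$ are positive bridging arcs; lift both to the universal cover $\mathbb{U}$, fix a lift of one of them, and count the $\pi$-fibre translates of the other that cross it from the right. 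A direct computation shows this count is precisely the number of lattice points $(l_1,l_2)$ with $0\leq l_1<p$, $0\leq l_2<q$ and $l_1\vec{x}_1+l_2\vec{x}_2\leq \vec{x}+\vec\omega-\vec{y}$, which agrees with $\dim S_{\vec{x}+\vec\omega-\vec{y}}$.

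For case (ii), suppose $X$ is a line bundle and $Y=S^{(j)}_{\lambda,i}$ an exceptional torsion sheaf. Applying ${\rm Hom}(X,-)$ to the filtration $S_{\lambda,i-j+1}\hookrightarrow\cdots\hookrightarrow S_{\lambda,i}^{(j)}$ and using the defining exact sequences (\ref{esp}), (\ref{esq}) reduces the computation to ${\rm Hom}(X,S_{\lambda,k})$, handled by the graded structure of $S$. Geometrically this corresponds to sliding a single peripheral arc of length $j$ against $\phi^{-1}(X)$ and counting right-crossings in $\mathbb{U}$, and the two counts match termwise. The symmetric case, with $X$ torsion and $Y$ a line bundle, is dispatched by Serre duality, which swaps the roles and interchanges the orientation of the positive intersection. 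The ordinary simple case $\lambda\in\Bbbk^*$ is identical but uses $u_\lambda$.

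Cases (iii) and (iv) are the easiest: for distinct $\lambda$ the tubes are pairwise ${\rm Hom}$- and ${\rm Ext}^1$-orthogonal, while the corresponding peripheral arcs (or $\Bbbk^*$-parameterised loops) can be isotoped into disjoint neighbourhoods of the two punctures, so both sides vanish. Within one tube, the uniserial combinatorics of the $S^{(j)}_{\lambda,i}$ combined with Serre duality give a closed formula that matches the number of right-crossings between two nested peripheral arcs, handling the homogeneous tubes separately because the two orientations of a loop must be distinguished. The main obstacle I expect is the bookkeeping in case (i): one has to check carefully that the notion of \emph{positive} intersection from Definition \ref{positive intersection} corresponds, under the lift to $\mathbb{U}$, to the sign of the degree map $\delta$ on $\vec{x}+\vec\omega-\vec{y}$, and in particular that the boundary marked points are correctly excluded so that the lattice-point count is exactly $l+1$ rather than off by one.
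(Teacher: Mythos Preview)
The paper does not prove this theorem at all: it is quoted verbatim from \cite{CRZ23} as background material in Section~\ref{geometric modle}, with no argument supplied. So there is no ``paper's own proof'' to compare your proposal against; your sketch is an attempt to reprove a result the authors import wholesale.

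As a standalone sketch your case analysis is the natural one, and the line-bundle case (i) via Serre duality and lattice-point counting in $\mathbb{U}$ is exactly how one would expect the original proof in \cite{CRZ23} to go. Two places deserve more care if you flesh this out. First, your blanket claim that ``both sides behave additively in short exact sequences'' is not quite right for ${\rm Ext}^1$: in a hereditary category the long exact sequence mixes ${\rm Hom}$ and ${\rm Ext}^1$, so you do not get additivity of $\dim{\rm Ext}^1$ alone but only of the alternating sum $\dim{\rm Hom}-\dim{\rm Ext}^1$. You therefore need either to control the ${\rm Hom}$ terms simultaneously or to argue directly in each case rather than by d\'evissage. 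Second, the homogeneous tubes require separate treatment because $\phi^{-1}$ of an object there is a $\Bbbk^*$-parameterised $n$-cycle, not an arc, and the definition of $I^+$ for loops against arcs (and loops against loops) has its own conventions that you have not spelled out; your remark about ``distinguishing the two orientations of a loop'' hints at this but does not resolve it.
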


\begin{proposition}[\cite{CRZ23}]\label{prop:ext}
Let $\gamma_1,\gamma_2$ be positive bridging arcs or peripheral arcs in $\mathbb{U}$. If $I^{+}(\gamma_1,\gamma_2)=1$, then there is a natural short exact sequence in $ \cohx$ associated to the intersection in geometric terms. Precisely, consider the following figure of the chosen intersection, we have a short exact sequence 
\begin{equation}\label{3.2}
0\to \phi(\pi(\gamma_{2})) \to \phi(\pi(\gamma_{3}))\oplus\phi(\pi(\gamma_{4})) \to \phi(\pi(\gamma_{1}))\to 0.
\end{equation} 
\begin{figure}[H]
\begin{tikzpicture}[scale=0.85]
\node(1)at(-0.1,-0.06){\tiny{}};
\node(2)at(2.05,-0.06){\tiny{}};
\node(3)at(0,2){\tiny{}};
\node(4)at(2,2){\tiny{}};
\draw[->](1)--(4);
\draw[->](2)--(3);
\draw[->,green](0.05,0.1) arc(330:390:1.7);
\draw[->,green](1.95,0.1) arc(210:150:1.7);
\node at(1.7,2){\small{$\gamma_1$}};
\node at(0.3,2){\small{$\gamma_2$}};
\node[ green] at(2,0.95){\small{${ \gamma_4}$}};
\node[ green] at(0,0.95){\small{${ \gamma_3}$}};
\end{tikzpicture}
\caption{Short exact sequence}\label{labelx}
\end{figure}
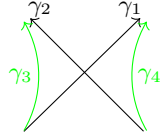
\end{proposition}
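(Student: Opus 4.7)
The strategy is to turn the hypothesis into a one-dimensional extension problem and then identify the middle term geometrically via a case analysis. By Theorem~\ref{dimension and positive intersection}, $I^{+}(\gamma_{1},\gamma_{2})=1$ implies $\dim_\Bbbk {\rm Ext}^1(\phi(\pi(\gamma_{1})),\phi(\pi(\gamma_{2})))=1$, so there is (up to scalar) a unique non-split short exact sequence of shape $0\to \phi(\pi(\gamma_{2}))\to M\to \phi(\pi(\gamma_{1}))\to 0$. Hence the task reduces to verifying that the geometric smoothing of the prescribed crossing yields $M\cong \phi(\pi(\gamma_{3}))\oplus\phi(\pi(\gamma_{4}))$, together with concrete maps realizing \eqref{3.2} as a non-split extension.

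I would split the argument into three cases according to the types of $\gamma_{1}$ and $\gamma_{2}$. In the first case, where both are positive bridging arcs, write $\gamma_{1}=D^{i/p}_{j/q}$ and $\gamma_{2}=D^{k/p}_{l/q}$; the hypothesis of a single crossing in $\mathbb{U}$ forces their endpoints to interleave on the two boundaries, and the smoothing from Figure~\ref{labelx} produces two positive bridging arcs $\gamma_{3},\gamma_{4}$ obtained by regrouping the four endpoints of $\gamma_{1}\cup\gamma_{2}$ according to orientation. Unpacking the definition of $\phi\circ\pi$ on bridging arcs, this gives line bundles $\co(\vec{y}_{3}),\co(\vec{y}_{4})$ with $\vec{y}_{3}+\vec{y}_{4}=\vec{y}_{1}+\vec{y}_{2}$, and the required sequence is assembled from the standard multiplication maps in $S$, generalizing~\eqref{esp} and \eqref{esq}. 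In the second case, where both arcs are peripheral, Definition~\ref{peripheral arc and bridging arc} together with the minimal-position assumption forces them to lie on the same boundary in a nested configuration; their images under $\phi\circ\pi$ are indecomposable torsion sheaves supported at the same exceptional point, and the required extension is obtained by iterating~\eqref{esp} or \eqref{esq}, with $\gamma_{3}$ and $\gamma_{4}$ corresponding exactly to the composition factors of the middle term. In the third case, where one arc is bridging and the other is peripheral, the extension mixes a line bundle and a torsion sheaf at an exceptional point; it is again built from an iteration of~\eqref{esp} or \eqref{esq}, and Proposition~\ref{prop:tau} is used to track how the smoothing reshapes the arcs into $\gamma_{3},\gamma_{4}$ via elementary moves of starting/ending points.

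The main obstacle is the combinatorial bookkeeping of indices through the covering $\pi:\mathbb{U}\to A_{p,q}$: one must carefully identify which marked points serve as the endpoints of $\gamma_{3}$ and $\gamma_{4}$ after the local smoothing, and then verify case-by-case that the $\mathbb{L}$-degrees (for bridging arcs) or the tops and lengths (for peripheral arcs) of the associated sheaves agree with the two summands of $M$. My preferred tactic is to perform the entire analysis upstairs in $\mathbb{U}$, where the smoothing is unambiguous and arcs carry canonical degree data, and to descend to $A_{p,q}$ only at the end. Non-splitness of the resulting sequence is automatic, since minimal position prevents $\gamma_{3}$ or $\gamma_{4}$ from coinciding with $\gamma_{1}$ or $\gamma_{2}$.
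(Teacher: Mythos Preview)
The paper does not supply a proof of this proposition: it is quoted verbatim from \cite{CRZ23} in the preliminaries section and used as background input. Consequently there is nothing here to compare your proposal against.

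That said, your outline is a reasonable route to an independent proof. The reduction via Theorem~\ref{dimension and positive intersection} to a one-dimensional ${\rm Ext}^1$ is the natural first step, and the three-case analysis (bridging/bridging, peripheral/peripheral, mixed) together with the identification of the smoothed arcs $\gamma_3,\gamma_4$ upstairs in $\mathbb{U}$ is exactly how one would expect the argument in \cite{CRZ23} to go. One small caution: in the peripheral/peripheral case you assert that the two arcs must lie on the same boundary, but this is not forced by the hypothesis alone---a peripheral arc on $\partial$ and one on $\partial'$ can never meet in the interior of $\mathbb{U}$, so the case is vacuous rather than nested; you should state this explicitly. Also, your non-splitness argument (``minimal position prevents $\gamma_3$ or $\gamma_4$ from coinciding with $\gamma_1$ or $\gamma_2$'') is not quite enough on its own: you need that the constructed sequence represents a \emph{nonzero} class in ${\rm Ext}^1$, which follows once you check that the maps you build from \eqref{esp}, \eqref{esq} are not split monomorphisms/epimorphisms, or equivalently that neither $\gamma_3$ nor $\gamma_4$ equals $\gamma_2$.
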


\section{The geometric interpretation of morphisms in $\cohx$} \label{sec.3}
In this section, we study the  morphisms in $\cohx$  via the geometric model established in~\cite{CRZ23}. Since $\cohx$ is an abelian category, we consider the epic-monic factorisation of a  morphism and  the kernel (resp. cokernel) of a monomorphism (resp. an epimorphism) in $\cohx$.

First, the dimension of morphism space between two  indecomposable  sheaves over $\cohx$   can be calculated 
via the common points of the corresponding  curves in   $A_{p,q}$.

\begin{proposition}\label{hom condition} 
 Let   $X$ and $Y$ be
 indecomposable   bundles or  torsion sheaves supported at exceptional points.
Assume   that $ \phi  ^{-1}(X)=\gamma_1$ and   $ \phi ^{-1}(Y)=\gamma_2$. 
The dimension of the morphism space ${\rm Hom}(X, Y)$ equals $k$ if and only if there are exactly $k$ common points between $\gamma_1$ and $\gamma_2$ that satisfy the following conditions: 
  
\begin{itemize}
    \item[(1)]    At each common point, $\gamma_2$ follows $\gamma_1$ in the clockwise order.
  \item[(2)]      These common points are not the   intersections of $\gamma_1$ and $\gamma_2$ depicted in Figure~\ref{fig:morphism point}.
    \end{itemize} 
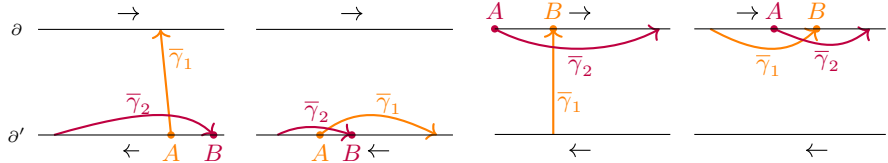
\begin{figure}[h]
    \centering
\begin{tikzpicture}[scale=0.7]
\coordinate (3) at (1.7,-0.4);
\coordinate (5) at (0.9,-0.1);
\coordinate (7) at (0.1-0.3,0.35);
\coordinate (8) at (3,-0.25);
\coordinate (9) at (2.5,0.45);
\coordinate (orangeStart) at (2.3, -1);  
\coordinate (purpleEnd) at (2.1+1,-1);   

\node[orange] at (9){\small$ {\overline{\gamma}_1}$};
\node[purple] at (3){\small$ {\overline{\gamma}_2}$};
 
\draw [->,orange,thick](orangeStart) -- (2.6-0.5,1);
\fill[orange]  (orangeStart) circle (2pt) node[below  ]{\small$A$}; 
\draw [->,purple,thick](-0.4+0.5,-1) .. controls (1.3+0.5,-.5) and (2.6,-.5) .. (purpleEnd);
\fill[purple] (purpleEnd) circle (2pt) node[below]{\small$B$};  
\draw (-0.2,1) -- (3.3,1);
\draw (-0.2,-1) -- (3.3,-1);
 
\draw[->] (1.3,1.3) -- (1.7,1.3);
\draw[<-] (1.4,-1.3) -- (1.7,-1.3);
%\draw[->] (2.225,0.5) to [out=200,in=-20] (1.6,0.5);
\node()at(-0.6,1){\tiny{${\partial}$}};
\node()at(-0.6,-1){\tiny{${\partial'}$}};
\end{tikzpicture}
\hspace{0.05cm}
\begin{tikzpicture}[scale=0.7]
\coordinate (3) at (1.7,-0.4);
\coordinate (5) at (1.1,0.1);
\coordinate (6) at (3.7,-0.1);
\coordinate (9) at (0.35,-0.55);
\coordinate (orangeStart) at (-0.6+1,-1);  
\coordinate (purpleEnd) at (1,-1);        

\node[orange] at (3){\small$ {\overline{\gamma}_1}$};
\node[purple] at (9){\small$ {\overline{\gamma}_2}$};
 
\draw [->,orange,thick] (orangeStart) .. controls
(1,-.5) and (2.6-1,-.5) ..(2.1+0.5,-1);
\fill[orange] (orangeStart) circle (2pt) node[below]{\small$A$}  
;
 
\draw [->,purple,thick] (-0.4,-1) .. controls
(0,-0.8) and (0.4,-0.8) .. (purpleEnd);
\fill[purple] (purpleEnd) circle (2pt) node[below]{\small$B$} 
;
 
\draw (-.8,1) -- (2.9,1);
\draw (-.8,-1) -- (2.9,-1);

\draw[->] (.8,1.3) -- (1.2,1.3);
\draw[<-] (1.3,-1.3) -- (1.7,-1.3);

\end{tikzpicture}
\hspace{0.05cm}
\begin{tikzpicture}[scale=0.7]
\coordinate (2) at (0.2,0.3);
\coordinate (5) at (1,0);
\coordinate (7) at (0.1,0);
\coordinate (9) at (2.4,0);
\coordinate (8) at (-.9,0.25);
\node[below right,orange] at (0,0){\small$ {\overline{\gamma}_1}$};
\node[right,purple] at (2){\small$ {\overline{\gamma}_2}$};
 
\fill[purple] (-1,1) circle (2pt) node[above]{\small$A$}; 
%\coordinate (purpleStart) at (-1,1);
 
\fill[orange] (0.1,1) circle (2pt) node[above]{\small$B$}; 

\draw[->,orange,thick] (-0.4+0.5,-1) -- (0.1,1);
\draw [->,purple,thick](-1,1) .. controls
(1.3-1.4,0.5) and (2.6-1.4,0.5) ..(2.6-0.5,1);
\draw (-1,1) -- (2.3,1);
\draw (-1,-1) -- (2.3,-1);
\draw[->] (0.4,1.3) -- (0.8,1.3);
\draw[<-] (0.4,-1.3) -- (0.8,-1.3);
\node at (0.45,-1.5){};
\end{tikzpicture}
\hspace{0.05cm}
\begin{tikzpicture}[scale=0.7]
\coordinate (2) at (0.45,0.55);
\coordinate (5) at (1,0);
\coordinate (7) at (0.1-0.4,0.7);
\coordinate (9) at (2.6,0);
\coordinate (8) at (-.9,0.25);
\node[below,orange] at (7){\small$ {\overline{\gamma}_1}$};
%\node[left,blue]  at (5){\small$\overline{\gamma}$};
\node[purple] at  (0.8,0.4){\small$ {\overline{\gamma}_2}$};
 
\fill[purple] (-0.2 ,1) circle (2pt) node[above]{\small$A$}; 
 
\fill[orange] (2.6-2,1) circle (2pt) node[above]{\small$B$}; 

\draw [->,orange,thick](0.1-0.5-1,1) .. controls
(1.3-1.8,0.5) and (2.6-2.5,0.5) ..(2.6-2,1);
\draw [->,purple,thick](-0.2 ,1) .. controls
(0.7,0.6) and (1.1,0.6) ..(1.6,1);
\draw (-1.7,1) -- (1.9,1);
\draw (-1.7,-1) -- (1.9,-1);
\draw[->] (-0.9,1.3) -- (-0.5,1.3);
\draw[<-] (0.4,-1.3) -- (0.8,-1.3);
\coordinate (22) at (0.45,-1.5);
\node at (0.45,-1.5){};
%\draw [->,orange,thick](1.8,1)--(2,0.8);
\end{tikzpicture}
    \caption{$\overline{\gamma}_1$  and $\overline{\gamma}_2$  are arcs in $\mathbb{U}$  such that $\pi(\overline{\gamma}_1)=\gamma_1$ and $\pi(\overline{\gamma}_2)=\gamma_2$, with no marked points between points $A$ and $B$
    }
    \label{fig:morphism point}
\end{figure} 
\end{proposition}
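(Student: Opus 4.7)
The plan is to reduce the statement to Theorem~\ref{dimension and positive intersection} via Serre duality and then to analyse how an Auslander--Reiten shift affects intersections near endpoints. Since $\cohx$ is hereditary with Auslander--Reiten translation $\tau$, Serre duality yields
$$\dim_{\Bbbk}{\rm Hom}(X,Y) \;=\; \dim_{\Bbbk}{\rm Ext}^{1}(\tau^{-1}Y,\,X).$$
Combining the remark after Proposition~\ref{prop:tau} (which identifies $\phi^{-1}(\tau^{-1}Y)$ with ${}_{s}\gamma_{2,e}$) with Theorem~\ref{dimension and positive intersection} then gives
$$\dim_{\Bbbk}{\rm Hom}(X,Y) \;=\; I^{+}({}_{s}\gamma_{2,e},\,\gamma_1).$$
Hence it suffices to construct a bijection between the positive intersections of ${}_{s}\gamma_{2,e}$ with $\gamma_1$ and the common points of $\gamma_1,\gamma_2$ in $A_{p,q}$ satisfying conditions $(1)$ and $(2)$.

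For the interior of $A_{p,q}$, I would fix minimal-position lifts $\overline{\gamma}_1,\overline{\gamma}_2$ in the universal cover $\mathbb{U}$. The elementary moves ${}_{s}(\cdot)$ and $(\cdot)_{e}$ alter $\overline{\gamma}_2$ only inside small disc neighbourhoods of its two endpoints, so away from these neighbourhoods $\overline{\gamma}_2$ and ${}_{s}\overline{\gamma}_{2,e}$ agree up to isotopy. This provides a natural bijection between interior crossings of $\gamma_1$ with $\gamma_2$ and interior crossings of $\gamma_1$ with ${}_{s}\gamma_{2,e}$; moreover the orientation conventions match, so an interior crossing at which $\gamma_2$ lies immediately clockwise of $\gamma_1$ is precisely a positive intersection of ${}_{s}\gamma_{2,e}$ with $\gamma_1$ in the sense of Definition~\ref{positive intersection}. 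This handles all common points in the interior of $A_{p,q}$.

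The delicate step is a local analysis at the common boundary endpoints of $\gamma_1$ and $\gamma_2$. At such a marked point, each elementary move rotates the corresponding endpoint of $\overline{\gamma}_2$ one position clockwise around its opposite endpoint. I would enumerate the local configurations of $\overline{\gamma}_1$ and $\overline{\gamma}_2$ at the shared marked point, splitting cases according to whether each arc is bridging or peripheral, whether the shared point is its starting or ending point, and whether the meeting occurs on $\partial$ or $\partial'$. The outcome should be: a positive intersection of ${}_{s}\gamma_{2,e}$ with $\gamma_1$ arises exactly when $\gamma_2$ emerges clockwise-adjacent to $\gamma_1$ at the shared point \emph{and} the local picture is not one of those depicted in Figure~\ref{fig:morphism point}; in each of the four forbidden configurations the rotation pushes the shifted arc away from $\gamma_1$ near its other endpoint, so no new intersection is produced.

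The principal obstacle is precisely this boundary case analysis. Each of the four pictures in Figure~\ref{fig:morphism point} must be verified individually, and care is required because both endpoints of $\overline{\gamma}_2$ are being shifted simultaneously and can interact with $\overline{\gamma}_1$ via the covering $\pi$ in non-obvious ways (for instance, when $\gamma_1$ and $\gamma_2$ meet at two distinct boundary points in $A_{p,q}$, or when a peripheral arc wraps so that its two endpoints lie on the same local picture). Once all cases have been checked and matched against the pictures, combining the boundary bijection with the interior bijection of the previous paragraph yields the claimed equality $\dim_{\Bbbk}{\rm Hom}(X,Y)=k$.
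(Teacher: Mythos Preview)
Your proposal is correct and follows essentially the same route as the paper: the paper's proof is a one-line citation of Serre duality (Proposition~\ref{prop of coh}(1)), Theorem~\ref{dimension and positive intersection}, and Proposition~\ref{prop:ext}, which amounts precisely to your reduction $\dim{\rm Hom}(X,Y)=I^{+}({}_{s}\gamma_{2,e},\gamma_1)$ followed by matching positive intersections of the shifted curve with the admissible common points of $\gamma_1,\gamma_2$. Your write-up simply makes explicit the boundary case analysis that the paper leaves implicit in its terse appeal to those three results.
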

\begin{proof}
It is a direct consequence of Proposition~\ref{prop of coh}(1), Theorem~\ref{dimension and positive intersection} and Proposition~\ref{prop:ext}.
\end{proof}

 Before  further exploring the morphisms  in $\cohx$  via  the geometric model, we show some properties  of  morphisms in $\cohx$ that will be used subsequently. The following proposition gives a simple observation  regarding monomorphisms and epimorphisms of indecomposable sheaves  in $\cohx$.

\begin{proposition}\label{hom:mon,epi}
Let   $X$ and $Y$ be two  indecomposable  sheaves over $\cohx$. 
\begin{enumerate}
\item
 The morphism $f:X\to Y$ is a proper monomorphism if and only if   one of the following holds: 
\begin{enumerate}
\item[(a)] $X=\mathcal{O}(\vec{x})$ and $Y=\mathcal{O}(\vec{y})$, where $\vec{x},\vec{y} \in\mathbb{L}$ and $\vec{x}<\vec{y}$.  
\item[(b)] $X=S_{\lambda}^{(j)}$ and $Y=S_{\lambda}^{(j+m)}$, where  $\lambda\in\Bbbk^{*}$ and $j,m\in\mathbb{Z}_{>0}$.
\item[(c)] $X=S_{\lambda,i}^{(j)}$ and $Y=S_{\lambda,i+m}^{(j+m)}$, where $\lambda\in\{\infty,0\}$, $i\geq0$ and $j,m\in\mathbb{Z}_{>0}$. 
\end{enumerate}
\item The morphism  $g:X\to Y$  is a proper epimorphism if and only if   one of the following holds:
\begin{enumerate}
\item 
$X=\mathcal{O}(\vec{x})$ and $Y=S_{\lambda}^{(j)}$, where  $\vec{x} \in\mathbb{L}$,  $\lambda\in\Bbbk^{*}$  and $j\in \mathbb{Z}_{>0}$.
\item $X=\mathcal{O}(\vec{x})$ and $Y=S_{\lambda,0}^{(j)}(\vec{x})$, where  $\vec{x} \in\mathbb{L}$,  $\lambda\in\{\infty,\;0\}$  and $j\in \mathbb{Z}_{>0}$. 
\item  $X=S_{\lambda}^{(j)}$ and $Y=S_{\lambda}^{(j-m)}$, where  $\lambda\in\Bbbk^{*}$ and  $m,j-m\in \mathbb{Z}_{>0}$.
\item $X=S_{\lambda,i}^{(j)}$ and $Y=S_{\lambda,i}^{(j-m)}$, where $\lambda\in\{\infty,0\}$, $i\geq0$ and $m,j-m\in \mathbb{Z}_{>0}$.
\end{enumerate}
\end{enumerate}
\end{proposition}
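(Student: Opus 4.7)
\medskip

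\noindent\textbf{Proof proposal.} The plan is to argue both parts by a case analysis on whether each of $X,Y$ lies in $\vect\text{-}\mathbb{X}(p,q)$ or in $\coh_{0}\text{-}\mathbb{X}(p,q)$, exploiting the decomposition $\cohx=\vect\text{-}\mathbb{X}(p,q)\bigvee\coh_{0}\text{-}\mathbb{X}(p,q)$ together with the two structural facts: $(\ast)$ a line bundle has no nonzero torsion subsheaf, and $(\ast\ast)$ $\mathrm{Hom}(\coh_{0}\text{-}\mathbb{X}(p,q),\vect\text{-}\mathbb{X}(p,q))=0$. I would first dispose of the sufficiency of (1)(a)--(c) and of (2)(a)--(d) by writing down explicit morphisms: for (1)(a), any nonzero $s\in S_{\vec{y}-\vec{x}}$ yields an injection $\mathcal{O}(\vec{x})\hookrightarrow\mathcal{O}(\vec{y})$, which is proper when $\vec{x}<\vec{y}$; (1)(b) and (1)(c) are read off from the uniserial structure of the tube $\mathcal{U}_{\lambda}$ and the explicit composition series recalled in the preliminaries. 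For (2)(a), iterating the short exact sequence $0\to\mathcal{O}\to\mathcal{O}(\vec{c})\to S_{\lambda}\to 0$ (multiplication by $x_{2}^{q}-\lambda x_{1}^{p}$) and twisting by $\vec{x}$ produces an epimorphism $\mathcal{O}(\vec{x})\to S_{\lambda}^{(j)}$. For (2)(b), I would twist the sequences (\ref{esp}) and (\ref{esq}) by $\vec{x}$ and induct on $j$ to establish
\[
\mathcal{O}(\vec{x})/\mathcal{O}(\vec{x}-j\vec{x}_{1})\cong S_{\infty,0}^{(j)}(\vec{x}),\qquad \mathcal{O}(\vec{x})/\mathcal{O}(\vec{x}-j\vec{x}_{2})\cong S_{0,0}^{(j)}(\vec{x}),
\]
while (2)(c) and (2)(d) are the obvious quotient maps inside the tubes.

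For the necessity direction of (1), suppose $f\colon X\hookrightarrow Y$ is a proper monomorphism between indecomposables. If $X$ is a line bundle, then $(\ast)$ forces $Y$ not to be torsion, hence $Y$ is a line bundle, and the Hom formula $\mathrm{Hom}(\mathcal{O}(\vec{x}),\mathcal{O}(\vec{y}))\cong S_{\vec{y}-\vec{x}}$ together with the observation that every nonzero map between line bundles is injective (since $S$ is a domain) pins down (a). If $X$ is torsion, then $(\ast\ast)$ prevents $Y$ from being a bundle; both are torsion and must be supported at the same point $\lambda$, since $\coh_{0}\text{-}\mathbb{X}(p,q)$ decomposes as $\coprod_{\lambda}\mathcal{U}_{\lambda}$; the uniserial structure of $\mathcal{U}_{\lambda}$ then gives (b) for $\lambda\in\Bbbk^{*}$ and (c) for $\lambda\in\{\infty,0\}$ via the explicit composition filtration.

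For the necessity direction of (2), suppose $g\colon X\twoheadrightarrow Y$ is a proper epimorphism. If $X$ is torsion then so is $Y$, and the same tube analysis yields (c) or (d). If $X=\mathcal{O}(\vec{x})$ is a line bundle, then $Y$ cannot be a line bundle (any nonzero map between line bundles is injective, so a proper surjection would force an isomorphism). Hence $Y$ is indecomposable torsion at some $\lambda$. When $\lambda\in\Bbbk^{*}$, the tube is homogeneous and $Y=S_{\lambda}^{(j)}$, giving (a). When $\lambda\in\{\infty,0\}$, composing $g$ with the canonical surjection $S_{\lambda,i}^{(j)}\twoheadrightarrow S_{\lambda,i}$ onto the top forces $\mathrm{Hom}(\mathcal{O}(\vec{x}),S_{\lambda,i})\neq 0$; using the twist formula (\ref{shift}) and $\mathrm{Hom}(\mathcal{O},S_{\lambda,i})\neq 0 \iff i=0$, this constrains $i\equiv l_{1}\pmod{p}$ (resp.\ $i\equiv l_{2}\pmod{q}$) for $\vec{x}=l_{1}\vec{x}_{1}+l_{2}\vec{x}_{2}+l\vec{c}$ in normal form, whence $S_{\lambda,i}^{(j)}=S_{\lambda,0}^{(j)}(\vec{x})$, which is (b).

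The main obstacle is the necessity of case (2)(b): one must show that among all indecomposable torsion sheaves supported at an exceptional point $\lambda$, only the specific twist $S_{\lambda,0}^{(j)}(\vec{x})$ can occur as a quotient of $\mathcal{O}(\vec{x})$. The cleanest route, which I would use, is to identify epic maps $\mathcal{O}(\vec{x})\twoheadrightarrow Y$ with indecomposable torsion cokernels of inclusions $\mathcal{O}(\vec{z})\hookrightarrow \mathcal{O}(\vec{x})$ coming from multiplication by a homogeneous element of $S_{\vec{x}-\vec{z}}$, and then use that the cokernel is indecomposable and supported only at $\lambda\in\{\infty,0\}$ precisely when the multiplier is (up to scalar) a pure power $x_{1}^{j}$ or $x_{2}^{j}$; all other factorisations produce decomposable cokernels at distinct points of $\mathbb{P}_{\Bbbk}^{1}$. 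Combined with the inductive identification of these cokernels with $S_{\lambda,0}^{(j)}(\vec{x})$, this closes the argument.
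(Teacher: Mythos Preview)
The paper states this proposition without proof, treating it as a routine consequence of the standard structure theory of $\cohx$ (the torsion/torsion-free split, the description of line bundles and of the uniserial tubes). Your write-up is a correct and reasonably complete filling-in of exactly those details.

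Two small remarks. First, in the necessity direction of (1) when $X$ is a line bundle, your fact $(\ast)$ points the wrong way: you need that $\coh_{0}\text{-}\mathbb{X}(p,q)$ is closed under subobjects (it is, being the Serre subcategory of finite-length objects), so a line bundle, having infinite length, cannot embed in a torsion sheaf. Your $(\ast)$ says line bundles have no torsion \emph{sub}sheaves, which is true but not what is used here. Second, the ``main obstacle'' paragraph on (2)(b) is superfluous: the argument you already gave---composing the epimorphism $g$ with the top projection $S_{\lambda,i}^{(j)}\twoheadrightarrow S_{\lambda,i}$ to force $\mathrm{Hom}(\mathcal{O}(\vec{x}),S_{\lambda,i})\neq 0$, hence $i\equiv l_{1}\pmod p$ (resp.\ $i\equiv l_{2}\pmod q$), hence $S_{\lambda,i}^{(j)}=S_{\lambda,0}^{(j)}(\vec{x})$---already settles the matter. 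The alternative route via classifying indecomposable cokernels of inclusions $\mathcal{O}(\vec{z})\hookrightarrow\mathcal{O}(\vec{x})$ is valid but longer, and you do not need it.
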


\begin{proposition}\label{cro:inj,epi}
 Let   $X$ and $Y$ be two  indecomposable  sheaves over $\cohx$.
 Suppose    $ \phi^{-1}(X)=\gamma_1$,  $\phi^{-1}(Y)=\gamma_2$, 
and $Y$ is an indecomposable torsion sheaf supported at an exceptional point. Then
  \begin{itemize}
\item[(1)]  $\gamma_1$ and $\gamma_2$ share the starting (resp. ending) point $M$ on the outer (resp. inner)   boundary of  $A_{p,q}$ with $\gamma_2$ following $\gamma_1$ in the clockwise order at $M$ 
    if and only if there is  an epimorphism $f_M : X\to Y $    associated with 
    $M$(see  Figure~\ref{epi+mon}(a)-(d)); 
\item [(2)]  
$\gamma_1$ and $\gamma_2$ share the  ending (resp. starting)   point  $M$ on    the outer (resp. inner)   boundary of  $A_{p,q}$, with $\gamma_2$ following $\gamma_1$ in the clockwise order at $M$ if and only if there is  a    monomorphism $f_M : X\to Y $  associated with  $M$(see  Figure~\ref{epi+mon}(e)(f)).
\end{itemize}
\begin{figure} [H]
\begin{tikzpicture}[scale=0.7]
\coordinate (3) at (2,-0.35);
\coordinate (5) at (0.9-0.5,-0.1);
\coordinate (6) at (3.1,-0.1);
\coordinate (7) at (0.1-0.5,0.35);
\coordinate (8) at (0,-0.35);
\coordinate (9) at (2.1,0.25);
\node[left,purple]  at (5){\small$\overline{\gamma}_1$};
\node[orange] at (3){\small$\overline{\gamma}_2$};
\draw [->,purple,thick](-0.4+0.5,-1) --(2-1,1);
\draw [->,orange,thick](-0.4+0.5,-1) .. controls
(1.3+0.5,-.5) and (2.4,-.5) ..(2.5,-1);

\draw (-0.1,1) -- (3.2,1);
\draw (-0.1,-1) -- (3.2,-1);
\node()at(-0.5,1){\tiny{${\partial}$}};
\node()at(-0.5,-1){\tiny{${\partial'}$}};
\draw[->] (1.3,1.3) -- (1.7,1.3);
\draw[<-] (1.3,-1.3) -- (1.7,-1.3);
\node at (1.5,-1.8){\small$(a)$};
\end{tikzpicture}
\hspace{0.01cm}
\begin{tikzpicture}[scale=0.7]
\coordinate (3) at (1.7,-0.85);
\coordinate (5) at (1.1,0.1);
\coordinate (6) at (4.2-0.5,-0.1);
\coordinate (9) at (0.5,-0.7);
\node[right,purple]  at (5){\small$\overline{\gamma}_1$};
\node[orange] at (9){\small$\overline{\gamma}_2$};
\draw [->,purple,thick](-0.4,-1) .. controls
(1.3+0.5-1,0.05) and (2.6-1,0.05) ..(2.1+0.5,-1);
\draw (-.5,1) -- (2.8,1);
\draw (-.5,-1) -- (2.8,-1);
\draw[->] (1,1.3) -- (1.4,1.3);
\draw[<-] (1,-1.3) -- (1.4,-1.3);
\draw [->,orange,thick](-0.4,-1) .. controls
(0,-0.8) and (0.4,-0.8) ..(1,-1);
\node at (1.2,-1.8){\small$(b)$};
\end{tikzpicture}
\hspace{0.01cm}
\begin{tikzpicture}[scale=0.7]
\coordinate (2) at (0.6,0.3);
\coordinate (3) at (2.7,0.5);
\coordinate (5) at (1.8,0);
\coordinate (6) at (-2+.75,0);
\coordinate (7) at (0.1+0.5,0);
\coordinate (9) at (2.6,0);
\node[left,purple]  at (5){\small$\overline{\gamma}_1$};
\node[orange] at (2){\small$\overline{\gamma}_2$};
\draw [->,purple,thick](1.5,-1) --(2.6-0.5,1);
\draw [->,orange,thick](0.1-0.5,1) .. controls
(1.3-1.4,0.5) and (2.6-1.4,0.5) ..(2.6-0.5,1);
\draw (-0.7,1) -- (2.3,1);
\draw (-0.7,-1) -- (2.3,-1);
\draw[->] (0.5,1.3) -- (0.9,1.3);
\draw[<-] (0.5,-1.3) -- (0.9,-1.3);
\node at (0.7,-1.8){\small$(c)$};
\end{tikzpicture}
\hspace{0.05cm}
\begin{tikzpicture}[scale=0.7]
\coordinate (2) at (0.45,0.55);
\coordinate (5) at (1,-0.1);
\coordinate (7) at (0.1-0.5,0.75);
\coordinate (9) at (2.6,0);
\coordinate (8) at (-.9,0.25);
\node[left,purple]  at (5){\small$\overline{\gamma}_1$};
\node[orange] at (2){\small$\overline{\gamma}_2$};
\draw [->,purple,thick](0.1-0.5-1,1) .. controls
(1.3-1.8,-0.1) and (2.6-1.8,-0.1) ..(2.6-1,1);
\draw [->,orange,thick](0.2,1) .. controls
(0.7,0.6) and (1.1,0.6) ..(1.6,1);
\draw (-1.6,1) -- (1.7,1);
\draw (-1.5,-1) -- (1.7,-1);
\draw[->] (0,1.3) -- (0.4,1.3);
\draw[<-] (0,-1.3) -- (0.4,-1.3);
\node at (0.1,-1.8){\small$(d)$};
\end{tikzpicture}
\hspace{0.01cm}
\begin{tikzpicture}[scale=0.7]
\coordinate (3) at (1.7,-0.9);
\coordinate (5) at (1.1,0.1);
\coordinate (6) at (4.2-0.5,-0.1);
\coordinate (9) at (0.5,-0.7);
\node[right,orange]  at (5){\small$\overline{\gamma}_2$};
\node[purple] at (3){\small$\overline{\gamma}_1$};
%\node[blue] at (9){\small$\overline{\gamma}$};
\draw [->,orange,thick](-0.4,-1) .. controls
(1.3+0.5-1,0.05) and (2.6-1,0.05) ..(2.1+0.5,-1);
\draw [->,purple,thick](-0.4+1,-1) .. controls
(1,-.5) and (2.6-1,-.5) ..(2.1+0.5,-1);
\draw (-.5,1) -- (2.7,1);
\draw (-.6,-1) -- (2.7,-1);
\draw[->] (1,1.3) -- (1.4,1.3);
\draw[<-] (1,-1.3) -- (1.4,-1.3);
\node at (1.2,-1.8){\small$(e)$};
%\draw [->,blue,thick](-0.4,-1) .. controls
%(0,-0.8) and (0.4,-0.8) ..(1,-1);
\end{tikzpicture}
\hspace{0.01cm}
\begin{tikzpicture}[scale=0.7]
\coordinate (2) at (0.45,0.55);
\coordinate (5) at (1,-0.1);
\coordinate (7) at (0.1-0.4,0.7);
\coordinate (9) at (2.6,0);
\coordinate (8) at (-.9,0.25);
\node[below,purple] at (7){\small$\overline{\gamma}_1$};
\node[left,orange]  at (5){\small$\overline{\gamma}_2$};
%\node[blue] at (2){\small$\overline{\gamma}$};
\draw [->,orange,thick](0.1-0.5-1,1) .. controls
(1.3-1.8,-0.1) and (2.6-1.8,-0.1) ..(2.6-1,1);
\draw [->,purple,thick](0.1-0.5-1,1) .. controls
(1.3-1.8,0.5) and (2.6-2.5,0.5) ..(2.6-2,1);
\draw (-1.5,1) -- (1.7,1);
\draw (-1.5,-1) -- (1.7,-1);
\draw[->] (0,1.3) -- (0.4,1.3);
\draw[<-] (0,-1.3) -- (0.4,-1.3);
\node at (0.2,-1.8){\small$(f)$};
 \end{tikzpicture}
\caption{ $\overline{\gamma}_1$  and $\overline{\gamma}_2$  are arcs in $\mathbb{U}$  such that $\pi(\overline{\gamma}_1)=\gamma_1$ and $\pi(\overline{\gamma}_2)=\gamma_2$ 
}
\label{epi+mon}
\end{figure}
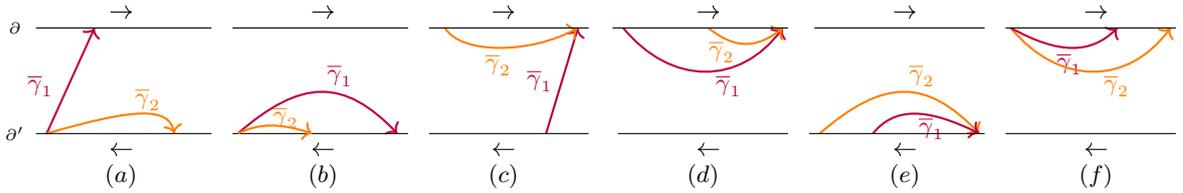
\end{proposition}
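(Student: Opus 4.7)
The plan is to match the finitely many algebraic cases supplied by Proposition~\ref{hom:mon,epi} with the corresponding geometric pictures in Figure~\ref{epi+mon}. Since $Y = S_{\lambda,i}^{(j)}$ with $\lambda \in \{\infty, 0\}$ is a torsion sheaf at an exceptional point, $\gamma_2 = \phi^{-1}(Y)$ is a peripheral arc on the inner boundary (if $\lambda = \infty$) or on the outer boundary (if $\lambda = 0$); both of its endpoints are therefore read off directly from the formulas $\phi^{-1}(S_{\infty,i}^{(j)}) = [D^{(i-j-1)/p,\,i/p}]$ and $\phi^{-1}(S_{0,i}^{(j)}) = [D_{-i/q,\,(j-i+1)/q}]$.

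For the forward direction of (1), Proposition~\ref{hom:mon,epi}(2) tells us that an epimorphism $X \to Y$ exists only when either $X = \mathcal{O}(\vec{x})$ with $Y = S_{\lambda,0}^{(j)}(\vec{x})$, or $X = S_{\lambda,i}^{(j+m)}$ with the same top as $Y$ and strictly larger length $j+m$. I will run through these four subcases (two values of $\lambda$, two types of $X$) and compute $\gamma_1 = \phi^{-1}(X)$ case by case. For $X$ a line bundle, writing $\vec{x} = l_1\vec{x}_1 + l_2\vec{x}_2 + l\vec{c}$ in normal form makes $\gamma_1$ a bridging arc whose endpoints project to $(-l_2/q)_{\partial'}$ on the outer boundary and $(l_1/p)_{\partial}$ on the inner boundary, so together with $\gamma_2$ it realizes the configuration of Figure~\ref{epi+mon}(a) when $\lambda = 0$ and of Figure~\ref{epi+mon}(c) when $\lambda = \infty$; for $X = S_{\lambda,i}^{(j+m)}$ the arc $\gamma_1$ is a peripheral arc on the same boundary as $\gamma_2$ with an identical top vertex, yielding Figures~\ref{epi+mon}(b) and~\ref{epi+mon}(d). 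The clockwise ordering at $M$ is then verified by unfolding to $\mathrm{Cyl}_{p,q}$ and using the left-to-right (resp.\ right-to-left) indexing convention on $\partial$ (resp.\ $\partial'$) recalled in Section~\ref{geometric modle}. The converse direction of (1) is then automatic: the four configurations just produced exhaust all possible arc pairs $(\gamma_1,\gamma_2) \in \mathcal{C}\times\mathcal{C}$ in which $\gamma_2$ is a peripheral arc sharing an endpoint with $\gamma_1$ in the prescribed clockwise way, so the corresponding sheaves must sit in one of the algebraic families above, and the required $f_M$ is supplied by the canonical exact sequence \eqref{esp} or \eqref{esq}, iterated along the composition filtration of the stable tube when $X$ is itself torsion.

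Part (2) is argued in the same way, but using Proposition~\ref{hom:mon,epi}(1)(c), which forces $X = S_{\lambda, i-m}^{(j-m)}$ for some $1 \leq m \leq j-1$ whenever a monomorphism into $Y$ exists. The corresponding $\gamma_1$ is a peripheral arc on the same boundary as $\gamma_2$ sharing exactly one endpoint (the starting point on $\partial$ when $\lambda = \infty$, the ending point on $\partial'$ when $\lambda = 0$), producing Figure~\ref{epi+mon}(f) and~\ref{epi+mon}(e) respectively, and the inclusion $S_{\lambda,i-m}^{(j-m)} \hookrightarrow S_{\lambda,i}^{(j)}$ from the composition series recalled before \eqref{shift} provides the required $f_M$. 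The main obstacle I anticipate is the careful bookkeeping of lifts to the universal cover $\mathbb{U}$ together with the orientation conventions on $\partial$ and $\partial'$, so that the shared endpoints and the clockwise ordering at $M$ line up with the figures rather than with the reverse orientation; once these conventions are unwound, each subcase reduces to a one-line comparison between $\phi^{-1}(X)$ and $\phi^{-1}(Y)$.
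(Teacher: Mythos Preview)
Your proposal is correct and follows essentially the same route as the paper: both directions are reduced to the finite case analysis supplied by Proposition~\ref{hom:mon,epi}, and the translation to arc configurations is read off from the explicit formulas for $\phi^{-1}$. The paper dispatches the ``geometric $\Rightarrow$ morphism'' direction in a single sentence (citing $\phi$ and Proposition~\ref{hom:mon,epi}) and spells out the ``morphism $\Rightarrow$ geometric'' direction by the same case split you describe, so your write-up is, if anything, slightly more detailed on the converse than the paper's own proof.
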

\begin{proof}
 The necessity of (1) and (2) follows directly from the bijection $\phi$ and Proposition~\ref{hom:mon,epi}. It remains to show  their  sufficiency. 
Since $Y$ is an indecomposable torsion sheaf supported at an exceptional point,  it is either  $Y=S_{\infty,i}^{(j)}$ or  $Y=S_{0,i'}^{(j)}$, where   $i\in \mathbb{Z}/p\mathbb{Z}$, $i'\in \mathbb{Z}/q\mathbb{Z}$ and $j\in \mathbb{Z}_{>0}$.

(1) If  $f:X\to Y$  is an epimorphism and  $Y=S_{\infty,i}^{(j)}$,  then  Proposition \ref{hom:mon,epi}(1) implies that  
$X$ is $S_{\infty,i}^{(j+m)}$ or $\mathcal{O}(i\vec{x}_1+l'\vec{x}_2+l\vec{c})$  with $m>0$,  $l\in \mathbb{Z}$ and $0\leq l'<q$. Using the bijection $\phi$, we get that $\gamma_1$ and $\gamma_2$ share the same   ending  point  on the  inner  boundary of  $A_{p,q}$ with $\gamma_2$ following $\gamma_1$ in the clockwise order at  this common point. By the same token, we can
  prove that if   $f  :X\to Y$ is an epimorphism and  $Y\in \mathcal{U}_{0}$, then $\gamma_1$ and $\gamma_2$ must   share the same starting point on the outer boundary.

(2) If  $f :X\to Y$  is a monomorphism and  $Y=S_{\infty,i}^{(j)}$, then it follows from  Proposition~\ref{hom:mon,epi}(2) that $X$ is $S_{\infty,i-m}^{(j-m)}$ with $0<m<j$. According to the bijection  $\phi$,  $\gamma_1$ and $\gamma_2$ share the same   starting    point  on the  inner  boundary of  $A_{p,q}$ with $\gamma_2$ following $\gamma_1$ in the clockwise order at  this common point. A similar argument to the one used for the case $Y = S_{\infty,i}^{(j)}$ shows that statement (2) also holds  when $Y \in \mathcal{U}_0$.
\end{proof}

\begin{remark}\label{rek:mon or epi} Let   $X$ and $Y$ be
 indecomposable   bundles or  torsion sheaves supported at exceptional points.
By integrating Theorem \ref{dimension and positive intersection}, Proposition~\ref{hom:mon,epi}, and Proposition~\ref{cro:inj,epi}, we establish the  result, originally presented in \cite{HR,M}:   If ${\rm Ext^{1}}(Y,X)=0$, then any nonzero morphism $X\to Y$ is an epimorphism or a monomorphism. 
In particular, for an indecomposable sheaf $X$ in $\cohx$, the condition ${\rm Ext^{1}}(X,X)=0$ implies that ${\rm End}(X)\cong\Bbbk$.
\end{remark}

We now proceed to  characterize the unique  epic-monic factorisation  of morphisms in   $\cohx$ into compositions of epimorphisms and monomorphisms through their corresponding oriented arcs in   $\mathbb{U}$. 
Note that  if $X,Y$ are line bundles  
 then  
 the nonzero morphism $f:X\to Y$   is a monomorphism. Hence, Proposition~\ref{hom condition}  and  Proposition~\ref{cro:inj,epi} imply that  we only  need to consider the remaining cases below.

\begin{theorem} \label{udecom}
Let $\gamma_1$ be a positive bridging arc  or peripheral arc,    and $\gamma_2$ be a peripheral arc  in $\mathbb{U}$.  Suppose   $I^{+}(\gamma_2,\gamma_1)=1$ and this  positive intersection $M$  meets the conditions in Proposition~\ref{hom condition}. 
Then there is a
 nonzero morphism $f_{M}:\phi(\pi(\gamma_1))\to \phi(\pi(\gamma_2))$ associated with the positive intersection  $M$ of $\gamma_2$ and $\gamma_1$. Furthermore,   $f_{M}$ can be uniquely decomposed,  
up to  isomorphism, into an epimorphism followed by a monomorphism through $\phi(\pi(\gamma))$, where 
$\gamma$ is a  peripheral arc  contained in $\gamma_2$ obtained by
smoothing  the crossing    at $M$ (see  Figure \ref{gamma}).

\begin{figure}[H]
\begin{tikzpicture}[scale=0.85]
\coordinate (1) at (1.2,0.7);
\node[blue] at (1){\small$\gamma_1$};
\draw[->,thick,blue] (0.7,0) -- (1.3,2);
\draw  [thick,->](-1,2) .. controls
(-0,1) and (1,1) .. (2,2);
\coordinate (2) at (1.5,1.5);
\node[right] at (2){\small$\gamma_2 $};
\draw  [->,red](-1,2) .. controls
(-0.2,1.3) and (0.4,1.3) .. (1.3,2);
\node[red]  at (0.8,1.85){\small$\gamma$};
\end{tikzpicture}
\hspace{1cm}
\begin{tikzpicture}[scale=0.85]
\coordinate (3) at (0,-.6);
\node  at (3){\small$\gamma_2$};
\draw [->,blue,thick](-0.4-1,-1) --(2-1.7-1,1);
\coordinate (1) at (-0.4,-1);
\draw [->,thick](0.1-3,-1) .. controls
(0.1-2,-0.1) and (0.1-1,-0.1) ..(0.1,-1);
\coordinate (4) at (0.4,0.2);
\draw [->,red](-1.4,-1) .. controls
(-0.9,-.5) and (-0.4,-.5) ..(0.1,-1);
\coordinate (4) at (0.4-1.1,0.2);
\node[blue] at (4){\small$\gamma_1$};
\node[red]  at (-0.35,-0.9){\small$\gamma$};
\end{tikzpicture}
\caption{Peripheral arc $\gamma$ contained in $\gamma_2$}
\label{gamma}
\end{figure}
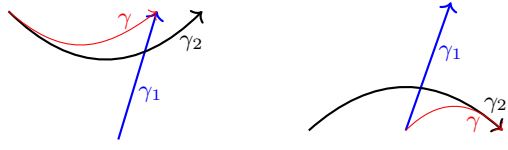
\end{theorem}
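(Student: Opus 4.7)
The plan is to establish the theorem in three stages: exhibit an explicit epi and an explicit mono through $\phi(\pi(\gamma))$, identify their composition with $f_M$, and deduce uniqueness from the abstract uniqueness of image factorizations in the abelian category $\cohx$.

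First, I would analyze the local geometry around $M$ by cases, depending on whether $\gamma_1$ is a positive bridging arc or a peripheral arc (and if peripheral, on which boundary of $\mathbb{U}$), together with the boundary on which $\gamma_2$ lies. In every admissible configuration, the smoothing of the crossing at $M$ inside $\gamma_2$ yields two arcs, the relevant one being the peripheral arc $\gamma$ contained in $\gamma_2$. Directly tracing the smoothing shows that $\gamma$ shares one endpoint with $\gamma_1$ (namely the endpoint of $\gamma_1$ lying on the boundary containing $\gamma_2$, with $\gamma$ following $\gamma_1$ in the clockwise order at that point) and shares the other endpoint with $\gamma_2$ (with $\gamma_2$ following $\gamma$ in the clockwise order). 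Proposition~\ref{cro:inj,epi}(1) applied at the first shared endpoint then provides an epimorphism $p:\phi(\pi(\gamma_1))\twoheadrightarrow\phi(\pi(\gamma))$, and Proposition~\ref{cro:inj,epi}(2) applied at the second yields a monomorphism $\iota:\phi(\pi(\gamma))\hookrightarrow\phi(\pi(\gamma_2))$.

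The composite $\iota\circ p$ is a nonzero element of ${\rm Hom}(\phi(\pi(\gamma_1)),\phi(\pi(\gamma_2)))$, being a surjection onto the nonzero sheaf $\phi(\pi(\gamma))$ followed by an injection. Its construction depends only on the smoothing data at $M$, which by Proposition~\ref{hom condition} is precisely the information encoding the basis vector of the Hom-space attached to $M$; hence after rescaling we may take $f_M=\iota\circ p$, proving existence together with the claimed epic-monic factorization. For uniqueness, I would appeal to the universal property of the image in an abelian category: any epic-monic factorization of $f_M$ is equivalent, through a unique isomorphism on the middle term, to its image factorization, whose middle object here is ${\rm Im}\,f_M\cong\phi(\pi(\gamma))$.

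The main obstacle I anticipate is the geometric case analysis underlying the first step: one must verify, for every admissible configuration of $(\gamma_1,\gamma_2,M)$ including both pictures of Figure~\ref{gamma} and their symmetric variants, that the smoothed arc $\gamma$ is a genuine arc (in particular does not cut out a monogon or bound a digon with $\gamma_1$ or $\gamma_2$), and that its two endpoints fit exactly into the shared-endpoint configurations of Proposition~\ref{cro:inj,epi} with the correct clockwise orientation at each. This combinatorial check ultimately rests on the positivity of the crossing $M$ and on $\gamma$ being contained in the peripheral arc $\gamma_2$, which together force the local smoothing to be of the desired form.
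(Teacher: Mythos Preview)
Your argument is correct and takes a genuinely different route from the paper's proof. The paper proceeds by explicit algebraic case analysis: writing out the parametrizations of $\gamma_1,\gamma_2,\gamma$ in each case, it identifies the associated sheaves as specific line bundles and torsion sheaves $S_{\infty,\bullet}^{(\bullet)}$, then constructs the short exact sequence
\[
0\longrightarrow \mathcal{O}(-m\vec{x}_1)\longrightarrow \mathcal{O}\longrightarrow S_{\infty,0}^{(m)}\longrightarrow 0
\]
by induction on $m$ to obtain the desired epi--mono factorization in the bridging case, and invokes Proposition~\ref{hom:mon,epi} directly in the peripheral case. By contrast, you bypass all of this by reading off the epi $p$ and the mono $\iota$ straight from the shared-endpoint criterion of Proposition~\ref{cro:inj,epi}, then invoking the abstract uniqueness of image factorizations in the abelian category $\cohx$. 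Your approach is cleaner and more uniform across cases, at the cost of depending on Proposition~\ref{cro:inj,epi} (itself a consequence of Proposition~\ref{hom:mon,epi}); the paper's approach is more hands-on and yields the explicit exact sequence \eqref{exs:long}, which is reused later in the proof of Theorem~\ref{thm:morphism}. Both are valid; your anticipated ``obstacle'' of verifying the endpoint-sharing configurations in each case is indeed the only place requiring care, and the check goes through exactly as you outline.
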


\begin{proof}
%According to Proposition \ref{hom:mon,epi}(1) and the bijection $\phi$, $\gamma_2$ is a peripheral arc. 
The proof proceeds via case analysis  according to the types of $\gamma_1$ and $\gamma_2$. 
For simplicity, we focus on the case where both endpoints of $\gamma_2$ are located on the upper boundary of $\mathbb{U}$, as the case with endpoints on the lower boundary follows similarly.

(1) $\gamma_1$ is a positive bridging arc. By assumption,   $\gamma_1=D^{\frac{i}{p}}_{\frac{j}{q} }$ and $\gamma_2=D^{\frac{k-l-1}{p}, \frac{k}{p}}$ for $i,j,k,l\in \mathbb{Z}$ with  
 $k-l-1<i<k$. From this,   we have
 \[
 \phi(\pi(\gamma_1))=\co(i\vec{x}_{1}-j\vec{x}_{2}), \quad   \phi(\pi(\gamma_2))=S_{\infty,k}^{(l)}, \quad   \phi(\pi(\gamma))=S_{\infty,i}^{(i-k+l)}.
 \]
 
We now show by induction on $m$ that
 there exists a short exact sequence:
\begin{equation}\label{exs:long}
    \begin{tikzcd}
   0\ar[r] & \co(-m\vec{x}_{1}) \ar[r,"{X}_{1}^{m}"] & \co\ar[r] & S_{\infty,0}^{(m)}\ar[r]  & 0,
\end{tikzcd} \end{equation}
%\end{equation}
for all $m\in\mathbb{Z}_{>0}$ 
in $\cohx$.
%https://q.uiver.app/#q=WzAsMTYsWzEsMSwiXFxtYXRoY2Fse099KC1cXHZlY3t4fV97MX0pIl0sWzIsMSwiXFxtYXRoY2Fse099ICJdLFszLDEsIlNfe1xcaW5mdHksMH0iXSxbMywyLCJTX3tcXGluZnR5LG0tMX1eeyhtKX0iXSxbMSwyLCJcXG1hdGhjYWx7T30oLVxcdmVje3h9X3sxfSkiXSxbMiwyLCJcXG1hdGhjYWx7T30oKG0tMSlcXHZlY3t4fV97MX0pIl0sWzIsMywiU197XFxpbmZ0eSxtLTF9Il0sWzQsMiwiMCJdLFswLDIsIjAiXSxbMCwxLCIwIl0sWzQsMSwiMCJdLFsyLDAsIjAiXSxbMiw0LCIwIl0sWzMsMywiU197XFxpbmZ0eSxtLTF9Il0sWzMsNCwiMCJdLFszLDAsIjAiXSxbMCw0XSxbNSw2XSxbNCw1LCJYX3sxfV57bX0iXSxbNSwzXSxbMyw3XSxbOCw0XSxbOSwwXSxbMCwxLCJYXzEiXSxbMSwyXSxbMiwxMF0sWzIsM10sWzEsNV0sWzExLDFdLFs2LDEyXSxbMywxM10sWzEzLDE0XSxbNiwxM10sWzE1LDJdXQ==
 For $m=1$, the existence follows immediately from  \eqref{esp}.
 Assume that the statement is valid for    $m$, 
we get
the following commutative diagram  with exact rows and columns.
\[\begin{tikzcd}[ampersand replacement=\&,cramped]
	\& 0 \& 0 \\
	\& {\mathcal{O}(-m\vec{x}_{1})} \& {\mathcal{O}(-m\vec{x}_{1})} \\
	0 \& {\mathcal{O}} \& {\mathcal{O}(\vec{x}_{1})} \& {S_{\infty,1}} \& 0 \\
	0 \& {S_{\infty,0}^{(m)}} \& {S_{\infty,1}^{(m+1)}} \& {S_{\infty,1}} \& 0 \\
	\& 0 \& 0
	\arrow[equal,from=2-2, to=2-3]
	\arrow["{X_{1}^{m}}", from=2-2, to=3-2]
	\arrow[  from=3-2, to=4-2]
	\arrow[from=4-2, to=5-2]
	\arrow[from=4-3, to=5-3]
	\arrow[from=4-2, to=4-3]
	\arrow[from=4-3, to=4-4]
	\arrow[from=4-4, to=4-5]
	\arrow[from=4-1, to=4-2]
	\arrow[from=3-1, to=3-2]
	\arrow[  from=3-2, to=3-3]
	\arrow[from=2-3, to=3-3]
	\arrow[from=3-3, to=3-4]
	\arrow[from=3-4, to=3-5]
	\arrow[equal, from=3-4, to=4-4]
	\arrow[from=3-3, to=4-3]
	\arrow[from=1-2, to=2-2]
	\arrow[from=1-3, to=2-3]
	\arrow[from=1-3, to=2-3]
\end{tikzcd}\]
From this setup, the proof of the required existence is complete. 
Consequently, it follows from \eqref{exs:long}   that  $f_{M}=g_{M}\circ h_{M}$, where $g_{M}$ is an epimorphism from $\phi(\pi(\gamma_1))$ to $\phi(\pi(\gamma))$ and  $h_{M}$ is a monomorphism from  $\phi(\pi(\gamma))$ to $\phi(\pi(\gamma_2))$.
  
(2)   $\gamma_1$ is a peripheral arc. 
In this case, we assume that  $\gamma_1=D^{\frac{i-j-1}{p}, \frac{i}{p}}$ and $\gamma_2=D^{\frac{k-l-1}{p}, \frac{k}{p}}$ 
for $i,j,k,l\in \mathbb{Z}$ with  
 $k-l-1<i<k$. Then  
 \[
 \phi(\pi(\gamma_1))=S_{\infty,i}^{(j)}, \quad   \phi(\pi(\gamma_2))=S_{\infty,k}^{(l)}, \quad   \phi(\pi(\gamma))=S_{\infty,i}^{(i-k+l)}
. \]
The required factorization follows immediately from Proposition~\ref{hom:mon,epi}, completing the proof.
\end{proof}
  
We now present another key result in this section. This theorem provides an efficient method for computing the kernels of monomorphisms and the cokernels of epimorphisms between indecomposable sheaves via oriented arcs in $\mathbb{U}$. Let $\mathcal{D}$ be the set defined by
\[
\mathcal{D} := \mathcal{C} \cup \left\{ [D^{\frac{i}{p}, \frac{j}{p}}], [D_{\frac{i}{q}, \frac{j}{q}}] \ \bigg| \ i,j\in \mathbb{Z},\ j-i=1\right\},
\]
where the   bijection $\phi: \mathcal{C} \to {\rm ind}(\cohx)$ canonically extends to a map $\Phi: \mathcal{D} \to {\rm ind}(\cohx) \cup \{0\}$ given by
\[
\Phi(\gamma) = \begin{cases} 
\phi(\gamma), & \gamma \in \mathcal{C}, \\
0, & \gamma \notin \mathcal{C}.
\end{cases}
\]

\begin{theorem}\label{thm:morphism}
Let $\gamma_1,\gamma_2$ be positive bridging arcs or peripheral arcs in $\mathbb{U}$.
\begin{itemize}
    \item [(1)]
If $I^{+}(\gamma_2,{_{s^{-1}}{\gamma_1}_{e^{-1}}})=1$ and the morphism $f_M : \phi(\pi(\gamma_1)) \to \phi(\pi(\gamma_2)) $ associated with this positive intersection $M$ is a monomorphism, then  there exists  a short exact sequence  given by
\begin{equation}\label{e:mon}
\eta_{M}: 0\to\phi(\pi(\gamma_1)) \to \phi(\pi(\gamma_2)) \to \Phi(\pi(\alpha_{1}))\oplus\Phi(\pi(\alpha_{2}))\to 0,
\end{equation}
where $\pi(\alpha_{1})$ and $\pi(\alpha_{2})$  are 
elements in $\mathcal{D}$ such that  $\alpha_{1}$ and $\alpha_{2}$ with endpoints at %ending at
the starting points and ending points  of ${_{s^{-1}}{\gamma_1}_{e^{-1}}}$ and  $ \gamma_2$, respectively  (see Figure~\ref{fig:mon,phi}(a)).

\item [(2)] If $I^{+}( {_{s}{\gamma_2}_{e}},{\gamma_1})=1$ and the morphism $f_N : \phi(\pi(\gamma_1)) \to \phi(\pi(\gamma_2)) $ associated with this positive intersection $N$ is an epimorphism, then there exists a short exact sequence 
\begin{equation}\label{epi,short}
\eta_{N}:0\to \Phi(\pi(\beta_{1}))\oplus\Phi(\pi(\beta_{2})) \to \phi(\pi(\gamma_1)) \to \phi(\pi(\gamma_2)) \to 0,
\end{equation}
where  $\pi(\beta_{1})$ and $\pi(\beta_{2})$  are 
elements in $\mathcal{D}$ such that  $\beta_{1}$  and $\beta_{2}$    with endpoints at the starting points and ending points  of $\gamma_1$ and $_{s}{\gamma_2}_{e}$, respectively  (see  Figure~\ref{fig:mon,phi}(b)).
\end{itemize}
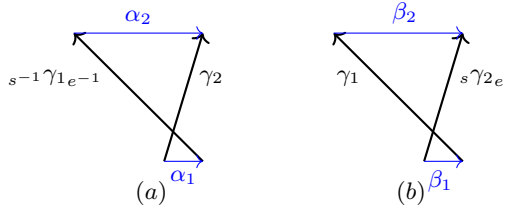
\begin{figure}[H]
\begin{tikzpicture}[scale=0.85]
\draw[->,thick] (1.3,0) -- (-0.7,2);
\coordinate (1) at (-0.1,1.3);
\node[left] at (1){\small${_{s^{-1}}{\gamma_1}_{e^{-1}}}$};
\draw[->,thick] (0.7,0) -- (1.3,2);
\coordinate (2) at (1.1,1.3);
\node[right] at (2){\small ${\gamma_2}$};
\draw[->,blue] (0.7,0) -- (1.3,0);
\draw[->,blue] (-0.7,2) -- (1.3,2);
\coordinate (3) at (0.3,2);
\coordinate (4) at (1,0);
\node[above,blue] at (3){\small $\alpha_{2}$};
\node[below,blue] at (4){\small $\alpha_{1}$};
\node at (0.5,-0.5) {\small $(a)$};
\end{tikzpicture}
\hspace{1cm}
\begin{tikzpicture}[scale=0.85]
\coordinate (1) at (1.6,1.3);
\node at (1){\small $_{s}{\gamma_2}_{e}$};
\draw[->,thick] (0.7,0) -- (1.3,2);
\draw[->,thick] (1.3,0) -- (-0.7,2);
\coordinate (1) at (-0.1,1.3);
%\draw  [->](-1,2) .. controls (-0.2,1) and (0.6,1) .. (2,2);
\coordinate (2) at (-0.8,1.3);
\node[right] at (2){\small ${\gamma_1}$};
\draw[->,blue] (0.7,0) -- (1.3,0);
\draw[->,blue] (-0.7,2) -- (1.3,2);
\coordinate (3) at (0.4,2);
\coordinate (4) at (0.95,0);
\node[above,blue] at (3){\small $\beta_{2}$};
\node[below,blue] at (4){\small $\beta_{1}$};
\node at (0.5,-0.5) {\small $(b)$};
\end{tikzpicture}
\caption{The cutting of $({\gamma_2},{_{s^{-1}}{\gamma_1}_{e^{-1}}})$ and $(_{s}{\gamma_2}_{e},{\gamma_1})$ 
}
\label{fig:mon,phi}
\end{figure}
\end{theorem}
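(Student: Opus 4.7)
My plan is to prove both parts by a direct case-by-case analysis, based on the classification of monomorphisms and epimorphisms between indecomposable sheaves in Proposition~\ref{hom:mon,epi}. Since Part~(2) is the exact dual of Part~(1), I concentrate on Part~(1) and indicate how to dualize at the end.

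\textbf{Setup via Serre duality.} I would first translate the positive-intersection hypothesis into a Hom-space statement. Combining Proposition~\ref{prop of coh}(1), Theorem~\ref{dimension and positive intersection}, and the identity $\phi(\pi({_{s^{-1}}\gamma_1}_{e^{-1}})) = \tau\phi(\pi(\gamma_1))$, one obtains
\[
1 \;=\; I^+\bigl(\gamma_2,\,{_{s^{-1}}\gamma_1}_{e^{-1}}\bigr) \;=\; \dim\mathrm{Ext}^1\bigl(\phi(\pi(\gamma_2)),\,\tau\phi(\pi(\gamma_1))\bigr) \;=\; \dim\mathrm{Hom}\bigl(\phi(\pi(\gamma_1)),\,\phi(\pi(\gamma_2))\bigr).
\]
Hence the monomorphism $f_M$ is unique up to scalar, and fits into a short exact sequence $0 \to \phi(\pi(\gamma_1)) \to \phi(\pi(\gamma_2)) \to Z \to 0$ with $Z$ uniquely determined up to isomorphism. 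The task is therefore to show $Z \cong \Phi(\pi(\alpha_1)) \oplus \Phi(\pi(\alpha_2))$.

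\textbf{Case analysis.} By Proposition~\ref{hom:mon,epi}(1), the proper monomorphism $f_M$ falls into three types. For the homogeneous-tube case $S_\lambda^{(j)} \hookrightarrow S_\lambda^{(j+m)}$ with $\lambda\in\Bbbk^*$ and the exceptional-tube case $S_{\lambda,i}^{(j)} \hookrightarrow S_{\lambda,i+m}^{(j+m)}$ with $\lambda\in\{\infty,0\}$, the cokernel is the single indecomposable torsion sheaf $S_\lambda^{(m)}$, resp. $S_{\lambda,i+m}^{(m)}$, in the same tube. Reading off the endpoint coordinates of $\gamma_1,\gamma_2$ via the bijection $\phi$ recalled in Section~\ref{geometric modle}, I would verify that exactly one of the peripheral arcs $\alpha_1,\alpha_2$ lies in $\mathcal{C}$ and maps under $\phi$ to this cokernel, while the other joins two adjacent marked points on the same boundary (hence lies in $\mathcal{D}\setminus\mathcal{C}$ and contributes $0$ under $\Phi$). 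For the line-bundle case $\mathcal{O}(\vec{x})\hookrightarrow\mathcal{O}(\vec{y})$, write $\vec{y}-\vec{x}=l_1\vec{x}_1+l_2\vec{x}_2$ in normal form. Together with the identification $\mathrm{Hom}(\mathcal{O}(\vec{x}),\mathcal{O}(\vec{y}))\cong S_{\vec{y}-\vec{x}}$, the displayed equality forces $f_M$ to be a scalar multiple of $x_1^{l_1}x_2^{l_2}$. Factoring this multiplication as $\cdot x_2^{l_2}$ followed by $\cdot x_1^{l_1}$ and iterating~\eqref{esp} and~\eqref{esq}, I get a short exact sequence $0\to S_{0,*}^{(l_2)}\to Z\to S_{\infty,*}^{(l_1)}\to 0$; this splits since the two summands live in distinct tubes, giving $Z \cong S_{\infty,*}^{(l_1)}\oplus S_{0,*}^{(l_2)}$. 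Matching the simple tops of these torsion sheaves with the endpoints of $\gamma_1,\gamma_2$ identifies the summands with $\Phi(\pi(\alpha_2))$ on $\partial$ and $\Phi(\pi(\alpha_1))$ on $\partial'$, exactly as depicted in Figure~\ref{fig:mon,phi}(a). Part~(2) follows by the entirely analogous argument: the four types of epimorphism listed in Proposition~\ref{hom:mon,epi}(2) each yield a kernel whose summands, computed via the dual iteration of~\eqref{esp}, \eqref{esq}, correspond under $\Phi$ to $\beta_1,\beta_2$ as in Figure~\ref{fig:mon,phi}(b).

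\textbf{Main obstacle.} The delicate step is the line-bundle-to-line-bundle case of Part~(1). Its difficulty is conceptual rather than computational: one must exploit the positive-intersection hypothesis (equivalently $\dim\mathrm{Hom}=1$) to pin down the map as the single monomial $x_1^{l_1}x_2^{l_2}$, because a generic morphism between two line bundles would contribute additional ordinary torsion summands $S_{\lambda}$ with $\lambda\in\Bbbk^*$ to the cokernel, destroying the purely exceptional-tube decomposition demanded by the geometry. The analogous subtlety arises in Part~(2) for the line-bundle-to-ordinary-tube and line-bundle-to-exceptional-tube epimorphisms, where the kernel is a line bundle and the uniqueness of $f_N$ (up to scalar) is what prevents an extraneous torsion summand from appearing. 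Once these algebraic identifications are in place, matching summands with the peripheral arcs $\alpha_i,\beta_i$ is a direct application of the dictionary in Section~\ref{geometric modle}.
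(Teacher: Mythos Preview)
Your approach is correct and coincides with the paper's: both proceed by the case analysis dictated by Proposition~\ref{hom:mon,epi}, and in the line-bundle case both reduce to iterating the sequences~\eqref{esp},~\eqref{esq} (the paper packages this as a $3\times 3$ commutative diagram, while you phrase it as a factorisation followed by a split extension between distinct tubes---these are the same computation). Your explicit use of Serre duality to pin down $\dim\mathrm{Hom}=1$, and hence to force $f_M$ to be the monomial $x_1^{l_1}x_2^{l_2}$, makes transparent a step the paper leaves implicit.

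One minor oversight: the hypothesis that $\gamma_1,\gamma_2$ are positive bridging or peripheral \emph{arcs} excludes the homogeneous-tube objects $S_\lambda^{(j)}$ with $\lambda\in\Bbbk^*$ (these correspond to parameterised loops, not arcs), so your ``homogeneous-tube case'' in Part~(1) and the ``line-bundle-to-ordinary-tube'' case in Part~(2) never actually arise and can be dropped.
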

\begin{proof} 
(1)  
Since $f_M : \phi(\pi(\gamma_1)) \to \phi(\pi(\gamma_2))$ is a monomorphism,   it follows from Proposition~\ref{hom:mon,epi} that both $\phi(\pi(\gamma_1))$ and $\phi(\pi(\gamma_2))$ must lie either in ${\rm vect}\mbox{-}\mathbb{X}(p,q)$ or in $\mathcal{U}_{\lambda}$ for $\lambda \in \{\infty, 0\}$.

First, suppose that $\phi(\pi(\gamma_1)),\phi(\pi(\gamma_2))\in {\rm vect}\mbox{-}\mathbb{X}(p,q)$.
Then   $\gamma_1=D^{\frac{i}{p}}_{\frac{j}{q} }$ 
and $\gamma_2=D^{\frac{k}{p}}_{\frac{l}{q} }$ for some integers $i-1<k$ and $l<j+1$. This leads to $\alpha_1 = D^{\frac{i-1}{q}, \frac{k}{q}}$ and $\alpha_2 = D_{\frac{l}{q}, \frac{j+1}{q}}$. Based on \eqref{exs:long}, we get the following commutative diagram with   exact rows and columns in $\cohx$:
% https://q.uiver.app/#q=WzAsMTcsWzEsMSwiXFxtYXRoY2Fse099KGlcXHZlY3t4fV97MX0talxcdmVje3h9X3syfSkiXSxbMiwxLCJcXG1hdGhjYWx7T30oa1xcdmVje3h9X3sxfS1qXFx2ZWN7eH1fezJ9KSJdLFszLDEsIlNfe1xcaW5mdHksa31ee++8iGstae+8iX0iXSxbMywyLCJTX3tcXGluZnR5LGt9Xnsoay1pKX1cXG9wbHVzIFNfezAsLWx9Xnsoai1sKX0iXSxbMSwyLCJcXG1hdGhjYWx7T30gaVxcdmVje3h9X3sxfS1qXFx2ZWN7eH1fezJ9KSAiXSxbMiwyLCJcXG1hdGhjYWx7T30oa1xcdmVje3h9X3sxfS1sXFx2ZWN7eH1fezJ9KSJdLFsyLDMsIlNfezAsLWx9Xnsoai1sKX0iXSxbNCwyLCIwIl0sWzAsMiwiMCJdLFswLDEsIjAiXSxbNCwxLCIwIl0sWzIsMCwiMCJdLFsyLDQsIjAiXSxbMywzLCJTX3swLC1sfV57KGotbCl9Il0sWzMsNCwiMCJdLFszLDAsIjAiXSxbMSwwLCIwIl0sWzAsNF0sWzUsNl0sWzQsNSwiWF97MX1ee219Il0sWzUsM10sWzMsN10sWzgsNF0sWzksMF0sWzAsMSwiWF8xIl0sWzEsMl0sWzIsMTBdLFsyLDNdLFsxLDVdLFsxMSwxXSxbNiwxMl0sWzMsMTNdLFsxMywxNF0sWzYsMTNdLFsxNSwyXSxbMTYsMF1d
\[\begin{tikzcd}
	&&  0 & 0 \\
	0 & {\mathcal{O}(i\vec{x}_{1}-j\vec{x}_{2})} & {\mathcal{O}(k\vec{x}_{1}-j\vec{x}_{2})} & {S_{\infty,k}^{(k-i)}} & 0 \\
	0 & {\mathcal{O} (i\vec{x}_{1}-j\vec{x}_{2}) } & {\mathcal{O}(k\vec{x}_{1}-l\vec{x}_{2})} & {S_{\infty,k}^{(k-i)}\oplus S_{0,-l}^{(j-l)}} & 0 \\
	&& {S_{0,-l}^{(j-l)}} & {S_{0,-l}^{(j-l)}} \\
	&& 0 & 0
	%\arrow[from=1-2, to=2-2]
	\arrow[from=1-3, to=2-3]
	\arrow[from=1-4, to=2-4]
	\arrow[from=2-1, to=2-2]
	\arrow[  from=2-2, to=2-3]
	\arrow[equal,from=2-2, to=3-2]
	\arrow[from=2-3, to=2-4]
	\arrow[from=2-3, to=3-3]
	\arrow[from=2-4, to=2-5]
	\arrow[from=2-4, to=3-4]
	\arrow[from=3-1, to=3-2]
	\arrow[  from=3-2, to=3-3]
	\arrow[from=3-3, to=3-4]
	\arrow[from=3-3, to=4-3]
	\arrow[from=3-4, to=3-5]
	\arrow[from=3-4, to=4-4]
	\arrow[equal,from=4-3, to=4-4]
	\arrow[from=4-3, to=5-3]
	\arrow[from=4-4, to=5-4]
\end{tikzcd}\]
Thus the associated sheaves form  a short exact sequence in $\cohx$:
\[ 0 \to \mathcal{O}(i\vec{x}_1 - j\vec{x}_2) \to \mathcal{O}(k\vec{x}_1 - l\vec{x}_2) \to S_{\infty,k}^{(k-i)} \oplus S_{0,-l}^{(j-l)} \to 0. \]
Here, $\Phi(\pi(\alpha_1)) = S_{\infty,k}^{(k-i)}$ and $\Phi(\pi(\alpha_2)) = S_{0,-l}^{(j-l)}$, confirming that equation \eqref{e:mon} holds. 

Next, if $\phi(\pi(\gamma_1))$ and $\phi(\pi(\gamma_2))$ are in $\mathcal{U}_{\lambda}$ with $\lambda \in \{\infty, 0\}$, we focus on $\lambda = \infty$ (the case for $\lambda = 0$ is analogous).  In this case, $\gamma_1 = D^{\frac{i}{p}, \frac{j}{p}}$ and $\gamma_2 = D^{\frac{i}{p}, \frac{k}{p}}$ with $i < j < k$, leading to $\alpha_1 = D^{\frac{j-1}{p}, \frac{k}{p}}$ and $\alpha_2 = D^{\frac{i-1}{p}, \frac{i}{p}}$.  Note that \[
 \phi(\pi(\gamma_1))=S_{\infty,j}^{(j-i-1)}, \quad \phi(\pi(\gamma_2))=S_{\infty,k}^{(k-i-1)}.
 \]
Since there   exists a short exact sequence
\[
0\to S_{\infty,j}^{(j-i-1)}\to S_{\infty,k}^{(k-i-1)} \to S_{\infty,k}^{(k-j)}\to 0
\]
in $\cohx$, the proof  
is thus complete. Therefore, the statement (1) holds.

(2) 
For $f_N : \phi(\pi(\gamma_1)) \to \phi(\pi(\gamma_2))$ being an epimorphism, Proposition~\ref{hom:mon,epi} gives two cases: either $\phi(\pi(\gamma_1)) \in {\rm vect}\mbox{-}\mathbb{X}(p,q)$ and $\phi(\pi(\gamma_2)) \in \mathcal{U}_{\lambda}$, or both are in $\mathcal{U}_{\lambda}$ for $\lambda \in \{\infty, 0\}$. Given the similarity between the cases $\lambda = 0$ and $\lambda = \infty$, we   focus on    $\lambda = \infty$.

First, suppose $\phi(\pi(\gamma_1))\in {\rm vect}\mbox{-}\mathbb{X}(p,q) $. 
In this setting,  
$\gamma_1=D_{\frac{j}{q}}^{\frac{i}{p}}$ and ${\gamma_2}=D^{\frac{k}{p}, \frac{i}{p}}$ for $i,j,k\in \mathbb{Z}$ with  
 $k+1<i$. 
Thus, $\Phi(\pi(\beta_{1}))=\mathcal{O}((k+1)\vec{x}_{1}-j\vec{x}_{2})$  and $\Phi(\pi(\beta_{2}))=0$. 
 The sequence \eqref{epi,short}  is obtained by applying the  twisting
 functor $\mathcal{O}(i\vec{x}_1-j\vec{x}_2) $ to the exact sequence \eqref{exs:long} with parameter $ m = i-k -1$.

Next, suppose $\phi(\pi(\gamma_1))\in \mathcal{U_{\lambda}}$ for $\lambda\in\{\infty, 0\}$. By assumption,  $\gamma_1=D^{\frac{i}{p},\frac{j}{p} }$ 
and $\gamma_2=D^{\frac{k}{p},\frac{j}{p} }$ for some integers $i<k<j$.
Then 
we obtain  $\Phi(\pi(\beta_{1}))=0$ and $\Phi(\pi(\beta_{2}))=S_{\infty,k+1}^{(k-i)}$. The exact sequence:  
\[
0\to S_{\infty,k+1}^{(k-i)}\to  S_{\infty,j}^{(j-i-1)} \to  S_{\infty,j}^{(j-k-1)} \to 0,
\] completes the proof.
\end{proof}
 
%%%%%%%%%%%%%%\section{Exceptional sequence}

%%%%%%%%%%%%%%%%%%%%%%%%%%%
\section{The geometric realization of   exceptional sequences in $\cohx$} \label{sec.4}

This section provides a geometric characterization of exceptional sequences in $\cohx$. We start by establishing a combinatorial criterion, defined  in terms of arcs in $A_{p,q}$, to determine when two exceptional objects form an exceptional pair. Based on this characterization,    we define a family of arc collections in $A_{p,q}$ that offer  a graphical interpretation of  exceptional sequences in $\cohx$. 
  As  applications, we classify complete exceptional sequences, and reveal the enlargement property of exceptional sequences  in $\cohx$ from
combinatorial perspectives.

\subsection{The position relation of arcs with respect to an exceptional pair} 
Recall that
an indecomposable sheaf $E$  in $\cohx$ is    exceptional   if and only if $\phi^{-1}(E)$ is an   arc in $A_{p,q}$ \cite{CRZ23}. In order to give  a geometric characterization of exceptional pairs in $\cohx$, the following   definition is needed.

\begin{definition}\label{def:exceptional intersection}
Let $\gamma_{1}$ and $\gamma_{2}$ be    arcs in  $A_{p,q}$. A positive intersection of  $\gamma_{1}$ and $\gamma_{2}$  is called an \emph{exceptional intersection} of $\gamma_{1}$ and $\gamma_{2}$, if  it satisfies   condition   $I^{+}( \gamma_1, {_{s^{-1}}{\gamma_2}_{e^{-1}}} )=0$ (or 
$I^{+}( _{s}{\gamma_1}_{e},  {\gamma_{2}}  )=0$).
\end{definition} 
\begin{remark}\label{exceptional intersection=1}
For any  two  arcs
 $\gamma_{1},\gamma_{2}$  in   $A_{p,q}$,  the number of exceptional intersections is at most one. Moreover, if there exists an   exceptional intersection  of $\gamma_{1}$ and $\gamma_{2}$, then $\gamma_{1}$ must be   a peripheral arc.
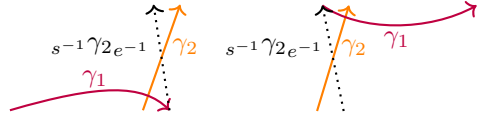
\begin{figure}[H]
\begin{tikzpicture}[scale=0.7]
\coordinate (3) at (-1.3,-.4);
\coordinate (5) at (-0.1,0.2);
\node[left]  at (5){${_{s^{-1}}{\gamma_2}_{e^{-1}}}$};
\node[purple] at (3){$\gamma_{1}$};
\draw [->,orange,thick](-0.4,-1) --(2-1.7,1);
\coordinate (1) at (-0.4,-1);
\draw [->,purple,thick](-0.4+0.5-3,-1) .. controls
(1.3+0.5-3,-.5) and (0.6-1,-.5) ..(0.1,-1);
\draw [->,thick,dotted](-0.4+0.5,-1) --(2-1.7-0.5,1);
\coordinate (4) at (0.4,0.2);
\node[orange] at (4){$\gamma_{2}$};
\end{tikzpicture}
\begin{tikzpicture}[scale=0.7]
\coordinate (2) at (3.5-.10,0.35);
\coordinate (6) at (2.7-.10,0.2);
\node[orange]  at (6){$\gamma_{2}$};
\node[purple] at (2){$\gamma_{1}$};
\draw [->,orange,thick](2-.10,-1) --(2.6-.10,1);
\draw [->,purple,thick](2.1-.10,1) .. controls
(1.3-1.4+1+1.9-.10,0.5) and (2.6-1.4+1+1.9-.10,0.5) ..(2.6-0.5+1+1.9-.10,1);
\draw [->,thick,dotted](2+0.5-.10,-1) --(2.6-0.5-.10,1);
\coordinate (4) at (2.3-.10,0.2);
\node[left] at (4){${_{s^{-1}}{\gamma_2}_{e^{-1}}}$};
\end{tikzpicture}
\caption{Exceptional intersection of $\gamma_{1}$ and $\gamma_{2}$ }
\label{EI}
\end{figure}
\end{remark}

\begin{theorem}\label{position}
 Let $E$ and $F$   be exceptional sheaves in $\cohx$ with $\phi^{-1}(E)=\alpha$  and $\phi^{-1}(F)=\beta$. Then   $(E,F)$  is an exceptional pair if and only if the arcs $\alpha$ and $\beta$ satisfy  one of the following:

\begin{itemize}
\item The intersection  of  $\alpha$ and $\beta$ in the interior  of  $A_{p,q}$ is an exceptional intersection of $\alpha$ and $\beta$;  
\item  
$\alpha$ and $\beta$ share either a starting point or an ending point, possibly both, and $\beta$ follows $\alpha$ in the clockwise order at the common  point;  
\item   
 $ \alpha $ and $ \beta $ do not intersect in the interior  of $A_{p,q}$,  and also do not share a starting point or an ending point.
 \end{itemize}
\end{theorem}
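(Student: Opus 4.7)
The plan is to convert the algebraic definition of an exceptional pair, namely $\mathrm{Hom}(F,E)=0$ and $\mathrm{Ext}^{1}(F,E)=0$, into two explicit geometric intersection conditions on the arcs $\alpha$ and $\beta$. Theorem~\ref{dimension and positive intersection} immediately gives $\dim\mathrm{Ext}^{1}(F,E)=I^{+}(\beta,\alpha)$. For the Hom side I would use the Serre duality isomorphism of Proposition~\ref{prop of coh}(1) in the form $\mathrm{Ext}^{1}(E,\tau F)\cong D\mathrm{Hom}(\tau F,\tau E)\cong D\mathrm{Hom}(F,E)$. Combining this with Proposition~\ref{prop:tau} and the subsequent remark, which identify $\tau F=\phi(\,_{s^{-1}}\beta_{e^{-1}})$, and applying Theorem~\ref{dimension and positive intersection} once more, one obtains $\dim\mathrm{Hom}(F,E)=I^{+}(\alpha,\,_{s^{-1}}\beta_{e^{-1}})$. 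Hence $(E,F)$ is an exceptional pair if and only if both
$$I^{+}(\beta,\alpha)=0 \qquad \text{and}\qquad I^{+}(\alpha,\,_{s^{-1}}\beta_{e^{-1}})=0. \quad (\ast)$$
This identity is the driving tool for the whole argument.

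With $(\ast)$ in hand, I would split the proof according to the intersection pattern of $\alpha$ and $\beta$; since arcs are drawn in minimal position, common points can only be shared endpoints or transverse interior crossings. If $\alpha,\beta$ have no common points at all, both numbers in $(\ast)$ vanish trivially, landing in Case~(3). If $\alpha,\beta$ share an endpoint but do not meet in the interior, then $I^{+}(\beta,\alpha)=0$ is automatic and the remaining condition becomes local at each shared endpoint: a neighborhood picture shows that shifting $\beta$'s endpoint one marked point backward along the same boundary creates a new local positive intersection with $\alpha$ precisely when $\alpha$ lies clockwise of $\beta$ at that endpoint, so the vanishing is equivalent to $\beta$ following $\alpha$ in the clockwise order at every shared endpoint --- this is Case~(2). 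Finally, if $\alpha,\beta$ meet in the interior, the vanishing $I^{+}(\beta,\alpha)=0$ forces every such crossing to be positive in the direction $(\alpha,\beta)$ (the first half of Definition~\ref{def:exceptional intersection}), and the second vanishing in $(\ast)$ is precisely the defining condition for that crossing to be an exceptional intersection of $\alpha$ and $\beta$. Remark~\ref{exceptional intersection=1} then forces there to be exactly one such interior crossing (and in particular $\alpha$ must be peripheral), which is Case~(1).

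The converse implication is a direct unwinding: for each of the three listed configurations one checks via Definition~\ref{positive intersection} and Definition~\ref{def:exceptional intersection} that both intersection numbers in $(\ast)$ vanish, hence $(E,F)$ is an exceptional pair. The step I expect to be the main obstacle is the neighborhood analysis that translates the backward shift $\,_{s^{-1}}(-)_{e^{-1}}$ into a statement about the relative clockwise order of $\alpha$ and $\beta$ at a shared marked point; several subcases must be distinguished according to which boundary (inner or outer) the shared point lies on, whether it is a start or an end of each arc, and whether $\alpha,\beta$ can share \emph{both} endpoints at once. In the interior-crossing case the additional information that $\alpha$ is peripheral, supplied by Remark~\ref{exceptional intersection=1}, will be essential for ruling out configurations that would otherwise produce a second unwanted positive intersection after the backward shift.
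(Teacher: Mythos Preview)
Your proposal is correct and follows essentially the same route as the paper: both derive the key equivalence $I^{+}(\beta,\alpha)=0=I^{+}(\alpha,{_{s^{-1}}\beta_{e^{-1}}})$ from Serre duality and Theorem~\ref{dimension and positive intersection}, and then split into the three geometric cases. The only difference is in the shared-endpoint case: the paper invokes Proposition~\ref{cro:inj,epi} (which already packages the relation between the clockwise order at a common endpoint and the existence of a nonzero morphism $F\to E$) instead of carrying out the local neighborhood analysis of the backward shift by hand, so you can shortcut that step.
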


\begin{proof} 
Because the sufficiency is  evident, we only show the necessity.
Assume that $(E,F)$ is an exceptional pair. According to \cite[Lemma 8.1]{CRZ23}, both $\alpha$ and $\beta$ are  arcs. Furthermore,
it follows from Proposition~\ref{prop of coh}(1) and Theorem~\ref{dimension and positive intersection} that $I^{+}(\alpha, {_{s^{-1}}{\beta}_{e^{-1}}}  )=0=I^{+}(\beta,\alpha)$. 
Consequently, if there is an intersection  between  $\alpha$ and $\beta$ in the interior  of $A_{p,q}$, it must be an exceptional intersection of $\alpha$ and $\beta$. In addition,   Remark~\ref{exceptional intersection=1} guarantees that $I^{+}(\alpha,\beta)=1$.

Next, we consider the case, where there are no intersections between  $\alpha$ and $\beta$ in the interior  of $A_{p,q}$. By   Proposition~\ref{cro:inj,epi},  if  $\alpha$ and $\beta$ share the starting point or ending point, then the condition ${\rm Hom}(F,E)= 0$ indicates that  $\beta$ follows $\alpha$ in the clockwise order at this common point (see   Figure~\ref{fig:exceptional collection}). Besides, for all other positional relationships between arcs $\alpha$ and $\beta$, apart from the aforementioned cases,  ${\rm Hom}(F,E)= 0$ also holds. Now we have finished the proof.
\end{proof}

\begin{figure}[H]
\begin{tikzpicture}[scale=0.8]
\coordinate (5) at (0.9,-0.1);
\coordinate (9) at (2.6-0.3,0);
\node[right,purple] at (9){$\alpha$};
\node[left,orange]  at (5){$\beta$};
\draw [->,purple,thick](2.1+0.5,-1) -- (2.6-0.5,1);
\draw [->,orange,thick](-0.4+0.5,-1) --(2.6-0.5,1);
\draw[->] (2.225,0.5) to [out=200,in=-20] (1.6,0.5);

\fill (2.6-0.5,1) circle[radius=1.5pt];

\coordinate (4) at (1.8+4,0);
\coordinate (7) at (-0.5+4,0);
\node[purple] at (7){$\alpha$};
\node[left,orange]  at (4){$\beta$};
\draw[->,purple,thick] (-0.4+0.5+4,-1) -- (0.1-0.5+4,1);
\draw [->,orange,thick](-0.4+0.5+4,-1) --(2.6-0.5+4,1);
\draw[->] (4,-0.5) to [out=20,in=150] (4.6,-0.5);

\fill (-0.4+0.5+4,-1) circle[radius=1.5pt];
\coordinate (1) at (3.2,-1.5);
%\node at (1){Case I};
\end{tikzpicture}
\caption{ $\alpha$ and $\beta$ share the ending point (left) or starting point (right)
}
\label{fig:exceptional collection}
\end{figure}
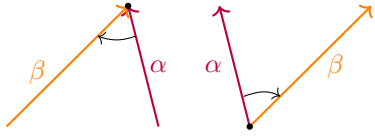

\begin{corollary}\label{iif}
Let   $(E,F)$ is an exceptional pair in $\cohx$ with  $\phi^{-1}(E)=\alpha$  and $\phi^{-1}(F)=\beta$.
\begin{itemize}
    \item [(1)]
 If  $I^{+}(_{s}\beta_{e},\alpha)\ne 0$, then  $I^{+}(_{s}\beta_{e},\alpha)=1$ or $I^{+}(_{s}\beta_{e},\alpha)=2$. Specifically,
 $I^{+}(_{s}\beta_{e},\alpha)=1$ if and only if $\alpha$ and $\beta$  only share the starting point or the ending point, with $\beta$ following  $\alpha$ in the clockwise order at  this common point. And   $I^{+}(_{s}\beta_{e},\alpha)=2$  if and only if 
 $\alpha$ and $\beta$   share both the starting   and the ending points, that is,  \[
        \alpha =  [D^{\frac{i}{p}}_{\frac{j}{q}}], \quad \beta = [D^{\frac{i+p}{p}}_{\frac{j}{q}}]\quad \text{for some } i,j\in \mathbb{Z}.
        \]
   \item[(2)]  If $I^{+}(\alpha,\beta)\ne 0$, then $I^{+}(\alpha,\beta)=1$. More precisely, $I^{+}(\alpha,\beta)=1$
 if and only if  
there exists an exceptional intersection of
 $\alpha$ and 
$\beta$.  
\end{itemize}
\end{corollary}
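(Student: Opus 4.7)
The plan is to reduce both parts to computations of the Hom and Ext dimensions for the exceptional pair $(E,F)$ through Serre duality, and then read off the geometric conditions using the mono/epi classification from Section~\ref{sec.3}. As a preliminary step, I would establish the two bridge identities
\[
I^{+}(\alpha,\beta)=\dim_{\Bbbk}{\rm Ext}^{1}(E,F),\qquad I^{+}(_{s}\beta_{e},\alpha)=\dim_{\Bbbk}{\rm Hom}(E,F).
\]
The first is Theorem~\ref{dimension and positive intersection} applied directly to $(E,F)$. For the second, Proposition~\ref{prop:tau} gives $\phi(_{s}\beta_{e})=\tau^{-1}F$, so Theorem~\ref{dimension and positive intersection} identifies $I^{+}(_{s}\beta_{e},\alpha)$ with $\dim{\rm Ext}^{1}(\tau^{-1}F,E)$, and the Serre duality ${\rm Ext}^{1}(\tau^{-1}F,E)\cong {\rm D}\,{\rm Hom}(E,F)$ from Proposition~\ref{prop of coh}(1) completes the identification.

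For part~(2), with the bridge identity in hand the claim is immediate from Theorem~\ref{position} combined with Remark~\ref{exceptional intersection=1}. Among the three mutually exclusive configurations available to the arcs of an exceptional pair, only case~(a) produces an interior intersection, and such an exceptional intersection is unique when it exists. Therefore $I^{+}(\alpha,\beta)\in\{0,1\}$, with equality to $1$ characterising exactly the exceptional-intersection configuration.

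For part~(1), I would use the defining condition ${\rm Ext}^{1}(F,E)=0$ of an exceptional pair together with Remark~\ref{rek:mon or epi} to conclude that any nonzero morphism $E\to F$ is either a monomorphism or an epimorphism. The algebraic enumeration of such morphisms in Proposition~\ref{hom:mon,epi} together with their geometric realisation in Proposition~\ref{cro:inj,epi} then identifies every such map with a shared endpoint of $\alpha$ and $\beta$ at which $\beta$ follows $\alpha$ in the clockwise order. Since two arcs share at most one starting point and at most one ending point, one obtains $\dim{\rm Hom}(E,F)\leq 2$, with the value $1$ corresponding to exactly one shared endpoint and $2$ to both endpoints being shared; the explicit parametric form $\alpha=[D^{i/p}_{j/q}]$, $\beta=[D^{(i+p)/p}_{j/q}]$ in the latter case then follows by matching the clockwise-orientation constraint against the bridging-arc configuration of $\alpha$ and $\beta$.

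The subtle step will be ruling out extra contributions to ${\rm Hom}(E,F)$ arising from the exceptional-intersection configuration of Theorem~\ref{position}(a), in which $\alpha$ is peripheral (so $E$ is an exceptional torsion sheaf) and $\alpha,\beta$ share no endpoints. In that case any nonzero $E\to F$ would again have to be a monomorphism or an epimorphism by Remark~\ref{rek:mon or epi}, yet Proposition~\ref{cro:inj,epi} insists such a morphism be realised by a shared endpoint of $\alpha$ and $\beta$ --- a contradiction. Hence ${\rm Hom}(E,F)=0$ whenever case~(a) occurs, which is precisely what is needed to deduce the dichotomy $I^{+}(_{s}\beta_{e},\alpha)\in\{1,2\}$ whenever this quantity is nonzero.
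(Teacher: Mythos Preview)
Your bridge identities and the argument for part~(2) are correct. The gap is in part~(1). You want to conclude $\dim_\Bbbk{\rm Hom}(E,F)\le 2$ by matching each nonzero map $E\to F$ to a shared endpoint of $\alpha,\beta$ via Propositions~\ref{hom:mon,epi} and~\ref{cro:inj,epi}. But Proposition~\ref{cro:inj,epi} is stated only under the hypothesis that the target $F$ is a torsion sheaf at an exceptional point, so it says nothing when $E$ and $F$ are both line bundles --- which is precisely where $\dim_\Bbbk{\rm Hom}(E,F)=2$ occurs. Moreover, even for torsion $F$ that proposition only asserts \emph{existence} of an epi or mono associated to a shared endpoint; it does not by itself bound the dimension of the Hom-space. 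Thus the inference ``two arcs share at most two endpoints, hence $\dim_\Bbbk{\rm Hom}(E,F)\le 2$'' is not justified by the results you cite, and the configuration $\alpha=[D^{i/p}_{j/q}]$, $\beta=[D^{(i+p)/p}_{j/q}]$ at the heart of the statement is not actually covered. (The same issue recurs in your ``subtle step'': when $F$ is a line bundle and $E$ is torsion, Proposition~\ref{cro:inj,epi} does not apply, though there the conclusion is rescued trivially since ${\rm Hom}({\rm coh}_0,{\rm vect})=0$.)

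The paper gives no explicit proof but treats the result as immediate from the trichotomy in Theorem~\ref{position}: for each of the three arc configurations one reads off $I^{+}({_s}\beta_e,\alpha)$ and $I^{+}(\alpha,\beta)$ directly. The cleanest way to salvage your approach is to replace the appeal to Proposition~\ref{cro:inj,epi} by Proposition~\ref{hom condition}, which counts $\dim_\Bbbk{\rm Hom}(E,F)$ as the number of common points of $\alpha,\beta$ with the correct clockwise orientation (excluding the patterns of Figure~\ref{fig:morphism point}); combined with Theorem~\ref{position} this yields the bound and the equality cases uniformly, including the two-line-bundle situation.
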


This corollary implies that   for an exceptional pair $(E,F)$ in $\cohx$,   either ${\rm Hom}(E,F)=  0$ or  ${\rm Ext}^{1}(E,F)= 0$,  which   has been  previously stated in \cite[Lemma 3.2.4]{M}.  
Additionally, we can derive an important property of exceptional pairs in  $\cohx$  as following. 
\begin{corollary}\label{cor:class}
Let $(E,F)$ be an exceptional pair in $\cohx$.   
\begin{enumerate}
\item If  ${\rm Hom}(E,F)\ne 0$, then  ${\rm Hom}(E,F)\cong\Bbbk$ or  ${\rm Hom}(E,F)\cong \Bbbk^2$. More precisely, ${\rm Hom}(E,F)\cong \Bbbk^2$ if and only if $E=\mathcal{O}(\vec{x})$ and $F=\mathcal{O}(\vec{x}+\vec{c})$ for some $\vec{x} \in\mathbb{L}$.
\item If  ${\rm Ext}^{1}(E,F)\ne 0$, then  ${\rm Ext}^{1}(E,F)\cong\Bbbk$.
\end{enumerate}
\end{corollary}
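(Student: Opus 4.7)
My plan is to reduce both parts to the intersection-number bounds already proved in Corollary~\ref{iif}. Part (2) follows immediately from the geometric dimension formula, whereas part (1) requires one additional Serre-duality step.

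For part (2), writing $\alpha=\phi^{-1}(E)$ and $\beta=\phi^{-1}(F)$, Theorem~\ref{dimension and positive intersection} gives $\dim_{\Bbbk}{\rm Ext}^{1}(E,F)=I^{+}(\alpha,\beta)$. Corollary~\ref{iif}(2) caps this intersection number at $1$, so the hypothesis ${\rm Ext}^{1}(E,F)\ne 0$ immediately forces ${\rm Ext}^{1}(E,F)\cong\Bbbk$.

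For part (1), the idea is to rewrite $\dim_{\Bbbk}{\rm Hom}(E,F)$ as the intersection number $I^{+}({}_{s}\beta_{e},\alpha)$, which is already controlled by Corollary~\ref{iif}(1). I would apply Serre duality (Proposition~\ref{prop of coh}(1)) in the form ${\rm Ext}^{1}(\tau^{-1}F,E)\cong {\rm DHom}(E,F)$ and then use Proposition~\ref{prop:tau} to identify $\phi^{-1}(\tau^{-1}F)={}_{s}\beta_{e}$. Invoking that proposition requires that every exceptional sheaf be a line bundle or an indecomposable torsion sheaf supported at an exceptional point; this is the case because any indecomposable in a homogeneous tube $\mathcal{U}_{\lambda}$ with $\lambda\in\Bbbk^{*}$ satisfies $\tau X\cong X$ and hence, by Serre duality, has ${\rm Ext}^{1}(X,X)\ne 0$, so cannot be exceptional. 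Combined with Theorem~\ref{dimension and positive intersection} this gives
\[\dim_{\Bbbk}{\rm Hom}(E,F)\;=\;\dim_{\Bbbk}{\rm Ext}^{1}(\tau^{-1}F,E)\;=\;I^{+}({}_{s}\beta_{e},\alpha),\]
and Corollary~\ref{iif}(1) then tells me this quantity lies in $\{0,1,2\}$, attaining the value $2$ precisely when $\alpha=[D^{\frac{i}{p}}_{\frac{j}{q}}]$ and $\beta=[D^{\frac{i+p}{p}}_{\frac{j}{q}}]$ for some $i,j\in\mathbb{Z}$.

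The final step is to rephrase this arc configuration algebraically: under the bijection $\phi$ these arcs correspond to $E=\mathcal{O}(i\vec{x}_{1}-j\vec{x}_{2})$ and $F=\mathcal{O}((i+p)\vec{x}_{1}-j\vec{x}_{2})=\mathcal{O}(\vec{x}+\vec{c})$ with $\vec{x}=i\vec{x}_{1}-j\vec{x}_{2}$, which is the characterization in the statement. As a consistency check, the converse is direct from ${\rm Hom}(\mathcal{O}(\vec{x}),\mathcal{O}(\vec{x}+\vec{c}))\cong S_{\vec{c}}$, whose dimension equals $2$ by the normal form of $\vec{c}$. I do not anticipate any serious obstacle; the only place requiring care is the bookkeeping of the $\tau$-shift in Serre duality and the verification of the hypothesis of Proposition~\ref{prop:tau}.
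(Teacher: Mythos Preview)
Your proposal is correct and follows essentially the same route as the paper: both parts are deduced from Corollary~\ref{iif} via the identities $\dim_{\Bbbk}{\rm Ext}^{1}(E,F)=I^{+}(\alpha,\beta)$ and $\dim_{\Bbbk}{\rm Hom}(E,F)=I^{+}({}_{s}\beta_{e},\alpha)$, and the case $I^{+}({}_{s}\beta_{e},\alpha)=2$ is then translated through $\phi$ to $F=E(\vec{c})$. You actually spell out the Serre-duality derivation of the second identity and the verification that Proposition~\ref{prop:tau} applies, whereas the paper simply asserts $\dim_{\Bbbk}{\rm Hom}(E,F)=I^{+}({}_{s}\beta_{e},\alpha)$; otherwise the arguments coincide.
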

\begin{proof}
 In the following proof, we assume   that $\phi^{-1}(E)=\alpha$  and $\phi^{-1}(F)=\beta$.

(1)  Since ${\rm dim_{\Bbbk}}{\rm Hom}(E,F)=I^{+}(_{s}\beta_{e}, \alpha)$, it follows from Corollary~\ref{iif} that ${\rm dim_{\Bbbk}}{\rm Hom}(E,F)\leq 2$. %\in\{0,1, 2\} 
According to the definition of $\phi$, we have that
$I^{+}(_{s}\beta_{e}, \alpha)=2$ if and only if  $E,F$ are line bundles and $F=E(\vec{c})$. Hence the statement (1) is proved.

(2) Also by Corollary~\ref{iif}, we have  $I^{+}( \alpha,\beta)= 0$ or $1$.  Consequently, statement (2) follows directly from Theorem \ref{dimension and positive intersection}.
\end{proof}

\subsection{The 
graphical characterization  of  exceptional sequences}
We now introduce a family of collections consisting of   arcs in $A_{p,q}$,  which  will used to provide a geometric  realization of  exceptional sequences in $\cohx$. To this end, the following definitions are necessary.

\begin{definition}
Let $\gamma_{1},\gamma_{2},\ldots, \gamma_{s}$ be distinct curves. If the collection $\{\gamma_{i}|i=1,2,\ldots,s\}$ forms a closed figure $\tilde{P}_s$, with vertices
at the intersections of these curves, 
then $\tilde{P}_s$ is called a \emph{$s$-curved polygon} and each  $\gamma_{i}$ is referred to as a \emph{curved edge} of $\tilde{P}_s$ for $i=1,2,\ldots,s$.
\end{definition}

\begin{definition}
    Let $\gamma:=\pi([x_{b}, y_{b}])$ be a  peripheral arc in $A_{p,q}$, where $x, \,y\in \{\frac{\mathbb{Z}}{p},\,\frac{\mathbb{Z}}{q}\}$ and $b\in \{\partial,\,\partial^{\prime}\}$. A marked point $z_b$ is said to be  
\emph{contained} in $\gamma$ if $x<z<y$. 

Given a collection $\Theta$ of arcs in $A_{p,q}$, a marked point $M$ of $A_{p,q}$ is called an  \emph{external point} of $\Theta$ if there exists no peripheral arc $\gamma \in \Theta$ containing $M$. In particular, if $\Theta$ consists entirely of positive bridging arcs, then every marked point of $A_{p,q}$  is an external point of $\Theta$.
\end{definition}

\begin{definition}\label{def:collection}
Let $\gamma_{1},\gamma_{2},\ldots, \gamma_{s}$ be distinct arcs in $A_{p,q}$. 
  The collection $\Theta=\{\gamma_{i}|i=1,2,\ldots,s\}$ is called    an \emph{exceptional collection} if it satisfies the following conditions: 
\begin{itemize}
\item [(E1)] The arcs from $\Theta$ do not intersect in the interior  of $A_{p,q}$, except at exceptional intersections.
 \item[(E2)]  
There is at least one external point     of the collection $\Theta$ on the inner boundary of $A_{p,q}$,
and similarly, there exists at least one external point on the outer boundary  of $A_{p,q}$.
 \item[(E3)]  For any curved polygon $\tilde{P}$ formed by  arcs from  $\Theta$,   $\tilde{P}$ is either  a $3$-curved polygon with two vertices in the interior  of $A_{p,q}$, or there exists a vertex of $\tilde{P}$ that is both the ending point of $\gamma_i$ and  the starting point of $\gamma_j$, where  $\gamma_i$ and $\gamma_j$ are the   curved edges of $\tilde{P}$. 
\end{itemize}

  An exceptional collection $\Theta$ is called 
\emph {maximal  exceptional collection},
 if for every  arc $\gamma$ in $A_{p,q}$, the collection $\Theta\cup \{\gamma\}$ is not an exceptional collection.
\end{definition} 
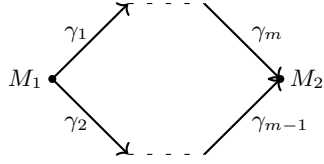
\begin{figure}[H]   
\begin{tikzpicture}
\fill(0,0) circle[radius=1.5pt];
\draw [->,thick](0,0) --(1,1);
\draw [->,thick](0,0) --(1,-1);
\draw [->,thick](2,1) --(3,0);
\draw [->,thick](2,-1) --(3,0);
\fill(3,0) circle[radius=1.5pt];
\node[left] at (0,0){\small$M_1$};
\node[right] at (3,0){\small$M_2$};
\node[left] at (0.6,0.6){\small$\gamma_1$};
\node[right] at (2.5,0.6){\small$\gamma_m$};
\node[left] at (0.6,-0.6){\small$\gamma_2$};
\node[right] at (2.5,-0.6){\small$\gamma_{m-1}$};
\draw[dash pattern=on 2pt off 5pt on 2pt off 5pt] (1,1)--(2,1);
\draw[dash pattern=on 2pt off 5pt on 2pt off 5pt] (1,-1)--(2,-1);
\end{tikzpicture}
\caption{Curved polygon with vertices in the interior of $A_{p,q}$, except for $M_1$ and $M_2$, which may coincide 
 ($M_2$ might be the ending point of  $\gamma_1$   or $\gamma_2$)}
\label{collection}
\end{figure}

\begin{remark}
Condition (E3) in Definition~\ref{def:collection} holds if and only if the arcs in $\Theta$  do not form any curved polygon  as depicted in Figure~\ref{collection}. In such a curved polygon, one vertex is the common starting point of $\gamma_1$ and $\gamma_2$, another vertex is the common ending point of $\gamma_{m-1}$ and $\gamma_m$, and the remaining vertices lie in the interior of $A_{p,q}$.
\end{remark}

\begin{example}
For the case of $p=2$ and $q=3$, let $\gamma_1, \gamma_2, \ldots,\gamma_6$ be six arcs in  $\mathbb{U}$, as illustrated in Figure~\ref{example}. Then the collection $\{\pi(\gamma_{i})|i=1,2,\ldots,5\}$ is a maximal exceptional  collection. However, the collection  $\{\pi({_{s^{-1}}{\gamma_1}_{e^{-1}}}),\pi(\gamma_5)\}$ and $\{\pi(\gamma_3),  \pi(\gamma_4) ,\pi(\gamma_6)\}$ are not   exceptional  collections, 
contradicting conditions (E2) and (E3) from Definition~\ref{def:collection}, respectively.
 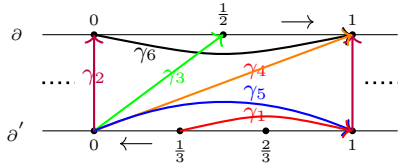
\begin{figure}[H] 
\begin{tikzpicture}[scale=0.85]
\coordinate (1) at (2.1+3,-0.5);
\coordinate (2) at (2.6-0.5+3,1);
\coordinate (5) at(1.1,-0.5);
\coordinate (0) at (3.1,1);
\fill(0) circle[radius=1.5pt];
\node[above] at (0){\tiny$\frac{1}{2}$};
\coordinate (8) at(2.43,-0.5);
\coordinate (3) at(3.76,-0.5);
% \node[below] at (6.42,-0.5) {\tiny$\frac{4}{3}$};
% \fill  (6.42,-0.5) circle[radius=1.5pt];
\fill(8) circle[radius=1.5pt];
\node[below] at (8){\tiny$\frac{1}{3}$};
\fill(3) circle[radius=1.5pt];
\node[below] at (3){\tiny$\frac{2}{3}$};
\coordinate (10) at (3.6,0.35);
\coordinate (11) at (2.1,-0.65);
\coordinate (12) at (4.6,0.95);
\coordinate (14) at (3.6,0.9);
\coordinate (9) at (1.1,1);
\coordinate (6) at (3-0.5,1);
\coordinate (7) at (3.5-1,1);
%\draw[->,thick] (1.1,-0.5) .. controls  (2.43,-0.2) .. (3.76,-0.5);
%\draw[->,thick] (3.76,-0.5) .. controls  (5.1,-0.2) .. (5.1+1.33,-0.5);
\draw[->,thick] (1.1,1) .. controls  (3.1,0.6) .. (5.1,1);
%\draw[->,thick] (1.1+2,1) .. controls  (3.1+2,0.6) .. (5.1+2,1);

\node[below,red] at (3.6,-0){\small$\gamma_{1}$};
\node[below] at (1.9 ,0.95){\small$\gamma_{6}$};
\node[below,blue] at (10){\small$\gamma_{5}$};

\node[red,below] at (3.6,0.7){\small$\gamma_{4}$};

\node[below] at (5){\tiny$0$};
\node[above] at (9){\tiny$0$};

\node[below ] at (1){\tiny$1$};
\node[above] at (2){\tiny$1$};
\node[below,green] at (2.35,0.6){\small$\gamma_{3}$};
\node[below, purple] at (1.1,0.6){\small$\gamma_{2}$};
\fill (9) circle[radius=1.5pt];
\fill (5) circle[radius=1.5pt];
\fill (1) circle[radius=1.5pt];
\fill (2) circle[radius=1.5pt];
\draw[line width=1pt,dotted](0.3,0.25)--(.8,0.25);
% \draw[line width=1pt,dotted](9.7-3.4,0.25)--(10.2-3.4,0.25);
\draw[line width=1pt,dotted](9.7-4.4,0.25)--(10.2-4.4,0.25);
% \draw (0.3,1) -- (6.8,1);
% \draw (0.3,-0.5) -- (6.8,-0.5);
\draw (0.3,1) -- (5.8,1);
\draw (0.3,-0.5) -- (5.8,-0.5);
% \draw[->] (5.5,1.2) -- (6,1.2);
\draw[->] (4,1.2) -- (4.5,1.2);
\draw[<-] (1.5,-0.7) -- (2,-0.7);
\node (a) at (-0.1,1) {\tiny$\partial$};
\node (b) at (-0.1,-0.5) {\tiny{$\partial^{'}$}};
\draw [->,,purple,thick](2.1-1,-0.5) -- (2.6-0.5-1,1);
\draw [->,,purple,thick](2.1+3,-0.5) -- (2.6-0.5+3,1);
\draw [->,thick,orange](1.1,-0.5) -- (5.1,1);
\draw [->,thick,green](1.1,-0.5) -- (3.1,1);
\draw[->,blue,thick] (1.1,-0.5) .. controls
(2.5,0.1) and (3.7,0.1) .. (5.1,-0.5);
\draw[->,red,thick] (2.43,-0.5) .. controls  (3.76,-0.2) .. (5.1,-0.5);
 
\end{tikzpicture}
\caption{$\gamma_1,\gamma_2,\ldots,\gamma_6$  in  $\mathbb{U}$}
\label{example}
\end{figure}
\end{example}

\begin{definition}
     \label{prop:order}
Let $\Theta$ be an exceptional collection in $A_{p,q}$. A binary relation $``\preceq"$ on $\Theta$  is defined such that for   $\gamma_{i},\gamma_{j}\in \Theta$,    $\gamma_i\preceq \gamma_j$  
if   any of the following conditions is satisfied:
\begin{itemize}
\item $\gamma_i=\gamma_j$;
\item
there exists an exceptional intersection of  $\gamma_{i}$ and $\gamma_{j}$;
 \item  
$\gamma_{i}$,   $\gamma_{j}$ share either a starting point or an ending point,   possibly both,  and  $\gamma_j$ follows $\gamma_i$ in the clockwise order at the common point. 
\end{itemize}
\end{definition}

Taking the transitive closure of this relation,  still denoted  by $``\preceq"$, establishes a partial order on $\Theta$. Specifically,
this relation $``\preceq"$ is reflexive and antisymmetric by definition. Moreover, the transitivity of $``\preceq"$ is guaranteed by conditions (E2) and (E3) in Definition~\ref{def:collection}.

\begin{definition}[\cite{MR4578471}]\label{definition:ordered exceptional collection}
An ordered set of arcs $(\gamma_1,\gamma_2,\ldots,\gamma_m)$ in $A_{p,q}$ is called  an \emph{ordered exceptional collection} if it constitutes an exceptional collection and the ordering of the arcs is consistent with the partial order $``\preceq"$  previously defined, that is, $\gamma_{i}\preceq \gamma_{j}$ implies $i\leq j$.
\end{definition}

Now we are ready to determine the collection of   arcs which correspond to an  exceptional sequence.

\begin{theorem}\label{collection and sequence}
Let $\gamma_{1},\gamma_2,\ldots, \gamma_{s}$ be    arcs in  $A_{p,q}$. The following statements are equivalent.
\begin{enumerate}
\item The ordered set  $(\gamma_1,\gamma_2,\ldots,\gamma_s)$
 is an ordered exceptional collection in $A_{p,q}$;
\item The sequence $(\phi( \gamma_{1}),\phi( \gamma_{2}),\ldots,\phi(\gamma_{s}))$ forms an exceptional sequence in $\cohx$.
\end{enumerate}
\end{theorem}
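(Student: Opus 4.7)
The plan is to reduce the equivalence to the pairwise characterization of exceptional pairs provided by Theorem~\ref{position}. A tuple $(\phi(\gamma_1),\ldots,\phi(\gamma_s))$ is an exceptional sequence in $\cohx$ precisely when each $\phi(\gamma_i)$ is an exceptional sheaf and, for every $i<j$, the pair $(\phi(\gamma_i),\phi(\gamma_j))$ is an exceptional pair. By \cite[Lemma 8.1]{CRZ23}, the first condition corresponds to each $\gamma_i$ being an arc in $A_{p,q}$; by Theorem~\ref{position}, the pairwise condition translates into the three geometric alternatives listed there. The theorem thus amounts to matching these pairwise alternatives with the global conditions (E1), (E2), (E3) together with compatibility of the ordering with the partial order $\preceq$.

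For the direction (1)$\Rightarrow$(2), I would argue pair by pair. Fix $i<j$ and consider $\gamma_i,\gamma_j$. If they are incomparable under $\preceq$, then (E1) excludes any non-exceptional interior crossing and the definition of $\preceq$ excludes shared endpoints with $\gamma_j$ following $\gamma_i$ clockwise; the compatibility of the ordering with $\preceq$ also rules out the reverse (which would give $\gamma_j\preceq\gamma_i$ and hence $j<i$). Thus alternative~(3) of Theorem~\ref{position} applies. If $\gamma_i\preceq\gamma_j$ is a direct comparison, then the relevant alternative~(1) or~(2) of Theorem~\ref{position} applies by construction of $\preceq$. Transitive comparisons split into chains of direct ones, and (E3) is what ensures that no hidden inconsistency arises along such a chain.

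For (2)$\Rightarrow$(1), I would verify each condition of Definition~\ref{def:collection} together with compatibility of the ordering. Condition (E1) is immediate from Theorem~\ref{position} applied to each pair $i<j$. Compatibility of the ordering with $\preceq$ follows from a short case check using Corollary~\ref{iif}: if $\gamma_i\preceq\gamma_j$ holds directly and $\gamma_i\neq\gamma_j$, the reversed pair $(\phi(\gamma_j),\phi(\gamma_i))$ cannot be an exceptional pair, forcing $i<j$ in the sequence. The substantial work lies in (E3) and (E2). For (E3) I would proceed contrapositively: assume arcs of $\Theta$ form a forbidden curved polygon $\tilde P$. Walking along the boundary of $\tilde P$ and tracking orientations at its vertices, one should locate a pair of curved edges whose joint configuration violates all three alternatives of Theorem~\ref{position}, contradicting the assumption that every pair $(\phi(\gamma_i),\phi(\gamma_j))$ with $i<j$ is exceptional. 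The implementation likely proceeds by induction on the number of curved edges, reducing to minimal forbidden configurations, such as a $4$-curved polygon with all vertices in the interior, or a polygon with an orientation mismatch at a boundary vertex. For (E2) I would invoke the classical fact that an exceptional sequence of indecomposables in a stable tube of rank $r$ has length at most $r-1$; applied to $\mathcal{U}_\infty$ (rank $p$) and $\mathcal{U}_0$ (rank $q$), this caps the numbers of inner and outer peripheral arcs in $\Theta$ by $p-1$ and $q-1$ respectively. A short combinatorial argument on the coverage of marked points by a non-crossing (except for exceptional crossings) collection of peripheral arcs then produces an external point on each boundary.

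The main obstacle I foresee is the case analysis for (E3). Forbidden curved polygons can be large, mixing bridging and peripheral arcs, with vertices distributed among both boundaries and the interior of $A_{p,q}$; the inductive reduction must isolate, in each case, a pair of arcs whose configuration unavoidably fails all three alternatives of Theorem~\ref{position}. Careful bookkeeping of clockwise orientations at shared endpoints and at exceptional intersections will be the technical heart of the proof.
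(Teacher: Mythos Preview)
Your direction $(1)\Rightarrow(2)$ matches the paper's proof. The difference lies in $(2)\Rightarrow(1)$, specifically in how you plan to handle (E2) and (E3).

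The paper's argument for both conditions is a single observation: a violation of (E2) or (E3) produces a \emph{directed cycle} in the relation generating $\preceq$. Concretely, if (E2) fails on, say, the inner boundary, there are peripheral arcs $\gamma_{i_1},\ldots,\gamma_{i_m}$ with $I^+(\gamma_{i_k},\gamma_{i_{k+1}})\neq 0$ for all $k$ (indices mod $m$); if (E3) fails, the forbidden polygon of Figure~\ref{collection} has its consecutive edges related by the direct clauses of Definition~\ref{prop:order}, again cyclically. Each such direct relation $\gamma_a\preceq\gamma_b$ forces ${\rm Hom}(\phi(\gamma_a),\phi(\gamma_b))\neq 0$ or ${\rm Ext}^1(\phi(\gamma_a),\phi(\gamma_b))\neq 0$, hence the sequence index of $\gamma_a$ must precede that of $\gamma_b$. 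A cycle of such strict inequalities is impossible. No case analysis or induction on polygon size is needed.

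Your plan for (E3) risks a genuine misstep: you aim to ``locate a pair of curved edges whose joint configuration violates all three alternatives of Theorem~\ref{position}''. Read literally, this asks for a pair that fails to be exceptional in \emph{either} order, but no such pair need exist in a forbidden polygon---every adjacent pair of edges satisfies Theorem~\ref{position} in exactly one of the two orders. The contradiction is global (cyclicity versus linearity), not local to any single pair. Your anticipated heavy case analysis is thus aimed at the wrong target; once you track orientations around the polygon you will recover precisely the cycle argument, so you may as well invoke it directly.

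Your alternative for (E2) via the rank bound in stable tubes is correct in principle, but it imports an external fact and still needs the combinatorial coverage step. The paper's cycle argument handles (E2) and (E3) uniformly and self-containedly.
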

\begin{proof}
(1) implies (2). Assume that $(\gamma_{1},\gamma_{2},\ldots, \gamma_{s})$ is  an ordered exceptional collection in  $A_{p,q}$. If  $i\leq j$, then either $\gamma_{i}\preceq \gamma_{j}$ or $\gamma_{i},\gamma_{j}$ are  not comparable by $``\preceq"$. According to the definition of  $``\preceq"$ on 
$\Theta=\{\gamma_{1},\gamma_{2},\ldots, \gamma_{s}\}$,  both situations imply that $(\phi (\gamma_{i}) ,\phi( \gamma_{j} ))$ is an exceptional pair. Consequently,  $(\phi( \gamma_{1}),\phi( \gamma_{2}),\ldots,\phi( \gamma_{s} ))$ is an exceptional sequence in $\cohx$.

(2) implies (1). Now suppose that $(\phi( \gamma_{1}),\phi( \gamma_{2}), \ldots,\phi( \gamma_{s} ))$ is an exceptional sequence.  
If there exist $\gamma_i$ and $\gamma_j$ in $\Theta$ such that the positive intersection of $\gamma_i$
and $\gamma_j$ is not an exceptional intersection, then
\[
    {\rm Ext}^{1}(\phi( \gamma_{i}) ,\phi( \gamma_{j}) )\ne 0,\
    {\rm Hom}(\phi( \gamma_{j}) ,\phi( \gamma_{i}) )\ne 0.
\] This is contrary to assumption.

We claim that  $\Theta$ is    an exceptional collection.  
In fact, if (E2)  is invalid,
without loss of generality, we may assume that the   peripheral arcs  $\gamma_{1},\gamma_{2},\ldots, \gamma_{m}$ with  ${\rm Int}^{+}(\gamma_m,\gamma_{1})\ne 0$ and ${\rm Int}^{+}(\gamma_i,\gamma_{i+1})\ne 0$  for all $1\leq i\leq m-1$, where $1< m\leq s$.  
Then we have 
\[ {\rm Ext}^{1}(\phi( \gamma_{m}) ,\phi( \gamma_{1})) )\ne 0,\
      {\rm Ext}^{1}(\phi( \gamma_{i}) ,\phi( \gamma_{i+1}) )\ne 0,
\]
for all $1\leq i\leq m-1$. 
This contradicts the fact that  $(\phi(\gamma_{1}),\phi(\gamma_{2}),\ldots,\phi(\gamma_{s}))$ is an exceptional sequence. On the other hand, if (E3)   fails to be satisfied, then we   may assume that the arcs $\gamma_{1},\gamma_{2},\ldots, \gamma_{n}$ form a   curved polygon, where $(\gamma_{n},\gamma_{1})$ and $(\gamma_{i},\gamma_{i+1})$ satisfy one of the conditions outlined in Definition~\ref{prop:order} for all $1\leq i\leq n-1$,   where $1< n\leq s$. 
Therefore,  we obtain:
\[
      {\rm Hom}(\phi( \gamma_{n}) ,\phi(\gamma_{1}))\ne 0,
\ {\rm or} \
      {\rm Ext}^{1}(\phi (\gamma_{n}) ,\phi( \gamma_{1}) )\ne 0.
\] 
This also leads to a contradiction.

Furthermore, if $\gamma_{i}\preceq \gamma_{j}$ with $\gamma_{i}\ne\gamma_{j}$, then  there exists  a subset $\{\gamma_{0}',\gamma_{1}',\ldots,\gamma_{m}'\}\subseteq\{\gamma_{1},\gamma_{2},\ldots, \gamma_{s}\}$ such that 
$$\gamma_{i}=\gamma_{0}'\preceq\gamma_{1}'\preceq \ldots \preceq \gamma_{m-1}'\preceq\gamma_{m}'=\gamma_{j}, $$ where the pair $(\gamma_{t-1}',\gamma_{t}')$ satisfies one of the conditions outlined in Definition~\ref{prop:order} for all $1\leq t \leq m$. 
Consequently,   the  sequence 
$$(\phi( \gamma_{i}) ,\phi( \gamma_{1}') ,\ldots,\phi( \gamma_{m-1}') ,\phi (\gamma_{j}) )$$ is a subsequence of  the original exceptional sequence. 
Thus, we conclude that $i\leq j$.  This completes the proof that  an exceptional sequence in  $\cohx$ leads to  an ordered exceptional collection.  
 \end{proof}

\subsection{The geometric realization of complete exceptional sequences}\label{complete exceptional} 
We now classify complete exceptional sequences in $\cohx$ based on the  following property of maximal exceptional collections.

\begin{proposition}\label{collection es} 
Let $\Theta$ be  a maximal  exceptional collection in  $A_{p,q}$.  Suppose   there are   $k$ (resp. $l$)   external points  of   $\Theta$ on the  inner (resp. outer)  boundary of   $A_{p,q}$ with $1\leq k\leq p$ and  $1\leq l\leq q$.
Then  $\Theta$ consists of $p-k$  (resp. $q-l$) peripheral arcs  with endpoints   on the inner  (resp. outer) boundary of    $A_{p,q}$,
and $k+l$ positive bridging arcs.  
Consequently, the  total number of arcs in $\Theta$  is  $p+q$. 
\end{proposition}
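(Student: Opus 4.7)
The plan is to decompose $\Theta = \Theta_{\mathrm{in}} \sqcup \Theta_{\mathrm{out}} \sqcup \Theta_{\mathrm{br}}$ into inner peripheral, outer peripheral, and positive bridging arcs, and to count each class using the structure forced by conditions (E1)--(E3) together with the maximality of $\Theta$. The first key observation is that any arc of $\Theta_{\mathrm{in}}$ has both endpoints in a single cyclic interval of the inner boundary cut out by the $k$ external points: were its endpoints to straddle an external point $e$, then $e$ would lie strictly between them and hence be contained in the arc, contradicting externality. Label the intervals $I_1, \ldots, I_k$, with $I_j$ containing $m_j$ non-external marked points, so that $\sum_j m_j = p - k$.

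Next I would bound the number of inner peripheral arcs landing in each interval $I_j$. Viewing $I_j$ together with its two external endpoints as an $(m_j+2)$-vertex polygon embedded near the inner boundary, the arcs of $\Theta_{\mathrm{in}}$ with endpoints in $I_j$ are chords joining non-adjacent vertices; a direct combinatorial analysis, using Theorem \ref{position}, shows that at most $m_j$ such chords can coexist in an exceptional collection (a maximum is attained, for instance, by the ``long chord'' connecting the two external endpoints together with $m_j - 1$ non-crossing diagonals triangulating the remaining sub-polygon). Maximality of $\Theta$, together with (E3) ruling out curved polygons with $\geq 4$ sides that lack the required shared-endpoint structure, then forces this bound to be attained in every interval. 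Summing over $j$ gives $|\Theta_{\mathrm{in}}| = p - k$, and the symmetric argument yields $|\Theta_{\mathrm{out}}| = q - l$.

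For the bridging arcs I would first show, by a pocket argument, that every arc of $\Theta_{\mathrm{br}}$ has both endpoints at external points: a non-external inner point $x$ lies strictly between the endpoints of some $\gamma \in \Theta_{\mathrm{in}}$, and $\gamma$ cuts off a disk-shaped pocket of $A_{p,q}$ containing $x$ but meeting only the inner boundary, so any arc with endpoint $x$ is trapped in this pocket and cannot reach the outer boundary. Therefore $\Theta_{\mathrm{br}}$ lives in the gap region obtained by excising all peripheral pockets from $A_{p,q}$, which is topologically an annulus with $k$ marked points on one boundary and $l$ on the other. Since two bridging arcs cannot meet at an exceptional intersection (by Remark \ref{exceptional intersection=1}, one of them would have to be peripheral), condition (E1) makes $\Theta_{\mathrm{br}}$ non-crossing in the gap annulus. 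Maximality together with (E3) then forces $\Theta_{\mathrm{br}}$ to subdivide the gap annulus into triangles whose vertices are all marked points, and the Euler identity $V - E + F = 0$ on the annulus, with $V = k + l$ and boundary-edge count $E_{\mathrm{bdy}} = k + l$, combined with $3F = 2E_{\mathrm{int}} + E_{\mathrm{bdy}}$, yields $E_{\mathrm{int}} = k + l$, i.e.\ $|\Theta_{\mathrm{br}}| = k + l$.

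The principal obstacle is the maximality step in each count: one must produce, from an assumed suboptimal configuration, a further arc preserving all of (E1)--(E3). For the inner peripheral case this amounts to exhibiting an admissible chord in an untriangulated sub-polygon of $I_j$; for the bridging case one extracts a missing diagonal from a curved polygon of $\geq 4$ sides in the gap region. In both instances one must verify that the candidate arc is genuinely an arc of $A_{p,q}$ and that adjoining it to $\Theta$ creates neither a non-exceptional crossing nor a forbidden curved polygon violating (E3). Assembling the three counts then yields $|\Theta| = (p - k) + (q - l) + (k + l) = p + q$, as required.
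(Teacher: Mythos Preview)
Your pocket argument for the bridging arcs is where the proof breaks. You claim that a bridging arc cannot have an endpoint at a non-external marked point because the containing peripheral arc $\gamma$ traps it in a disk meeting only one boundary. But condition (E1) in Definition~\ref{def:collection} permits arcs of $\Theta$ to meet at \emph{exceptional} intersections, and by Remark~\ref{exceptional intersection=1} such a crossing between a peripheral arc and a bridging arc is perfectly legal (with the peripheral arc in the role of $\gamma_1$). So a bridging arc starting at a non-external outer point can escape the pocket by crossing $\gamma$ exceptionally; nothing in (E1)--(E3) forbids this. Indeed, the construction given immediately after this proposition in the paper (the extended boundary point sets $\overline{S}_1,\overline{S}_2$ of Definition~\ref{def:extended-boundary} and the endpoint adjustment map $\Psi_\Theta$ of Definition~\ref{adjustment map}) explicitly manufactures maximal exceptional collections whose bridging arcs land at such non-external points. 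Once this claim fails, your gap-annulus model collapses and the Euler-characteristic count of $k+l$ bridging arcs no longer applies.

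There is a second, softer gap in the peripheral count. You assert that at most $m_j$ chords fit in the interval $I_j$ ``by Theorem~\ref{position}'', and that maximality plus (E3) forces equality. But the chords here are not merely non-crossing diagonals of a convex polygon: they may intersect one another exceptionally, and it is precisely the interaction of (E3) with these interior crossings that pins the count down. The paper handles this by induction on the gap width $d$ between two consecutive external points, splitting into the case where the peripheral arcs in that interval are pairwise non-crossing (so a genuine polygon triangulation gives $d-1$ arcs via \cite{FST07}) and the case where two of them, say $[D^{a/p,b/p}]$ and $[D^{(b-1)/p,c/p}]$, meet exceptionally, which allows the inductive hypothesis to be applied to smaller sub-intervals together with a separate count of the ``outer'' arcs joining them. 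For the bridging arcs the paper does not pass to any quotient annulus; instead it orders the bridging arcs cyclically by their endpoints and shows, using maximality and (E3), that the set of marked points lying strictly between any two consecutive bridging arcs contains exactly one external point of $\Theta$, whence their number is $k+l$.
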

\begin{proof} 
Assume that the points $\pi((\frac{h}{p}, 1))$ and $\pi((\frac{h+d}{p}, 1))$ are external points of $\Theta$, with no other external points of $\Theta$ located between them, where $1 \leq d \leq p$.
For  convenience, we define the exceptional collection $\Theta (e, f)$ as follows:
\[
\Theta (e, f) = \{ [D^{\frac{u}{p}, \frac{v}{p}}] \in \Theta \mid e \leq u < v \leq f \}, 
\]
with $h \leq e < f \leq h + d$, and we denote by $|S|$ the cardinality of the set  $S$. These notations will be used  throughout the remainder of this proof.

  We claim that there are $d-1$  peripheral arcs in  $  \Theta (h,h+d)$, which    
  will be proved by induction  on $d$.
 For the base case where  $d=1$,   it is clear that there are no peripheral arcs in  $\Theta (h,h+d)$, thereby validating our claim. For  $d=2$,  the unique peripheral arc $[D^{\frac{h}{p}, \frac{h+2}{p}}]$ exists in $\Theta (h,h+d)$.   
Now, let us assume that   $d>2$.  We analyze two cases based on whether the arcs in $\Theta (h,h+d)$ intersect in the interior of $A_{p,q}$ or not.

If the  arcs  in $\Theta (h,h+d)$ do not intersect in the interior of $A_{p,q}$, then $\Theta (h,h+d)$ forms a  triangulation of marked surface with marked points $$\{\pi((\frac{h}{p},1)), \pi((\frac{h+1}{p},1)), \ldots,\pi((\frac{h+d}{p},1))\}$$ (refer to Figure~\ref{ep arc}).
Hence, there are $d-1$ arcs in $\Theta(h,h+d)$, by \cite[Proposition 2.10]{FST07}. 
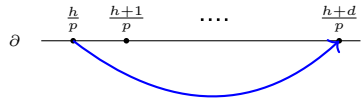
\begin{figure}[H]
\begin{tikzpicture}[scale=0.7]
\coordinate (7) at (0.6,1);
\coordinate (6) at (1.6,1);

\coordinate (0) at (2.6+3,1);
\node[above] at (7){\tiny$\frac{h}{p}$};
\node[above] at (6){\tiny$\frac{h+1}{p}$};
\node[above ] at (0){\tiny$\frac{h+d}{p}$};
\fill (0) circle[radius=1.5pt];
\node (a) at (-0.5,1) {\tiny{$\partial$}};
 
\fill (6) circle[radius=1.5pt];
\fill (7) circle[radius=1.5pt];
\draw (0,1) -- (6,1);
 
%\draw[->] (3,1.2) -- (3.8,1.2);
 
\draw[->,thick,blue] (0.1+0.5,1) .. controls
(0.6+1.8,-0.4) and (1.7+2.4,-0.4) .. (2.6+3,1);
%\draw[->,thick,red] (2+0.5,1) .. controls (3.2+0.5,0.65) and (4.4+0.5,0.65) .. (2.6+3,1);
\draw[line width=1pt,dotted](3,1.4)--(3+0.5,1.4);

%\node[below ] at (2.6+3-0.6,-1){$\frac{i}{q}+1$};
\coordinate (1) at (0.1+0.5-0.6,-1);
\coordinate (2) at (2.6+3-0.6,-1);
 
%\fill (2) circle[radius=1.5pt];
\end{tikzpicture}
\caption{The marked surface with mark points $\{\pi((\frac{h}{p},1)),   \ldots,\pi((\frac{h+d}{p},1))\}$}
\label{ep arc}
\end{figure}
Conversely,  consider the arcs
  $[D^{\frac{a}{p}, \frac{b}{p}}]$
  and $[D^{\frac{b-1}{p}, \frac{c}{p}}]$  in $\Theta (h,h+d)$  such that $h\leq a<b<c\leq h+d$. 
 If $a=h$ and $c=h+d$, then the inductive hypothesis asserts that 
$$|\Theta (h, b)|=b-h-1, \ |\Theta (b-1, h+d)|=h+d-b.$$
Consequently,  the total number of arcs in $\Theta (h,h+d)$ remains  $d-1$.
If $a>h$ or $c<h+d$, then the inductive hypothesis yields that $|\Theta (a, c)|=c-a-1$.
Let $$\mathcal{N} :=\{N_i:=\pi((\frac{h_i}{p},1))|h\leq h_i\leq a,1\leq i\leq m\}, \ \mathcal{M} :=\{M_i:=\pi((\frac{h'_i}{p},1))|c\leq h'_i\leq h+d,1\leq i\leq n\},$$ denote the sets of external points of the collections $\Theta (h, a+1)$ and $\Theta (c-1, h+d)$, respectively.
 Due to the maximality property of $\Theta$, 
if  $\pi((\frac{u}{p},1))\in \mathcal{N} $  and
  $\pi((\frac{v}{p},1))\in \mathcal{M} $, then
 $h\leq u< a$  and  $ c < v \leq h + d $.
By the inductive hypothesis, we have:
\[
|\Theta(h, a+1)|+m=a+1-h,\ |\Theta(c-1, h+d)|+n=h+d+1-c.
\]
Note that the peripheral arcs $[D^{\frac{u}{p}, \frac{v}{p}}]\in  \Theta (h,h+d)$ with endpoints $\pi((\frac{u}{p}, 1))\in \mathcal{N} $ and $\pi ((\frac{v}{p}, 1))\in \mathcal{M} $ do not intersect in the interior of $A_{p,q}$. Consequently, these arcs correspond to a triangulation of the marked polygon illustrated in Figure~\ref{fig:theta1}.
 Moreover, the condition (E3) from Definition~\ref{def:collection}  implies that the peripheral arc ending with %the  rightmost  marked point in 
 $N_{m}\in \mathcal{N} $ and
 %the leftmost  marked point in 
 $M_1 \in \mathcal{M} $ is not in $\Theta (h,h+d)$ (see Figure~\ref{fig:theta1}).
Thus, the number of peripheral arcs $[D^{\frac{u}{p}, \frac{v}{p}}]\in\Theta (h,h+d)$   with $\pi((\frac{u}{p},1))\in \mathcal{N} $ and $\pi((\frac{v}{p},1))\in \mathcal{M} $ that do not intersect in the interior of $A_{p,q}$ is    $m+n$.
Therefore,  we can compute the total number of peripheral arcs in $\Theta (h,h+d)$ as follows: 
\[
|\Theta (a, c)|+|\Theta (h, a+1)|+|\Theta (c-1, h+d)|+m+n=d-1.
\]
 This establishes the claim.

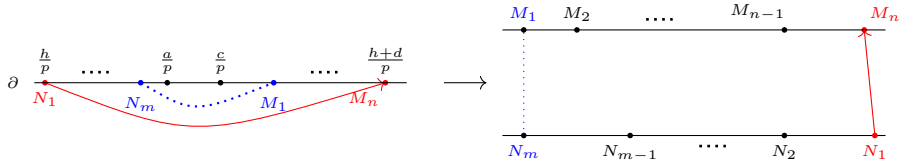
\begin{figure}[H]
    \begin{tikzpicture}[scale=0.7]
\fill[red] (-9+-0.8,1-1) circle[radius=1.5pt];
\node[above] at (-9+-0.8,1-1){\tiny$\frac{h}{p}$};
\node[below,red ] at (-9+-0.8,1-1){\tiny$N_{1}$};
\fill[red] (-9+2.6+3,1-1) circle[radius=1.5pt];
\node[above] at  (-9+2.6+3,1-1){\tiny$\frac{h+d}{p}$};
\node[below,red ] at (-9+2.2+3,1-1){\tiny$M_{n}$};
\draw[red,->] (-9+-0.8,1-1) ..  controls
(-9+1.5+0.5,0.4-1.5)  .. (-9+2.6+3,1-1);

 \node[above] at(-9+1+0.5,1-1){\tiny$\frac{a}{p}$};
 \node[above ] at (-9+2+0.5,1-1){\tiny$\frac{c}{p}$};
 \fill (-9+1+0.5,1-1) circle[radius=1.5pt];
\node (-9+a-1) at (-9+-1.4,1-1) {\tiny{$\partial$}};
%\node (-9+b-1) at (-9+-1.4,-1-1) {\tiny{$\partial^{'}$}};
 \fill (-9+2+0.5,1-1) circle[radius=1.5pt];
\draw (-9+-1,1-1) -- (-9+6,1-1);
%\draw (-9+-1,-1-1) -- (-9+6,-1-1);
%\draw[->] (-9+3,1.2-1) -- (-9+3.8,1.2-1);
%\draw[<-] (-9+1.5,-1.2-1) -- (-9+2.3,-1.2-1);
\draw[line width=1pt,dotted](-9+3+1.2,1.2-1)--(-9+3+1.7,1.2-1);
\draw[line width=1pt,dotted](-9+-0.1,1.2-1)--(-9+0.4,1.2-1);
\draw[blue,thick,dotted] (-9+0.5+0.5,1-1) .. controls
(-9+1.5+0.5,0.4-1)  .. (-9+3+0.5,1-1);
\fill[blue] (-9+0.5+0.5,1-1) circle[radius=1.5pt];
\fill[blue] (-9+3+0.5,1-1) circle[radius=1.5pt];
\node[below,blue ] at (-9+0.5+0.5,0.9-1){\tiny$N_{m}$};
\node[below,blue ] at (-9+3+0.5,0.9-1) {\tiny$M_1$};

\draw[<-] (-1.5,0) -- (-2.3,0);

\coordinate (7) at (-0.8,1);
\coordinate (6) at (-0.8+1,1);
\coordinate (1) at (2.6+1.5,1);
\coordinate (0) at (2.6+3,1);
\node[above,red] at (3+3,1){\tiny$M_{n}$};
\node[above] at (2.6+1,1){\tiny$M_{n-1}$};
\node[above ] at (6){\tiny$M_{2}$};
\node[above,blue] at (7){\tiny$M_{1}$};
\fill[red] (0) circle[radius=1.5pt];
\fill (7) circle[radius=1.5pt];
\fill (1) circle[radius=1.5pt];
\fill (6) circle[radius=1.5pt];
\draw[line width=1pt,dotted](1.5,1.2)--(2,1.2);
\draw[line width=1pt,dotted](2.5,-1.2)--(3,-1.2);
\draw (-1.2,1) -- (6,1);
\draw (-1.2,-1) -- (6,-1);
\coordinate (2) at (-0.8,-1);
\coordinate (3) at (-0.8+2,-1);
\coordinate (4) at (2.6+1.5,-1);
\coordinate[red] (5) at (2.8+3,-1);
\node[below,red] at (5){\tiny$N_{1}$};
\node[below] at (4){\tiny$N_{2}$};
\node[below  ] at (3){\tiny$N_{m-1}$};
\node[below,blue] at (2){\tiny$N_{m}$};
\fill (2) circle[radius=1.5pt];
\fill (3) circle[radius=1.5pt];
\fill (4) circle[radius=1.5pt];
\fill[red] (5) circle[radius=1.5pt];
\draw[<-,red]   (2.6+3,1)--(2.8+3,-1);
\draw[dotted,blue]   (-0.8,-1)--(-0.8,1);
% \coordinate (8) at (-2,-1);
% \fill[red] (8) circle[radius=1.5pt];
% \node[below,red] at (8){\tiny$N_{h+d+1-c-f}$};
% \coordinate (10) at (-2,1);
% \fill[red] (10) circle[radius=1.5pt];
% \node[above,red] at (10){\tiny$M_1$};
% \draw[line width=1pt,dotted](1.5,-1.2)--(2,-1.2);
\end{tikzpicture}
    \caption{$N_i\in \mathcal{N} $ for all $1\leq i \leq n$ and $M_i\in \mathcal{M} $ for all $1\leq i \leq n$}
    \label{fig:theta1} 
\end{figure}

Based on the above claim and  the maximality property of $\Theta$, there are $p - k$ peripheral arcs with endpoints on the inner boundary of $A_{p,q}$.  
By a similar argument, we can also conclude that $\Theta$ contains $q - l$ peripheral arcs with endpoints on the outer boundary of $A_{p,q}$.

 Next, we will prove that there are $k+l$ positive bridging arcs  in $\Theta$.
Assume that the positive bridging arcs   in $\Theta$ are
\[ [D_{\frac{b_1}{q}}^{\frac{   a_1}{p}}],[D_{\frac{b_2}{q}}^{\frac{   a_2}{p}}], \dots,[D_{\frac{b_{r}}{q}}^{\frac{   a_{r}}{p}}] \]
where $  a_1 \leq a_2 \leq \cdots \leq a_{r} \leq a_1+p$ and $b_1 \leq b_2 \leq \cdots \leq b_r \leq b_1+q$.  Given the maximality of $\Theta$ and condition (E3) from Definition~\ref{def:collection}, we get that each set of marked points 
\[
\{\pi((\frac{x}{p},1)),\pi((\frac{y}{q},0))|a_i\leq x<a_{i+1},\  b_i<y\leq b_{i+1}\},
\] 
contains exactly one external point of $\Theta$.  
This holds for all $i = 1, 2, \ldots, r$  with $a_{r+1}=a_1+p$ and $b_{r+1}=b_1+q$ .  Thus  $r=k+l$,   
thereby completing the proof of this proposition.
\end{proof}
 
\begin{remark} Let $ \Theta$ be a maximal exceptional collection in $A_{p,q}$.
   Suppose the points $\pi((\frac{h}{p}, 1))$ and $\pi((\frac{h+d}{p}, 1))$ are external points of  $\Theta$. Then the set $ \{\phi([D^{\frac{a}{p}, \frac{b}{p}}]) | [D^{\frac{a}{p}, \frac{b}{p}}]\in\Theta, h\leq a < b \leq h+d\}$   forms an exceptional  set of type $A_{d-1}$. The number of such  sets is given by the generalized Catalan number
    $$\frac{1}{3d-2}\binom{3d-2}{d-1}=\frac{(3d-3)!}{({d}-1)!(2d-1)!},$$ as established in \cite[Theorem 5.4]{IR}.
  
 \end{remark}

In the light of   Proposition~\ref{collection es}, we present a
method for constructing
   maximal exceptional collections. For this, we give the following definitions.

\begin{definition}\label{def:extended-boundary}
     Let $\Theta$ be an  exceptional collection in $A_{p,q}$. The \emph{extended boundary point sets  of $\Theta$} are defined as follows:
 \begin{itemize}
    \item The extended boundary point set on the inner boundary: $$\overline{S}_1 :=\{ \pi ( (\frac{a}{p},1  ) \, |\, \text{every peripheral arc  in $ \Theta $ containing } \pi ( (\frac{a}{p},1 ) ) \text{ starts at } \pi ( (\frac{a-1}{p},1 ) ) \}.$$ 
    \item The extended boundary point set on the inner boundary:$$\overline{S}_2 := \{\pi ( (\frac{b}{q},0 )) \, |\, \text{every peripheral arc in $ \Theta $ containing } \pi ( (\frac{b}{q},0 ) ) \text{ ends at } \pi ( (\frac{b+1}{q},0 ) ) \}.$$
 \end{itemize} 
\end{definition}
\begin{definition}\label{adjustment map}
  Let $\Theta$ be an  exceptional collection in $A_{p,q}$ with extended boundary point sets $\overline{S}_1, \overline{S}_2$ as in Definition~\ref{def:extended-boundary}.  
   Denote  by $S_1$  (resp. $S_2$ )   the set of external points  of   $\Theta$ on the  inner (resp. outer)  boundary of   $A_{p,q}$.  
    The \emph{basic bridging arcs set  of $\Theta$}  is 
 $$\Theta_B := \{\gamma \in \Theta \mid \gamma \text{ is positive bridging with endpoints in } S_1 \cup S_2\},$$
  and   
    \emph{extended bridging arc set  of $\Theta$} is
$$\overline{\Theta}_B := \{\gamma \in \Theta \mid \gamma \text{ is positive bridging with endpoints in } S_1 \cup S_2 \cup \overline{S}_1 \cup \overline{S}_2\}.$$ 
 The \emph{endpoint adjustment map}
 $\Psi_\Theta: \overline{\Theta}_B \to \Theta_B$   is defined by
\[
\Psi_\Theta( [D_{\frac{b}{q}}^{\frac{a}{p}}]) = [D_{\frac{\overline{b}}{q}}^{\frac{\overline{a}}{p}}],
\]
where $\overline{a}=\min \{ c \mid\pi((\frac{c}{p}, 1))\in S_1,   a \leq c \}$ and  $\overline{b}=\max \{ d \mid\pi((\frac{d}{q}, 0))\in S_2,   d \leq b \}$.
\end{definition}
Based on Definition~\ref{def:extended-boundary} and Definition~\ref{adjustment map},
a maximal exceptional collection   of    $A_{p,q}$  is constructed as follows:  Fix integers $k,l$  such that $1\leq k\leq p$ and  $1\leq l\leq q$. 
\begin{enumerate}[leftmargin=*,label=Step \arabic*.,ref=Step~\arabic*]
 \item  Select $k$ marked points  on the inner  boundary to form the set $S_1$,  and  $l$ marked points   on the outer  boundary  to form the set $S_2$.
    \item \label{step:init} 
Include $p - k$ peripheral arcs on the inner boundary and $q - l$ peripheral arcs on the outer boundary.  These arcs, which form the set $\Theta$,  must satisfy conditions (E1) and (E3) from Definition~\ref{def:collection}, and    not contain any points from
$S_1\cup S_2$.    
    \item \label{step:quotient}     
  Consider $\overline{A}_{k,l}$ as the marked annulus with marked points $S_1 \cup S_2$. Select $k+l$  positive bridging arcs $\{\gamma_1,\gamma_2,\dots,  \gamma_{k+l} \} \subseteq \overline{\Theta}_B$ such that no two arcs intersect in the interior of  $A_{p,q}$ and  their adjusted images $\{\Psi_\Theta(\gamma_1),\Psi_\Theta(\gamma_2),\dots,  \Psi_\Theta(\gamma_{k+l}) \}$ form a triangulation of  $\overline{A}_{k,l}$. 
\end{enumerate}

The  resulting  collection  can be ordered into an ordered exceptional collection $(\overline{\gamma}_1, \ldots,\overline{\gamma}_{p+q})$. 
Moreover, by Theorem~\ref{collection and sequence}, the sequence $ ( \phi(\overline{\gamma}_1), \dots, \phi(\overline{\gamma}_{p+q}) )$
forms a complete exceptional sequence.

The proof of Proposition~\ref{collection es} also implies
that any exceptional collection can be
augmented with a finite number of arcs to construct a maximal exceptional collection.
 Combining this result with Theorem~\ref{collection and sequence} leads to the following conclusion, which is also stated in \cite[Lemma~2.6]{MR1318999}. 
 \begin{corollary}\label{complete}
   Any exceptional sequence in  $\cohx$  can be  enlarged into a complete exceptional sequence. 
\end{corollary}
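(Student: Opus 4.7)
The plan is to translate the problem into the combinatorial setting of arcs in $A_{p,q}$ via Theorem~\ref{collection and sequence}, and then exploit Proposition~\ref{collection es} together with the explicit three-step construction of maximal exceptional collections presented immediately after it. Starting from an exceptional sequence $\epsilon=(E_1, E_2, \ldots, E_s)$ in $\cohx$, I would first apply $\phi^{-1}$ to obtain an ordered exceptional collection $(\gamma_1, \gamma_2, \ldots, \gamma_s)$ in $A_{p,q}$. If the underlying set $\Theta_0 = \{\gamma_1, \ldots, \gamma_s\}$ is already maximal, then by Proposition~\ref{collection es} we have $s = p+q$, which by Proposition~\ref{prop of coh}(3) coincides with the rank of $K_0(\cohx)$, so $\epsilon$ is already complete and there is nothing to prove.

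Otherwise, I would show that $\Theta_0$ can be extended, one arc at a time, to a maximal exceptional collection $\Theta$. For the extension step, I would invoke the construction described after Proposition~\ref{collection es}: since $\Theta_0$ is not maximal, either (a) there is an unfilled peripheral region on the inner or outer boundary of $A_{p,q}$ where a new peripheral arc can be placed without violating conditions (E1)--(E3) of Definition~\ref{def:collection}, or (b) the extended bridging arc set $\overline{\Theta}_{0,B}$ does not yet project under $\Psi_{\Theta_0}$ to a full triangulation of the quotient annulus $\overline{A}_{k,l}$, so a new positive bridging arc can be adjoined. In either case the enlarged collection is still an exceptional collection, and iterating the process terminates after exactly $p+q-s$ steps, by the cardinality count in Proposition~\ref{collection es}.

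Once a maximal collection $\Theta \supseteq \Theta_0$ is obtained, I would equip $\Theta$ with a linear order refining the partial order $\preceq$ of Definition~\ref{prop:order} in such a way that the restricted order on $\{\gamma_1,\ldots,\gamma_s\}$ agrees with the prescribed ordering. This uses the standard topological-sort fact that any linear extension of a partial order on a subset extends to a linear extension on the whole set; compatibility is automatic because the original sequence was already consistent with $\preceq$. Applying $\phi$ to this ordered maximal exceptional collection and appealing to Theorem~\ref{collection and sequence} produces a complete exceptional sequence in $\cohx$ in which $E_1, \ldots, E_s$ appear (in their original relative order) as a subsequence, which is precisely the desired enlargement.

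The main obstacle is the linearization step: a newly adjoined arc $\gamma_{j}$ may be forced by $\preceq$ to precede some original $\gamma_i$, so one cannot in general simply append new arcs at the end of the sequence but must interleave them. The key observation that unblocks this is that the existence of a compatible linearization is purely an order-theoretic statement about finite posets, and does not require any geometric reconstruction; the content of Theorem~\ref{collection and sequence} then converts this linearization back into an honest complete exceptional sequence. I do not expect serious combinatorial difficulties beyond the careful bookkeeping of which external points and boundary regions remain available at each stage of the augmentation.
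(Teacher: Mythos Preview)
Your proposal is correct and follows essentially the same approach as the paper: translate to arcs via Theorem~\ref{collection and sequence}, invoke the augmentation furnished by (the proof of) Proposition~\ref{collection es} to reach a maximal exceptional collection, and then pass back to a complete exceptional sequence. The paper's own argument is a two-line sketch citing exactly these two ingredients; you are in fact more explicit than the paper about the linearization step needed to keep the original $E_i$ in their relative order.
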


\section{The  mutation of   exceptional sequences in $\cohx$} \label{sec.5}
In this section, we 
use the geometric model to describe the action of braid group 
on  exceptional sequences in $\cohx$ and
give a new approach to proving the transitivity of this action.
We  begin by introducing  the action of braid group on  ordered  exceptional collections in $A_{p,q}$. This is based on Corollary~\ref{iif} and  the following convention:
If $\gamma_{1}$ and $\gamma_{2}$  only share the starting point or the ending point at $M$, we denote by  $\delta$ the arc in $\mathcal{C}$ that   terminates   at the remaining endpoints  $M'$ and $M''$ of $\gamma_{1}$ and $\gamma_{2}$ (see  Figure~\ref{fig:mutation1}(a)(b)).

\begin{definition}\label{mutation of arcs}
(1) Let $(\gamma_{1}, \gamma_{2})$ be an ordered  exceptional collection in  $A_{p,q}$. We  define the left  mutation of $\gamma_{2}$ at $\gamma_{1}$ (resp. right mutation of $\gamma_{1}$ at $\gamma_{2}$) denoted by $L_{\gamma_{1}}\gamma_{2}$ (resp. $R_{\gamma_{2}}\gamma_{1}$) as follows.
\begin{itemize}
\item
If $\gamma_{1}$ and $\gamma_{2}$ only share the ending point at $M$, then both $L_{\gamma_{1}}\gamma_{2} $ and $R_{\gamma_{2}}\gamma_{1}$ are defined as $\delta[e]$, where $\delta[e]$  is  obtained by moving the endpoint of $\delta$ to the immediately adjacent marked point on the right along the same boundary;
\item If $\gamma_{1}$ and $\gamma_{2}$ only share the starting point at $M$, then  both $L_{\gamma_{1}}\gamma_{2} $ and $R_{\gamma_{2}}\gamma_{1}$ are defined as $[s]\delta$, where $[s]\delta$  is  obtained by   moving the starting point of $\delta$  to the immediately adjacent marked point on the left along the same boundary;
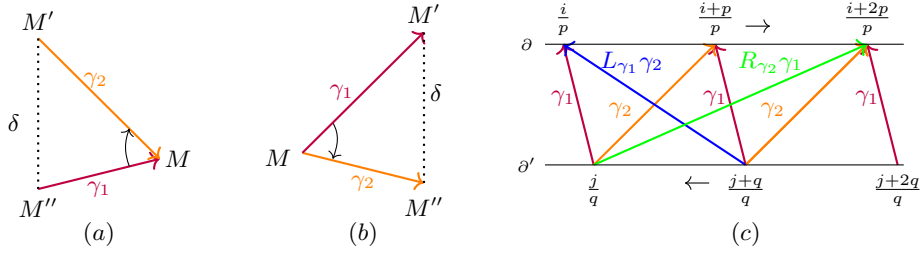
\begin{figure}[h]
\begin{tikzpicture}[scale=0.8]
 \begin{scope}[rotate=-90]
\coordinate (5) at (0.8,0.3);
\coordinate (9) at (2.6,0);
\node[purple] at (9){\small$\gamma_{1}$};
%\node[purple] at (2.6-0.5-2.7,0){\small$\gamma_{1}$};
\node[left,orange]  at (5){\small$\gamma_{2}$};
\draw [->,purple,thick](2.1+0.5,-1) -- (2.6-0.5,1);
\draw [->,orange,thick](-0.4+0.5,-1) --(2.6-0.5,1);
\draw[thick, dotted](2.1+0.5,-1) --(-0.4+0.5,-1) ;
\node[above]  at (1.8,-1.4){\small$\delta$};
%\draw[->,purple,thick] (-0.1,-1) -- (0.4-1,1);
\node[ below]  at (-1+0.5,-1){\small$M'$};
\node[ below]  at(2.1+0.5,-1){\small$M''$};
\node at (2.6-0.5,1.3) {\small$M$};
\draw[->] (2.225,0.5) to [out=200,in=-20] (1.6,0.5);
\coordinate (1) at (1.4,-2);
 \end{scope}
\node[ below]  at (0,-3){\small$(a)$};
\end{tikzpicture}
\hspace{0.5cm}
\begin{tikzpicture}[scale=0.8]
\begin{scope}[rotate=270, xscale=-1]
\coordinate (4) at (1+4+0.1,0);
\coordinate (7) at (4-0.4,0);
\node[orange] at (7){\small$\gamma_{2}$};
%\node[purple] at (-0.9+7.2,0){\small$\gamma_{1}$};
\node[left,purple]  at (4){\small$\gamma_{1}$};
\draw[<-] (4,-0.5) to [out=20,in=150] (4.6,-0.5);
\draw[->,orange,thick] (-0.4+0.5+4,-1) -- (0.1-0.5+4,1);
%\draw [->,purple,thick](2.1+1+3.7,-1) -- (2.6+3.7,1);
\draw [->,purple,thick](-0.4+0.5+4,-1) --(2.6-0.5+4,1);
\draw[thick, dotted](0.1-0.5+4,1)--(2.6-0.5+4,1);
\node[above]  at (4.8,1.2){\small$\delta$};

\coordinate (1) at (3.2,-1.5);
\node[ above]  at (2.6-0.5+4,1){\small$M'$};
\node[ below]  at (-0.4+0.7+4,-1.4){\small$M$};
\node[above] at (0.1-0.7+4-0.5,1) {\small$M''$};
\coordinate (1) at (4.4,-2);
\end{scope} 
\node[ below]  at (0,4.2-1.1){\small$(b)$};
\end{tikzpicture}
\hspace{0.5cm}
\begin{tikzpicture}[scale=0.8]
\coordinate (3) at (1.4,0.7);
\coordinate (4) at (0.7+3,0.7);
\coordinate (2) at (0.9,-0.1);
\coordinate (5) at (0.9+2.5,-0.1);
\coordinate (7) at (0.1-0.5,0.1);
\coordinate (9) at (2.6-0.5,0.1);
\coordinate (1) at (2.6-0.5+2.5,0.1);
\node[purple] at (7){\small$\gamma_{1}$};
\node[purple] at (9){\small$\gamma_{1}$};
\node[left,orange]  at (5){\small$\gamma_{2}$};
\node[purple] at (1){\small$\gamma_{1}$};
\node[left,orange]  at (2){\small$\gamma_{2}$};
\node[left,blue]  at (3){\small$L_{\gamma_{1}}\gamma_{2}$};
\node[left,green]  at (4){\small$R_{\gamma_{2}}\gamma_{1}$};
\draw[->,purple,thick] (-0.4+0.5,-1) -- (0.1-1+0.5,1);
\draw [->,purple,thick](2.1+0.5,-1) -- (2.6-0.5,1);
\draw [->,orange,thick](-0.4+0.5,-1) --(2.6-0.5,1);
\draw [->,purple,thick](2.1+0.5+2.5,-1) -- (2.6-0.5+2.5,1);
\draw [->,orange,thick](-0.4+0.5+2.5,-1) --(2.6-0.5+2.5,1);
\draw[->,orange,thick] (2.1+0.5,-1)-- (2.6-0.5+2.5,1);
\draw[->,blue,thick] (2.1+0.5,-1)-- (0.1-1+0.5,1);
\draw[->,green,thick] (-0.4+0.5,-1)-- (2.6-0.5+2.5,1);
\node[ below]  at (-0.4+0.5,-1){\small$\frac{j}{q}$};
\node[above] at (0.1-1+0.5,1){\small$\frac{i}{p}$};
\node[ below]  at (-0.4+0.5+2.5,-1){\small$\frac{j+q}{q}$};
\node[above] at (0.1-1+0.5+2.5,1){\small$\frac{i+p}{p}$};
\node[ below]  at (-0.4+0.5+2.5+2.5,-1){\small$\frac{j+2q}{q}$};
\node[above] at (0.1-1+0.5+2.5+2.5,1){\small$\frac{i+2p}{p}$};
\draw (-0.7,1) -- (5.2,1);
\draw (-0.7,-1) -- (5.2,-1);
\node()at(-1,1){\tiny{${\partial}$}};
\node()at(-1,-1){\tiny{${\partial'}$}};
\draw[->] (2.6,1.3) -- (3,1.3);
\draw[<-] (1.6,-1.3) -- (2,-1.3);
\coordinate (6) at (2.2,-2);
\node[ below]  at (2.6,-1.8){\small$(c)$};
\end{tikzpicture}
\caption{ $\gamma_{1}$ and $ \gamma_{2}$ meet at their endpoints}
\label{fig:mutation1}
\end{figure}
\item If $\gamma_{1}=[D^{\frac{i}{p}}_{\frac{j}{q}}] $ and $\gamma_{2}=[D^{\frac{i+p}{p}}_{\frac{j}{q}}] $  
 for  $ i,j\in \mathbb{Z}$, 
then we define     $L_{\gamma_{1}}\gamma_{2}=[D_{\frac{j+q}{q}}^{\frac{i}{p}}]$  and $R_{\gamma_{2}}\gamma_{1}=[D_{\frac{j}{q}}^{\frac{i+2p}{p}}]$ (see Figure~\ref{fig:mutation1}(c));
\item If there exists an exceptional intersection of  $\gamma_{1}$ and $\gamma_{2}$, then
$L_{\gamma_{1}}\gamma_{2}=R_{\gamma_{2}}\gamma_{1}$ defined as the result of smoothing the crossing between  $\gamma_{1}$ and $\gamma_{2}$ at the intersection (see  Figure~\ref{fig:mutation3});
\begin{figure}[h]
\begin{tikzpicture}[scale=0.8]
\coordinate (3) at (-1.3,-.4);
\node[purple] at (3){\small$\gamma_{1}$};
\draw [->,orange,thick](-0.4,-1) --(2-1.7,1);
\coordinate (1) at (-0.4,-1);
\draw [->,purple,thick](-0.4+0.5-3,-1) .. controls
(1.3+0.5-3,-.5) and (0.6-1,-.5) ..(0.1,-1);
\coordinate (4) at (0.4,0.2);
\node[orange] at (4){\small$\gamma_{2}$};
\node[blue]  at (-1.9,0.2){\small$L_{\gamma_{1}}\gamma_{2}$};
\draw [->,blue,thick](-0.4+0.5-3,-1)--(2-1.7,1);
\end{tikzpicture}
\hspace{2cm}
\begin{tikzpicture}[scale=0.8]
\coordinate (2) at (3.5-.10,0.35);
\coordinate (6) at (2.7-.10,0.2);
\node[orange]  at (6){\small$\gamma_{2}$};
\node[purple] at (2){\small$\gamma_{1}$};

\draw [->,orange,thick](2-.10,-1) --(2.6-.10,1);
\draw [->,purple,thick](2.1-.10,1) .. controls
(1.3-1.4+1+1.9-.10,0.5) and (2.6-1.4+1+1.9-.10,0.5) ..(2.6-0.5+1+1.9-.10,1);
\draw [->,blue,thick](2-.10,-1)--(2.6-0.5+1+1.9-.10,1);
\node[blue]  at (3.5+.40,-0.35){\small$L_{\gamma_{1}}\gamma_{2}$};
\coordinate (7) at (1,-1.5);
\end{tikzpicture}
\caption{There exists an exceptional intersection of  $\gamma_{1}$ and $\gamma_{2}$}
\label{fig:mutation3}
\end{figure}
\item If
$\gamma_{1}$, $\gamma_{2}$ 
  do not intersect in the interior  of $A_{p,q}$,  and also do not share a starting point or an ending point, 
then we define
 $L_{\gamma_{1}}\gamma_{2}=\gamma_{2}$ and $R_{\gamma_{2}}\gamma_{1}=\gamma_{1}$.
\end{itemize}

(2) Let $(\gamma_{1}, \ldots, \gamma_{r})$ be an  ordered exceptional collection in $A_{p,q}$. For any $1\leq s< r$, we define
$${\rm \sigma }_s \cdot (\gamma_1, \ldots,  \gamma_{s},  \gamma_{s+1},\ldots,  \gamma_{r})=
(\gamma_1, \ldots, \gamma_{s-1}, L_{\gamma_s} \gamma_{s+1},\gamma_s,  \gamma_{s+2} ,\ldots,  \gamma_{r}),$$
 $${\rm \sigma }_s^{-1} \cdot (\gamma_1, \ldots,  \gamma_{s},  \gamma_{s+1},\ldots,  \gamma_{r})=
(\gamma_1, \ldots, \gamma_{s-1},\gamma_{s+1},  R_{\gamma_{s+1}} \gamma_{s},  \gamma_{s+2} ,\ldots,  \gamma_{r}).$$
\end{definition}
 
Now we give a theorem which shows that the left (resp. right) mutation of an ordered  exceptional collection in $A_{p,q}$ corresponds  the left (resp. right) mutation of an exceptional pair in $\cohx$. This theorem enables us to compute  the left or right mutation of an exceptional pair   by applying Definition~\ref{mutation of arcs} to the ordered exceptional collection associated with it.

\begin{theorem}\label{mutation correspond} Let   $(E,F)$ is an exceptional pair in $\cohx$ with  $\phi^{-1}(E)=\alpha$  and $\phi^{-1}(F)=\beta$.  
Then the following equations hold
\[\phi( L_{\alpha}\beta)=L_{E}F, \ \phi( R_{\beta}\alpha )=R_FE.\]
\end{theorem}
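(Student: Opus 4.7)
My plan is to prove $\phi(L_\alpha\beta)=L_E F$ by case analysis on the positional relationship between $\alpha$ and $\beta$ as classified in Theorem~\ref{position}, invoking in each case the appropriate form of the definition of left mutation together with the identifications of $\mathrm{Hom}(E,F)$ and $\mathrm{Ext}^1(E,F)$ provided by Corollary~\ref{iif} and Corollary~\ref{cor:class}. Since the right mutation statement is completely dual, I only carry out the left case in detail.

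When $\alpha$ and $\beta$ are disjoint and share no endpoint, both $\mathrm{Hom}(E,F)$ and $\mathrm{Ext}^1(E,F)$ vanish by Theorem~\ref{dimension and positive intersection} and Proposition~\ref{hom condition}, so $L_E F=F=\phi(\beta)$, matching Definition~\ref{mutation of arcs}. When $\alpha,\beta$ share a single endpoint with $\beta$ clockwise after $\alpha$, Corollary~\ref{iif}(1) gives $\dim_{\Bbbk}\mathrm{Hom}(E,F)=1$, so the canonical map reduces to a single morphism $f\colon E\to F$. Proposition~\ref{cro:inj,epi} determines whether $f$ is a monomorphism or epimorphism according to which boundary the common endpoint lies on, and Theorem~\ref{thm:morphism} then identifies its cokernel or kernel as $\Phi(\pi(\alpha_1))\oplus\Phi(\pi(\alpha_2))$. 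I then verify that one of the $\alpha_i$ falls outside $\mathcal{C}$ (so $\Phi(\pi(\alpha_i))=0$), while the remaining nonzero summand is precisely the arc $[s]\delta$ or $\delta[e]$ prescribed in Definition~\ref{mutation of arcs}.

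When $\alpha,\beta$ share both endpoints, we must have $\alpha=[D^{\frac{i}{p}}_{\frac{j}{q}}]$ and $\beta=[D^{\frac{i+p}{p}}_{\frac{j}{q}}]$, hence $E=\co(\vec{x})$ and $F=\co(\vec{x}+\vec{c})$ with $\vec{x}=i\vec{x}_1-j\vec{x}_2$; Corollary~\ref{cor:class}(1) gives $\dim_{\Bbbk}\mathrm{Hom}(E,F)=2$ and the canonical map is surjective, so a Koszul-type sequence
\[
0\to\co(\vec{x}-\vec{c})\to\co(\vec{x})^{2}\xrightarrow{{\rm can}}\co(\vec{x}+\vec{c})\to 0
\]
yields $L_E F=\co(\vec{x}-\vec{c})=\co(i\vec{x}_1-(j+q)\vec{x}_2)=\phi([D^{\frac{i}{p}}_{\frac{j+q}{q}}])$, which agrees with $L_\alpha\beta$. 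Finally, when $\alpha,\beta$ meet in an exceptional intersection, Corollary~\ref{iif}(2) forces $\mathrm{Ext}^{1}(E,F)\cong\Bbbk$, so $L_E F$ is the universal extension $0\to F\to L_E F\to E\to 0$. Applying Proposition~\ref{prop:ext} to this unique positive intersection produces exactly such a sequence with middle term $\phi(\pi(\gamma_3))\oplus\phi(\pi(\gamma_4))$, the $\gamma_i$ being the two smoothings of the crossing.

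The principal obstacle is to confirm, in the last two cases, that in each middle term exactly one summand vanishes so that the sequence matches the single-arc prescription of Definition~\ref{mutation of arcs}. In the exceptional-intersection case, since $\alpha$ must be peripheral by Remark~\ref{exceptional intersection=1}, the ``short'' smoothing joining an endpoint of $\alpha$ to the nearest endpoint of $\beta$ produces an arc whose endpoints are too close (fewer than two marked steps apart) to lie in $\mathcal{C}$, so it contributes $0$ under $\Phi$; the other smoothing then coincides with the arc displayed in Figure~\ref{fig:mutation3}. A parallel but more elementary check disposes of the corresponding step in the shared-endpoint case, completing the proof.
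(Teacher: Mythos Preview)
Your proposal is correct and follows essentially the same case analysis as the paper's proof, which likewise splits into the cases $\mathrm{Hom}(E,F)\cong\Bbbk$, $\mathrm{Ext}^1(E,F)\cong\Bbbk$, $\mathrm{Hom}(E,F)\cong\Bbbk^2$, and $\mathrm{Hom}=\mathrm{Ext}^1=0$, invoking Theorem~\ref{thm:morphism}, Proposition~\ref{prop:ext}, and the Koszul-type sequence respectively. The only imprecision is that Proposition~\ref{cro:inj,epi} presupposes $F$ is torsion, so when $F$ is a line bundle you should instead cite Proposition~\ref{hom:mon,epi} (or Remark~\ref{rek:mon or epi}) to conclude that the nonzero map $E\to F$ is a monomorphism; this does not affect the rest of your argument.
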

\begin{proof}
According to Corollary~\ref{cor:class}, we will divide this proof into the following four cases. 
\begin{enumerate}
\item  If ${\rm Hom}(E,F)\cong \Bbbk$, then  the nonzero morphism $f:E\to F$ is a monomorphism or an epimorphism by Remark~\ref{rek:mon or epi}. In this case,  $L_{E}F=R_{F}E$ and  we will  
suppose that $\phi^{-1}(L_{E}F)= {\gamma}$.
\begin{itemize}
 \item
If $f:E\to F$ is a monomorphism, then  the arcs $\alpha$, $\beta$ and ${\gamma}$ are illustrated  in Figure~\ref{fig:mutation mon}, following from  Proposition~\ref{hom:mon,epi}(1) and Theorem~\ref{thm:morphism}(1).
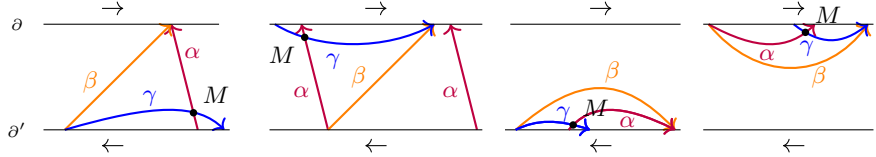
\begin{figure}[H]
\begin{tikzpicture}[scale=0.7]
\coordinate (3) at (1.7,-0.4);
\coordinate (5) at (0.9,-0.1);
\coordinate (7) at (0.1-0.3,0.35);
\coordinate (8) at (3,-0.25);
\coordinate (9) at (2.5,0.45);
%\node[purple] at (7){\small$\alpha$};
\node[purple] at (9){\small$\alpha$};
\node[left,orange]  at (5){\small$\beta$};
\node[blue] at (3){\small$ {\gamma}$};
%\draw[->,purple,thick] (-0.2,-1) -- (0.3-1,1);
\draw [->,purple,thick](2.1+0.5,-1) -- (2.6-0.5,1);
\draw [->,orange,thick](-0.4+0.5,-1) --(2.6-0.5,1);
\draw [->,blue,thick](-0.4+0.5,-1) .. controls
(1.3+0.5,-.5) and (2.6,-.5) ..(2.1+1,-1);
\draw (-0.3,1) -- (3,1);
\draw (-0.3,-1) -- (3.2,-1);
\node()at(-0.8,1){\tiny{${\partial}$}};
\node()at(-0.8,-1){\tiny{${\partial'}$}};
\draw[->] (0.8,1.3) -- (1.2,1.3);
\draw[<-] (0.8,-1.3) -- (1.2,-1.3);

\path[name path=path1] (2.6, -1) -- (2.1, 1); 
    \path[name path=path2] (-0.4 + 0.5, -1) .. controls (1.3 + 0.5, -.5) and (2.6, -.5) .. (2.1 + 1, -1);  
   
    \path[name intersections={of=path1 and path2,by=intersection1}];   
    
    \fill[black,radius=2pt] (intersection1) circle;  
    \node[above right] at (intersection1) {\small$M$};  
\end{tikzpicture}
\hspace{0.05cm}
\begin{tikzpicture}[scale=0.7]
\coordinate (2) at (0.2,0.3);
\coordinate (5) at (1,0);
\coordinate (7) at (0.1-0.5,0);
\coordinate (9) at (2.4,0);
\coordinate (8) at (-.9,0.25);
\node[below,purple] at (7){\small$\alpha$};
\node[below,purple] at (9){\small$\alpha$};
\node[left,orange]  at (5){\small$\beta$};
\node[blue] at (2){\small$ {\gamma}$};
\draw[->,purple,thick] (-0.4+0.5,-1) -- (0.1-0.5,1);
\draw [->,purple,thick](2.1+0.8,-1) -- (2.4,1);
\draw [->,orange,thick](-0.4+0.5,-1) --(2.6-0.5,1);
\draw [->,blue,thick](0.1-1,1) .. controls
(1.3-1.4,0.5) and (2.6-1.4,0.5) ..(2.6-0.5,1);
\draw (-1,1) -- (3,1);
\draw (-1,-1) -- (3,-1);
\draw[->] (0.8,1.3) -- (1.2,1.3);
\draw[<-] (0.8,-1.3) -- (1.2,-1.3);

    \path[name path=path1] (-0.4+0.5,-1) -- (0.1-0.5,1);  
    \path[name path=path2] (0.1-1,1) .. controls
(1.3-1.4,0.5) and (2.6-1.4,0.5) ..(2.6-0.5,1);

    \path[name intersections={of=path1 and path2,by=intersection1}];

    \fill[black,radius=2pt] (intersection1) circle;  
    \node[below left] at (intersection1) {\small$M$};  
\end{tikzpicture}
\hspace{0.05cm}
\begin{tikzpicture}[scale=0.7]
\coordinate (3) at (1.7,-0.85);
\coordinate (5) at (1.1,0.1);
\coordinate (6) at (4.2-0.5,-0.1);
\coordinate (9) at (0.5,-0.7);
\node[right,orange]  at (5){\small$\beta$};
\node[purple] at (3){\small$\alpha$};
\node[blue] at (9){\small$ {\gamma}$};
\draw [->,orange,thick](-0.4,-1) .. controls
(1.3+0.5-1,0.05) and (2.6-1,0.05) ..(2.1+0.5,-1);
\draw [->,purple,thick](-0.4+1,-1) .. controls
(1,-.5) and (2.6-1,-.5) ..(2.1+0.5,-1);
\draw (-.5,1) -- (2.7,1);
\draw (-.6,-1) -- (2.7,-1);
\draw[->] (1,1.3) -- (1.4,1.3);
\draw[<-] (1,-1.3) -- (1.4,-1.3);
\draw [->,blue,thick](-0.4,-1) .. controls
(0,-0.8) and (0.4,-0.8) ..(1,-1);

    \path[name path=path1] (-0.4+1,-1) .. controls
(1,-.5) and (2.6-1,-.5) ..(2.1+0.5,-1);
    
    \path[name path=path2] (-0.4,-1) .. controls
(0,-0.8) and (0.4,-0.8) ..(1,-1);
   
    \path[name intersections={of=path1 and path2,by=intersection1}];  
   
    \draw[->,purple,thick] (-0.4+1,-1) .. controls
(1,-.5) and (2.6-1,-.5) ..(2.1+0.5,-1);    
    \draw[->,blue,thick] (-0.4,-1) .. controls
(0,-0.8) and (0.4,-0.8) ..(1,-1);  
   
    \fill[black,radius=2pt] (intersection1) circle;  
    \node[above right] at (intersection1) {\small$M$};  
  
\end{tikzpicture}
\hspace{0.05cm}
\begin{tikzpicture}[scale=0.7]
\coordinate (2) at (0.45,0.55);
\coordinate (5) at (1,0);
\coordinate (7) at (0.1-0.4,0.7);
\coordinate (9) at (2.6,0);
\coordinate (8) at (-.9,0.25);
\node[below,purple] at (7){\small$\alpha$};
\node[left,orange]  at (5){\small$\beta$};
\node[blue] at (2){\small$ {\gamma}$};
\draw [->,orange,thick](0.1-0.5-1,1) .. controls
(1.3-1.8,-0.1) and (2.6-1.8,-0.1) ..(2.6-1,1);
\draw [->,purple,thick](0.1-0.5-1,1) .. controls
(1.3-1.8,0.5) and (2.6-2.5,0.5) ..(2.6-2,1);
\draw [->,blue,thick](0.2,1) .. controls
(0.7,0.6) and (1.1,0.6) ..(1.6,1);
\draw (-1.5,1) -- (1.7,1);
\draw (-1.5,-1) -- (1.7,-1);
\draw[->] (0,1.3) -- (0.4,1.3);
\draw[<-] (0,-1.3) -- (0.4,-1.3);
\path[name path=path1] (0.1-0.5-1,1) .. controls
(1.3-1.8,0.5) and (2.6-2.5,0.5) ..(2.6-2,1);
     
    \path[name path=path2] (0.2,1) .. controls
(0.7,0.6) and (1.1,0.6) ..(1.6,1); 
   
    \path[name intersections={of=path1 and path2,by=intersection1}];  
   
    \fill[black,radius=2pt] (intersection1) circle;  
    \node[above right] at (intersection1) {\small$M$};  
 \end{tikzpicture}
\caption{$M$ is an exceptional intersection of  $({\gamma},\alpha)$}
\label{fig:mutation mon}
\end{figure}
\item If $f:E\to F$ is an epimorphism, then the arcs  $\alpha$, $\beta$ and $ {\gamma}$ are depicted in Figure~\ref{fig:mutation epi}, based on Proposition~\ref{cro:inj,epi}(1) and Theorem~\ref{thm:morphism}(2).  
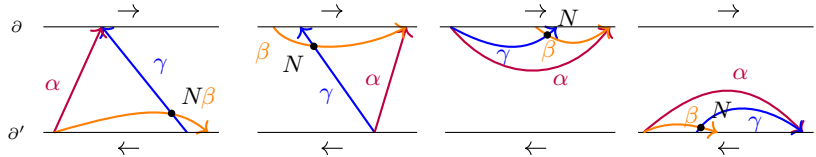
\begin{figure}[H]
\begin{tikzpicture}[scale=0.7]
\coordinate (3) at (3,-0.35);
\coordinate (5) at (0.9-0.5,-0.1);
\coordinate (6) at (3.1,-0.1);
\coordinate (7) at (0.1-0.5,0.35);
\coordinate (8) at (0,-0.35);
\coordinate (9) at (2.1,0.25);
\node[blue] at (9){\small$ {\gamma}$};
\node[left,purple]  at (5){\small$\alpha$};
\node[orange] at (3){\small$\beta$};
\draw [->,blue,thick](2.1+0.5,-1) -- (1,1);
\draw [->,purple,thick](-0.4+0.5,-1) --(2-1,1);

\draw [->,orange,thick](-0.4+0.5,-1) .. controls
(1.3+0.5,-.5) and (2.4,-.5) ..(2+1,-1);
\draw (-0.1,1) -- (3.2,1);
\draw (-0.1,-1) -- (3.2,-1);
\node()at(-0.6,1){\tiny{${\partial}$}};
\node()at(-.6,-1){\tiny{${\partial'}$}};
\draw[->] (1.3,1.3) -- (1.7,1.3);
\draw[<-] (1.3,-1.3) -- (1.7,-1.3);
\path[name path=path1] (2.1+0.5,-1) -- (1,1);  
    \path[name path=path2] (-0.4 + 0.5, -1) .. controls (1.3 + 0.5, -.5) and (2.6, -.5) .. (2.1 + 1, -1);  
    
    \path[name intersections={of=path1 and path2,by=intersection1}];   
      
    \fill[black,radius=2pt] (intersection1) circle;  
    \node[above right] at (intersection1) {\small$N$};  

\end{tikzpicture}
\hspace{0.05cm}
\begin{tikzpicture}[scale=0.7]
\coordinate (2) at (-0.6,0.5);
\coordinate (3) at (2.7,0.5);
\coordinate (5) at (1.8,0);
\coordinate (6) at (-2+.75,0);
\coordinate (7) at (0.1+0.5,0);
\coordinate (9) at (2.6,0);
\node[below,blue] at (7){\small$ {\gamma}$};
\node[left,purple]  at (5){\small$\alpha$};
\node[orange] at (2){\small$\beta$};
\draw[->,blue,thick] (1.5,-1) -- (0.1,1);
\draw [->,purple,thick](1.5,-1) --(2.6-0.5,1);
\draw [->,orange,thick](0.1-0.5,1) .. controls
(1.3-1.4,0.5) and (2.6-1.4,0.5) ..(2.6-0.5,1);
\draw (-0.7,1) -- (2.3,1);
\draw (-0.7,-1) -- (2.3,-1);
\draw[->] (0.5,1.3) -- (0.9,1.3);
\draw[<-] (0.5,-1.3) -- (0.9,-1.3);
\path[name path=path1] (1.5,-1) -- (0.1,1);  
    \path[name path=path2] (0.1-1,1) .. controls
(1.3-1.4,0.5) and (2.6-1.4,0.5) ..(2.6-0.5,1); 
   
    \path[name intersections={of=path1 and path2,by=intersection1}];

    \fill[black,radius=2pt] (intersection1) circle;  
    \node[below left] at (intersection1) {\small$N$};  
  
\end{tikzpicture}
\hspace{0.05cm}
\begin{tikzpicture}[scale=0.7]
\coordinate (2) at (0.45,0.55);
\coordinate (5) at (1,0);
\coordinate (7) at (0.1-0.5,0.75);
\coordinate (9) at (2.6,0);
\coordinate (8) at (-.9,0.25);
\node[below,blue] at (7){\small$ {\gamma}$};
\node[left,purple]  at (5){\small$\alpha$};
\node[orange] at (2){\small$\beta$};
\draw [->,purple,thick](0.1-0.5-1,1) .. controls
(1.3-1.8,-0.1) and (2.6-1.8,-0.1) ..(2.6-1,1);
\draw [->,blue,thick](0.1-0.5-1,1) .. controls
(1.3-1.8,0.5) and (2.6-2.5,0.5) ..(2.6-2,1);
\draw [->,orange,thick](0.2,1) .. controls
(0.7,0.6) and (1.1,0.6) ..(1.6,1);
\draw (-1.6,1) -- (1.7,1);
\draw (-1.5,-1) -- (1.7,-1);
\draw[->] (0,1.3) -- (0.4,1.3);
\draw[<-] (0,-1.3) -- (0.4,-1.3);
\path[name path=path1] (0.1-0.5-1,1) .. controls
(1.3-1.8,0.5) and (2.6-2.5,0.5) ..(2.6-2,1);

    \path[name path=path2] (0.2,1) .. controls
(0.7,0.6) and (1.1,0.6) ..(1.6,1); 
    
    \path[name intersections={of=path1 and path2,by=intersection1}];

    \fill[black,radius=2pt] (intersection1) circle;  
    \node[above right] at (intersection1) {\small$N$};  
\end{tikzpicture}
\hspace{0.05cm}
\begin{tikzpicture}[scale=0.7]
\coordinate (3) at (1.7,-0.85);
\coordinate (5) at (1.1,0.1);
\coordinate (6) at (4.2-0.5,-0.1);
\coordinate (9) at (0.5,-0.7);
\node[right,purple]  at (5){\small$\alpha$};
\node[blue] at (3){\small$ {\gamma}$};
\node[orange] at (9){\small$\beta$};
\draw [->,purple,thick](-0.4,-1) .. controls
(1.3+0.5-1,0.05) and (2.6-1,0.05) ..(2.1+0.5,-1);
\draw [->,blue,thick](-0.4+1,-1) .. controls
(1,-.4) and (2.6-.8,-.4) ..(2.1+0.5,-1);
\draw (-.5,1) -- (2.8,1);
\draw (-.5,-1) -- (2.8,-1);
\draw[->] (1,1.3) -- (1.4,1.3);
\draw[<-] (1,-1.3) -- (1.4,-1.3);
\draw [->,orange,thick](-0.4,-1) .. controls
(0,-0.8) and (0.4,-0.8) ..(1,-1);

    \path[name path=path1] (-0.4+1,-1) .. controls
(1,-.5) and (2.6-1,-.5) ..(2.1+0.5,-1);  
    \path[name path=path2] (-0.4,-1) .. controls
(0,-0.8) and (0.4,-0.8) ..(1,-1);
   
    \path[name intersections={of=path1 and path2,by=intersection1}];

    \fill[black,radius=2pt] (intersection1) circle;  
    \node[above right] at (intersection1) {\small$N$};  
  
\end{tikzpicture}
\caption{$N$ is an exceptional intersection of  $(\beta,{\gamma})$}
\label{fig:mutation epi}
\end{figure}
\end{itemize}
In conclusion, it follows from Definition~\ref{mutation of arcs}(1) that $L_{\alpha}\beta = {\gamma}$. Thus, we obtain the results.

\item  If  ${\rm Ext}^{1}(E,F)\cong\Bbbk$, then we have $L_{E}F=R_{F}E$.  Besides, the intersection of
 $\alpha$ and 
$\beta$ is an exceptional  intersection by Theorem~\ref{position}.  Therefore, the statement holds 
 by Proposition~\ref{prop:ext}. 
\item  If ${\rm Hom}(E,F)\cong \Bbbk^{2}$, then it follows from Corollary~\ref{cor:class}
that
$E=\mathcal{O}(\vec{x})$ and $F=\mathcal{O}(\vec{x}+\vec{c})$ for some $\vec{x} \in\mathbb{L}$. By considering the two  short exact sequences 
in $\cohx$: 
\[
\begin{tikzcd}
   0\ar[r] & \mathcal{O}(\vec{x})  \ar[r] & \mathcal{O}(\vec{x}+\vec{c})^{2} \ar[r] & \mathcal{O}(\vec{x}+2\vec{c})\ar[r]  & 0,
\end{tikzcd}
\]
\[
\begin{tikzcd}
    0\ar[r] & \mathcal{O}(\vec{x}-\vec{c})  \ar[r] & \mathcal{O}(\vec{x})^{2} \ar[r] & \mathcal{O}(\vec{x}+\vec{c})\ar[r]  & 0, 
    \end{tikzcd}
\]
we get that
 $L_{E}F=E(-\vec{c})$ and $R_{F}E=E(\vec{2c})$. 
 Hence,  we have 
\[\phi( L_{\alpha}\beta )=E(-\vec{c})=L_{E}F, \ \phi( R_{\beta}\alpha )=E(2\vec{c})=R_FE.\]
  
\item  If ${\rm Hom}(E,F)= 0={\rm Ext}^{1}(E,F)$, then 
$ \alpha $, $ \beta $ do not intersect in the interior  of $A_{p,q}$   and also do not share a starting point or an ending point. 
Therefore, this proof is completed by  Definition~ \ref{mutation of arcs}(1) and
  the definition of the   mutation of $(E,F)$. 
  \end{enumerate}
\end{proof}

From the proof of Theorem~\ref{mutation correspond}, we observe the following characteristics  of braid group action on exceptional sequences.

\begin{corollary}\label{property of braid action}
Let $(E_1, \ldots,  E_{s},  \ldots,  E_{r})$ be an exceptional sequence in $\cohx$.  Then, for any ${\rm \sigma }_s\in B_r$ and $k\in \mathbb{Z}_{>0}$,   the following results hold:
\begin{enumerate}
\item [(1)] If  ${\rm Hom}(E_{s},  E_{s+1})\cong\Bbbk^2$, then 
$$
{\rm \sigma }_s^k  \cdot (E_1, \ldots, E_{s},  E_{s}(\vec{c}),\ldots,  E_{r})=
(E_1, \ldots ,E_{s-1}, E_{s}(k\vec{c}),E_{s}((k+1)\vec{c}) , E_{s+1},\ldots,  E_{r}),$$
$$
{\rm \sigma }_s^{-k}  \cdot (E_1, \ldots, E_{s},E_{s}(\vec{c}),\ldots,  E_{r})=
(E_1, \ldots, E_{s-1},E_{s}(-k\vec{c}),E_{s}(-(k-1)\vec{c}),E_{s+1},\ldots,  E_{r}).$$

\item [(2)] If  ${\rm Hom}(E_{s},  E_{s+1})\cong\Bbbk$ or ${\rm Ext}^{1}(E_{s},  E_{s+1})\cong\Bbbk$, then 
$$
{\rm \sigma }_s^{3k}  \cdot (E_1, \ldots,  E_{s} ,\ldots,  E_{r})=
(E_1, \ldots,  E_{s}, \ldots,  E_{r})={\rm \sigma }_s^{-3k}  \cdot (E_1, \ldots,  E_{s},\ldots,  E_{r}).
$$

\item [(3)]
If  ${\rm Hom}(E_{s},  E_{s+1})=0={\rm Ext}^{1}(E_{s},  E_{s+1})$, then 
$$
{\rm \sigma }_s^{2k}  \cdot (E_1, \ldots,  E_{s} ,\ldots,  E_{r})=
(E_1, \ldots,  E_{s}, \ldots,  E_{r})={\rm \sigma }_s^{-2k}  \cdot (E_1, \ldots,  E_{s},\ldots,  E_{r}).
$$
\end{enumerate}
\end{corollary}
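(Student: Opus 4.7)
Since $\sigma_s$ only affects positions $s$ and $s+1$ of an exceptional sequence, it suffices to study iterated mutations of the exceptional pair $(E,F):=(E_s, E_{s+1})$ in isolation. I will analyse the three cases separately according to the dimensions of ${\rm Hom}(E,F)$ and ${\rm Ext}^1(E,F)$ prescribed by Corollary~\ref{cor:class}.

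Case~(1) is a direct calculation. By Corollary~\ref{cor:class}(1), ${\rm Hom}(E,F)\cong\Bbbk^2$ forces $E=\mathcal{O}(\vec{x})$ and $F=\mathcal{O}(\vec{x}+\vec{c})$, and the two universal sequences displayed in case (3) of the proof of Theorem~\ref{mutation correspond} give $L_E F = \mathcal{O}(\vec{x}-\vec{c})$ and $R_F E = \mathcal{O}(\vec{x}+2\vec{c})$. Hence after one step both $(L_E F, E)$ and $(F, R_F E)$ are again of the form $(\mathcal{O}(\vec{y}),\mathcal{O}(\vec{y}+\vec{c}))$, and an induction on $k$ immediately produces the two displayed formulas. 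Case~(3) is even shorter: when ${\rm Hom}(E,F) = 0 = {\rm Ext}^1(E,F)$, the definition of mutation gives $L_E F = F$ and $R_F E = E$, so $\sigma_s$ acts as a transposition of positions $s$ and $s{+}1$, and $\sigma_s^2$ is the identity.

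Case~(2) is the essential content, and I will approach it through the geometric model. Writing $\phi^{-1}(E)=\alpha$ and $\phi^{-1}(F)=\beta$, Theorem~\ref{mutation correspond} translates $\sigma_s$ into the arc mutation $L_\alpha\beta$ of Definition~\ref{mutation of arcs}. Under the hypothesis of case~(2), Corollary~\ref{iif} together with Theorem~\ref{position} shows that $(\alpha,\beta)$ falls into one of three geometric situations: a shared endpoint with an associated monomorphism (Figure~\ref{fig:mutation mon}), a shared endpoint with an associated epimorphism (Figure~\ref{fig:mutation epi}), or a single exceptional intersection in the interior of $A_{p,q}$ (Figure~\ref{fig:mutation3}). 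In each situation, $L_\alpha\beta$ is obtained either by a smoothing of the crossing or by an endpoint adjustment, and the new pair $(L_\alpha\beta,\alpha)$ again falls into one of the same three classes. The plan is to verify case by case that three iterations cycle through the three geometric types and close up to the original pair $(\alpha,\beta)$. This gives $\sigma_s^3\cdot(E_1,\ldots,E_r)=(E_1,\ldots,E_r)$, and the formula for $\sigma_s^{-3k}$ follows by the parallel analysis of $R_\beta\alpha$ (or by taking inverses).

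The main obstacle is the bookkeeping in case~(2): one must enumerate the subconfigurations in Figures~\ref{fig:mutation mon}--\ref{fig:mutation3} (which boundary carries the shared endpoint, whether the associated morphism is mono or epi, and the direction of a possible interior intersection) and confirm in each that the three-step cycle returns to the initial configuration. A representative orbit such as $(\mathcal{O},\mathcal{O}(\vec{x}_1))\to(S_{\infty,1},\mathcal{O})\to(\mathcal{O}(\vec{x}_1),S_{\infty,1})\to(\mathcal{O},\mathcal{O}(\vec{x}_1))$, computed using the short exact sequences \eqref{esp} and the universal extension, already captures the full mechanism, and the general closure then follows by the symmetry of the arc model and the case analysis in the proof of Theorem~\ref{mutation correspond}.
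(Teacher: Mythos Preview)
Your proposal is correct and follows essentially the same route as the paper: cases~(1) and~(3) are handled by direct computation from the mutation formulas (the paper simply cites Theorem~\ref{mutation correspond} and Definition~\ref{mutation of arcs}), and case~(2) is reduced via Theorem~\ref{mutation correspond} to a geometric case analysis showing that the three configuration types (shared-endpoint mono, shared-endpoint epi, exceptional intersection) cycle with period three. The paper carries out that enumeration explicitly by drawing all twelve local pictures in Figure~\ref{The fourth orbit} (four boundary configurations, each forming a $3$-cycle), which is exactly the bookkeeping you flag as the main obstacle; your representative algebraic orbit $(\mathcal{O},\mathcal{O}(\vec{x}_1))\to(S_{\infty,1},\mathcal{O})\to(\mathcal{O}(\vec{x}_1),S_{\infty,1})\to(\mathcal{O},\mathcal{O}(\vec{x}_1))$ is a concrete instance of one such row.
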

\begin{proof}
 Statements (1) and (3) can be verified using Theorem~\ref{mutation correspond} and Definition~\ref{mutation of arcs}. 
 For statement (2), by Theorem~\ref{mutation correspond}, we only need to
 compute the left and right mutation of order exceptional collection $(\xi,\eta)$, which  satisfies either $I^{+}(\xi,\eta)=1$ or $I^{+}(_{s}\eta_{e}, \xi)=1$.

There are   twelve cases of the order exceptional collection $(\xi,\eta)$ that satisfy 
either
  $I^{+}(\xi,\eta)=1$ or $I^{+}(_{s}\eta_{e}, \xi)=1$ (see  
  Figure~\ref{The fourth orbit}). Furthermore, 
 for any  
row  in Figure~\ref{The fourth orbit}, the order exceptional collection $(\xi_{i+1},\eta_{i+1})$ represents   $( L_{\xi_{i}}\eta_{i}, \xi_{i})$  for $i=1,2,3$, where $i$ is taken modulo  $3$.  
  Conversely, the order exceptional collection $(\xi_{i},\eta_{i})$ represents   $( \eta_{i+1},R_{\eta_{i+1}}\xi_{i+1})$, also for $i=1,2,3$, with $i$ again taken modulo  $3$.
Therefore, statement (2) holds.
\end{proof}
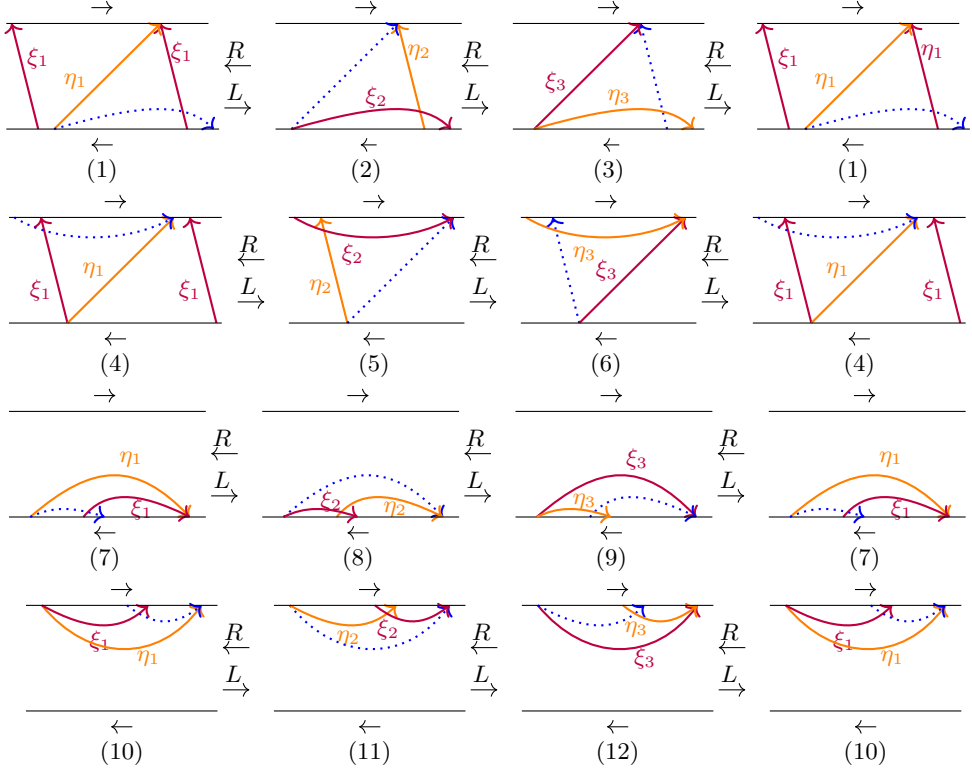
\begin{figure} [h]
\begin{tikzpicture}[scale=0.7]
\coordinate (3) at (1.7,-0.4);
\coordinate (5) at (0.9,-0.1);
\coordinate (7) at (0.1-0.3,0.35);
\coordinate (8) at (3,-0.25);
\coordinate (9) at (2.5,0.45);
\node[purple] at (7){\small$\xi_1$};
\node[purple] at (9){\small$\xi_1$};
\node[left,orange]  at (5){\small$\eta_1$};
%\node[blue] at (3){\small$\overline{\gamma}$};
\draw[->,purple,thick] (-0.2,-1) -- (0.3-1,1);
\draw [->,purple,thick](2.1+0.5,-1) -- (2.6-0.5,1);
\draw [->,orange,thick](-0.4+0.5,-1) --(2.6-0.5,1);
\draw [->,blue,thick,dotted](-0.4+0.5,-1) .. controls
(1.3+0.5,-.5) and (2.6,-.5) ..(2.1+1,-1);
\draw (-0.8,1) -- (3,1);
\draw (-0.8,-1) -- (3.2,-1);
\draw[->] (0.8,1.3) -- (1.2,1.3);
\draw[<-] (0.8,-1.3) -- (1.2,-1.3);
\node at (1,-1.8){\small$(1)$};
\draw[->] (3.3,-0.6) -- (3.8,-0.6);
\node  at (3.5,-0.3){\small$L$};
\draw[<-] (3.3,0.2) -- (3.8,0.2);
\node  at (3.5,0.5){\small$R$};
\end{tikzpicture}
\hspace{0.05cm}
\begin{tikzpicture}[scale=0.7]
\coordinate (3) at (1.7,-0.4);
\coordinate (5) at (0.9,-0.1);
\coordinate (7) at (0.1-0.3,0.35);
\coordinate (8) at (3,-0.25);
\coordinate (9) at (2.5,0.45);
\node[orange] at (9){\small$\eta_2$};
%\node[left,blue]  at (5){\small$\overline{\gamma}$};
\node[purple] at (3){\small$\xi_2$};
\draw [->,orange,thick](2.1+0.5,-1) -- (2.6-0.5,1);
\draw [->,blue,thick,dotted](-0.4+0.5,-1) --(2.6-0.5,1);
\draw [->,purple,thick](-0.4+0.5,-1) .. controls
(1.3+0.5,-.5) and (2.6,-.5) ..(2.1+1,-1);
\draw (-0.2,1) -- (3.3,1);
\draw (-0.2,-1) -- (3.3,-1);
\draw[->] (1.3,1.3) -- (1.7,1.3);
\draw[<-] (1.4,-1.3) -- (1.7,-1.3);
\node at (1.5,-1.8){\small$(2)$};
\draw[->] (3.3,-0.6) -- (3.8,-0.6);
\node  at (3.5,-0.3){\small$L$};
\draw[<-] (3.3,0.2) -- (3.8,0.2);
\node  at (3.5,0.5){\small$R$};
\end{tikzpicture}
\hspace{0.05cm}
\begin{tikzpicture}[scale=0.7]
\coordinate (3) at (1.7,-0.4);
\coordinate (5) at (0.9,-0.1);
\coordinate (8) at (0.9+3.2,-0.1);
\coordinate (7) at (0.1-0.3,0.35);
\coordinate (9) at (2.5,0.45);
%\node[blue] at (9){\small$\overline{\gamma}$};
\node[left,purple]  at (5){\small$\xi_3$};
\node[orange] at (3){\small$\eta_3$};
\draw [->,blue,thick,dotted](2.1+0.5,-1) -- (2.6-0.5,1);
\draw [->,purple,thick](-0.4+0.5,-1) --(2.6-0.5,1);
\draw [->,orange,thick](-0.4+0.5,-1) .. controls
(1.3+0.5,-.5) and (2.6,-.5) ..(2.1+1,-1);
\draw (-0.3,1) -- (3.3,1);
\draw (-0.3,-1) -- (3.3,-1);
\draw[->] (1.3,1.3) -- (1.7,1.3);
\draw[<-] (1.4,-1.3) -- (1.7,-1.3);
\node at (1.5,-1.8){\small$(3)$};
\draw[->] (3.3,-0.6) -- (3.8,-0.6);
\node  at (3.5,-0.3){\small$L$};
\draw[<-] (3.3,0.2) -- (3.8,0.2);
\node  at (3.5,0.5){\small$R$};
\end{tikzpicture}
\hspace{0.05cm}
\begin{tikzpicture}[scale=0.7]
\coordinate (3) at (1.7,-0.4);
\coordinate (5) at (0.9,-0.1);
\coordinate (7) at (0.1-0.3,0.35);
\coordinate (8) at (3,-0.25);
\coordinate (9) at (2.5,0.45);
\node[purple] at (7){\small$\xi_1$};
\node[purple] at (9){\small$\eta_1$};
\node[left,orange]  at (5){\small$\eta_1$};
%\node[blue] at (3){\small$\overline{\gamma}$};
\draw[->,purple,thick] (-0.2,-1) -- (0.3-1,1);
\draw [->,purple,thick](2.1+0.5,-1) -- (2.6-0.5,1);
\draw [->,orange,thick](-0.4+0.5,-1) --(2.6-0.5,1);
\draw [->,blue,thick,dotted](-0.4+0.5,-1) .. controls
(1.3+0.5,-.5) and (2.6,-.5) ..(2.1+1,-1);
\draw (-0.8,1) -- (3,1);
\draw (-0.8,-1) -- (3.2,-1);
\draw[->] (0.8,1.3) -- (1.2,1.3);
\draw[<-] (0.8,-1.3) -- (1.2,-1.3);
\node at (1,-1.8){\small$(1)$};
\end{tikzpicture}
\hspace{0.05cm}
\begin{tikzpicture}[scale=0.7]
\draw[->] (3.3,-0.6) -- (3.8,-0.6);
\node  at (3.5,-0.3){\small$L$};
\draw[<-] (3.3,0.2) -- (3.8,0.2);
\node  at (3.5,0.5){\small$R$};
\coordinate (2) at (0.2,0.3);
\coordinate (5) at (1,0);
\coordinate (7) at (0.1-0.5,0);
\coordinate (9) at (2.4,0);
\coordinate (8) at (-.9,0.25);
\node[below,purple] at (7){\small$\xi_1$};
\node[below,purple] at (9){\small$\xi_1$};
\node[left,orange]  at (5){\small$\eta_1$};
%\node[blue] at (2){\small$\overline{\gamma}$};
\draw[->,purple,thick] (-0.4+0.5,-1) -- (0.1-0.5,1);
\draw [->,purple,thick](2.1+0.8,-1) -- (2.4,1);
\draw [->,orange,thick](-0.4+0.5,-1) --(2.6-0.5,1);
\draw [->,blue,thick,dotted](0.1-1,1) .. controls
(1.3-1.4,0.5) and (2.6-1.4,0.5) ..(2.6-0.5,1);
\draw (-1,1) -- (3,1);
\draw (-1,-1) -- (3,-1);
\draw[->] (0.8,1.3) -- (1.2,1.3);
\draw[<-] (0.8,-1.3) -- (1.2,-1.3);
\node at (1,-1.8){\small$(4)$};
\end{tikzpicture}
\hspace{0.05cm}
\begin{tikzpicture}[scale=0.7]
\draw[->] (2.4,-0.6) -- (2.9,-0.6);
\node  at (2.6,-0.3){\small$L$};
\draw[<-] (2.4,0.2) -- (2.9,0.2);
\node  at (2.6,0.5){\small$R$};
\coordinate (2) at (0.2,0.3);
\coordinate (5) at (1,0);
\coordinate (7) at (0.1-0.5,0);
\coordinate (9) at (2.4,0);
\coordinate (8) at (-.9,0.25);
\node[below,orange] at (7){\small$\eta_2$};
%\node[left,blue]  at (5){\small$\overline{\gamma}$};
\node[purple] at (2){\small$\xi_2$};
\draw[->,orange,thick] (-0.4+0.5,-1) -- (0.1-0.5,1);
\draw [->,blue,thick,dotted](-0.4+0.5,-1) --(2.6-0.5,1);
\draw [->,purple,thick](0.1-1,1) .. controls
(1.3-1.4,0.5) and (2.6-1.4,0.5) ..(2.6-0.5,1);
\draw (-1,1) -- (2.3,1);
\draw (-1,-1) -- (2.3,-1);
\draw[->] (0.4,1.3) -- (0.8,1.3);
\draw[<-] (0.4,-1.3) -- (0.8,-1.3);
\node at (.6,-1.8){\small$(5)$};
\end{tikzpicture}
\hspace{0.05cm}
\begin{tikzpicture}[scale=0.7]
\coordinate (2) at (0.2,0.3);
\coordinate (4) at (1,0);
\coordinate (5) at (-2.3,0);
\coordinate (7) at (0.1-0.5,0);
\coordinate (9) at (2.4,0);
\coordinate (8) at (-.9,0.25);
%\node[below,blue] at (7){\small$\overline{\gamma}$};
\node[left,purple]  at (4){\small$\xi_3$};
\node[orange] at (2){\small$\eta_3$};
\draw[->,blue,thick,dotted] (-0.4+0.5,-1) -- (0.1-0.5,1);
\draw [->,purple,thick](-0.4+0.5,-1) --(2.6-0.5,1);
\draw [->,orange,thick](0.1-1,1) .. controls
(1.3-1.4,0.5) and (2.6-1.4,0.5) ..(2.6-0.5,1);
\draw (-1,1) -- (2.3,1);
\draw (-1,-1) -- (2.3,-1);
\draw[->] (2.4,-0.6) -- (2.9,-0.6);
\node  at (2.6,-0.3){\small$L$};
\draw[<-] (2.4,0.2) -- (2.9,0.2);
\node  at (2.6,0.5){\small$R$};

\draw[->] (0.4,1.3) -- (0.8,1.3);
\draw[<-] (0.4,-1.3) -- (0.8,-1.3);
\node at (0.6,-1.8){\small$(6)$};
\end{tikzpicture}
\hspace{0.05cm}
\begin{tikzpicture}[scale=0.7]

\coordinate (2) at (0.2,0.3);
\coordinate (5) at (1,0);
\coordinate (7) at (0.1-0.5,0);
\coordinate (9) at (2.4,0);
\coordinate (8) at (-.9,0.25);
\node[below,purple] at (7){\small$\xi_1$};
\node[below,purple] at (9){\small$\xi_1$};
\node[left,orange]  at (5){\small$\eta_1$};
%\node[blue] at (2){\small$\overline{\gamma}$};
\draw[->,purple,thick] (-0.4+0.5,-1) -- (0.1-0.5,1);
\draw [->,purple,thick](2.1+0.8,-1) -- (2.4,1);
\draw [->,orange,thick](-0.4+0.5,-1) --(2.6-0.5,1);
\draw [->,blue,thick,dotted](0.1-1,1) .. controls
(1.3-1.4,0.5) and (2.6-1.4,0.5) ..(2.6-0.5,1);
\draw (-1,1) -- (3,1);
\draw (-1,-1) -- (3,-1);
\draw[->] (0.8,1.3) -- (1.2,1.3);
\draw[<-] (0.8,-1.3) -- (1.2,-1.3);
\node at (1,-1.8){\small$(4)$};
\end{tikzpicture}
% \caption{$L_{\alpha}\beta=\overline{\gamma}=R_{\beta}\alpha$}
% \label{The second orbit}
% \end{figure}
% \begin{figure}[H]
\hspace{0.05cm}
\begin{tikzpicture}[scale=0.7]
\coordinate (3) at (1.7,-0.85);
\coordinate (5) at (1.1,0.1);
\coordinate (6) at (4.2-0.5,-0.1);
\coordinate (9) at (0.5,-0.7);
\node[right,orange]  at (5){\small$\eta_1$};
\node[purple] at (3){\small$\xi_1$};
%\node[blue] at (9){\small$\overline{\gamma}$};
\draw [->,orange,thick](-0.4,-1) .. controls
(1.3+0.5-1,0.05) and (2.6-1,0.05) ..(2.1+0.5,-1);
\draw [->,purple,thick](-0.4+1,-1) .. controls
(1,-.5) and (2.6-1,-.5) ..(2.1+0.5,-1);
\draw (-.8,1) -- (2.9,1);
\draw (-.8,-1) -- (2.9,-1);
\draw[->] (3.3-0.3,-0.6) -- (3.8-0.3,-0.6);
\node  at (3.5-0.3,-0.3){\small$L$};
\draw[<-] (3.3-0.3,0.2) -- (3.8-0.3,0.2);
\node  at (3.5-0.3,0.5){\small$R$};

\draw[->] (.8,1.3) -- (1.2,1.3);
\draw[<-] (.8,-1.3) -- (1.2,-1.3);
\node at (1,-1.8){\small$(7)$};
\draw [->,blue,thick,dotted](-0.4,-1) .. controls
(0,-0.8) and (0.4,-0.8) ..(1,-1);
\end{tikzpicture}
\hspace{0.05cm}
\begin{tikzpicture}[scale=0.7]
\coordinate (3) at (1.7,-0.85);
\coordinate (5) at (1.1,0.1);
\coordinate (6) at (4.2-0.5,-0.1);
\coordinate (9) at (0.5,-0.7);
%\node[right,blue]  at (5){\small$\overline{\gamma}$};
\node[orange] at (3){\small$\eta_2$};
\node[purple] at (9){\small$\xi_2$};
\draw [->,blue,thick,dotted](-0.4,-1) .. controls
(1.3+0.5-1,0.05) and (2.6-1,0.05) ..(2.1+0.5,-1);
\draw [->,orange,thick](-0.4+1,-1) .. controls
(1,-.5) and (2.6-1,-.5) ..(2.1+0.5,-1);
%\draw [->,orange,thick](-0.8,-0.8)--(-0.6,-1);
\draw (-.8,1) -- (2.9,1);
\draw (-.8,-1) -- (2.9,-1);
\draw[->] (3.3-0.3,-0.6) -- (3.8-0.3,-0.6);
\node  at (3.5-0.3,-0.3){\small$L$};
\draw[<-] (3.3-0.3,0.2) -- (3.8-0.3,0.2);
\node  at (3.5-0.3,0.5){\small$R$};
\node at (1,-1.8){\small$(8)$};
\draw[->] (.8,1.3) -- (1.2,1.3);
\draw[<-] (.8,-1.3) -- (1.2,-1.3);
\draw [->,purple,thick](-0.4,-1) .. controls
(0,-0.8) and (0.4,-0.8) ..(1,-1);
\end{tikzpicture}
\hspace{0.05cm}
\begin{tikzpicture}[scale=0.7]
\coordinate (3) at (1.7,-0.85);
\coordinate (5) at (1.1,0.1);
\coordinate (6) at (4.2-0.5,-0.1);
\coordinate (8) at (0.5,-0.7);
\coordinate (9) at (0.5+3.2,-0.7);
\node[right,purple]  at (5){\small$\xi_3$};
%\node[blue] at (3){\small$\overline{\gamma}$};
\node[orange] at (8){\small$\eta_3$};
\draw [->,purple,thick](-0.4,-1) .. controls
(1.3+0.5-1,0.05) and (2.6-1,0.05) ..(2.1+0.5,-1);
\draw [->,blue,thick,dotted](-0.4+1,-1) .. controls
(1,-.5) and (2.6-1,-.5) ..(2.1+0.5,-1);
\draw (-.8,1) -- (2.9,1);
\draw (-.8,-1) -- (2.9,-1);
\draw[->] (3.3-0.3,-0.6) -- (3.8-0.3,-0.6);
\node  at (3.5-0.3,-0.3){\small$L$};
\draw[<-] (3.3-0.3,0.2) -- (3.8-0.3,0.2);
\node  at (3.5-0.3,0.5){\small$R$};
\draw[->] (.8,1.3) -- (1.2,1.3);
\draw[<-] (.8,-1.3) -- (1.2,-1.3);
\draw [->,orange,thick](-0.4,-1) .. controls
(0,-0.8) and (0.4,-0.8) ..(1,-1);
\node at (1,-1.8){\small$(9)$};
\end{tikzpicture}
\hspace{0.05cm}
\begin{tikzpicture}[scale=0.7]
\coordinate (3) at (1.7,-0.85);
\coordinate (5) at (1.1,0.1);
\coordinate (6) at (4.2-0.5,-0.1);
\coordinate (9) at (0.5,-0.7);
\node[right,orange]  at (5){\small$\eta_1$};
\node[purple] at (3){\small$\xi_1$};
%\node[blue] at (9){\small$\overline{\gamma}$};
\draw [->,orange,thick](-0.4,-1) .. controls
(1.3+0.5-1,0.05) and (2.6-1,0.05) ..(2.1+0.5,-1);
\draw [->,purple,thick](-0.4+1,-1) .. controls
(1,-.5) and (2.6-1,-.5) ..(2.1+0.5,-1);
\draw (-.8,1) -- (2.9,1);
\draw (-.8,-1) -- (2.9,-1);
\draw[->] (.8,1.3) -- (1.2,1.3);
\draw[<-] (.8,-1.3) -- (1.2,-1.3);
\draw [->,blue,thick,dotted](-0.4,-1) .. controls
(0,-0.8) and (0.4,-0.8) ..(1,-1);
\node at (1,-1.8){\small$(7)$};
\end{tikzpicture}
  
\hspace{0.05cm}
\begin{tikzpicture}[scale=0.7]
\coordinate (2) at (0.45,0.55);
\coordinate (5) at (1,0);
\coordinate (7) at (0.1-0.4,0.7);
\coordinate (9) at (2.6,0);
\coordinate (8) at (-.9,0.25);
\node[below,purple] at (7){\small$\xi_1$};
\node[left,orange]  at (5){\small$\eta_1$};
%\node[blue] at (2){\small$\overline{\gamma}$};
\draw [->,orange,thick](0.1-0.5-1,1) .. controls
(1.3-1.8,-0.1) and (2.6-1.8,-0.1) ..(2.6-1,1);
\draw [->,purple,thick](0.1-0.5-1,1) .. controls
(1.3-1.8,0.5) and (2.6-2.5,0.5) ..(2.6-2,1);
\draw [->,blue,thick,dotted](0.2,1) .. controls
(0.7,0.6) and (1.1,0.6) ..(1.6,1);
\draw (-1.7,1) -- (1.9,1);
\draw (-1.7,-1) -- (1.9,-1);
\draw[->] (3.3-1.3,-0.6) -- (3.8-1.3,-0.6);
\node  at (3.5-1.3,-0.3){\small$L$};
\draw[<-] (3.3-1.3,0.2) -- (3.8-1.3,0.2);
\node  at (3.5-1.3,0.5){\small$R$};

\draw[->] (-0.1,1.3) -- (0.3,1.3);
\draw[<-] (-0.1,-1.3) -- (0.3,-1.3);
\node at (.1,-1.8){\small$(10)$};
\end{tikzpicture}
\hspace{0.05cm}
\begin{tikzpicture}[scale=0.7]
\coordinate (2) at (0.45,0.55);
\coordinate (5) at (1,0);
\coordinate (7) at (0.1-0.4,0.7);
\coordinate (9) at (2.6,0);
\coordinate (8) at (-.9,0.25);
\node[below,orange] at (7){\small$\eta_2$};
%\node[left,blue]  at (5){\small$\overline{\gamma}$};
\node[purple] at (2){\small$\xi_2$};
\draw [->,blue,thick,dotted](0.1-0.5-1,1) .. controls
(1.3-1.8,-0.1) and (2.6-1.8,-0.1) ..(2.6-1,1);
\draw [->,orange,thick](0.1-0.5-1,1) .. controls
(1.3-1.8,0.5) and (2.6-2.5,0.5) ..(2.6-2,1);
\draw [->,purple,thick](0.2,1) .. controls
(0.7,0.6) and (1.1,0.6) ..(1.6,1);
\draw (-1.7,1) -- (1.9,1);
\draw (-1.7,-1) -- (1.9,-1);
\draw[->] (3.3-1.3,-0.6) -- (3.8-1.3,-0.6);
\node  at (3.5-1.3,-0.3){\small$L$};
\draw[<-] (3.3-1.3,0.2) -- (3.8-1.3,0.2);
\node  at (3.5-1.3,0.5){\small$R$};
\draw[->] (-0.1,1.3) -- (0.3,1.3);
\draw[<-] (-0.1,-1.3) -- (0.3,-1.3);
\node at (.1,-1.8){\small$(11)$};
%\draw [->,orange,thick](1.8,1)--(2,0.8);
\end{tikzpicture}
\hspace{0.05cm}
\begin{tikzpicture}[scale=0.7]
\coordinate (2) at (0.45,0.55);
\coordinate (3) at (0.45-3.2,0.55);
\coordinate (5) at (1,0);
\coordinate (7) at (0.1-0.4,0.7);
\coordinate (9) at (2.6,0);
\coordinate (8) at (-.9,0.25);
%\node[below,blue] at (7){\small$\overline{\gamma}$};
\node[left,purple]  at (5){\small$\xi_3$};
\node[orange] at (2){\small$\eta_3$};
\draw [->,purple,thick](0.1-0.5-1,1) .. controls
(1.3-1.8,-0.1) and (2.6-1.8,-0.1) ..(2.6-1,1);
\draw [->,blue,thick,dotted](0.1-0.5-1,1) .. controls
(1.3-1.8,0.5) and (2.6-2.5,0.5) ..(2.6-2,1);
\draw [->,orange,thick](0.2,1) .. controls
(0.7,0.6) and (1.1,0.6) ..(1.6,1);
\draw (-1.7,1) -- (1.9,1);
\draw (-1.7,-1) -- (1.9,-1);
\draw[->] (3.3-1.3,-0.6) -- (3.8-1.3,-0.6);
\node  at (3.5-1.3,-0.3){\small$L$};
\draw[<-] (3.3-1.3,0.2) -- (3.8-1.3,0.2);
\node  at (3.5-1.3,0.5){\small$R$};
\draw[->] (-0.1,1.3) -- (0.3,1.3);
\draw[<-] (-0.1,-1.3) -- (0.3,-1.3);
\node at (.1,-1.8){\small$(12)$};
\end{tikzpicture}
\hspace{0.05cm}
\begin{tikzpicture}[scale=0.7]
\coordinate (2) at (0.45,0.55);
\coordinate (5) at (1,0);
\coordinate (7) at (0.1-0.4,0.7);
\coordinate (9) at (2.6,0);
\coordinate (8) at (-.9,0.25);
\node[below,purple] at (7){\small$\xi_1$};
\node[left,orange]  at (5){\small$\eta_1$};
%\node[blue] at (2){\small$\overline{\gamma}$};
\draw [->,orange,thick](0.1-0.5-1,1) .. controls
(1.3-1.8,-0.1) and (2.6-1.8,-0.1) ..(2.6-1,1);
\draw [->,purple,thick](0.1-0.5-1,1) .. controls
(1.3-1.8,0.5) and (2.6-2.5,0.5) ..(2.6-2,1);
\draw [->,blue,thick,dotted](0.2,1) .. controls
(0.7,0.6) and (1.1,0.6) ..(1.6,1);
\draw (-1.7,1) -- (1.9,1);
\draw (-1.7,-1) -- (1.9,-1);
\draw[->] (-0.1,1.3) -- (0.3,1.3);
\draw[<-] (-0.1,-1.3) -- (0.3,-1.3);
\node at (.1,-1.8){\small$(10)$};
\end{tikzpicture}
\caption{The bule  dotted arc represent the arc  $L_{\xi_i}\eta_i=R_{\eta_i}\xi_i$ for $i=1,2,3$}
\label{The fourth orbit}
\end{figure}

\begin{corollary}
Let $\epsilon =(E_1,E_2, \ldots,  E_{r})$ be an exceptional sequence in $\cohx$.  
\begin{enumerate}
\item  For any
$\sigma\in B_r$, there exists a line bundle
 in $\sigma\epsilon$ if and only if   there exists     a line bundle
 in $\epsilon$. 
\item If $(E_1,E_2)$  satisfies  either ${\rm Hom}(E_1,E_2)=\Bbbk$ or ${\rm Ext}^{1}(E_1,E_2)=\Bbbk$, then  the following   hold:
$$L_{(L_{E_1}E_2)}E_1=E_2\quad {\rm and}\quad  L_{E_2}(L_{E_1}E_2)=E_1.$$ Dually, $$R_{(R_{E_2}E_1)}E_2=E_1\quad  {\rm and}\quad  R_{E_1}(R_{E_2}E_1)=E_2.$$
\end{enumerate}
\end{corollary}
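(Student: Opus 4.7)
The plan is to handle the two parts separately. For part (1) I will use the Serre-subcategory structure of $\mathrm{coh}_{0}\text{-}\mathbb{X}(p,q)$, together with the classification of exceptional indecomposables. For part (2) I will invoke Corollary~\ref{property of braid action}(2) and expand the resulting relation $\sigma_{1}^{\pm 3}\cdot(E_{1},E_{2})=(E_{1},E_{2})$.

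For part (1), I first observe that every exceptional indecomposable in $\cohx$ is either a line bundle or a torsion sheaf supported at an exceptional point. Indeed, Proposition~\ref{prop of coh}(2) identifies indecomposable bundles with line bundles, while for $\lambda\in\Bbbk^{*}$ the tube $\mathcal{U}_{\lambda}$ has rank one, so $\tau$ fixes every object of $\mathcal{U}_{\lambda}$, and Serre duality then gives $\mathrm{Ext}^{1}(S_{\lambda}^{(j)},S_{\lambda}^{(j)})\cong\mathrm{D}\,\mathrm{End}(S_{\lambda}^{(j)})\neq 0$, so no ordinary-point torsion sheaf is exceptional. Next, I show that the torsion subcategory $\mathrm{coh}_{0}\text{-}\mathbb{X}(p,q)$ absorbs mutations: since it is a Serre subcategory, it is closed under kernels, cokernels and extensions, so the three defining short exact sequences of $L_{E}F$ and $R_{F}E$ in Section~\ref{sec.2} keep $L_{E}F$ and $R_{F}E$ inside $\mathrm{coh}_{0}\text{-}\mathbb{X}(p,q)$ whenever $E,F$ lie there; note that the anomalous case $\mathrm{Hom}(E,F)\cong\Bbbk^{2}$ does not arise, since by Corollary~\ref{cor:class}(1) it occurs only between line bundles, and the degenerate case $\mathrm{Hom}(E,F)=\mathrm{Ext}^{1}(E,F)=0$ gives $L_{E}F=F$ trivially. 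Consequently, the set of exceptional sequences whose entries all lie in $\mathrm{coh}_{0}\text{-}\mathbb{X}(p,q)$ is stable under each generator $\sigma_{s}^{\pm 1}$, hence under all of $B_{r}$. Contrapositively, if $\epsilon$ contains a line bundle then so does $\sigma\cdot\epsilon$, and the converse follows by applying $\sigma^{-1}$.

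For part (2), the hypothesis on $(E_{1},E_{2})$ is exactly that of Corollary~\ref{property of braid action}(2), so $\sigma_{1}^{3}\cdot(E_{1},E_{2})=(E_{1},E_{2})$. Iterating the formula $\sigma_{1}\cdot(A,B)=(L_{A}B,A)$ three times produces
\[
\sigma_{1}^{3}\cdot(E_{1},E_{2})=\bigl(L_{L_{L_{E_{1}}E_{2}}E_{1}}(L_{E_{1}}E_{2}),\;L_{L_{E_{1}}E_{2}}E_{1}\bigr).
\]
Matching the second coordinate with $E_{2}$ immediately yields $L_{L_{E_{1}}E_{2}}E_{1}=E_{2}$, which is the first asserted identity; substituting this equality back into the first coordinate collapses it to $L_{E_{2}}(L_{E_{1}}E_{2})=E_{1}$, the second identity. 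The dual statements for right mutation follow verbatim from $\sigma_{1}^{-3}\cdot(E_{1},E_{2})=(E_{1},E_{2})$ by iterating $\sigma_{1}^{-1}\cdot(A,B)=(B,R_{B}A)$.

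The main obstacle I anticipate is entirely in part (1): one must verify uniformly that each of the cases in Definition~\ref{mutation of arcs} (or, equivalently via Theorem~\ref{mutation correspond}, each of the three nontrivial defining sequences of $L_{E}F$ and $R_{F}E$) sends torsion inputs to torsion outputs. Once this case-check is settled, the combinatorial model collapses the problem to the Serre-subcategory argument above, and both parts are short.
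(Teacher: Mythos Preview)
Your proof is correct. For part~(2) you use exactly the same input as the paper: the $3$-periodicity $\sigma_{1}^{\pm 3}\cdot(E_{1},E_{2})=(E_{1},E_{2})$ from Corollary~\ref{property of braid action}(2), unpacked into the four mutation identities. For part~(1), however, you take a genuinely different route. The paper simply points back to the proof of Corollary~\ref{property of braid action}(2), meaning the twelve-case enumeration in Figure~\ref{The fourth orbit}: one inspects each $3$-cycle of arc pairs and observes that the presence of a bridging arc (equivalently, a line bundle) is invariant along each cycle, the remaining cases $\mathrm{Hom}\cong\Bbbk^{2}$ and $\mathrm{Hom}=\mathrm{Ext}^{1}=0$ being trivial. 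Your argument instead exploits that $\mathrm{coh}_{0}\text{-}\mathbb{X}(p,q)$ is a Serre subcategory, hence closed under the kernels, cokernels and extensions appearing in the defining sequences of $L_{E}F$ and $R_{F}E$; this shows at once that all-torsion exceptional sequences are $B_{r}$-stable, and the equivalence follows. Your approach is more conceptual and avoids the geometric case analysis entirely, while the paper's approach stays within the combinatorial model and reuses computations already performed. One minor remark: your sentence beginning ``Contrapositively'' actually states the direction obtained from $\sigma^{-1}$-stability rather than the literal contrapositive, but since you invoke both $\sigma$ and $\sigma^{-1}$ the logic is sound; and your final paragraph anticipates an obstacle (the torsion-to-torsion case-check) that your Serre-subcategory argument has in fact already dispatched.
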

\begin{proof}
This conclusion follows directly from the proof of Corollary~\ref{property of braid action}(2).
\end{proof}

Our discussion now turns to the transitivity of the braid group action on the set of ordered exceptional collections in $A_{p,q}$.  To explore this, we first establish several technical lemmas.

Note that different choices of order for a given exceptional collection in $A_{p,q}$ give rise to  distinct ordered exceptional collections. However, we show that   all these ordered collections are interconnected via the braid group action.

\begin{lemma}\label{same collection}
Assume that $\Gamma=(\gamma_1,\gamma_2,\ldots,\gamma_r)$ and $\Gamma'=(\gamma'_1,\gamma'_2,\ldots,\gamma'_r)$ are two ordered exceptional collections in $A_{p,q}$ derived from the same exceptional collection. Then there exists  an element $\sigma$ in $B_r$ such that $\sigma \Gamma=\Gamma'$.
\end{lemma}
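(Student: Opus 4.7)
The plan is to reduce Lemma~\ref{same collection} to a classical combinatorial fact about linear extensions of finite posets and then combine it with the explicit form of the braid action when adjacent arcs are incomparable. By Definition~\ref{prop:order}, the common underlying exceptional collection $\Theta = \{\gamma_1,\ldots,\gamma_r\} = \{\gamma'_1,\ldots,\gamma'_r\}$ carries a partial order ``$\preceq$'', and by Definition~\ref{definition:ordered exceptional collection} both $\Gamma$ and $\Gamma'$ are linear extensions of $(\Theta,\preceq)$.

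First, I would invoke the standard fact that any two linear extensions of a finite poset are connected by a finite sequence of adjacent transpositions, each of which swaps two consecutive elements that are \emph{incomparable} under $\preceq$. This can be proved by induction on the number of inversions of $\Gamma'$ relative to $\Gamma$: if $\Gamma \ne \Gamma'$, one locates a position $s$ where $\Gamma = (\ldots,\gamma_s,\gamma_{s+1},\ldots)$ lists two elements in the opposite order from $\Gamma'$; the pair $(\gamma_s,\gamma_{s+1})$ is then necessarily $\preceq$-incomparable (otherwise one of the two extensions would violate the order), and swapping them produces a new linear extension strictly closer to $\Gamma'$.

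Next, I would translate each adjacent swap of incomparable arcs into a braid generator. If $\gamma_s,\gamma_{s+1}$ are incomparable in $\preceq$, then none of the three clauses of Definition~\ref{prop:order} applies to either $(\gamma_s,\gamma_{s+1})$ or $(\gamma_{s+1},\gamma_s)$; by Theorem~\ref{position} this forces the arcs to be disjoint in the interior of $A_{p,q}$ and to share no endpoint. The last clause of Definition~\ref{mutation of arcs}(1) then gives $L_{\gamma_s}\gamma_{s+1} = \gamma_{s+1}$ (and $R_{\gamma_{s+1}}\gamma_s = \gamma_s$), so by Definition~\ref{mutation of arcs}(2) the action of $\sigma_s$ on the ordered collection is precisely the adjacent transposition at position $s$. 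Moreover, the swapped sequence is still an ordered exceptional collection, because the underlying set $\Theta$ does not change (so conditions (E1)--(E3) of Definition~\ref{def:collection} persist) and the new order still extends $\preceq$.

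Finally, concatenating the generators $\sigma_{s_1},\sigma_{s_2},\ldots,\sigma_{s_k}$ corresponding to the successive adjacent transpositions in Step~1 yields $\sigma := \sigma_{s_k}\cdots\sigma_{s_1} \in B_r$ satisfying $\sigma\cdot\Gamma = \Gamma'$. The only genuinely nontrivial ingredient is the poset-theoretic Step~1 together with the observation that incomparability of an adjacent pair in an extension of $\preceq$ is exactly the geometric condition (no interior intersection and no shared endpoint) that makes the braid generator degenerate to a bare transposition; everything else is a direct application of the definitions recalled above.
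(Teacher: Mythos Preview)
Your proof is correct and follows essentially the same approach as the paper: both arguments hinge on the observation that when two adjacent arcs in an ordered exceptional collection are $\preceq$-incomparable, the braid generator $\sigma_s$ acts as the bare transposition (via the last clause of Definition~\ref{mutation of arcs}(1)). The paper organizes the swaps as a selection-sort (bubble the minimal element $\gamma'_1$ to the front, then $\gamma'_2$, and so on), whereas you invoke the standard connectedness of linear extensions under adjacent incomparable swaps; the underlying mechanism is the same.
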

\begin{proof}
Suppose there exists some $1<i \leq r$ such that $\gamma_i = \gamma'_1$. Consequently, the pair $(\gamma_j, \gamma_i)$ fails to meet   all  the conditions outlined in Definition~\ref{prop:order}, that is,  $L_{\gamma_j}\gamma_i = \gamma_i$ holds for $1 \leq j \leq i-1$.  

By applying the braid group generators $\sigma_1, \ldots, \sigma_{i-1}$ to $\Gamma$, we can move $\gamma_i$ (which is $\gamma'_1$) to the first position, shifting $\gamma_1, \ldots, \gamma_{i-1}$ one position to the right:
\[
\sigma_{i-1} \ldots \sigma_1 \Gamma = (\gamma_i = \gamma'_1, \gamma_1, \ldots, \gamma_{i-1}, \gamma_{i+1}, \ldots, \gamma_r).
\]

Next, locate $\gamma'_2$  in the  ordered collection $\sigma_{i-1} \ldots \sigma_1 \Gamma$, and perform a similar mutation to position it immediately after $\gamma'_1$. Repeating this process for all elements of $\Gamma'$, we can  mutate $\Gamma$ into $\Gamma'$. This inductive procedure constructs an element $\sigma$ of the braid group such that $\sigma \Gamma = \Gamma'$.
\end{proof}

 Any ordered exceptional collection can be transformed via mutations  to yield  one where bridging arcs and peripheral arcs do not intersect in the interior of $A_{p,q}$. 
 \begin{lemma}\label{lem:5.7}
     Let $\Gamma=(\gamma_1,\gamma_2,\ldots,\gamma_r)$ be an ordered exceptional collection in $A_{p,q}$. Then there exists an   element  $\sigma\in B_r$ such that  $\sigma \Gamma=(\gamma'_1,\gamma'_2,\ldots,\gamma'_r)$
 satisfies the following condition:  if there is an exceptional intersection between 
$\gamma'_i$  and $\gamma'_j$  then both   $\gamma'_i$  and $\gamma'_j$ are  peripheral arcs for $1\leq i<j\leq r$. 
\end{lemma}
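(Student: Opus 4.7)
The plan is to proceed by induction on the nonnegative integer
\[
N(\Gamma) \;:=\; \#\bigl\{(i,j)\,:\,1\le i<j\le r,\ \gamma_i \text{ and } \gamma_j \text{ admit an exceptional intersection with at least one of them bridging}\bigr\}.
\]
If $N(\Gamma)=0$ then, since every arc in $A_{p,q}$ is either peripheral or bridging, each exceptional intersection in $\Gamma$ must occur between two peripheral arcs, so I would take $\sigma=\mathrm{id}$ and be done.

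Suppose $N(\Gamma)>0$. I would first select a pair $(\gamma_i,\gamma_j)$ counted by $N(\Gamma)$ that is a covering pair in the partial order $\preceq$ of Definition~\ref{prop:order} (starting from any pair with $i<j$ counted by $N$ one has $\gamma_i\preceq\gamma_j$, and refining along a maximal chain in $\preceq$ between them produces a cover that still has a mixed exceptional intersection). Remark~\ref{exceptional intersection=1} then forces $\gamma_i$ to be peripheral, and by the choice of $(\gamma_i,\gamma_j)$ the arc $\gamma_j$ is bridging. Next I would invoke Lemma~\ref{same collection} to rewrite $\Gamma$, via an element of $B_r$, as an ordered exceptional collection on the same underlying set in which $\gamma_i$ and $\gamma_j$ occupy adjacent positions $s$ and $s+1$; this is possible precisely because $(\gamma_i,\gamma_j)$ is a cover in $\preceq$. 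Finally I would apply the generator $\sigma_s$: by Theorem~\ref{mutation correspond} together with case~(4) of Definition~\ref{mutation of arcs}, the resulting collection is obtained by smoothing the crossing at the chosen exceptional intersection, replacing $\gamma_{s+1}$ with the bridging arc $L_{\gamma_s}\gamma_{s+1}$ shown in Figure~\ref{fig:mutation3}, so that the new adjacent pair has no interior intersection.

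The main obstacle is to verify the strict decrease $N(\sigma_s\cdot\widetilde\Gamma)<N(\Gamma)$ after this mutation. The eliminated crossing accounts for a decrement of one, so what must be shown is that the new bridging arc $L_{\gamma_s}\gamma_{s+1}$ does not acquire any fresh exceptional intersection with another arc $\gamma_k$ $(k\ne s,s+1)$ of the collection. The key technical step I would carry out is a local case analysis in the universal cover $\mathbb{U}$, guided by the geometric models of Figures~\ref{fig:mutation1}--\ref{fig:mutation3}: because $L_{\gamma_s}\gamma_{s+1}$ is the standard resolution at the chosen crossing, in minimal position every interior intersection of $L_{\gamma_s}\gamma_{s+1}$ with some $\gamma_k$ can be traced back to a pre-existing interior intersection of either $\gamma_s$ or $\gamma_{s+1}$ with $\gamma_k$ in $\Gamma$; together with conditions (E1) and (E3) of Definition~\ref{def:collection}, this rules out the creation of any new mixed exceptional intersection. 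Iterating this reduction, $N$ is driven to zero after finitely many braid mutations, producing the required $\sigma\in B_r$.
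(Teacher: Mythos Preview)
Your inductive invariant $N(\Gamma)$ need not strictly decrease after a single smoothing, so the induction breaks down. The claim that ``the new bridging arc $L_{\gamma_s}\gamma_{s+1}$ does not acquire any fresh exceptional intersection'' is false: although every interior crossing of $L_{\gamma_s}\gamma_{s+1}$ with some $\gamma_k$ does trace back to a crossing of $\gamma_s$ or $\gamma_{s+1}$ with $\gamma_k$, a crossing inherited from the \emph{peripheral} arc $\gamma_s$ was previously peripheral--peripheral (hence not counted by $N$) and becomes bridging--peripheral after the smoothing (hence counted). Concretely, for $p\ge 5$ take the ordered exceptional collection
\[
\Gamma=\bigl([D^{\frac{2}{p},\frac{4}{p}}],\,[D^{\frac{2}{p},\frac{5}{p}}],\,[D^{0,\frac{3}{p}}],\,[D_{-\frac{1}{q}}^{\frac{1}{p}}]\bigr).
\]
Here $N(\Gamma)=1$, the unique mixed exceptional crossing being the covering pair $\bigl([D^{0,\frac{3}{p}}],[D_{-\frac{1}{q}}^{\frac{1}{p}}]\bigr)$. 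Applying $\sigma_3$ replaces the bridging arc by $L_{[D^{0,3/p}]}[D_{-\frac{1}{q}}^{\frac{1}{p}}]=[D_{-\frac{1}{q}}^{\frac{3}{p}}]$, and one checks directly that \emph{both} $[D^{\frac{2}{p},\frac{4}{p}}]$ and $[D^{\frac{2}{p},\frac{5}{p}}]$ now have exceptional intersections with the new bridging arc. Thus $N$ jumps from $1$ to $2$.

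The paper sidesteps this by inducting instead on the number $n$ of bridging arcs admitting \emph{some} exceptional intersection, and reducing $n$ via Lemma~\ref{no exceptional int}: rather than smoothing a single crossing, one performs the compound mutation $\sigma_k\cdots\sigma_{r-1}$ that pushes the bridging endpoint all the way out to the endpoint of the outermost intersecting peripheral arc. Iterating this (each application strictly moves the bridging arc's endpoint toward the external points guaranteed by~(E2)) eventually frees the bridging arc from all peripheral crossings, so $n$ drops by one. Your approach could be repaired along the same lines, but not with $N$ as the sole inductive measure; one needs a monotone quantity such as the position of the bridging endpoint relative to the external points.
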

  The following lemma
 allows us to give  an inductive proof of above assertion.

\begin{lemma}\label{no exceptional int}
Let $\Gamma=(\gamma_1,\gamma_2,\ldots,\gamma_r)$ be an ordered exceptional collection in $A_{p,q}$, where 
 \[
\gamma_i=\begin{cases}
[D_{ {a_i} ,0}],  &{\rm for}\ 1\leq i\leq k,\\
[D^{0, {b_i} }],  &{\rm for}\ k+1\leq i\leq r-1,\\
[D_{-\frac{1}{q}}^{\frac{1}{p}}],  &{\rm for}\ i=r,\\
\end{cases} 
    \] 
    with $ -1\leq a_k<\ldots<a_1<-\frac{1}{q}$ and $\frac{1}{p}<b_{k+1}<\ldots<b_{r-1}\leq1$. 
 Then 
$$\sigma_{k}\ldots\sigma_{r-1} \Gamma=(\gamma_1,\ldots,\gamma_{k-1},[D_{ {a_{k}} }^{ {b_{r-1}} }],\gamma_{k},\ldots,\gamma_{r-1}).$$
\end{lemma}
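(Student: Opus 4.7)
The plan is to apply the generators one at a time in the order $\sigma_{r-1},\sigma_{r-2},\ldots,\sigma_{k}$ (i.e.\ reading the composition $\sigma_k\cdots\sigma_{r-1}$ from right to left) and to track how the bridging arc $\gamma_r$ evolves under these mutations. Write $\delta:=[D_{-1/q}^{b_{r-1}}]$ for the intermediate bridging arc that will appear after the first mutation.

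First I would handle $\sigma_{r-1}$. The pair $(\gamma_{r-1},\gamma_r)$ couples a peripheral arc on the inner boundary with a positive bridging arc, and because $0<1/p<b_{r-1}$ the endpoint $(1/p)_{\partial}$ of $\gamma_r$ lies strictly between the two endpoints of $\gamma_{r-1}$. Hence $\gamma_r$ must cross $\gamma_{r-1}$ in the interior of $A_{p,q}$; by Theorem~\ref{position} this crossing is an exceptional intersection, and the smoothing rule of Definition~\ref{mutation of arcs} (the right sub-figure of Figure~\ref{fig:mutation3}) yields $L_{\gamma_{r-1}}\gamma_r=\delta$, i.e.\ the arc joining the starting point of $\gamma_r$ to the ending point of $\gamma_{r-1}$. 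The sequence becomes $(\gamma_1,\ldots,\gamma_{r-2},\delta,\gamma_{r-1})$.

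Next, for each intermediate $\sigma_i$ with $k+1\leq i\leq r-2$, the pair to mutate is $(\gamma_i,\delta)$ with $\gamma_i=[D^{0,b_i}]$ and $b_i<b_{r-1}$. The key observation is that $\gamma_i$ and $\delta$ share no endpoints and do not cross in the interior: lifting to the universal cover $\mathbb{U}$, both endpoints of $\gamma_i$ lie on $\partial$ strictly to the left of $(b_{r-1},1)$, hence on the same side of a straight-line representative of $\delta$, so $\gamma_i$ admits a representative disjoint from $\delta$. By Definition~\ref{mutation of arcs}, $L_{\gamma_i}\delta=\delta$, so $\sigma_i$ simply transposes $\gamma_i$ and $\delta$; iterating these transpositions yields $(\gamma_1,\ldots,\gamma_k,\delta,\gamma_{k+1},\ldots,\gamma_{r-1})$.

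Finally I would apply $\sigma_k$ to $(\gamma_k,\delta)$, where $\gamma_k=[D_{a_k,0}]$ is peripheral on the outer boundary. Since $a_k<-1/q<0$, the starting point $(-1/q)_{\partial'}$ of $\delta$ sits strictly between the endpoints of $\gamma_k$, forcing another crossing which must be an exceptional intersection by Theorem~\ref{position}. The corresponding smoothing (left sub-figure of Figure~\ref{fig:mutation3}) then produces $L_{\gamma_k}\delta=[D_{a_k}^{b_{r-1}}]$, the arc joining the starting point of $\gamma_k$ to the ending point of $\delta$, which delivers the claimed sequence. The main obstacle is the disjointness argument in the intermediate step; it is handled cleanly by comparing the positions of the endpoints of $\gamma_i$ against the straight-line representative of $\delta$ in $\mathbb{U}$.
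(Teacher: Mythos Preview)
Your proof is correct and follows essentially the same approach as the paper's own proof: compute $L_{\gamma_{r-1}}\gamma_r=\delta$ via an exceptional intersection, observe $L_{\gamma_i}\delta=\delta$ for the intermediate $i$, and then compute $L_{\gamma_k}\delta=[D_{a_k}^{b_{r-1}}]$ via another exceptional intersection. The paper's version is simply terser, stating these three mutation identities without the geometric justification you supply.
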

\begin{proof}
By Definition~\ref{mutation of arcs}, we have
 $L_{\gamma_{r-1}}\gamma_r=[D_{-\frac{1}{q}}^{b_{r-1}}]$,
  $L_{\gamma_{k}}[D_{-\frac{1}{q}}^{b_{r-1}}]=[D_{a_{k}}^{b_{r-1}}]$ and $L_{\gamma_{i}}[D_{-\frac{1}{q}}^{b_{r-1}}]=[D_{-\frac{1}{q}}^{b_{r-1}}]$ for all $k+1\leq i\leq r-2$. Thus this statement holds.  
\end{proof}

\begin{proof}[The proof of Lemma~\ref{lem:5.7}] 
Let $n$ represent the number of    bridging arcs $\gamma$ in   $\Gamma$ such that ${\rm Int}^{+}(\gamma', \gamma)=1$ for some $\gamma'\in\Gamma$.
We will prove the statement  by induction on $n$.
 
If $n=0$, then there is nothing to show. Assume that $n>0$.
According to Lemma~\ref{same collection}
  and   Lemma~\ref{no exceptional int}, there is an   element  $\sigma'\in B_r$ such that  the ordered exceptional collection $\sigma' \Gamma$   contains  $n-1$
  positive bridging arcs $\overline{\gamma}$ with the property  that ${\rm Int}^{+}(\overline{\gamma}', \overline{\gamma})=1$ for some $\overline{\gamma}'\in\sigma' \Gamma$.
By the inductive hypothesis, we conclude that the result holds for this case as well.   
\end{proof}

\begin{proposition}
\label{parallel}
 Let $\Gamma $ be a maximal ordered exceptional collection in $ A_{p,q}$.
Suppose that  if there is an exceptional intersection of $\gamma_i$  and $\gamma_j$, then  $\gamma_i$  and $\gamma_j$ are  peripheral arcs in $\Gamma$. 
Denote by  $S_1$ (resp. $S_2$)   the set of external points of $\Gamma$ on the inner (resp. outer) boundary of $A_{p,q}$.
Then, there exist $h$ marked points $\{M_1, \ldots, M_h\}$ in $S_1$ with at least two positive bridging arcs in $\Gamma$ ending at each if and only if there exist $h$ marked points $\{N_1, \ldots, N_h\}$ in $S_2$ with at least two positive bridging arcs in $\Gamma$ starting from each, where $1 \leq h \leq \min\{p, q\}$.
\[
\begin{tikzpicture}[scale=0.8]
\draw[->](-4,-0.5-0.5)--(-4,1);
\draw[->](4,-0.5-0.5)--(4,1);
\draw[->](4-2.4,-1)--(4,1);
\draw[->](-4,-1)--(-2,1);
\draw[->](4-2.4,1)--(2,1);
\draw[->](-4+2.4,-0.5-0.5)--(-2,1);
%%%%%
\draw[->](0.4,-1)--(0,1);
\draw[->](-4+2.4,-1)--(0,1);
\node()at(0,1){$\bullet$};
\node()at(0.4,-0.5-0.5){$\bullet$};
%%%%
\draw[->](5,-1)--(5.5,1);
\draw[->](7,-1)--(7,1);
\draw[->](5,-1)--(7,1);
\node()at(7,1){$\bullet$};
\node()at(7,-0.5-0.5){$\bullet$};
\node()at(5,-1){$\bullet$};
\node()at(5.5,1){$\bullet$};
\node()at(5,-1.3){\tiny{$N_h$}};
\node()at(7,-1.3){\tiny{$N_1$}};
\node()at(5.5,1.3){\tiny{$M_h$}};
\node()at(7,1.3){\tiny{$M_1$}};
%%
% \draw[-](-5.3,-0.5-0.5)--(8.3,-0.5-0.5);
% \draw[-](-5.3,1)--(8.3,1);
\draw[-](-4,-0.5-0.5)--(7,-0.5-0.5);
\draw[-](-4,1)--(7,1);
\draw[->](0,1.65)--(2,1.65);
\draw[->](2,-1.15-0.5)--(0,-1.15-0.5);

\node()at(-4,-0.5-0.5){$\bullet$};
\node()at(-4,1){$\bullet$};
\node()at(-2,1){$\bullet$};
\node()at(2,1){$\bullet$};
\node()at(-4+2.4,-0.5-0.5){$\bullet$};
\node()at(4-2.4,-0.5-0.5){$\bullet$};

\draw[->](4-2.4,-0.5-0.5)--(2,1);

\node()at(4,1){$\bullet$};
\node()at(4,-0.5-0.5){$\bullet$};

\node()at(-4,1.3){\tiny{$M_1$}};
\node()at(0,1.3){\tiny{$M_3$}};
\node()at(0.4,-1.3){\tiny{$N_3$}};
\node()at(-4.5 ,1){\tiny{${\partial}$}};
\node()at(-4.5 ,-0.5-0.5){\tiny{${\partial^{\prime}}$}};
\node()at(-4,-0.8-0.5){\tiny{$N_1$}};
\node()at(-2,1.3){\tiny{$M_2$}};
\node()at(2,1.3){\tiny{$M_i$}};
\node()at(-4+2.4,-0.8-0.5){\tiny{$N_2$}};
\node()at(4-2.4,-0.8-0.5){\tiny{$N_{i}$}};
%\node()at(1,1.3){\tiny{$\frac{p+1}{p}$}};

\draw[line width=1pt,dotted](0.6,1.2)--(1.1 ,1.2);
\draw[line width=1pt,dotted](0.8,-0.8-0.5)--(1.2,-0.8-0.5);
\draw[line width=1pt,dotted](0+0.8,0.5)--(0.5+0.8 ,0.5);
\draw[line width=1pt,dotted](-0.5-0.5-0.2+2,-0.5)--(-0-0.5-0.2+2,-0.5);
\node()at(4,1.3){\tiny{$M_{i+1}$}};
\node()at(4,-0.8-0.5){\tiny{$N_{i+1}$}}; 
\end{tikzpicture}
\]
\end{proposition}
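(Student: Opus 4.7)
The plan is to reduce the claim to a combinatorial identity on a triangulation of the reduced annulus $\overline{A}_{k,l}$ and then establish that identity by a double count of triangles by type. By Proposition~\ref{collection es}, $\Gamma$ contains exactly $k+l$ positive bridging arcs, where $k=|S_1|$ and $l=|S_2|$, and by the hypothesis (every exceptional intersection involves two peripheral arcs) these bridging arcs have no intersection with any other arc in the interior of $A_{p,q}$. Applying the endpoint adjustment map $\Psi_\Gamma$ of Definition~\ref{adjustment map} yields a family $\mathcal{T}$ of $k+l$ pairwise non-crossing arcs in $\overline{A}_{k,l}$ connecting $S_2$ to $S_1$, and since $k+l$ is the maximal number of pairwise non-crossing arcs in $\overline{A}_{k,l}$, the family $\mathcal{T}$ is a triangulation. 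Writing $a_i$ (resp.\ $b_j$) for the number of arcs of $\mathcal{T}$ incident to the $i$-th point of $S_2$ (resp.\ the $j$-th point of $S_1$), we have $\sum_i a_i=\sum_j b_j=k+l$, and the proposition is equivalent to the single equality
\[
|\{\,j:b_j\geq 2\,\}|=|\{\,i:a_i\geq 2\,\}|.
\]

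To prove this equality, I would first classify the triangles of $\mathcal{T}$: since every interior arc of $\mathcal{T}$ is bridging, each triangle has exactly one boundary edge, and hence is of one of two types, \emph{Type A} (one inner vertex, two outer) or \emph{Type B} (two inner vertices, one outer). A straightforward vertex-incidence count gives $\sum_j(b_j+1)=2k+l$ and $\sum_i(a_i+1)=k+2l$, which together yield $T_A=l$ and $T_B=k$. Next, assuming $k,l\geq 2$ (the cases $k=1$ or $l=1$ admit a short direct verification since then all bridging arcs share a common endpoint and the extremal multiplicity is automatically $\geq 2$), I would classify each interior arc as \emph{AA}, \emph{AB}, or \emph{BB} according to the types of its two adjacent triangles. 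A local analysis at an inner vertex $M_j$ shows that the two extreme angular sectors are Type B and the $b_j-1$ middle sectors are Type A; hence the arcs incident to $M_j$ contribute one BB-arc if $b_j=1$, and otherwise two AB-arcs together with $b_j-2$ AA-arcs.

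Summing these contributions over all inner vertices expresses the total number of AA-arcs as $l-k+n_1$, where $n_1:=|\{j:b_j=1\}|$. Performing the symmetric count at the outer vertices, with $m_1:=|\{i:a_i=1\}|$, expresses the same total as $m_1$. Equating the two expressions gives $l-k+n_1=m_1$, that is $k-n_1=l-m_1$, which is precisely $|\{j:b_j\geq 2\}|=|\{i:a_i\geq 2\}|$, completing the proof.

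The main obstacle I anticipate is the local sector analysis at each boundary vertex: one must argue carefully that at an inner vertex the two sectors flanking the boundary are always Type B (and dually Type A at outer vertices), which uses only that all interior arcs of $\mathcal{T}$ are bridging, but must be spelled out to justify the counting of AA-, AB-, BB-arcs. A secondary issue is the degenerate cases $k=1$ or $l=1$, where the relevant boundary of $\overline{A}_{k,l}$ is a single loop and the sector count at that unique vertex must be treated separately; these cases reduce to a direct inspection because all $k+l$ bridging arcs then share that vertex.
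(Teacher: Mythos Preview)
Your argument is correct and takes a genuinely different route from the paper. The paper argues constructively: labelling the inner high-degree points $M_1,\ldots,M_h$ cyclically, it lets $N_i$ be the outer starting point of the $\preceq$-minimal bridging arc at $M_i$ and uses maximality of $\Gamma$ to show this same $N_i$ also starts the $\preceq$-maximal bridging arc at $M_{i+1}$; thus each $N_i$ carries at least two bridging arcs, and a non-crossing argument shows the $N_i$ are distinct. Your approach instead passes to the bridging triangulation of the contracted annulus $\overline{A}_{k,l}$ and proves the numerical identity $|\{j:b_j\geq 2\}|=|\{i:a_i\geq 2\}|$ by double-counting arcs according to the $A/B$ types of their two adjacent triangles. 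The paper's method buys an explicit pairing $M_i\leftrightarrow N_i$ and the alternating-fan picture shown in the figure, which is what gets fed into Proposition~\ref{prop:1+1=2} in the transitivity proof; your method is a cleaner global count that applies to any all-bridging triangulation of an annulus, but does not by itself exhibit that pairing. One small simplification of your reduction: under the hypothesis, a bridging arc with an endpoint outside $S_1\cup S_2$ would be forced to cross a peripheral arc in the interior of $A_{p,q}$, contradicting (E1) together with the hypothesis; hence every bridging arc of $\Gamma$ already has both endpoints in $S_1\cup S_2$, so $\Psi_\Gamma$ is the identity on them and the passage to $\overline{A}_{k,l}$ requires no adjustment at all.
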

\begin{proof}
Suppose that  the   positive bridging arcs in $\Gamma$ that end at $M_i$ are $\gamma_{i1},\ldots, \gamma_{ik_{i}}$ with  $\gamma_{i1} \preceq \ldots \preceq  \gamma_{ik_{i}}$ for all $1\leq i\leq h$.
%$\gamma_{i1}\preceq\gamma_{i2}\preceq \ldots \preceq \gamma_{ik-1}\preceq \gamma_{ik}$ for all $1\leq i\leq h$. 
Without loss of generality, we may assume that there are no points in $S_1$ between $M_i$ and $M_{i+1}$ for all $1\leq i\leq h$, where $i$ is taken module $h$.
Denote by $N_i$ the starting point of $\gamma_{i1}$. We claim that $N_i$ is also the starting point of $\gamma_{i+1 k_{i+1}}$. In fact, thanks to the maximality of $\Gamma$ there exists a  positive bridging arc $\gamma\in\Gamma$ starting from the starting point of $\gamma_{i+1,k_{i+1}}$ and ending with $M_i$. Thus by assumption,  $\gamma$ must be the bridging arc  $\gamma_{i1}$. Therefore,  the necessity follows from Definition~\ref{def:collection}.
Since the proof of sufficiency is similar, we omit here.
\end{proof} 
Next, we show  that any ordered exceptional collection can be transformed via mutations into one consisting of positive bridging arcs.

\begin{Convention}\label{normal}
From now on, for any $x,y,k,l\in \mathbb{Z}$ with $1\leq k\leq q$, $1\leq l \leq p$ and $k+l <p+q$, we always denote by
$\Theta[x,y](k,l)=(\delta_1,\delta_2,\ldots,\delta_{k+l+1})$ is an ordered exceptional collection,
where
     \begin{equation}\label{p of t}
\delta_i=\begin{cases}
[D_{\frac{x}{q}}^{\frac{y+i-1}{p}}],  &{\rm for}\ 1\leq i\leq l,\\
[D_{\frac{x+k+l+1-i}{q}}^{\frac{y+l}{p}}],  &{\rm for}\ l+1\leq i\leq k+l+1.\\
\end{cases} 
   \end{equation}
 Intuitively, they are described as  follows:
\[
\begin{tikzpicture}[scale=0.8]
\draw[->](-4,-0.5-0.5)--(-4,1);
\draw[->](3.7,-0.5-0.5)--(4,1);
\draw[->](-4,-1)--(4,1);
\draw[->](-4,-1)--(-2,1);
\draw[->](-4,-1)--(2,1);

\draw[-](-5.3,-0.5-0.5)--(5.3,-0.5-0.5);
\draw[-](-5.3,1)--(5.3,1);
\draw[->](-1,1.65)--(1,1.65);
\draw[->](1,-1.15-0.5)--(-1,-1.15-0.5);
\node()at(-4,-0.5-0.5){$\bullet$};
\node()at(-4,1){$\bullet$};
\node()at(-2,1){$\bullet$};
\node()at(2,1){$\bullet$};
\node()at(-4+2.4,-0.5-0.5){$\bullet$};
\node()at(4-2.4,-0.5-0.5){$\bullet$};
\draw[->](-4+2.4,-0.5-0.5)--(4,1);
\draw[->](4-2.4,-0.5-0.5)--(4,1);
\node()at(4,1){$\bullet$};
\node()at(3.7,-0.5-0.5){$\bullet$};
%\node()at(6,1){$\bullet$};
%\node()at(6,1.3){\tiny{$\frac{y+l+1}{p}$}};
%\draw[->](3.7,-0.5-0.5)--(6,1);

%\node()at(-4,1.3){\tiny{$b$}};
%\node()at(-5.6,1){\tiny{${\partial}$}};
%\node()at(-5.6,-0.5-0.5){\tiny{${\partial^{\prime}}$}};
%\node()at(-4,-0.8-0.5){\tiny{$a$}};
%\node()at(-2,1.3){\tiny{$b+\frac{1}{p}$}};
%\node()at(2,1.3){\tiny{$b+\frac{l-1}{p}$}};
%\node()at(-4+2.4,-0.8-0.5){\tiny{$a+\frac{1}{q}$}};
%\node()at(4-2.4,-0.8-0.5){\tiny{$a+\frac{k-1}{q}$}};
%\node()at(1,1.3){\tiny{$\frac{p+1}{p}$}};
%\node()at(4,1.3){\tiny{$b+\frac{l}{p}$}};
%\node()at(3.7,-0.8-0.5){\tiny{$a+\frac{k}{q}$}};

\node()at(-4,1.3){\tiny{$\frac{y}{p}$}};
\node()at(-5.6,1){\tiny{${\partial}$}};
\node()at(-5.6,-0.5-0.5){\tiny{${\partial^{\prime}}$}};
\node()at(-4,-0.8-0.5){\tiny{$\frac{x}{q}$}};
\node()at(-2,1.3){\tiny{$\frac{y+1}{p}$}};
\node()at(2,1.3){\tiny{$\frac{y+l-1}{p}$}};
\node()at(-4+2.4,-0.8-0.5){\tiny{$\frac{x+1}{q}$}};
\node()at(4-2.4,-0.8-0.5){\tiny{$\frac{x+k-1}{q}$}};
\node()at(4,1.3){\tiny{$\frac{y+l}{p}$}};
\node()at(3.7,-0.8-0.5){\tiny{$\frac{x+k}{q}$}};

\draw[line width=1pt,dotted](0,1.2)--(0.5 ,1.2);
\draw[line width=1pt,dotted](-0.5,-0.8-0.5)--(-0,-0.8-0.5);
\draw[line width=1pt,dotted](0+0.8-2,0.5)--(0.5+0.8-2 ,0.5);
\draw[line width=1pt,dotted](-0.5-0.5-0.2+2,-0.5)--(-0-0.5-0.2+2,-0.5);

\node()at(0,-0){\tiny{$\delta_{k+l+1}$}};
\node()at(-1.1,-0){\tiny{$\delta_l$}};
\node()at(-3,-0){\tiny{$\delta_2$}};
\node()at(-4,-0){\tiny{$\delta_{1}$}};
\node()at(4,-0){\tiny{$\delta_{l+1}$}};
\node()at(1.3,-0){\tiny{$\delta_{k+l}$}};
\node()at(2.9,-0){\tiny{$\delta_{l+2}$}};
%\node()at(5.1,-0){\tiny{$\Theta_{k+l+2}$}};

%\node()at(0,1.3){\tiny{$1_{\partial}$}};
%\node()at(0,-0.8-0.5){\tiny{$1_{\partial^{\prime}}$}};
\draw[dash pattern=on 2pt off 5pt on 2pt off 5pt](-5.3,0)--(-4.5,0);
\draw[dash pattern=on 2pt off 5pt on 2pt off 5pt](4.5,0)--(5.3,0);
\end{tikzpicture}
\]  
\end{Convention}

\begin{lemma}\label{2 to normal} 
 Let  $x,y,k,l$ be integers with $1\leq k\leq q$, $1\leq l \leq p$ and $k+l <p+q$.  
Suppose that $\Gamma$ is the ordered exceptional collection $(\gamma_{1},\ldots,\gamma_{k+l-1})$ in $A_{p,q}$, where 
   \[
\gamma_i=\begin{cases} 
[D_{\frac{x}{q}}^{\frac{y+l}{p}}],  &{\rm for}\ i=1,\\
[D_{\frac{a}{q},\frac{b}{q}}]\  {\rm or} \ [D^{\frac{c}{p},\frac{d}{p}}],  &{\rm for}\ 2\leq i\leq k+l-1,\\
\end{cases} 
    \] 
with $x\leq a<b\leq x+k$ and $y\leq c<d\leq y+l$. Then there exists an   element  $\sigma\in B_{k+l-1}$ such that $\sigma \Gamma= (\delta_{2},\ldots,\delta_l, \delta_{l+2},\dots,\delta_{k+l+1})$, where $\delta_i$ are given in \eqref{p of t} for all $2\leq i\leq l$ and $l+2\leq i \leq k+l+1$.
\end{lemma}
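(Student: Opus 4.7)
The plan is to proceed in three stages: first, extract a combinatorial constraint from the hypothesis; second, reduce $\Gamma$ to a canonical ``fan'' configuration via flip-type braid mutations; third, complete the transformation by braiding $\gamma_1$ through this fan.

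For the first stage, I would observe that the $k+l-2$ peripheral arcs of $\Gamma$ must distribute as exactly $l-1$ on the inner-boundary segment containing the $l+1$ marked points $\frac{y}{p}, \ldots, \frac{y+l}{p}$, and exactly $k-1$ on the outer-boundary segment containing $\frac{x}{q}, \ldots, \frac{x+k}{q}$. This follows from the upper bounds $l-1$ and $k-1$ on the size of any non-crossing collection of peripheral arcs on these segments, combined with the required total $k+l-2$. Each subcollection is therefore maximal non-crossing, and by a standard argument it must contain the ``spanning'' peripheral arc $[D^{\frac{y}{p},\frac{y+l}{p}}]$ (respectively $[D_{\frac{x}{q},\frac{x+k}{q}}]$), since otherwise appending this arc would yield a strictly larger non-crossing collection.

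For the second stage, I would use Lemma~\ref{same collection} to freely reorder the peripheral arcs, together with local braid mutations among $\gamma_2,\ldots,\gamma_{k+l-1}$ (whose geometric effect is given by Theorem~\ref{mutation correspond} and Definition~\ref{mutation of arcs}), to transform $\Gamma$ into the canonical fan
\[
\Gamma' = (\gamma_1,\, \alpha_1, \ldots, \alpha_{l-1},\, \beta_1, \ldots, \beta_{k-1}),
\]
where $\alpha_j = [D^{\frac{y+l-j-1}{p},\, \frac{y+l}{p}}]$ are the inner peripheral arcs all terminating at $\frac{y+l}{p}$, and $\beta_j = [D_{\frac{x}{q},\, \frac{x+j+1}{q}}]$ are the outer peripheral arcs all emanating from $\frac{x}{q}$. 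The existence of such a braid element reduces, via the bijection between maximal non-crossing peripheral configurations (fixing the spanning arc) and triangulations of a convex polygon, to the classical flip-connectivity of the flip graph of polygon triangulations.

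For the third stage, I would apply $\sigma_{k+l-2}\circ\cdots\circ\sigma_2\circ\sigma_1$ to $\Gamma'$ to move $\gamma_1$ to the last position. Direct computation via Definition~\ref{mutation of arcs} yields
\[
L_{\gamma_1}(\alpha_j) = [D_{\frac{x}{q}}^{\frac{y+l-j}{p}}] = \delta_{l-j+1}, \qquad L_{\gamma_1}(\beta_j) = [D_{\frac{x+j}{q}}^{\frac{y+l}{p}}] = \delta_{k+l+1-j},
\]
since each $\alpha_j$ shares the endpoint $\frac{y+l}{p}$ with $\gamma_1$ and each $\beta_j$ shares the starting point $\frac{x}{q}$ with $\gamma_1$. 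Together with $\gamma_1 = \delta_{k+l+1}$ now in the final position, we obtain precisely the set of target arcs (possibly permuted within each of the two blocks); a final application of Lemma~\ref{same collection} restores the prescribed target ordering.

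The main obstacle is the second stage: rigorously justifying that the flips realizing the flip-connectivity can each be performed as a single braid generator $\sigma_i^{\pm 1}$ acting on an adjacent pair of peripheral arcs, and that the output of such a mutation matches the desired flipped arc. This requires a careful case analysis of the local configurations at shared endpoints and exceptional intersections, together with repeated use of Lemma~\ref{same collection} to guarantee that the two arcs to be flipped can always be brought to adjacent positions in the ordered collection without disturbing $\gamma_1$.
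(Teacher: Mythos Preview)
Your overall architecture is sound, and Stage~3 (once a fan is in hand) matches the paper's endgame. The key divergence is Stage~2: the paper does \emph{not} first normalize the peripherals to a fan via polygon flips. Instead it keeps the peripheral arcs arbitrary, separates inner from outer using Lemma~\ref{same collection} and the counting in the proof of Proposition~\ref{collection es}, and then directly invokes the 3-cycle mutation patterns of Figure~\ref{The fourth orbit}(4)(5)(6) (resp.\ (1)(2)(3)) to absorb each inner (resp.\ outer) peripheral arc into a bridging arc, producing the $\delta_i$'s in one pass. In effect, the paper merges your Stages~2 and~3 by letting successive bridging arcs (not only $\gamma_1$, but also the newly created $\delta_i$'s) eat the peripherals.

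There is a genuine gap in your Stage~2 beyond the obstacle you flag. Your reduction to flip-connectivity of polygon triangulations presupposes that the $l-1$ inner peripheral arcs form a \emph{non-crossing} collection. But Definition~\ref{def:collection} allows exceptional intersections among peripheral arcs, and these do occur in the hypotheses of the lemma: for instance with $l=3$, $k=1$, the ordered collection $\bigl(\gamma_1,\,[D^{\frac{y+1}{p},\frac{y+3}{p}}],\,[D^{\frac{y}{p},\frac{y+2}{p}}]\bigr)$ is a valid ordered exceptional collection in which the two inner peripherals cross (at an exceptional intersection). Such configurations are not triangulations of a polygon, so the flip-graph argument does not apply to them. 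To repair this you would need either a preliminary step resolving peripheral--peripheral exceptional intersections via braid moves (the 3-cycles in Figure~\ref{The fourth orbit}(10)--(12) are exactly what is needed), or to replace the polygon-triangulation model by transitivity of the braid group on exceptional sequences over a type~$A$ path algebra --- a stronger input than flip-connectivity. The paper sidesteps both issues because its 3-cycle absorption procedure treats all (bridging, peripheral) configurations uniformly, regardless of how the peripherals sit relative to one another.
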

\begin{proof}
By Lemma~\ref{same collection} and the proof of Proposition~\ref{collection es}, we may assume that $\gamma_2,\gamma_3,\ldots,\gamma_{l}$ (resp. $\gamma_{l+1},\gamma_{l+2},\ldots,\gamma_{l+k-1}$) are  peripheral arcs whose endpoints on the inner (resp. outer) boundary of  $A_{p,q}$.
  According to Figure~\ref{The fourth orbit}(4)(5)(6), there exists an   element  $\sigma'\in B_{k+l-1}$ such that $$\sigma' \Gamma=(\gamma_2',\ldots,\gamma_{l}',\gamma_1,\gamma_{l+1},\ldots,\gamma_{k+l-1})$$ is an ordered exceptional collection satisfying
$
\gamma_i'=
[D_{\frac{x}{q}}^{\frac{y+i-1}{p}}]$ for $2\leq i\leq l$.
Similarly, according to Figure~\ref{The fourth orbit}(1)(2)(3), there exists an   element  $\sigma''\in B_{k+l-1}$ such that $$\sigma''\sigma' \Gamma=( \gamma_2',\ldots,\gamma_{k+l-1}',\gamma_1)$$ is an ordered exceptional collection satisfying
 $\gamma_i'=[D_{\frac{x+k+l-i}{q}}^{\frac{y+l}{p}}]$ for $l+1\leq i\leq k+l-1$. Now  our desired result follows from Lemma~\ref{same collection}.                \end{proof}

 \begin{proposition}
     \label{prop:1+1=2}
 Let  $k,l,m$ be integers with 
 $1\leq k < q$, $1\leq l < p$ and $2\leq m \leq q-k+1$. Suppose 
$\Gamma$  is an
ordered collection $(\gamma_1,\ldots,\gamma_{k+l+m})$, where $$(\gamma_1,\ldots,\gamma_{k+l+1})=\Theta[x,y](k,l)\ { \rm and} \ (\gamma_{l+1},\gamma_{k+l+2},\ldots,\gamma_{k+l+m})=\Theta[x+k,y+l](m-2,1),$$
 for some $x,y\in\mathbb{Z}$. Then $\Gamma$ is an ordered exceptional collection and  there exists an   element  $\sigma\in B_{k+l+m}$ such that $\sigma\Gamma=\Theta[x,y](k+m-2,l+1)$.
\[
\begin{tikzpicture}[scale=0.8]
\draw[->](-4,-0.5-0.5)--(-4,1);
\draw[->](3.7,-0.5-0.5)--(4,1);
\draw[->](-4,-1)--(4,1);
\draw[->](-4,-1)--(-2,1);
\draw[->](-4,-1)--(2,1);

\draw[-](-5.3,-0.5-0.5)--(8.8,-0.5-0.5);
\draw[-](-5.3,1)--(8.8,1);
\draw[->](-1,1.65)--(1,1.65);
\draw[->](1,-1.15-0.5)--(-1,-1.15-0.5);
\node()at(-4,-0.5-0.5){$\bullet$};
\node()at(-4,1){$\bullet$};
\node()at(-2,1){$\bullet$};
\node()at(2,1){$\bullet$};
\node()at(-4+2.4,-0.5-0.5){$\bullet$};
\node()at(4-2.4,-0.5-0.5){$\bullet$};
\draw[->](-4+2.4,-0.5-0.5)--(4,1);
\draw[->](4-2.4,-0.5-0.5)--(4,1);
\node()at(4,1){$\bullet$};
\node()at(3.7,-0.5-0.5){$\bullet$};
\node()at(6,1){$\bullet$};
\node()at(6,1.3){\tiny{$\frac{y+l+1}{p}$}};
\draw[->](3.7,-0.5-0.5)--(6,1);
\draw[->,red](5.8,-0.5-0.5)--(6,1);
\node()at(5.8,-1){$\bullet$};
\node()at(5.8,-0.8-0.5){\tiny{$\frac{x+k+m-3}{q}$}};
\draw[->,red](7.7,-0.5-0.5)--(6,1);
\node()at(7.7,-1){$\bullet$};
\node()at(7.7,-0.8-0.5){\tiny{$\frac{x+k+m-2}{q}$}};

\node()at(-4,1.3){\tiny{$\frac{y}{p}$}};
\node()at(-5.6,1){\tiny{${\partial}$}};
\node()at(-5.6,-0.5-0.5){\tiny{${\partial^{\prime}}$}};
\node()at(-4,-0.8-0.5){\tiny{$\frac{x}{q}$}};
\node()at(-2,1.3){\tiny{$\frac{y+1}{p}$}};
\node()at(2,1.3){\tiny{$\frac{y+l-1}{p}$}};
\node()at(-4+2.4,-0.8-0.5){\tiny{$\frac{x+1}{q}$}};
\node()at(4-2.4,-0.8-0.5){\tiny{$\frac{x+k-1}{q}$}};
\node()at(4,1.3){\tiny{$\frac{y+l}{p}$}};
\node()at(3.7,-0.8-0.5){\tiny{$\frac{x+k}{q}$}};

\draw[line width=1pt,dotted](0,1.2)--(0.5 ,1.2);
\draw[line width=1pt,dotted](-0.5,-0.8-0.5)--(-0,-0.8-0.5);
\draw[line width=1pt,dotted](0+0.8-2,0.5)--(0.5+0.8-2 ,0.5);
\draw[line width=1pt,dotted](-0.5-0.5-0.2+2,-0.5)--(-0-0.5-0.2+2,-0.5);
\draw[line width=1pt,dotted](-0.5-0.5-0.1+6,-0.5)--(-0-0.5-0.1+6,-0.5);
\node()at(0,-0){\tiny{$\gamma_{k+l+1}$}};
\node()at(-1.1,-0){\tiny{$\gamma_l$}};
\node()at(-3,-0){\tiny{$\gamma_2$}};
\node()at(-4,-0){\tiny{$\gamma_{1}$}};
\node()at(3.7,-0){\tiny{$\gamma_{l+1}$}};
\node()at(1.3,-0){\tiny{$\gamma_{k+l}$}};
\node()at(2.7,-0){\tiny{$\gamma_{l+2}$}};
\node()at(5.2+2,-0){\tiny{$\gamma_{k+l+2}$}};
\node()at(4.7,-0){\tiny{$\gamma_{k+l+m}$}};
\node()at(5+1,-0){\tiny{$\gamma_{k+l+3}$}};
%\node()at(0,1.3){\tiny{$1_{\partial}$}};
%\node()at(0,-0.8-0.5){\tiny{$1_{\partial^{\prime}}$}};
\draw[dash pattern=on 2pt off 5pt on 2pt off 5pt](-5.2,0)--(-4.5,0);
\draw[dash pattern=on 2pt off 5pt on 2pt off 5pt](8.1,0)--(8.8,0);
\end{tikzpicture}
\]
 \end{proposition}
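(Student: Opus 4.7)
The plan is to first verify that $\Gamma$ is an ordered exceptional collection via Theorem~\ref{collection and sequence}, and then construct the braid element $\sigma$ by induction on $m$.

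For the exceptional-collection property, both $\Theta[x,y](k,l)$ and $\Theta[x+k,y+l](m-2,1)$ are ordered exceptional collections by construction, and their arcs overlap only in the shared bridging arc $\gamma_{l+1}=[D_{(x+k)/q}^{(y+l)/p}]$. The remaining arcs of the second fan have endpoints in the outer-boundary interval $[(x+k)/q,(x+k+m-2)/q]$ and the inner-boundary interval $[(y+l)/p,(y+l+1)/p]$, which meet the support of the first fan only at the shared endpoints $(x+k)/q$ and $(y+l)/p$. This geometric disjointness gives axioms (E1) and (E3) of Definition~\ref{def:collection}; axiom (E2) follows since marked points on each boundary just outside the combined support of the two fans remain external to $\Gamma$. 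The prescribed total order on $\Gamma$ is consistent with the partial order $\preceq$ of Definition~\ref{prop:order}: within each sub-fan by hypothesis, and for any cross-fan pair $(\gamma_i,\gamma_j)$ with $i\le k+l+1<j$ by a direct check against Definition~\ref{prop:order}.

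To build $\sigma$, we induct on $m$. In the base case $m=2$, the single extra arc $\gamma_{k+l+2}=[D_{(x+k)/q}^{(y+l+1)/p}]$ must be folded into the first fan to yield $\Theta[x,y](k,l+1)$. First apply the commutations $\sigma_{k+l+1}\sigma_{k+l}\cdots\sigma_{l+2}$ to slide $\gamma_{k+l+2}$ leftward past the non-interacting arcs $\gamma_{k+l+1},\ldots,\gamma_{l+2}$ (each such swap is trivial by Definition~\ref{mutation of arcs}, since the involved pairs share no endpoint and do not intersect in the interior). Then a further cascade of braid mutations at the shared outer-boundary endpoints $(x+k)/q,(x+k-1)/q,\ldots,x/q$ transforms the block of bridging arcs ending at $(y+l)/p$ into a new block ending at $(y+l+1)/p$, with $[D_{x/q}^{(y+l)/p}]$ settling into position $l+1$. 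For the inductive step $m\ge 3$, decompose $\Theta[x+k,y+l](m-2,1)=\Theta[x+k,y+l](m-3,1)\cup\{\delta'_2\}$ with $\delta'_2=[D_{(x+k+m-2)/q}^{(y+l+1)/p}]$; since $\delta'_2$ commutes trivially with every arc of $\Theta[x,y](k,l)$, it can be moved aside by commutations, the inductive hypothesis applied to the reduced collection of length $k+l+m-1$ yields $\Theta[x,y](k+m-3,l+1)$, and a final short cascade of mutations integrates $\delta'_2$ at its target position within $\Theta[x,y](k+m-2,l+1)$.

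The main technical obstacle is the cascade of mutations in the base case: each mutation on a pair of arcs sharing an outer-boundary endpoint initially produces a short peripheral arc on the inner boundary via the rule $L_\alpha\beta=[s]\delta$ of Definition~\ref{mutation of arcs}, and this peripheral arc must then be absorbed by the subsequent mutation to yield the correct new bridging arc of the target. Verifying that the full cascade terminates in $\Theta[x,y](k,l+1)$ requires careful case analysis using the orbit diagrams of Figure~\ref{The fourth orbit} together with Theorem~\ref{mutation correspond}. Once the base case is settled, the inductive step reduces to bookkeeping the position of the additional arc.
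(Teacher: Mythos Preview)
Your overall plan---induction on $m$ with the base case $m=2$---matches the paper's, and your verification that $\Gamma$ is an ordered exceptional collection is fine (the paper simply asserts this). The inductive step is also close: the paper handles $m>2$ by reordering via Lemma~\ref{same collection} and then invoking the $m=2$ case on a shifted subcollection, whereas you peel off one extra arc at a time; both work.

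The gap is in the base case. You describe the $m=2$ argument as a ``cascade of braid mutations at the shared outer-boundary endpoints'' and acknowledge this as the main technical obstacle, but you do not supply the mechanism that actually makes it terminate correctly. The paper does \emph{not} do this by a uniform cascade: it splits into subcases on $k$. For $k=1$ a short explicit word $\sigma_{l+2}^{-1}\sigma_{l+1}^{-1}$ suffices. For $k\ge 2$ the key ingredient is a separate technical lemma (Lemma~\ref{local}) giving an explicit braid word in $B_4$---namely $\sigma_3^{2}\sigma_2^{2}\sigma_1\sigma_2^{2}\sigma_3\sigma_1^{2}$---that transforms the specific four-arc configuration $([D_{\frac{x+2}{q}}^{\frac{y}{p}}],[D_{\frac{x+1}{q}}^{\frac{y}{p}}],[D_{\frac{x}{q}}^{\frac{y}{p}}],[D_{\frac{x+2}{q}}^{\frac{y+1}{p}}])$ into $([D_{\frac{x+2}{q}}^{\frac{y+1}{p}}],[D_{\frac{x+1}{q}}^{\frac{y+1}{p}}],[D_{\frac{x}{q}}^{\frac{y}{p}}],[D_{\frac{x}{q}}^{\frac{y+1}{p}}])$. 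For $k\ge 3$ this four-arc move is applied iteratively (after sliding $\gamma_{k+l+2}$ only to position $l+4$, not $l+2$ as you suggest), with a final parity-dependent cleanup. Your description of ``each mutation \ldots\ initially produces a short peripheral arc \ldots\ absorbed by the subsequent mutation'' is the right intuition, but without isolating and proving the four-arc local move you have not actually established the base case. I recommend stating and proving Lemma~\ref{local} first, then structuring the $m=2$ argument around it as the paper does.
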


To prove this proposition, we require the following lemma.
\begin{lemma}\label{local} 
Let $\Gamma$ be an ordered exceptional collection in  $A_{p,q}$ arising from exceptional collection  
$\{[D_{\frac{x}{q}}^{\frac{y}{p}}],[D_{\frac{x+1}{q}}^{\frac{y}{p}}],[D_{\frac{x+2}{q}}^{\frac{y}{p}}],[D_{\frac{x+2}{q}}^{\frac{y+1}{p}}]\}$, where   $x,y\in\mathbb{Z}$. 
 Then there is an element $\sigma$ in $B_4$ such that $$\sigma\Gamma=([D_{\frac{x+2}{q}}^{\frac{y+1}{p}}],[D_{\frac{x+1}{q}}^{\frac{y+1}{p}}],[D_{\frac{x}{q}}^{\frac{y}{p}}],[D_{\frac{x}{q}}^{\frac{y+1}{p}}])=\Gamma'.$$  
\end{lemma}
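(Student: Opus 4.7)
The plan is to reduce to a convenient linear extension of the given collection via Lemma~\ref{same collection}, and then exhibit an explicit braid word. Label the four arcs of the underlying exceptional collection as $\alpha_1 = [D_{\frac{x}{q}}^{\frac{y}{p}}]$, $\alpha_2 = [D_{\frac{x+1}{q}}^{\frac{y}{p}}]$, $\alpha_3 = [D_{\frac{x+2}{q}}^{\frac{y}{p}}]$, $\alpha_4 = [D_{\frac{x+2}{q}}^{\frac{y+1}{p}}]$. Setting $\vec{v} = y\vec{x}_1 - x\vec{x}_2$, the corresponding line bundles are $\mathcal{O}(\vec{v})$, $\mathcal{O}(\vec{v}-\vec{x}_2)$, $\mathcal{O}(\vec{v}-2\vec{x}_2)$, $\mathcal{O}(\vec{v}+\vec{x}_1-2\vec{x}_2)$. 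A direct computation, either via Theorem~\ref{position} or the formula ${\rm Hom}(\mathcal{O}(\vec{a}),\mathcal{O}(\vec{b})) \cong S_{\vec{b}-\vec{a}}$, shows the partial order is $\alpha_3 \preceq \alpha_2 \preceq \alpha_1$ and $\alpha_3 \preceq \alpha_4$, with $\alpha_4$ incomparable to $\alpha_1, \alpha_2$; a parallel check confirms that $\Gamma'$ is itself a valid ordered exceptional collection. By Lemma~\ref{same collection}, I may work with the specific ordering $\Gamma_0 = (\alpha_3, \alpha_2, \alpha_1, \alpha_4)$.

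The key structural observation is that $\alpha_4 = \alpha_3(\vec{x}_1)$, $\beta_2 = \alpha_2(\vec{x}_1)$, and $\beta_4 = \alpha_1(\vec{x}_1)$, so twisting the exact sequence \eqref{esp} yields, for each $i \in \{1,2,3\}$, a short exact sequence
\[
0 \to \alpha_i \to \alpha_i(\vec{x}_1) \to S_{\infty, y+1} \to 0,
\]
all sharing the same cokernel. This identifies the simple sheaf $S_{\infty, y+1}$ as the natural mediator between $\Gamma_0$ and $\Gamma'$. I claim the braid element $\sigma = \sigma_3^{2}\sigma_2^{-2}\sigma_1^{-1}\sigma_2^{-1}\sigma_3^{-1}$ satisfies $\sigma\Gamma_0 = \Gamma'$, and I verify this by tracking the arcs through seven successive mutations.

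The seven steps proceed as follows. Steps (i)--(ii), the right mutations $\sigma_3^{-1}$ and $\sigma_2^{-1}$, simply commute $\alpha_4$ past $\alpha_1$ and $\alpha_2$ without altering any arcs, since all relevant ${\rm Hom}$ and ${\rm Ext}^1$ groups vanish. Step (iii), $\sigma_1^{-1}$ on $(\alpha_3, \alpha_4)$, produces $(\alpha_4, R_{\alpha_4}\alpha_3) = (\alpha_4, S_{\infty, y+1})$, where $S_{\infty, y+1}$ is the cokernel of the canonical injection $\alpha_3 \hookrightarrow \alpha_4$. Step (iv), $\sigma_2^{-1}$ on $(S_{\infty, y+1}, \alpha_2)$, yields $(\alpha_2, \beta_2)$ via the universal extension realizing the generator of ${\rm Ext}^1(S_{\infty, y+1}, \alpha_2) \cong \Bbbk$, which is precisely the short exact sequence above with $i=2$. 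Step (v), a second $\sigma_2^{-1}$ on $(\alpha_2, \beta_2)$, returns $(\beta_2, S_{\infty, y+1})$ as the cokernel of $\alpha_2 \hookrightarrow \beta_2$. Steps (vi)--(vii), the two left mutations $\sigma_3$, first produce $\beta_4$ as the universal extension of $S_{\infty, y+1}$ by $\alpha_1$, and then extract $\alpha_1 = \beta_3$ as the kernel of the canonical surjection $\beta_4 \twoheadrightarrow S_{\infty, y+1}$. The resulting sequence is $(\beta_1, \beta_2, \beta_3, \beta_4) = \Gamma'$.

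The principal obstacle is the bookkeeping at each of the seven steps: one must identify which of the three defining exact sequences for $L_E F$ or $R_F E$ applies at each stage, as determined by $\dim{\rm Hom}(E,F)$ and $\dim{\rm Ext}^1(E,F)$. These dimensions are computed routinely using the formula ${\rm Hom}(\mathcal{O}(\vec{a}), \mathcal{O}(\vec{b})) \cong S_{\vec{b}-\vec{a}}$, Serre duality (Proposition~\ref{prop of coh}(1)), and the $\vec{x}_2$-twist invariance of $S_{\infty, y+1}$ from \eqref{shift}. Once these verifications are assembled, the seven-step computation produces the desired element $\sigma \in B_4$.
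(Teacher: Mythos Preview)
Your proof is correct and follows essentially the same strategy as the paper's: both reduce via Lemma~\ref{same collection} to the starting ordering $\Gamma_0=(\alpha_3,\alpha_2,\alpha_1,\alpha_4)$ and then exhibit an explicit braid word. The paper uses the longer word $\sigma=\sigma_3^{2}\sigma_2^{2}\sigma_1\sigma_2^{2}\sigma_3\sigma_1^{2}$ and leaves the verification implicit, whereas your word $\sigma=\sigma_3^{2}\sigma_2^{-2}\sigma_1^{-1}\sigma_2^{-1}\sigma_3^{-1}$ is shorter and comes with the pleasant conceptual explanation that the simple sheaf $S_{\infty,y+1}$ mediates between the two collections via the twisted sequences~\eqref{esp}.
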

\begin{proof}
According to Lemma~\ref{same collection}, we   suppose that $$\Gamma=([D_{\frac{x+2}{q}}^{\frac{y}{p}}],[D_{\frac{x+1}{q}}^{\frac{y}{p}}],[D_{\frac{x}{q}}^{\frac{y}{p}}],[D_{\frac{x+2}{q}}^{\frac{y+1}{p}}]).$$
It is directly to check that there is an element  $\sigma= \sigma_3^{2}\sigma_2^{2} \sigma_1 \sigma_2^{2}\sigma_3\sigma_1^{2} $ in $B_4$ such that $\sigma\Gamma=\Gamma'$. Thus this proof is completed by Lemma~\ref{same collection}.
\end{proof}

\begin{proof}[The proof of Proposition~\ref{prop:1+1=2}]
We first show the proof  of $m=2$. If $k=1$, then by Definition~\ref{mutation of arcs}, $$\sigma_{l+2}^{-1}\sigma_{l+1}^{-1}\Gamma=(\gamma_1,\ldots,\gamma_l,\gamma_{l+2},\gamma_{l+3},\gamma_{l+2}[e]).$$ We have done.
The case for $k=2$ follows from  Lemma~\ref{local}.
If $k\geq 3$, then $$\sigma_{l+4}\ldots\sigma_{k+l+1}\Gamma=(\gamma_1,\ldots,\gamma_{l+3},\gamma_{k+l+2},\gamma_{l+4},\ldots,\gamma_{k+l+1})=\Gamma_1.$$ Also by Lemma~\ref{local}, there exists an   element  $\nu_0\in B_{4}$ such that  $$\nu_0(\gamma_{l+1},\gamma_{l+2},\gamma_{l+3},\gamma_{k+l+2})
=(\gamma_{k+l+2},\gamma_{l+2}[e],\gamma_{l+3},\gamma_{l+3}[e]).$$
%(\gamma_1,\ldots,\gamma_{l},\gamma_{k+l+2},\gamma_{l+1}[e],\gamma_{l+3},\gamma_{l+2}[e],\gamma_{l+4},\ldots,\gamma_{k+l+1})
Furthermore, there exists an   element  $\nu_i\in B_{4}$ such that
$$\nu_{i}(\gamma_{l+2i+1},\gamma_{l+2i+1}[e],\gamma_{l+2i+2},\gamma_{l+2i+3})=(\gamma_{l+2i+1}[e],\gamma_{l+2i+2}[e],\gamma_{l+2i+3},\gamma_{l+2i+3}[e]),$$ for all $1\leq i\leq \lfloor \frac{k-2}{2}\rfloor$.
%$\lfloor x\rfloor$ denotes the integer part of $x$
For all $0\leq i\leq \lfloor \frac{k-2}{2}\rfloor$, we denote by $\overline{\nu}_{i}$ the element in $B_{k+l+2}$ corresponding to $\nu_{i}$. Consequently, if $k$ is  even,  then
$$\overline{\nu}_{\frac{k-2}{2}}\ldots\overline{\nu}_1\overline{\nu}_0\Gamma_{1}=(\gamma_1,\ldots,\gamma_l,\gamma_{k+l+2},\gamma_{l+2}[e],\ldots,\gamma_{k+l-1}[e],\gamma_{k+l}[e],\gamma_{k+l+1},\gamma_{k+l+1}[e]);$$
%If $k$ is  odd then
If otherwise, then
$$\sigma_{k+l+1}^{-1}\sigma_{k+l}^{-1}\sigma_{k+l+1}\overline{\nu}_{\frac{k-3}{2}}\ldots\overline{\nu}_1\overline{\nu}_0\Gamma_{1}=(\gamma_1,\ldots,\gamma_l,\gamma_{k+l+2},\gamma_{l+2}[e],\ldots,\gamma_{k+l-1}[e],\gamma_{k+l+1},\gamma_{k+l}[e],\gamma_{k+l+1}[e]).$$ 
Thus the result follows from Lemma~\ref{same collection}.                                                        

Now we consider the case for $m>2$.
According to Lemma~\ref{same collection}, there is  an element $\mu_1\in  B_{k+l+m}$ such that $$\mu_1\Gamma=(\gamma_1,\ldots,\gamma_{l},\gamma_{k+l+2},\ldots,\gamma_{k+l+m-1},\gamma_{l+1},\ldots,\gamma_{k+l+1},\gamma_{k+l+m})=\Gamma_2.$$
 Proceeding as in the proof of $m=2$, we can show that  there exists an element $\mu_2\in  B_{k+l+m}$ such that
$$\mu_2\Gamma_2=(\gamma_1,\ldots,\gamma_{l},\gamma_{k+l+2},\ldots,\gamma_{k+l+m-1},\gamma_{k+l+1},\gamma_{k+l+m},\gamma_{l+2}[e],\ldots,\gamma_{k+l+1}[e]).$$
Therefore, the statement holds also by Lemma~\ref{same collection}. 
\end{proof}

\begin{theorem}\label{transitivity}
    Let $\Gamma = (\gamma_1, \ldots, \gamma_{p+q})$ be an ordered exceptional collection in $A_{p,q}$. Then there exists an element $\sigma \in B_{p+q}$ such that $\sigma \Gamma = \Theta[0,0](q-1, p)$.
\end{theorem}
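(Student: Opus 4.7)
The plan is to apply a sequence of braid moves to reduce $\Gamma$ to the canonical fan $\Theta[0,0](q-1,p)$, which consists entirely of positive bridging arcs emanating from the single marked point $\pi(0_{\partial'})$ on the outer boundary. I would proceed in two phases.

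First, I apply Lemma~\ref{lem:5.7} to find $\sigma_1 \in B_{p+q}$ such that in $\sigma_1\Gamma$ every exceptional intersection occurs between two peripheral arcs; by Remark~\ref{exceptional intersection=1}, each positive bridging arc of $\sigma_1\Gamma$ is then disjoint in the interior of $A_{p,q}$ from every other arc. By Proposition~\ref{collection es}, the resulting ordered collection contains $p-k$ peripheral arcs on the inner boundary, $q-l$ peripheral arcs on the outer boundary, and $k+l$ positive bridging arcs, where $k$ and $l$ count the external points on the respective boundaries. Proposition~\ref{parallel} further shows that these external points come in a matched pattern between the two boundaries.

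Second, I induct on $n = (p-k) + (q-l)$, the total number of peripheral arcs. In the base case $n = 0$, the collection consists of $p+q$ pairwise non-crossing positive bridging arcs. I isolate one such arc and identify it with $\Theta[x,y](0,1)$ for suitable integers $x,y$, then iteratively invoke Proposition~\ref{prop:1+1=2} to adjoin the remaining bridging arcs one at a time, using Lemmas~\ref{2 to normal} and~\ref{same collection} to normalize the intermediate orderings. This yields $\Theta[x,y](q-1,p)$, and a final rotational adjustment (implemented by cycling indices through braid generators and reapplying Lemma~\ref{same collection}) transports $(x,y)$ to $(0,0)$. For the inductive step $n \geq 1$, I use Proposition~\ref{parallel} to locate a peripheral arc $\gamma$ sharing an endpoint with a positive bridging arc in the collection; an explicit local mutation modeled on the orbits of Figure~\ref{The fourth orbit} involving a peripheral--bridging pair converts this configuration into a pair of bridging arcs, decreasing $n$. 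A reapplication of Lemma~\ref{lem:5.7} restores the disjointness hypothesis, after which the inductive hypothesis completes the reduction.

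The main obstacle is the inductive step: choosing the precise peripheral arc to eliminate and verifying that the resulting local mutation, together with the reorderings afforded by Lemma~\ref{same collection}, produces a valid ordered exceptional collection at each stage. Proposition~\ref{parallel} is essential here, as it guarantees the existence of a peripheral--bridging pair with a common endpoint to which the local orbit move applies. A short case analysis, split by whether the shared endpoint lies on the inner or outer boundary, will be needed to cover all configurations, but in each case the explicit braid element can be read off from the mutation rules in Definition~\ref{mutation of arcs} together with Theorem~\ref{mutation correspond}.
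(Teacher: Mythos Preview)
Your two-phase strategy is broadly the paper's strategy, but you have inverted the emphasis in a way that hides the real work.

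The reduction to $\Theta[x,y](q-1,p)$ for some $x,y$ is where you spend all your effort. The paper handles this in one sentence by invoking Lemma~\ref{2 to normal} directly: after Lemma~\ref{lem:5.7} and Proposition~\ref{parallel}, the collection decomposes into fans of bridging arcs separated by blocks of peripheral arcs, and Lemma~\ref{2 to normal} converts each block-plus-adjacent-bridging-arc into a standard fan in one batch; Proposition~\ref{prop:1+1=2} then merges the fans. Your one-at-a-time induction can be made to work, but your justification of the inductive step is not right: Proposition~\ref{parallel} says nothing about peripheral arcs sharing endpoints with bridging arcs, it only matches up the multiplicities of bridging-arc endpoints on the two boundaries. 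You would need a separate argument that, after Lemma~\ref{lem:5.7}, some peripheral arc has an endpoint that is also an endpoint of a bridging arc, and this is not automatic. The paper avoids this issue entirely by using Lemma~\ref{2 to normal}.

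The genuine gap is your ``final rotational adjustment.'' You describe it as ``cycling indices through braid generators and reapplying Lemma~\ref{same collection},'' but moving from $\Theta[x,y](q-1,p)$ to $\Theta[0,0](q-1,p)$ is \emph{not} a reordering: the underlying arcs change. This is the step the paper actually proves, and it occupies almost the entire explicit argument. The paper exhibits a specific braid word taking $\overline{\Gamma}=\Theta[0,0](q-1,p)$ to $({}_s\delta_1,\ldots,{}_s\delta_{p+q})=\Theta[-1,0](q-1,p)$: one first creates, via $\sigma_1^2$, a temporary peripheral arc $\delta'=[D_{-\frac{1}{q},\frac{1}{q}}]$, then uses its exceptional intersections with $\delta_2,\ldots,\delta_p$ to shift each of them by $L_{\delta'}$, then transports $\delta'$ past the remaining arcs and eliminates it with a final $\sigma_{p+q-1}^2$. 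An analogous word handles the shift in the ending coordinate. None of this is captured by ``cycling indices''; it is an explicit construction that you would have to reproduce or replace.
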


\begin{proof}
    According to Convention~\ref{normal}, we denote $$\overline{\Gamma}=\Theta[0,0](q-1, p) = (\delta_1, \delta_2, \ldots, \delta_{p+q}).$$ Combining Proposition~\ref{prop:1+1=2} with Lemma~\ref{lem:5.7}, Proposition~\ref{parallel} and Lemma~\ref{2 to normal}, we can transform any maximal ordered exceptional collection via mutations into $\Theta[x, y](q-1, p)$ for some $x, y \in \mathbb{Z}$. Thus, it suffices to show that there exist elements $\sigma, \sigma' \in B_{p+q}$ such that $\sigma \overline{\Gamma} = ({_s(\delta_1)}, \ldots, {_s(\delta_{p+q})})$ and $\sigma' \overline{\Gamma} = ((\delta_{1})_e, \ldots, (\delta_{p+q})_e)$.

    From the proof of Lemma~\ref{same collection}, we have
    \[
    \sigma_2 \ldots \sigma_p \overline{\Gamma} = (\delta_1, \delta_{p+1}, \delta_2, \ldots, \delta_p, \delta_{p+2}, \ldots, \delta_{p+q}) = \Gamma_1.
    \]
By Definition~\ref{mutation of arcs},
    \[
    \sigma_1^2 \Gamma_1 = (\delta_{p+1}, \delta', \delta_2, \ldots, \delta_p, \delta_{p+2}, \ldots, \delta_{p+q}) = \Gamma_2,
    \]
    where $\delta' = [D_{-\frac{1}{q}, \frac{1}{q}}]$. Note that $\delta'$ has exceptional intersections with $\delta_i$ and $L_{\delta'} \delta_i = {_s(\delta_i)}$ for all $2 \leq i \leq p$, we get
    \[
    \sigma_p \ldots \sigma_2 \Gamma_2 = (\delta_{p+1}, {_s(\delta_2)}, \ldots, {_s(\delta_p)}, \delta', \delta_{p+2}, \ldots, \delta_{p+q}) = \Gamma_3.
    \]
Using the proof of Lemma~\ref{same collection} again, we obtain
    \[
    \sigma_{p+q-2} \ldots \sigma_{p+1} \Gamma_3 = (\delta_{p+1}, {_s(\delta_2)}, \ldots, {_s(\delta_p)}, \delta_{p+2}, \ldots, \delta_{p+q-1}, \delta', \delta_{p+q}) = \Gamma_4.
    \]
 Since $R_{\delta_{p+q}} \delta' =  {_s(\delta_{p+q}})$, 
 %$L_{\delta'} \delta_{p+q} =  {_s(\delta_{p+q}})$,
 $\delta_{p+1} = {_s(\delta_1})$  and $\delta_i = {_s(\delta_{i-1}})$ for $p+2 \leq i \leq p+q$,
    \[
    \sigma_{p+q-1}^2 \Gamma_4 = ({_s(\delta_1)}, \ldots, {_s(\delta_{p+q})}).
    \]
Similarly, we can find $\sigma' \in B_{p+q}$ such that $\sigma' \overline{\Gamma}= ((\delta_{1})_{e}, \ldots, (\delta_{p+q})_{e})$, which completes the proof.
\end{proof}

Combining  with Theorem~\ref{collection and sequence}, this theorem implies that
 the action of braid  group $B_{p+q}$ on the set of complete exceptional sequences in $\cohx$ is transitivity.

%%%%%%%%%%%%%

\section{Combinatorics of  tilting  bundles and tilting sheaves in $\cohx$}\label{sec.6}

In this section, we provide a combinatorial description of tilting bundles and count the number of tilting sheaves in
  $\cohx$, up to the Auslander-Reiten translation. To facilitate the statement of the main theorems, we give the following definition.
\begin{definition}
  Two  tilting sheaves $T$ and $T'$ in $\cohx$ are said to be \emph{$\tau$-equivalent}, if there exists some  $k\in\mathbb{Z}$ such that
$T'=\tau^{-k}T$.  
\end{definition}
 \subsection{Combinatorial descriptions of tilting bundles} 
  In this subsection, we explore the combinatorial descriptions of tilting bundles in $\cohx$, beginning with   a key observation.

\begin{lemma}\label{lemma:tau}
    For any tilting bundle  $T$ in $\cohx$, there exists a tilting bundle $\overline{T}$ containing $\co$  as a direct summand  such that  $T$ is $\tau$-equivalent to $\overline{T}$.
\end{lemma}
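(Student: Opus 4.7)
The plan is to leverage the geometric model to reduce the problem to a combinatorial invariant of arcs. By Proposition~\ref{prop of coh}(2), every indecomposable bundle is a line bundle, so I can write $T = \bigoplus_{l=1}^{p+q} \co(\vec{y}_l)$ with $\vec{y}_l = i_l\vec{x}_1 - j_l\vec{x}_2$, where each summand corresponds under $\phi$ to a positive bridging arc $\alpha_l = [D^{i_l/p}_{j_l/q}]$ in $A_{p,q}$. Since $T$ is tilting, Theorem~\ref{dimension and positive intersection} forces $I^{+}(\alpha_s,\alpha_t) = 0$ in both orders for all $s,t$, so the $p+q$ bridging arcs are pairwise non-crossing in the interior of $A_{p,q}$.

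To each arc $\alpha_l$ I attach the residue $n(\alpha_l) := (i_l + j_l) \bmod (p+q)$. This is a well-defined invariant of the arc in $A_{p,q}$, because the only ambiguity in representing a given line bundle as $\co(i\vec{x}_1 - j\vec{x}_2)$ is the substitution $(i,j) \mapsto (i+p, j+q)$, which shifts $i+j$ by $p+q$. From the formula $\tau^{-1}[D^{i/p}_{j/q}] = [D^{(i+1)/p}_{(j-1)/q}]$ recorded in Section~\ref{geometric modle}, the residue $n$ is $\tau$-invariant, and a direct computation identifies the $\tau$-orbit of $\co$ precisely with the set of line bundles whose residue is $0$.

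The core of the proof is to show that the residues $n(\alpha_1),\ldots,n(\alpha_{p+q})$ are pairwise distinct and hence exhaust $\mathbb{Z}/(p+q)\mathbb{Z}$. Suppose for contradiction that two distinct summands yield arcs $\alpha,\alpha'$ with $n(\alpha)=n(\alpha')$. After replacing the chosen lift of $\alpha'$ in the universal cover $\mathbb{U}$ by a suitable deck translate, I may arrange the representing integer coordinates so that $(i-i') + (j-j') = 0$; writing $a := i-i'$, the distinctness of the arcs forces $a \neq 0$, and hence $(i-i')(j-j') = -a^{2} < 0$. The standard crossing criterion for two straight bridging segments in $\mathbb{U}$ then guarantees that the lifts meet transversely in the interior of $\mathbb{U}$, and since $\pi$ is a local homeomorphism this pushes down to a transverse interior intersection of $\alpha$ and $\alpha'$ in $A_{p,q}$, contradicting the non-crossing property of a tilting bundle.

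Consequently, one summand $\co(\vec{y}_l)$ has residue $0$ and hence equals $\tau^{-m}\co$ for some $m \in \mathbb{Z}$. Setting $\overline{T} := \tau^{m} T$ produces a tilting bundle (since $\tau$ is a $\Bbbk$-equivalence by Proposition~\ref{prop of coh}(1)) that is $\tau$-equivalent to $T$ and contains $\co$ as a direct summand. The step demanding the most attention is the crossing analysis in the universal cover, in particular matching the description of deck translates against the ambiguity $(i,j) \mapsto (i+p,j+q)$; once this is set up correctly, the lemma reduces to the one-line identity $(i-i')(j-j') = -a^{2}$.
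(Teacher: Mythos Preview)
Your proof is correct and takes a genuinely different route from the paper. The paper normalizes the decomposition $T=\bigoplus\phi([D^{a_i/p}_{b_i/q}])$ so that $a_1=0$ and $-q\le b_1<q$, then runs an induction on $|b_1|$: if $b_1<0$ it shows (after possibly re-indexing) that applying $\tau$ decreases $|b_1|$, eventually producing a summand $\co$. Your argument instead introduces the $\tau$-invariant residue $n([D^{i/p}_{j/q}])=(i+j)\bmod(p+q)$ and observes that two bridging arcs with the same residue can be lifted to $\mathbb{U}$ so that $(i-i')(j-j')=-a^2<0$, which forces an interior crossing and hence a nonzero $\mathrm{Ext}^1$ in one direction, contradicting the tilting hypothesis via Theorem~\ref{dimension and positive intersection}. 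This pigeonhole step yields more than the lemma asks: it shows that the $p+q$ indecomposable summands of a tilting bundle hit each of the $p+q$ $\tau$-orbits of line bundles exactly once, from which the existence of a summand in the orbit of $\co$ is immediate. The paper's inductive approach is more hands-on and stays closer to the triangulation picture used in the subsequent enumeration, while your invariant-based argument is shorter, avoids case analysis, and makes the structural reason for the lemma transparent. The only point worth spelling out a bit more in your write-up is that a transverse crossing of straight bridging segments in $\mathbb{U}$ is essential (cannot be removed by homotopy rel endpoints), so it genuinely contributes to $I^+$ in one direction after projecting to $A_{p,q}$; this is standard for arcs between distinct boundary components but deserves a sentence.
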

\begin{proof}
Based on Definition~\ref{geometric} and Proposition~\ref{prop:tau}, without loss of generality, we can assume  that the tilting bundle $T$ has the decomposition:
\begin{equation}\label{form}
    T = \bigoplus_{i=1}^{p+q} \phi([D_{\frac{b_i}{q}}^{\frac{   a_i}{p}}]),
\end{equation}
 where   the parameters satisfy: $0=a_1\leq a_2\leq \dots\leq a_{p+q}\leq p $  and $b_1\leq b_2\leq \dots\leq b_{p+q}$ with $-q\leq b_1<q$.  
The result holds trivially when $b_1 = 0$.   We therefore focus on the  non-trivial cases $-q \leq b_1 < 0$; the dual case $0 < b_1 < q$ follows by analogous arguments.

For the case where $-q \leq b_1 < 0$, we can assume $a_2 = 1$ and $b_2 = b_1$. If this condition fails, then $a_2 = 0$ and $b_2 = b_1 + 1$. In this case, we define:
    \[
    [D_{\frac{b_{p+q}'}{q}}^{\frac{a_{p+q}'}{p}}] := [D_{\frac{b_1}{q} + 1}^1], \quad [D_{\frac{b_i'}{q}}^{\frac{a_i'}{p}}] := [D_{\frac{b_{i+1}}{q}}^{\frac{a_{i+1}}{p}}] \quad \text{for } 1 \leq i \leq p+q-1.
    \] 
This yields a  decomposition:
$$T = \bigoplus_{i=1}^{p+q} \phi([D_{\frac{b_i'}{q}}^{\frac{a_i'}{p}}])$$
preserving $a_1' = 0$ and $-q<b_1' \leq0$. Thus,  we may restrict our attention to the case where  $a_2 = 1$ and $b_2 = b_1$.
Now we use induction on  $|b_1|$ to  show that  the result holds for any tilting bundle $T$ decomposed as in equation \eqref{form}  with $-q \leq b_1 < 0$.  

For the base case where  $|b_1| = 1$, it follows from 
$a=1$ and $b=-1$ 
that $ \tau T$ contains $\co$  as a direct summand.  
For $|b_1|>1$, we consider the bundle $  \tau T$, which can be check that $\tau T$ decomposes as
    \[
   \tau T = \bigoplus_{i=1}^{p+q} \phi([D_{\frac{f_i}{q}}^{\frac{e_i}{p}}]),
    \]
    with $e_1 = 0$, $e_2 = 1$, and $b_1 + 1\leq f_2 = f_1 \leq 0$. By the induction hypothesis,  
    there exists a tilting bundle $\overline{T}$ containing $\co$  as a direct summand  such that  $\tau T$ is $\tau$-equivalent to 
    $\overline{T}$. The proof is now complete.
\end{proof}
 
We will give  a combinatorial description of the  
 $\tau$-equivalence classes of tilting bundles in  $\cohx$.  To this end,
  we recall the concept of a lattice path.
  \begin{definition}[\cite{dyck}]\label{lattice and dyck}
A sequence of lattice points 
$P=\{(x_1,y_1),(x_2,y_2)\dots, (x_k,y_k)\}$ in $\mathbb{Z}^{2}$
  is   a \emph{lattice path} from $(x_1,y_1)$ to $(x_k,y_k)$ 
if   $(x_{i+1}, y_{i+1})\in\{(x_i, y_i + 1),(x_i+1,y_i)\}$  for every $1\leq i\leq k-1$.
% \[
% (x_{i+1}, y_{i+1})\in\{(x_i, y_i + 1),(x_{i+1}, y_{i+1})\}
% = (x_i, y_i + 1) \text{ or } (x_{i+1}, y_{i+1}) = (x_i+1,y_i).
% \] 
Furthermore,  a lattice path $P$ from $(0,0)$ to $(m,n)$ such that $m y_i\leq n x_i$ for all $i$   is
 called an \emph{$(m,n)$-Dyck path}. Denote by $\mathcal{L}(m,n)$  the set of all lattice paths from $(0,0)$ to $(m,n)$,   and   $\mathcal{D}(m,n)$ the set of all  $(m,n)$-Dyck paths. 
  \end{definition}

\begin{theorem}\label{number  tilting bundles}
There is a one-to-one correspondence between  the   $\tau$-equivalence classes of tilting bundles in  $\cohx$ 
and the  lattice paths from $(0,0)$ to $(p,q)$.  Consequently, 
the number of $\tau$-equivalence classes of tilting bundles in  $\cohx$ is given by
  $$C_{p+q}^{p}=\binom{p+q}{p}.$$  
\end{theorem}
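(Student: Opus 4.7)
The plan is to reduce to tilting bundles containing $\mathcal{O}$ as a direct summand and then biject these with lattice paths. By Lemma~\ref{lemma:tau}, every $\tau$-equivalence class has at least one such representative, so two tasks remain: (i) exhibit a bijection between tilting bundles containing $\mathcal{O}$ and lattice paths from $(0,0)$ to $(p,q)$; and (ii) show that no two distinct tilting bundles containing $\mathcal{O}$ are $\tau$-equivalent. Together these yield the one-to-one correspondence and the count $\binom{p+q}{p}$.

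For (i), the geometric model of \cite{CRZ23} identifies a tilting bundle with a triangulation of $A_{p,q}$ whose arcs are all positive bridging, since each summand is a line bundle. Given such a triangulation containing the arc $\phi^{-1}(\mathcal{O}) = [D_{0}^{0}]$, I would cut $A_{p,q}$ open along this arc to obtain a polygon with $p+1$ marked points on one side (from the inner boundary) and $q+1$ on the other (from the outer boundary). The remaining $p+q-1$ arcs become diagonals of this polygon, each running from one side to the other, since no peripheral arc is allowed. Such a diagonal configuration partitions the polygon into $p+q$ triangles, exactly $p$ of which have two vertices on the inner side and $q$ of which have two vertices on the outer side (counted by boundary edges on each side). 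Reading these triangles from left to right in the universal cover coordinate records a word of $p$ letters of one type and $q$ of the other, i.e., a lattice path from $(0,0)$ to $(p,q)$; the construction is manifestly reversible, giving the desired bijection.

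For (ii), suppose $T_1$ and $T_2 = \tau^{-k}T_1$ both contain $\mathcal{O}$ with $k \neq 0$. Then $T_2$ contains both $\mathcal{O}$ and $\tau^{-k}\mathcal{O}$, whose arcs are $[D_{0}^{0}]$ and $[D_{-k/q}^{k/p}]$ respectively, using Proposition~\ref{prop:tau}. Lifting to the universal cover, these become the vertical segment from $(0,0)$ to $(0,1)$ and the oblique segment from $(-k/q,0)$ to $(k/p,1)$; since $-k/q$ and $k/p$ have opposite signs, $0$ lies strictly between them, so the two segments cross in the interior of the strip. Hence the two arcs are incompatible, contradicting their both lying in a common triangulation. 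Therefore $k=0$ and $T_1 = T_2$, completing the proof modulo the lattice-path bijection. The main obstacle is step (i): rigorously justifying that the cut-and-unfold operation transfers annulus triangulations to polygon triangulations bijectively, and matching the resulting staircase combinatorics to lattice paths. The underlying claim about polygon triangulations is classical, but one must verify that every top-to-bottom-diagonal triangulation of the polygon reassembles to a valid tilting bundle containing $\mathcal{O}$, and conversely that every such tilting bundle yields a triangulation of precisely this form when cut open along $[D_{0}^{0}]$.
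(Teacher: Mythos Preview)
Your proposal is correct and, at its core, follows the same route as the paper. The paper's proof also invokes Lemma~\ref{lemma:tau} to reduce to tilting bundles containing $\mathcal{O}$, and then sets up a bijection with $\mathcal{L}(p,q)$; the only difference in step~(i) is one of presentation. Where you cut $A_{p,q}$ along $[D_0^0]$ and read off the triangle types left-to-right, the paper simply lists the $p+q$ arcs of the triangulation as $[D_{b_i/q}^{a_i/p}]$ with $0=a_1\le\cdots\le a_{p+q}$ and $0=b_1\le\cdots\le b_{p+q}$, observes via \cite[Theorem~4.3]{CRZ23} that consecutive pairs differ by a unit step, and sends the bundle to the path $(a_1,b_1),\dots,(a_{p+q},b_{p+q}),(p,q)$. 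This is exactly the coordinate incarnation of your cut-polygon picture: your ``two-vertices-on-the-inner-side'' triangles are precisely the steps with $a_{i+1}=a_i+1$, and the others are the steps with $b_{i+1}=b_i+1$. So the bijection is the same, and the paper's coordinate bookkeeping dispatches cleanly the verification you flag as the main obstacle.

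Your step~(ii) is actually an improvement on the paper's exposition: the paper declares $\psi:\mathcal{T}^\nu\to\mathcal{L}(p,q)$ a bijection after exhibiting the inverse on lattice paths, but it never explicitly checks that two distinct tilting bundles containing $\mathcal{O}$ cannot be $\tau$-equivalent (equivalently, that the representative furnished by Lemma~\ref{lemma:tau} is unique). Your argument that $[D_0^0]$ and $[D_{-k/q}^{k/p}]$ must cross in the interior for $k\neq 0$ fills this gap; one can equally verify it algebraically by computing $\mathrm{Ext}^1(\tau^{-k}\mathcal{O},\mathcal{O})\cong DS_{(k-1)\vec{x}_1+(k-1)\vec{x}_2}\neq 0$ for $k\ge 1$ (and symmetrically for $k\le -1$), which avoids any worry about coincident endpoints in the annulus.
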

\begin{proof}
Let $\mathcal{T}^\nu$ denote the set of $\tau$-equivalence classes of tilting bundles in $\cohx$. By Lemma~\ref{lemma:tau}, we may assume that each $T \in \mathcal{T}^\nu$ is of the form:
\[ T = \bigoplus_{i=1}^{p+q} \phi([D_{\frac{b_i}{q}}^{\frac{   a_i}{p}}]),\]
where $0 = a_1 \leq a_2 \leq \cdots \leq a_{p + q} \leq p$ and $0 = b_1 \leq b_2 \leq \cdots \leq b_{p + q} \leq q$.  
Moreover,  \cite[Theorem 4.3]{CRZ23} 
 implies that for each $i = 1,2,\dots,p+q-1$:
\[
(a_{i+1}, b_{i+1})\in\{(a_i, b_i + 1),(a_i+1,b_i)\} \text{ and } (a_{p+q}, b_{p+q}) \in\{(q, p-1),(q-1,p)\}.
\] Thus we define a map
\begin{equation}\label{tilting and path}
    \psi:\mathcal{T}^{\nu}\to \mathcal{L}(p,q),  \quad \bigoplus_{i=1}^{p+q} \phi([D_{\frac{b_i}{q}}^{\frac{   a_i}{p}}])\mapsto  \{(a_1,b_1),(a_2,b_2)\dots, (a_{p+q},b_{p+q}),(p,q)\}.
\end{equation}

To establish the reverse direction, consider a lattice path   $\{(x_1,y_1),(x_2,y_2)\dots, (x_{p+q},y_{p+q}),(p,q)\}$   from $(0,0)$ to $(p,q)$. According to the definition of  lattice paths,  the set
\[\{[D_{\frac{y_1}{q}}^{\frac{   x_1}{p}}],[D_{\frac{y_2}{q}}^{\frac{   x_2}{p}}], \dots,[D_{\frac{y_{p+q}}{q}}^{\frac{   x_{p+q}}{p}}]\}
\] forms a triangulation of $A_{p,q}$. Also by \cite[Theorem 4.3]{CRZ23},  the object 
$\bigoplus_{i=1}^{p+q} \phi([D_{\frac{y_i}{q}}^{\frac{   x_i}{p}}])$ is an element in  $\mathcal{T}^{\nu}$. Hence, by above construction, the map $\psi$ is a bijection from  $\mathcal{T}^{\nu}$ to $\mathcal{L}(p,q)$.

Given that the number of lattice paths from $(0, 0)$ to $(p, q)$ is the binomial coefficient  $C_{p+q}^{p}$, the proof is complete.
\end{proof} 
Recall from \cite{MR4173177} that a  tilting bundle that contains $\co$  as a direct summand of minimal degree is called a \emph{fundamental tilting bundle}.
As a corollary of Theorem~\ref{number  tilting bundles}, we give a  combinatorial description of fundamental tilting bundles. 

\begin{corollary}\label{Dyck 1}
    There exists a bijection between  the  fundamental tilting bundles in  $\cohx$ 
and the  $(p,q)$-Dyck paths.  Furthermore, 
the number of fundamental tilting bundles in  $\cohx$  is  
\begin{equation}\label{introeqmain}
C{(p,q)}=\sum_{a }\prod _{i=1}^d \left( \frac{1}{a_i!}A_{(\frac{i}{d}p,\frac{i}{d}q)}^{a_i} \right) ,
\end{equation}
where  $d={\rm  gcd}(p,q)$, $A_{(k,l)}$ is the binomial coefficient $C_{k+l}^{k}$ and the sum $\sum_a$ is taken over all sequences of non-negative integers $a=(a_1, a_2, \cdots )$ such that $\sum_{i=1}^{\infty }ia_i =d$.
\end{corollary}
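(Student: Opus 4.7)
The plan is to combine Theorem~\ref{number tilting bundles} with a degree criterion that isolates fundamental tilting bundles inside each $\tau$-equivalence class of tilting bundles.

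By Lemma~\ref{lemma:tau} and the proof of Theorem~\ref{number tilting bundles}, each $\tau$-equivalence class of tilting bundles in $\cohx$ admits a (canonical) representative of the form $T = \bigoplus_{i=1}^{p+q} \phi([D^{a_i/p}_{b_i/q}])$ with $(a_1,b_1) = (0,0)$, and these representatives biject under $\psi$ with lattice paths $\{(a_i,b_i)\}_{i=1}^{p+q}\cup\{(p,q)\}$ from $(0,0)$ to $(p,q)$. Applying the degree map $\delta$, the $i$-th summand has degree
$$\delta(a_i\vec{x}_1 - b_i\vec{x}_2) = \frac{\mathrm{lcm}(p,q)}{pq}\,(a_i q - b_i p),$$
so $\co$ is a summand of minimal degree precisely when $a_i q \geq b_i p$ for every $i$. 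I would then argue that such a representative is in fact the unique fundamental tilting bundle in its $\tau$-orbit: any further application of $\tau$ would push $\co$ to strictly negative degree relative to some $\co(a_j\vec{x}_1 - b_j\vec{x}_2)$, while $\tau^{-1}$ would remove $\co$ from the set of summands altogether (since $(a_1,b_1)=(0,0)$ would be replaced by $(1,0)$ or $(0,-1)$, both of which still occupy the minimal position in the shifted path).

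Having identified the fundamental tilting bundles, the inequalities $a_i q \geq b_i p$ are exactly the defining $(p,q)$-Dyck condition $p y_i \leq q x_i$ of Definition~\ref{lattice and dyck} once one sets $(x_i,y_i) = (a_i,b_i)$ and $(m,n) = (p,q)$. Consequently $\psi$ restricts to a bijection between the set of fundamental tilting bundles in $\cohx$ and the set $\mathcal{D}(p,q)$ of $(p,q)$-Dyck paths, establishing the first assertion.

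For the counting formula~\eqref{introeqmain}, I would invoke Bizley's classical enumeration of lattice paths from $(0,0)$ to $(p,q)$ that stay weakly below the chord of slope $q/p$, which produces precisely the expression $C(p,q) = \sum_{a} \prod_{i=1}^{d} \frac{1}{a_i!}\,A^{a_i}_{(ip/d,iq/d)}$ indexed by sequences $a=(a_1,a_2,\dots)$ of non-negative integers with $\sum_i i a_i = d = \gcd(p,q)$. The main obstacle is the uniqueness part of the first step: one must rule out that two distinct $\tau$-shifts of a given tilting bundle can simultaneously be fundamental. Handling this rigorously requires a careful local analysis of how the lattice path shifts under $\tau^{\pm 1}$ (using Proposition~\ref{prop:tau}) and how the degree inequalities $a_iq \geq b_i p$ are violated outside the canonical representative. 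Once this uniqueness is settled, the bijection with $\mathcal{D}(p,q)$ is immediate from the discussion above, and the cardinality formula is a direct citation of Bizley's theorem.
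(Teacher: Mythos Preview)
Your approach coincides with the paper's: restrict the bijection $\psi$ from Theorem~\ref{number  tilting bundles} by observing that the normalised representative containing $\co$ is fundamental exactly when $\delta(a_i\vec{x}_1-b_i\vec{x}_2)\ge0$, i.e.\ when $pb_i\le qa_i$ for all $i$, which is precisely the Dyck condition; then cite the known enumeration of $\mathcal{D}(p,q)$ (the paper quotes \cite[Theorem~1.1]{dyck}, which is Fukukawa's version of the Bizley formula you invoke).

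The uniqueness you flag as the ``main obstacle'' is not a genuine difficulty, and the paper simply asserts it by writing that $\mathcal{T}_o^\nu$ ``can be viewed as a subset of $\mathcal{T}^\nu$''. A one-line argument settles it: if $\tau^kT$ also contains $\co$, then $(a_j-k)\vec{x}_1-(b_j+k)\vec{x}_2=0$ in $\mathbb{L}$ for some $j$, which forces $a_j+b_j=m(p+q)$ for some $m\in\mathbb{Z}$; since $0\le a_j+b_j\le p+q-1$ this gives $m=0$, hence $a_j=k$ and $b_j=-k$, so $k=0$. No local analysis of path shifts is needed. (Incidentally, your description of $\tau^{-1}$ is slightly off: since $\tau^{-1}$ is the shift by $-\vec\omega=\vec{x}_1+\vec{x}_2$, the summand $\co$ moves to parameters $(1,-1)$, not to $(1,0)$ or $(0,-1)$.)
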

\begin{proof}
Denote by $\mathcal{T}_{o}^{\nu}$   the set of  the   fundamental tilting bundle in the category $\cohx$.  Then $\mathcal{T}_{o}^{\nu}$ can be viewed as a subset of $\mathcal{T}^{\nu}$, as defined in the proof of Theorem~\ref{number tilting bundles}.  According to \cite[Theorem 4.3]{CRZ23}, 
a  tilting bundle including $\co$ as a direct summand  is of the form
   \begin{equation}\label{tilting with o}
       T = \bigoplus_{i=1}^{p+q} \phi([D_{\frac{b_i}{q}}^{\frac{   a_i}{p}}])=\bigoplus_{i=1}^{p+q} \co(a_i\vec{x}_1-b_i\vec{x}_2),
   \end{equation}  with $0=a_1\leq a_2\leq \dots\leq a_{p+q}\leq p $ and  $0=b_1\leq b_2\leq \dots\leq b_{p+q}\leq q$.  
 
By   Definition~\ref{lattice and dyck}, for any $T$ decomposed
as in equation~\eqref{tilting with o}, the image of $T$ under the  bijective  $\psi$ given in \eqref{tilting and path} 
 is a $(p,q)$-Dyck path if and only if $p  b_i\leq q  a_i$,   which is equivalent to $\delta(a_i \vec{x}_1 - b_i \vec{x}_2) \geq 0$. Therefore, the bijection $\psi: \mathcal{T}^{\nu} \to \mathcal{L}(p,q)$ restricts to a bijection between $\mathcal{T}_{o}^{\nu}$ and $\mathcal{D}(p,q)$. Consequently, \cite[Theorem~1.1]{dyck} confirms that the number of fundamental tilting bundles in $\cohx$ is $C(p,q)$.
\end{proof}

\subsection{Enumeration of tilting sheaves}
In this subsection, we focus on  counting the  tilting sheaves in the category $\cohx$, up to  $\tau$-equivalence. As shown in \cite[Theorem 4.3]{CRZ23}, there is a bijection between tilting sheaves in  $\cohx$ and triangulations of $A_{p,q}$. Consequently, our task reduces to computing the number of triangulations of  $A_{p,q}$ up to the  iterated elementary moves, which we formalize in the following definition.

\begin{definition}
Two  triangulations $\Gamma = \{\gamma_1,  \ldots, \gamma_{p+q}\}$ and $\Gamma' = \{\gamma_1', \ldots, \gamma_{p+q}'\}$ of $A_{p,q}$ are said to be \emph{$se$-equivalent}, denoted by $\Gamma' \sim \Gamma$, if there exists some $k \in \mathbb{Z}$ such that $\gamma_i' = {_{s^{k}}}(\gamma_i)_{e^{k}}$ for all $1 \leq i \leq p+q$. 
\end{definition}
 
Let $\mathcal{T}$ denote the set of triangulations of $A_{p,q}$. We introduce four distinct classes of triangulations for $A_{p,q}$ as follows:
Firstly, define
$$\Gamma(0,1):=\left\{\{\gamma_1,\gamma_2,\dots,\gamma_{p+q}\}\in\mathcal{T}\middle|\gamma_1=\left[D_{0}^{0}\right],\gamma_2=\left[D_{\frac{1}{q}}^{0}\right]\right\};$$
and
$$\Gamma(0,1)':=\left\{\{\gamma_1,\gamma_2,\dots,\gamma_{p+q}\}\in\mathcal{T}\middle|\gamma_1=\left[D_{0}^{0}\right],\gamma_2=\left[D_{0}^{\frac{1}{p}}\right]\right\};$$
For $a\in\mathbb{Z}\setminus\{0\}$ or $b\in\mathbb{Z}\setminus\{1\}$, we further define
$$\Gamma(a,b):=\left\{\{\gamma_1,\gamma_2,\dots,\gamma_{p+q}\}\in\mathcal{T}\middle|\gamma_1=\left[D_{\frac{a}{q}}^{0}\right],\gamma_2=\left[D_{\frac{b}{q}}^{0}\right],\gamma_3=\left[D_{\frac{a}{q},\frac{b}{q} }\right] \text{ with } a\leq 0<b\right\};$$
and
$$\Gamma(a,b)':=\left\{\{\gamma_1,\gamma_2,\dots,\gamma_{p+q}\}\in\mathcal{T}\middle|\gamma_1=\left[D_{\frac{a}{q}}^{0}\right],\gamma_2=\left[D_{\frac{a}{q}}^{\frac{b}{p}}\right],\gamma_3=\left[D^{0,\frac{b}{p} }\right] \text{ with }a<0 \text{ and } b>1 \right\}.$$  
 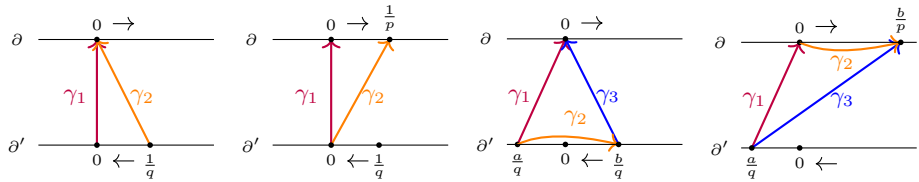
\begin{figure} [H]
\begin{tikzpicture}[scale=0.7]
\coordinate (3) at (1.2,-0.5);
\coordinate (5) at (1,-0.1);
\coordinate (6) at (3.1,-0.1);
\coordinate (7) at (0.1-0.5,0.35);
\coordinate (8) at (0,-0.35);
\coordinate (9) at (2.1,0.25);
\node[left,purple]  at (5){\small$\gamma_1$};
\draw [->,purple,thick](1,-1) --(2-1,1);
\draw [->,orange,thick](2,-1) --(2-1,1);
\node[right,orange]  at (1.45,-0.1){\small$\gamma_2$};
\fill  (1,1) circle[radius=1.5pt];
\node[above] at  (1,1){\tiny$0$}; 
\node[below] at  (1,-1){\tiny$0$}; 
\fill  (1,-1) circle[radius=1.5pt]; 
\node[below] at  (2,-1){\tiny$ \frac{1}{q}$};  
\fill (2,-1) circle[radius=1.5pt];
\draw (-0.1,1) -- (3.2,1);
\draw (-0.1,-1) -- (3.2,-1);
\node()at(-0.5,1){\tiny{${\partial}$}};
\node()at(-0.5,-1){\tiny{${\partial'}$}};
\draw[->] (1.3,1.3) -- (1.7,1.3);
\draw[<-] (1.3,-1.3) -- (1.7,-1.3);
%\node at (1.5,-1.8){\small$\Gamma(a,b)$};
\end{tikzpicture}
 \hspace{0.01cm}
\begin{tikzpicture}[scale=0.7]
\coordinate (3) at (1.2,-0.5);
\coordinate (5) at (0.95,-0.1);
\coordinate (6) at (3.1,-0.1);
\coordinate (7) at (0.1-0.5,0.35);
\coordinate (8) at (0,-0.35);
\coordinate (9) at (2.1,0.25);
\node[left,purple]  at (5){\small$\gamma_1$};
\draw [->,purple,thick](1,-1) --(2-1,1);
\draw [->,orange,thick](1,-1) --(2.1,1);
\node[right,orange]  at (1.4,-0.1){\small$\gamma_2$};
\fill  (1,1) circle[radius=1.5pt];
\node[above] at  (1,1){\tiny$0$}; 
\node[below] at  (1,-1){\tiny$0$}; 
\fill  (1,-1) circle[radius=1.5pt];
\node[below] at  (1.9,-1){\tiny$ \frac{1}{q}$};  
\fill (1.9,-1) circle[radius=1.5pt];
%\fill (-0.4+0.5,-1) circle[radius=1.5pt];
%\draw [->,orange,thick](-0.4+0.5+0.9,1) .. controls(0+0.5+0.9,0.8) and (0.4+0.75+0.9,0.8) ..(1+1+0.9,1);
\node[above] at  (2.1,1){\tiny$ \frac{1}{p}$};  
\fill (2.1,1) circle[radius=1.5pt];
\draw (-0.1,1) -- (3.2,1);
\draw (-0.1,-1) -- (3.2,-1);
\node()at(-0.5,1){\tiny{${\partial}$}};
\node()at(-0.5,-1){\tiny{${\partial'}$}};
\draw[->] (1.3,1.3) -- (1.7,1.3);
\draw[<-] (1.3,-1.3) -- (1.7,-1.3);
%\node at (1.5,-1.8){\small$\Gamma(a,b)$};
\end{tikzpicture}
\hspace{0.01cm}
\begin{tikzpicture}[scale=0.7]
\coordinate (3) at (1.2,-0.5);
\coordinate (5) at (0.9-0.35,-0.1);
\coordinate (6) at (3.1,-0.1);
\coordinate (7) at (0.1-0.5,0.35);
\coordinate (8) at (0,-0.35);
\coordinate (9) at (2.1,0.25);
\node[left,purple]  at (5){\small$\gamma_1$};
\node[orange] at (3){\small$\gamma_2$};
\draw [->,purple,thick](-0.4+0.5,-1) --(2-1,1);
\draw [->,blue,thick](2,-1) --(2-1,1);
\node[right,blue]  at (1.4,-0.1){\small$\gamma_3$};
\fill  (1,1) circle[radius=1.5pt];
\node[above] at  (1,1){\tiny$0$}; 
\node[below] at  (1,-1){\tiny$0$}; 
\fill  (1,-1) circle[radius=1.5pt];
\node[below] at  (-0.4+0.5,-1){\tiny$ \frac{a}{q}$};  
\fill (-0.4+0.5,-1) circle[radius=1.5pt];
\draw [->,orange,thick](-0.4+0.5,-1) .. controls
(0+0.5,-0.8) and (0.4+0.75,-0.8) ..(1+1,-1);
\node[below] at  (2,-1){\tiny$ \frac{b}{q}$};  
\fill (2,-1) circle[radius=1.5pt];
\draw (-0.1,1) -- (3.2,1);
\draw (-0.1,-1) -- (3.2,-1);
\node()at(-0.5,1){\tiny{${\partial}$}};
\node()at(-0.5,-1){\tiny{${\partial'}$}};
\draw[->] (1.3,1.3) -- (1.7,1.3);
\draw[<-] (1.3,-1.3) -- (1.7,-1.3);
%\node at (1.5,-1.8){\small$\Gamma(a,b)$};
\end{tikzpicture}
\hspace{0.01cm}
\begin{tikzpicture}[scale=0.7]
\coordinate (3) at (1.2,-0.5);
\coordinate (5) at (0.9-0.35,-0.1);
\coordinate (6) at (3.1,-0.1);
\coordinate (7) at (0.1-0.5,0.35);
\coordinate (8) at (0,-0.35);
\coordinate (9) at (2.1,0.25);
\node[left,purple]  at (5){\small$\gamma_1$};
\node[orange] at (1.8,0.6){\small$\gamma_2$};
\draw [->,purple,thick](-0.4+0.5,-1) --(2-1,1);
\draw [->,blue,thick](-0.4+0.5,-1) --(2.9,1);
\node[right,blue]  at (1.4,-0.1){\small$\gamma_3$};
\fill  (1,1) circle[radius=1.5pt];
\node[above] at  (1,1){\tiny$0$}; 
\node[below] at  (1,-1){\tiny$0$}; 
\fill  (1,-1) circle[radius=1.5pt];
\node[below] at  (-0.4+0.5,-1){\tiny$ \frac{a}{q}$};  
\fill (-0.4+0.5,-1) circle[radius=1.5pt];
\draw [->,orange,thick](-0.4+0.5+0.9,1) .. controls
(0+0.5+0.9,0.8) and (0.4+0.75+0.9,0.8) ..(1+1+0.9,1);
\node[above] at  (2.9,1){\tiny$ \frac{b}{p}$};  
\fill (2.9,1) circle[radius=1.5pt];
\draw (-0.1,1) -- (3.2,1);
\draw (-0.1,-1) -- (3.2,-1);
\node()at(-0.5,1){\tiny{${\partial}$}};
\node()at(-0.5,-1){\tiny{${\partial'}$}};
\draw[->] (1.3,1.3) -- (1.7,1.3);
\draw[<-] (1.3,-1.3) -- (1.7,-1.3);
%\node at (1.5,-1.8){\small$\Gamma(a,b)$};
\end{tikzpicture}
\caption{The local figures of triangulations in  $\Gamma(0,1)$, $\Gamma(0,1)'$, $\Gamma(a,b)$ and  $\Gamma(a,b)'$, respectively}
\end{figure}
Additionally, let
$$\mathcal{T}_1:=\left\{\Gamma\in\Gamma(a,b)\middle|a=0,-1,\dots,1-q ;b=1,2,\dots,a+q\right\};$$
and
$$\mathcal{T}_2:=\left\{\Gamma\in\Gamma(a,b)'\middle|a=0,-1,\dots,1-p;b=1-a,2-a,\dots,p\right\}.$$

First, we show that all triangulations in $\mathcal{T}_1 \cup \mathcal{T}_2$ are pairwise distinct with respect to $se$-equivalence.

\begin{lemma}
    For any triangulation $\Gamma$ of $A_{p,q}$ in $\mathcal{T}_1\cup \mathcal{T}_2$, there is no distinct  triangulation $\Gamma'$ in $ \mathcal{T}_1\cup \mathcal{T}_2$ such that   $\Gamma' \sim\Gamma$.
\end{lemma}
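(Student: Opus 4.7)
The plan is to argue by contradiction: suppose that $\Gamma \neq \Gamma'$ are two triangulations in $\mathcal{T}_1 \cup \mathcal{T}_2$ with $\Gamma' = {_{s^k}}\Gamma_{e^k}$ for some $k \in \mathbb{Z}$, and derive a contradiction by showing $k$ must be a multiple of $\operatorname{lcm}(p,q)$ (which, as a direct computation shows, makes $_{s^k}(-)_{e^k}$ the identity on every arc and hence forces $\Gamma = \Gamma'$). To set up the bookkeeping, I would first record the explicit action of $_{s^k}(-)_{e^k}$, which by Proposition~\ref{prop:tau} is the same as the action of $\tau^{-k}$: the bridging arc $[D_{c/q}^{d/p}]$ is sent to $[D_{(c-k)/q}^{(d+k)/p}]$, the outer peripheral arc $[D_{c/q,d/q}]$ to $[D_{(c-k)/q,(d-k)/q}]$, and the inner peripheral arc $[D^{i/p,j/p}]$ to $[D^{(i+k)/p,(j+k)/p}]$.

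The key auxiliary claim I would establish is the following: for $\Gamma \in \mathcal{T}_1 \cup \mathcal{T}_2$ with parameter $a$, the bridging arc $[D_0^0]$ belongs to $\Gamma$ if and only if $a=0$. Sufficiency is immediate, since $\gamma_1 = [D_{a/q}^0] = [D_0^0]$ when $a=0$. For the necessity, when $a<0$ and $\Gamma \in \mathcal{T}_1$, the peripheral arc $\gamma_3 = [D_{a/q,b/q}]$ strictly contains the outer marked point $0$ in its interior (because $a < 0 < b$), so any bridging arc emanating from $0$ on the outer boundary must cross $\gamma_3$ transversally, violating the triangulation condition. When $a<0$ and $\Gamma \in \mathcal{T}_2$, lifting to the universal cover $\mathbb{U}$ shows that the straight-line lifts of $[D_0^0]$ and of $\gamma_2 = [D_{a/q}^{b/p}]$ cross transversally in the interior (the intersection parameter being $-a/(b-a) \in (0,1)$), so these two arcs cannot coexist in a triangulation.

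With this claim in hand I would proceed by case analysis on whether the parameters $a, a'$ of $\Gamma, \Gamma'$ are zero. If both $a = a' = 0$, then $[D_0^0]$ lies in both $\Gamma$ and $\Gamma'$, which forces $_{s^k}[D_0^0]_{e^k} = [D_{-k/q}^{k/p}]$ to be a member of $\Gamma'$ and, symmetrically, $[D_{k/q}^{-k/p}] \in \Gamma$. I would then combine this with the further constraints on $\gamma_2$ (which differ between $\mathcal{T}_1$ and $\mathcal{T}_2$) and on the peripheral arc $\gamma_3$ to deduce separate divisibility conditions $k \equiv 0 \pmod p$ and $k \equiv 0 \pmod q$, using the normalization ranges $a,a'\in\{1-q,\dots,0\}$ (or $\{1-p,\dots,0\}$) to rule out non-trivial shifts. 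If exactly one of $a, a'$ equals $0$, the auxiliary claim is already violated: $[D_0^0]$ lies in only one of $\Gamma, \Gamma'$, but this arc is fixed by $_{s^k}(-)_{e^k}$ only when $k \equiv 0 \pmod{\operatorname{lcm}(p,q)}$, contradicting $a \neq a'$. If $a, a'$ are both nonzero, I would run the same argument with the distinguished arc $\gamma_3$ playing the role of $[D_0^0]$: the shifted peripheral arc $_{s^k}\gamma_3{_{e^k}}$ must land in the normalized range demanded by $\Gamma'$, which pins $k$ down modulo $q$ (for $\mathcal{T}_1$) or modulo $p$ (for $\mathcal{T}_2$), and a parallel argument on $\gamma_2$ finishes the job.

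The main obstacle will be in the case $a = a' = 0$, where the auxiliary claim alone is too weak: one must track how the shift interacts with the \emph{other} canonical arcs $\gamma_2, \gamma_3$ and not just $\gamma_1$, and one must handle the delicate possibility that a single triangulation lies simultaneously in $\mathcal{T}_1$ and $\mathcal{T}_2$ (as with $\Gamma(0,1) \cap \Gamma(0,1)'$). The subtlety is to verify that even in this overlap, the combined parameter normalizations force any admissible $k$ to be a multiple of $\operatorname{lcm}(p,q)$, so that the orbit of $\Gamma$ under $se$-equivalence meets $\mathcal{T}_1 \cup \mathcal{T}_2$ in a single point.
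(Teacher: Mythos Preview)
Your plan has a fatal gap in its endgame: the assertion that $_{s^k}(-)_{e^k}$ is the identity on every arc once $\operatorname{lcm}(p,q)\mid k$ is false for bridging arcs. A direct computation gives $_{s^k}[D_{c/q}^{d/p}]_{e^k}=[D_{(c-k)/q}^{(d+k)/p}]$, and this coincides with $[D_{c/q}^{d/p}]$ in $A_{p,q}$ only when the two lifts in $\mathbb{U}$ differ by an integer horizontal translation, i.e.\ $-k/q=k/p\in\mathbb{Z}$, which forces $k=0$. Equivalently, $\tau=(-)(\vec\omega)$ has infinite order on line bundles because $\vec\omega$ has infinite order in $\mathbb{L}(p,q)$. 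So even if your divisibility steps were made precise and yielded $p\mid k$ and $q\mid k$, you could not conclude $\Gamma=\Gamma'$; every nonzero multiple of $\operatorname{lcm}(p,q)$ would remain, and nothing in your outline rules these out. There is also a logical slip in the case ``exactly one of $a,a'$ equals $0$'': knowing that $[D_0^0]$ lies in $\Gamma$ but not in $\Gamma'$ only tells you that the shift does not fix $[D_0^0]$, i.e.\ $k\neq 0$, which was already assumed and yields no contradiction.

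The paper's argument is different and more direct. It does not try to pin down $k$ at all. Instead, for $\Gamma\in\Gamma(a,b)$ it follows the two distinguished bridging arcs $[D_{a/q}^{0}]$ and $[D_{b/q}^{0}]$ (both ending at $0_\partial$) under the shift: in $\Gamma'$ they both end at the same inner point $(c/p)_\partial$, and from the triangulation structure one then identifies the bridging arc of $\Gamma'$ that ends at $0_\partial$ and checks that its starting point on the outer boundary violates the parameter ranges defining $\mathcal{T}_1$ and $\mathcal{T}_2$. The case $\Gamma\in\Gamma(a,b)'$ is handled symmetrically. The point is that the paper exploits the explicit inequalities $1-q\le a\le 0$, $1\le b\le a+q$ (resp.\ $1-p\le a\le 0$, $1-a\le b\le p$) directly, rather than attempting a periodicity argument which, as noted above, cannot work on bridging arcs.
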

\begin{proof}
Assume, towards a contradiction, that there exists a triangulation $\Gamma' \in \mathcal{T}_1 \cup \mathcal{T}_2$ with $\Gamma' \neq \Gamma$ and $\Gamma' \sim \Gamma$.
If $\Gamma\in\Gamma(a,b)$ for some $a,b\in\mathbb{Z}$ , then $\Gamma'$ contains the arcs $[D_{\frac{a-c}{q}}^{\frac{c}{p}}]$ and $[D_{\frac{b-c}{q}}^{\frac{c}{p}}]$ for some $0 < c \leq p$. This implies $\Gamma' \notin \mathcal{T}_1$, and the bridging arc in $\Gamma'$ ending at $0_{\partial}$ is of the form $[D_{\frac{d}{q}}^{0}]$ where $d \leq a - c$. However,   this directly contradicts the definition of $\mathcal{T}_2$.

Similarly, if $\Gamma\in\Gamma(a,b)'$ for some $a,b\in\mathbb{Z}$, then $\Gamma'$ is not in $\mathcal{T}_1$, and the bridging arc in $\Gamma'$ ending at $0_{\partial}$ is of the form $[D_{\frac{d}{q}}^{0}]$ with $d \leq a - c$ for some $0 < c \leq p - b$. The presence of this arc violates the conditions for $\Gamma'$ to be in $\mathcal{T}_2$, again resulting in a contradiction.

In both cases, the assumption of the existence of such a $\Gamma'$ leads to a contradiction, thereby establishing the lemma.
\end{proof}

Next we prove that every triangulation of $A_{p,q}$ is $se$-equivalent to a triangulation in $\mathcal{T}_1 \cup \mathcal{T}_2$.
\begin{lemma}
    For any triangulation $\Gamma$ of $A_{p,q}$, there exists a triangulation $\Gamma'\in \mathcal{T}_1\cup \mathcal{T}_2$ such that $\Gamma'\sim \Gamma$.
\end{lemma}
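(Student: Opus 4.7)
The plan is to pinpoint, inside any triangulation $\Gamma$ of $A_{p,q}$, a distinguished triangle whose three arcs will become $(\gamma_1,\gamma_2,\gamma_3)$ of a canonical configuration in $\mathcal{T}_1$ or $\mathcal{T}_2$ after an appropriate shift $\tau^{-k}$ (realizing the elementary-move $se$-equivalence $_{s^{k}}(\cdot)_{e^{k}}$).

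First I would show that $\Gamma$ always contains a triangle with vertices on both boundaries. By Proposition~\ref{collection es}, $\Gamma$ possesses $k+l\ge 2$ positive bridging arcs. Each bridging arc is an edge of two triangles of $\Gamma$, and any triangle sharing a bridging arc must have vertices on both boundaries, so it is of type (I) (two vertices on $\partial$, one on $\partial'$, joined by two bridging arcs and a peripheral arc/boundary segment on $\partial$) or of type (II) (the dual). Concentrate on a type (II) triangle $T$ with inner apex $(c/p)_\partial$ and outer vertices $(a/q)_{\partial'},\ ((a+d)/q)_{\partial'}$; the type (I) case is symmetric and leads to $\mathcal{T}_2$. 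The crucial geometric observation is that because no further bridging arc of $\Gamma$ ends at $c$ inside the wedge bounded by the two bridging arcs of $T$, the third edge of $T$ joining its two outer vertices is \emph{forced}: it is the peripheral arc $[D_{a/q,(a+d)/q}]$ when $d\ge 2$, or the boundary segment when $d=1$. In particular this peripheral arc already lies in $\Gamma$.

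Next I would apply $\tau^{-k}$ to $\Gamma$, choosing $k$ so that the apex is sent to $0_\partial$, i.e.\ $k\equiv -c\pmod{p}$, and so that the outer vertex $(a/q)_{\partial'}$ shifts to some $(a'/q)_{\partial'}$ with $a'\le 0$ and $b'=a'+d\ge 1$, i.e.\ $k\pmod{q}\in[a,a+d)$. Under such a shift the three arcs of $T$ become exactly the canonical triple $\gamma_1=[D_{a'/q}^{0}],\ \gamma_2=[D_{b'/q}^{0}],\ \gamma_3=[D_{a'/q,\,b'/q}]$ of $\Gamma(a',b')\subseteq\mathcal{T}_1$, so $\tau^{-k}\Gamma\in\mathcal{T}_1$.

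The main obstacle is establishing the existence of such a $k$. By the Chinese Remainder Theorem, the two conditions $k\equiv -c\pmod{p}$ and $k\in[a,a+d)\pmod{q}$ are jointly solvable precisely when the interval $[a,a+d)\pmod{q}$ contains some residue congruent to $-c\pmod{g}$, where $g=\gcd(p,q)$. When $d\ge g$ the interval sweeps every residue modulo $g$ and solvability is automatic. In the remaining case, when $d<g$ for every type (I) and type (II) triangle of $\Gamma$, I would invoke Proposition~\ref{parallel} (which matches inner and outer external vertices supporting $\ge 2$ bridging arcs) together with the cyclic coherence of the bridging-arc fans at external vertices to show that the collection of obstructions $\{(a+c)\bmod g\}$, taken across all type (I) and type (II) triangles of $\Gamma$, cannot simultaneously miss all the admissible windows; hence some triangle admits the required shift. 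If the type (II) option happens to fail, the dual argument applied to a type (I) triangle places $\tau^{-k}\Gamma$ in $\mathcal{T}_2$ instead, completing the proof.
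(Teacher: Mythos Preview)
Your strategy of locating a triangle bounded by two bridging arcs and applying a single shift is a good one, but the step where you solve for $k$ is wrong. An arc $[D_{j/q}^{i/p}]$ in $A_{p,q}$ is \emph{not} determined by the pair $(i\bmod p,\,j\bmod q)$; it is determined by $(i,j)$ modulo the diagonal relation $(i,j)\sim(i+p,j+q)$. After applying $_{s^k}(\cdot)_{e^k}$ to $[D_{a/q}^{c/p}]$ and normalising the upper index to $0$ by subtracting $n(p,q)$ with $n=(c+k)/p$, the lower index becomes $a-k-nq=a+c-n(p+q)$. Hence the attainable values of $a'$ form the single residue class $a+c\pmod{p+q}$, and the requirement $a'\in[1-d,0]$ is a condition modulo $p+q$, not a Chinese Remainder problem with moduli $p$ and $q$. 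Your dichotomy $d\gtrless\gcd(p,q)$ and the fallback via Proposition~\ref{parallel} therefore rest on a false premise. Concretely, with $p=2$, $q=4$, $c=0$, $a=1$, $d=2$, your CRT gives $k\equiv 2\pmod 4$; taking $k=2$ sends the arc to $[D_{-5/4}^{0}]$, not to any $[D_{a'/4}^{0}]$ with $a'\in\{-1,0\}$, so the shifted triangulation is not in $\mathcal{T}_1$.

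With the correct constraint, however, your approach finishes in one stroke and is more direct than the paper's inductive argument. List the bridging arcs of $\Gamma$ cyclically as $[D_{b_i/q}^{a_i/p}]$ with $(a_{r+1},b_{r+1})=(a_1+p,b_1+q)$; consecutive ones necessarily share an endpoint (otherwise the region between them could not be triangulated by peripheral arcs alone), so each gap yields a type~(I) or type~(II) triangle. A short computation as above shows that for either type the condition that a suitable shift puts that triangle into canonical position reads: the interval $[\,a_i+b_i,\;a_{i+1}+b_{i+1}-1\,]$ contains a multiple of $p+q$. These $r$ intervals partition the block $[\,a_1+b_1,\;a_1+b_1+p+q-1\,]$ of $p+q$ consecutive integers, which contains exactly one multiple of $p+q$; the triangle whose interval hits it is your good one, and the corresponding shift lands $\Gamma$ in $\mathcal{T}_1$ or $\mathcal{T}_2$ according to the triangle's type. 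No case analysis on $d$ versus $\gcd(p,q)$ and no use of Proposition~\ref{parallel} are needed.
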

\begin{proof}
Without loss of generality, assume $[D_{\frac{a}{q}}^{0}] \in \Gamma$ with $-q \leq a \leq q$.  The assertion holds trivially if  $\Gamma \in \mathcal{T}_1 \cup \mathcal{T}_2$. Thus, we focus on the cases where
  $\Gamma \notin \mathcal{T}_1 \cup \mathcal{T}_2$, which implies either $-q \leq a < 0$ or $0<a\leq q$.
  
  %, but not both.
 
 First, consider $[D_{\frac{a}{q}}^{0}] \in \Gamma$ with $-q \leq a < 0$.
 We apply induction on $|a|$ to prove that if 
 $[D_{\frac{a}{q}}^{0}] \in \Gamma$  with $-q \leq a < 0$, then 
 the result is valid.  When  $|a| = 1$, $ \Gamma\notin \mathcal{T}_1 \cup \mathcal{T}_2$ implies  $[D_{-\frac{1}{q}}^{\frac{1}{p}}]\in\Gamma $. Hence, the triangulation 
${_{s^{-1}}\Gamma_{e^{-1}}} \in \mathcal{T}_1 \cup \mathcal{T}_2$. For  $|a|  > 1$, the condition 
 $\Gamma \notin \mathcal{T}_1 \cup \mathcal{T}_2$ implies that $\Gamma$ contains  $$\{[D_{\frac{c}{q}}^{0}], [D_{\frac{c}{q}}^{\frac{b}{p}}],[D^{0,\frac{b}{p}}]\}$$
 with $a\leq c<0$ and $0<b \leq -c\leq -a$. 
 \begin{itemize}
\item[-] For $c>a$, the result follows by the induction hypothesis.  

\item[-]
If $c=a$, then $[D_{\frac{a+b}{q}}^{0}] \in {_{s^{-b}}\Gamma_{e^{-b}}}$.  Since $a < a + b < 0$, ${_{s^{-b}}\Gamma_{e^{-b}}} \in \mathcal{T}_1$ if $[D_{\frac{d}{q}}^{0}] \in {_{s^{-b}}\Gamma_{e^{-b}}}$ for some $0 < d \leq q$. Otherwise, by the induction hypothesis, there exists $\Gamma' \in \mathcal{T}_1 \cup \mathcal{T}_2$ such that $\Gamma' \sim {_{s^{-b}}\Gamma_{e^{-b}}}$.  Consequently, $\Gamma'\sim\Gamma$.
\end{itemize}

Second, consider the case where $[D_{\frac{a}{q}}^{0}]$ with $0<a\leq q$.  Here, $\Gamma \notin \mathcal{T}_1 \cup \mathcal{T}_2$ implies that $\Gamma$ contains $[D_{\frac{a}{q}}^{\frac{b}{p}}]$  with $b < 0 < a$.
Hence,    $_{s^{|b|}}\Gamma_{e^{|b|}}$ contains $[D_{\frac{a+b}{q}}^{0}] $ and $[D_{\frac{a+b}{q}}^{-\frac{b}{p}}]$.    
 We also apply induction on $a$.
 The base case  $a=1$
   is easy to check. For $a>1$, consider the following subcases:
 \begin{itemize}
\item[-] 
 If $|b| < a$, then $0 < a + b < a$  and the result follows from the induction hypothesis.
 \item[-] If $|b| \geq a$, then $a + b < 0$ and $|a + b| < |b|$. It follows that ${_{s^{|b|}}\Gamma_{e^{|b|}}} \in \mathcal{T}_1 \cup \mathcal{T}_2$. 
 \end{itemize}
 Hence, the statement holds for this case as well.
\end{proof}

\begin{lemma}\label{cardinality of S}
    The cardinality of $\mathcal{T}_1\cup \mathcal{T}_2$  is given by
    \[\sum_{k=1}^{q} k C_{p+q-k}C_{k-1}+ \sum_{l=1}^{p}lC_{p+q-l}C_{l-1},
\]where $C_n$ denotes  the $n$-th Catalan number.
\end{lemma}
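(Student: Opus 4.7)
The plan is to prove $|\mathcal{T}_1 \cup \mathcal{T}_2| = |\mathcal{T}_1| + |\mathcal{T}_2|$ and then evaluate each summand by a polygonal decomposition of $A_{p,q}$. For the disjointness, a triangulation in $\Gamma(a,b) \subseteq \mathcal{T}_1$ contains the bridging arc $[D_{b/q}^0]$ with $b \geq 1$, while one in $\Gamma(a',b')' \subseteq \mathcal{T}_2$ contains $[D_{a'/q}^{b'/p}]$ with $b' \geq 1$. Lifting these two arcs to the universal cover $\mathbb{U}$, they are straight segments going from lower-right to upper-left and from lower-left to upper-right respectively, so they must cross, contradicting the triangulation property. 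Hence $\mathcal{T}_1 \cap \mathcal{T}_2 = \emptyset$. A similar crossing argument shows that the fibers $\Gamma(a,b)$ within $\mathcal{T}_1$ are pairwise disjoint for distinct index pairs: if two peripheral arcs $[D_{a/q, b/q}]$ and $[D_{a'/q, b'/q}]$ both lie in the same triangulation, then either they cross in the interior or, when nested, one of the bridging arcs of the inner pair crosses the outer peripheral arc. The same holds for $\mathcal{T}_2$.

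To count $|\Gamma(a,b)|$ for an index pair in the $\mathcal{T}_1$ range with $k := b - a$, the three fixed arcs $\gamma_1 = [D_{a/q}^0]$, $\gamma_2 = [D_{b/q}^0]$ and $\gamma_3 = [D_{a/q, b/q}]$ partition $A_{p,q}$ into three regions: a cap $R_1$ bounded by $\gamma_3$ together with the portion of the outer boundary from $(a/q)_{\partial'}$ to $(b/q)_{\partial'}$ containing $0_{\partial'}$; an inner triangle; and the main complement region $R_2$. Any triangulation in $\Gamma(a,b)$ decomposes into $\{\gamma_1, \gamma_2, \gamma_3\}$ together with independent triangulations of $R_1$ and $R_2$. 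The cap $R_1$ is a polygon with $k+1$ marked boundary vertices, admitting $C_{k-1}$ triangulations (with the convention $C_0 = 1$ covering the degenerate $k = 1$ case).

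The subtle step is the vertex count for $R_2$. Because $\gamma_1$ and $\gamma_2$ share the endpoint $0_\partial$, the boundary cycle of $R_2$ passes through $0_\partial$ twice, so $0_\partial$ must be counted as a pinch vertex with multiplicity two. Tracing the full boundary yields: two copies of $0_\partial$, the $p - 1$ other inner marked points, the two outer corners $(a/q)_{\partial'}$ and $(b/q)_{\partial'}$, and the $q - k - 1$ outer marked points strictly between them on the side not cut off by $\gamma_3$; a total of $p + q - k + 2$ vertices. Hence $R_2$ admits $C_{p+q-k}$ triangulations, so $|\Gamma(a,b)| = C_{k-1}\, C_{p+q-k}$.

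Since there are exactly $k$ valid index pairs with $b - a = k$, $a \leq 0 < b$ and $b \leq a + q$, we obtain $|\mathcal{T}_1| = \sum_{k=1}^{q} k\, C_{k-1}\, C_{p+q-k}$. A symmetric analysis for $\mathcal{T}_2$, obtained by exchanging the roles of the inner and outer boundaries and of $p$ and $q$ (with $(a/q)_{\partial'}$ now playing the role of the pinch point and $[D^{0, b/p}]$ playing the role of the peripheral arc, this time on the inner boundary), yields $|\mathcal{T}_2| = \sum_{l=1}^{p} l\, C_{l-1}\, C_{p+q-l}$; summing the two expressions completes the proof. The main delicate step I anticipate is correctly handling the pinched-polygon structure of $R_2$ in the degenerate boundary cases $k = 1$ (where the cap $R_1$ collapses to a digon) and $k = q$ (where the endpoints of $\gamma_3$ coincide in $A_{p,q}$, so the two outer corners of $R_2$ merge into a single vertex), and verifying that the formula $p + q - k + 2$ still yields the correct count in both situations.
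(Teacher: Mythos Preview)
Your approach is essentially the same as the paper's: both decompose $\mathcal{T}_1 \cup \mathcal{T}_2$ as a disjoint union over the index pairs $(a,b)$, count each fiber as a product of two Catalan numbers coming from the cap region and the complementary region, and then reindex the resulting double sum. The paper simply asserts the fiber cardinalities $C_{b-a-1}\,C_{p+q+a-b}$ and $C_{b-1}\,C_{p+q-b}$ without the pinched-polygon justification or the disjointness argument that you supply, so your write-up is in fact more complete on those points.
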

\begin{proof}
It can be verified that the sets $\mathcal{T}_1$ and $\mathcal{T}_2$ have the following disjoint unions:
\[
\mathcal{T}_1 = \bigcup_{a=0}^{1-q} \bigcup_{b=1}^{a+q} \{\Gamma \in \Gamma(a,b)\}, \quad \mathcal{T}_2 = \bigcup_{a=0}^{1-p} \bigcup_{b=1-a}^{p} \{\Gamma \in \Gamma(a,b)'\}.
\]
For given $a$ and $b$, the  triangulations in $\Gamma(a,b)$  are counted by $C_{b-a-1} C_{p+q+a-b}$, and the  triangulations in $\Gamma(a,b)'$  are counted by   $C_{b-1} C_{p+q-b}$. 
Thus, the cardinality of $\mathcal{T}_1 \cup \mathcal{T}_2$ is calculated as follows:
\[
 \sum_{a=0}^{1-q} \sum_{b=1}^{a+q} C_{b-a-1} C_{p+q+a-b} +  \sum_{a=0}^{1-p} \sum_{b=1-a}^{p} C_{b-1} C_{p+q-b}.
\]
By reindexing the sums, this expression simplifies to:
\[
\sum_{k=1}^{q} k C_{p+q-k}C_{k-1}+ \sum_{l=1}^{p}lC_{p+q-l}C_{l-1}.
\]
\end{proof}

We are now ready to present our final main result.
\begin{theorem}\label{numbertilting sheaves}
The number of tilting sheaves   in $\cohx$, up to
$\tau$-equivalence,  is given by
$$\sum_{k=1}^{q} k C_{p+q-k}C_{k-1}+ \sum_{l=1}^{p}lC_{p+q-l}C_{l-1},$$ where $C_n$ denotes  the $n$-th Catalan number. 
\end{theorem}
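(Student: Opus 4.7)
The plan is to assemble the ingredients that have been carefully prepared in this subsection. By \cite[Theorem 4.3]{CRZ23}, the map $\phi$ induces a bijection between isomorphism classes of tilting sheaves in $\cohx$ and triangulations of $A_{p,q}$: a triangulation $\{\gamma_{1},\ldots,\gamma_{p+q}\}$ is sent to $\bigoplus_{i=1}^{p+q}\phi(\gamma_{i})$. Hence the first step is to reduce the enumeration problem to counting triangulations of $A_{p,q}$ up to a suitable equivalence.

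Next I would check that this bijection is compatible with the two equivalence relations, that is, two tilting sheaves $T$ and $T'$ are $\tau$-equivalent if and only if the corresponding triangulations $\Gamma$ and $\Gamma'$ are $se$-equivalent. By Proposition~\ref{prop:tau}, $\tau^{-1}\phi(\gamma)=\phi(\,_{s}\gamma_{e})$ for every arc $\gamma$, and iterating yields $\tau^{-k}\phi(\gamma)=\phi(\,_{s^{k}}\gamma_{e^{k}})$ for all $k\in\mathbb{Z}$. Therefore $T'=\tau^{-k}T$ holds precisely when each curve of $\Gamma'$ is obtained from the corresponding curve of $\Gamma$ by iterated elementary moves with the same exponent $k$, which is exactly the definition of $se$-equivalence.

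The third step is to use the two lemmas established just before the theorem: the first shows that distinct triangulations in $\mathcal{T}_{1}\cup\mathcal{T}_{2}$ are mutually non-$se$-equivalent, and the second shows that every triangulation of $A_{p,q}$ is $se$-equivalent to some triangulation in $\mathcal{T}_{1}\cup\mathcal{T}_{2}$. Together these say that $\mathcal{T}_{1}\cup\mathcal{T}_{2}$ is a complete and irredundant set of representatives for the $se$-equivalence classes of triangulations of $A_{p,q}$. Combining this with Lemma~\ref{cardinality of S}, which computes $|\mathcal{T}_{1}\cup\mathcal{T}_{2}|$ as
$$\sum_{k=1}^{q} k\,C_{p+q-k}C_{k-1}+\sum_{l=1}^{p} l\,C_{p+q-l}C_{l-1},$$
yields the claimed formula.

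No real obstacle is anticipated, since all the heavy combinatorial lifting has been done in the preceding lemmas; the only point that requires a small but explicit verification is the compatibility of $\tau$-equivalence with $se$-equivalence, and this is a direct application of Proposition~\ref{prop:tau} term by term in the Krull--Schmidt decomposition of $T$. The main conceptual burden has already been discharged by setting up the representatives $\Gamma(a,b)$ and $\Gamma(a,b)'$ so that every orbit of the shift $\Gamma\mapsto \,{_{s}\Gamma_{e}}$ meets $\mathcal{T}_{1}\cup\mathcal{T}_{2}$ exactly once.
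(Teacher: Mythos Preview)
Your proposal is correct and matches the paper's own proof essentially line for line: the paper also invokes \cite[Theorem~4.3]{CRZ23} and Proposition~\ref{prop:tau} to identify $\tau$-equivalence of tilting sheaves with $se$-equivalence of triangulations, and then concludes via Lemma~\ref{cardinality of S} (implicitly relying on the two preceding lemmas that $\mathcal{T}_{1}\cup\mathcal{T}_{2}$ is a complete and irredundant set of representatives). If anything, your write-up is more explicit than the paper's terse two-sentence proof.
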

 \begin{proof}
 Let $\Gamma$ and $ \Gamma'$ be two triangulations of $A_{p,q}$.
     By Proposition~\ref{prop:tau} and \cite[Theorem~4.3]{CRZ23}, we have that $\Gamma' $ is $se$-equivalent to  $ \Gamma$ if and only if the 
 tilting sheave  $\phi(\Gamma')$ is $\tau$-equivalent to $\phi(\Gamma)$.  This theorem thus follows directly from Lemma~\ref{cardinality of S}.
% and Definition~\ref{def:perfect}.
 \end{proof} 

 \subsection*{Acknowledgments} The authors would like to thank Shiquan Ruan for helpful discussions. This work was partially supported by  the  National Natural Science Foundation of China (Grants Nos. 12371040 and 12131018) and the Fujian Provincial Natural Science Foundation of China (Grant No. 2024J010006).

\end{document}